\let\oldtocsection=\tocsection
\let\oldtocsubsection=\tocsubsection
\let\oldtocsubsubsection=\tocsubsubsection
\renewcommand{\tocsection}[2]{\hspace{0em}\oldtocsection{#1}{#2}}
\renewcommand{\tocsubsection}[2]{\hspace{1em}\oldtocsubsection{#1}{#2}}
\renewcommand{\tocsubsubsection}[2]{\hspace{2em}\oldtocsubsubsection{#1}{#2}}
\def\?[#1]{\textbf{[#1]}\marginpar{\Large{\textbf{??}}}}
\DeclareMathSymbol{\eset}{\mathalpha}{AMSb}{"3F}     % nicer `emptyset'
\renewcommand{\d}{\mathrm{d}}             % a straight d for differentials
\numberwithin{equation}{section}
\newtheorem{theorem}{Theorem}[section]
\newtheorem{lemma}[theorem]{Lemma}
\newtheorem{proposition}[theorem]{Proposition}
\newtheorem{corollary}[theorem]{Corollary}
\theoremstyle{remark}
\theoremstyle{definition}
\newtheorem{definition}[theorem]{Definition}
\newcommand{\C}{\mathbb{C}}
\newcommand{\D}{\mathbb{D}}
\newcommand{\R}{\mathbb{R}}
\newcommand{\Z}{\mathbb{Z}}
\renewcommand{\H}{\mathbb{H}}
\newcommand{\N}{\mathbb{N}}
\newcommand{\Q}{\mathbb{Q}}
\newcommand{\E}{\mathbb{E}}
\renewcommand{\P}{\mathbb{P}}
\renewcommand{\S}{\mathbb{S}}
\newcommand{\T}{\mathbb{T}}
\renewcommand{\Im}{\mathrm{Im}}
\renewcommand{\Re}{\mathrm{Re}}
\newcommand{\id}{\mathrm{Id}}
\newcommand{\cT}{\mathcal{T}}
\newcommand{\cE}{\mathcal{E}}
\newcommand{\cC}{\mathcal{C}}
\newcommand{\cF}{\mathcal{F}}
\newcommand{\cN}{\mathcal{N}}
\newcommand{\cA}{\mathcal{A}}
\newcommand{\cV}{\mathcal{V}}
\newcommand{\ind}{\mathds{1}}
\newcommand{\cD}{\mathcal{D}}
\newcommand{\cL}{\mathcal{L}}
\newcommand{\laweq}{\overset{\text{law}}{=}}
\newcommand{\cS}{\mathcal{S}}
\newcommand{\rL}{\mathrm{L}}
\renewcommand{\hat}{\widehat}
\newcommand{\del}{\partial}
\newcommand{\delbar}{\bar{\partial}}
\newcommand{\bJ}{\mathbf{J}}
\newcommand{\bS}{\mathbf{S}}
\newcommand{\bD}{\mathbf{D}}
\newcommand{\diff}{\mathrm{Diff}}
\newcommand{\norm}[1]{\left\Vert #1\right\Vert}
\newcommand{\cU}{\mathcal{U}}
\newcommand{\bT}{\mathbf{T}}
\newcommand{\bA}{\mathbf{A}}
\newcommand{\bL}{\mathbf{L}}
\newcommand{\bP}{\mathbf{P}}
\newcommand{\bz}{\mathbf{z}}
\newcommand{\sL}{\mathscr{L}}
\newcommand{\cJ}{\mathcal{J}}
\newcommand{\rM}{\mathrm{m}}
\renewcommand{\det}{\mathrm{det}}
\newcommand{\rv}{\mathrm{v}}
\newcommand{\rmint}{\mathrm{int}}
\newcommand{\rmext}{\mathrm{ext}}
\newcommand{\bk}{\mathbf{k}}
\renewcommand{\exp}{\mathrm{exp}}
\newcommand{\ru}{\mathrm{u}}
\newcommand{\ra}{\mathrm{a}}
\newcommand{\rb}{\mathrm{b}}
\newcommand{\kl}{\mathfrak{l}}
\newcommand{\sR}{\mathscr{R}}
\newcommand{\bK}{\mathbf{K}}
\newcommand{\eps}{\epsilon}
\DeclareMathOperator{\RN}{RN}
\newcommand{\indic}[1]{\mathbf{1}_{#1}}
\DeclareMathOperator{\dist}{dist}
\author{Guillaume Baverez}
\address{Aix--Marseille Universit\'e, I2M, Marseille, France}
\email{guillaume.baverez@univ-amu.fr}
\author{Antoine Jego}
\address{CNRS \& CEREMADE, Université Paris-Dauphine, PSL University, France}
\email{antoine.jego@dauphine.psl.eu}
\title[Conformal welding and matter--Liouville--ghost]{Conformal welding and the\\ matter--Liouville--ghost factorisation}
\date{}
\keywords{Schramm--Loewner evolutions, Gaussian free field, conformal welding, conformal field theory.}
\subjclass{Primary: 60J67; 60G60; 17B68. Secondary: 81T40; 30C62.}
\begin{document}

\begin{abstract}
We study the action of local conformal transformations on several measures related to the Gaussian free field and Schramm--Loewner evolutions. The main novelty of our work is a Cameron--Martin-type formula for the welding homeomorphism of the SLE loop measure ($\kappa\in(0,4]$); its proof relies on a rigorous interpretation (and computation) of the ``Jacobian" of the conformal welding map, which we relate to the ``$bc$-ghost system" from bosonic string theory. We also give an intrinsic definition of the trace of the GFF on SLE, and prove a characterisation of the free boundary GFF in $\D$. As an application, we introduce a new and intrinsic approach to the conformal welding of quantum surfaces.
%We establish and explore the consequences of various integration by parts formulas for the Gaussian free field, Schramm--Loewner evolutions, and their coupling. First, we study the GFF on Jordan curves and compute its infinitesimal variation under quasiconformal transformations. The framework is flexible enough to treat the case when the curve is a sample from the SLE loop measure. Second, we use an action of $\mathrm{Diff}(\S^1)$ to characterise the Neumann GFF in the unit disc, singling out the quantum length as a spectral parameter. Third, we consider the welding homeomorphism of the SLE loop measure and give a mathematical meaning to the ``Jacobian determinant" of the conformal welding map, which we interpret as the ``ghost determinant" from bosonic string theory. Finally, we combine the first three inputs in order to derive a new approach to Sheffield's celebrated quantum zipper theorem on the coupling of Schramm--Loewner evolutions with the Gaussian free field.
\end{abstract}

\maketitle

%\epigraph{\textit{We came, we saw, we kicked its ass.}}{Ghostbusters (1984)}
\begin{comment}
\red{Remplacer $\bT_\eta$ par $\bT_{\rmint(\eta)}$}

\red{Guillaume} : lien avec le résultat de [AHS23]

Simplifier le conformal welding
\end{comment}
\setcounter{tocdepth}{1}
\tableofcontents

\section{Introduction}

This article stems from a recent line of research pioneered by Yilin Wang (see \cite{MR4767165} for a review) exploring the rich links relating random conformal geometry, Teichm\"uller theory and Schramm--Loewner evolutions. The crux of these connections is the equality between the universal Liouville action (a K\"ahler potential for the Weil--Petersson metric on the universal Teichm\"uller space \cite{TakhtajanTeo06}), and the Loewner energy (the Dirichlet energy of the SLE driving function) \cite{Wang19}.
The Loewner energy can also be seen as a dissipation term for the Dirichlet energies during cutting or welding operations \cite{ViklundWang19}. This particular identity corresponds precisely to the semiclassical limit $\kappa\to0$ of Sheffield's celebrated quantum zipper \cite{sheffield2016}.
However, Viklund--Wang's identity does not explain Sheffield's result for a fixed $\kappa \in (0,4]$, in particular because of apparently mismatching constants in the action functional (in \eqref{E:VW20}, naively one would expect to have $6Q^2-25$ instead of $6Q^2$).
%For fixed $\kappa\in(0,4]$, the identity has an appealing interpretation in the language of Feynman path integrals, but the constants mismatch in the action functional.
One of our main motivations is to uncover the reason for this anomaly, by computing what amounts formally to the Jacobian determinant of the welding map.
The rigorous formulation of this computation takes the form of an integration by parts formula for the welding homeomorphism of SLE. As an application, we give a new proof that the conformal welding of quantum discs produces a quantum sphere decorated by an independent SLE (originally due to \cite{AHS20}, building on techniques of Sheffield's original paper).
While Sheffield's proof is probabilistic, our approach is more algebraic (borrowing from the representation theory of the Virasoro algebra) and makes explicit connections with conformal field theory. In particular, we can relate the Jacobian of the welding map to the $bc$-ghost system from bosonic string theory. See Section \ref{par:bc} for a detailed discussion.

    \subsection{Background}\label{SS:background}

We start by giving some background on the main characters of this paper.  Throughout the paper, the following parameters are fixed once and for all:
\begin{equation}\label{E:constants}
\begin{aligned}
    \gamma=\sqrt{\kappa}\in(0,2];\qquad & \qquad Q=\frac{\gamma}{2}+\frac{2}{\gamma}\ge 2;\\
    c_\rM:=1-6\Big(\frac{2}{\sqrt{\kappa}}-\frac{\sqrt{\kappa}}{2}\Big)^2\le 1;\qquad & \qquad c_\rL:=1+6Q^2=26-c_\rM\ge 25.
\end{aligned}
\end{equation}

\textbf{Space of Jordan curves, homeomorphisms of $\S^1$ and SLE.}
A Jordan curve in the Riemann sphere $\hat{\C}$ is a continuous simple loop (viewed up to parametrisation). We will denote by $\cJ_{0,\infty}$ the space of Jordan curves separating $0$ from $\infty$, equipped with the Carathéodory topology. For a curve $\eta \in \cJ_{0,\infty}$ separating $0$ from $\infty$, we will denote by $\rmint(\eta)$ and $\rmext(\eta)$ the ``interior'' and ``exterior'' of $\eta$ which are the connected components of $\hat{\C} \setminus \eta$ containing $0$ and $\infty$ respectively.
We will often consider conformal maps $f : \D \to \rmint(\eta)$ and $g:\D^* \to \rmext(\eta)$, where $\D^* = \hat\C \setminus \overline\D$, fixing $0$ and $\infty$ respectively.
The maps $f$ and $g$ are uniquely defined from $\eta$ up to pre-composition by a rotation.
By Carathéodory's theorem, these maps extend continuously to homeomorphisms between the closures $\overline\D \to \overline{\rmint(\eta)}$ and $\overline{\D^*} \to \overline{\rmext(\eta)}$.
The homeomorphism $h = g^{-1} \circ f : \S^1 \to \S^1$ is called a \emph{welding homeomorphism} of $\eta$.

Conversely, if $h \in \text{Homeo}(\S^1)$ is a homeomorphism of $\S^1$, one can seek for a curve $\eta$ and conformal maps $f$ and $g$ uniformising the interior and exterior of $\eta$ and such that $h=g^{-1}\circ f$. This reverse direction is known as conformal welding. Depending on the homeomorphism $h$,
neither existence nor uniqueness of such a welding necessarily holds. Moreover, by post-composing $f$ and $g$ by a given Möbius map of $\hat\C$, we see that uniqueness can only hold up to Möbius transformations.
We will denote by
\begin{equation}\label{E:Homeo^star}
    \text{Homeo}^\star(\S^1) := \{ h \in \text{Homeo}(\S^1) \text{ welding a unique (up to Möbius maps) Jordan curve} \}.
\end{equation}

Schramm--Loewner Evolutions (SLE) are random fractal curves of the plane, initially introduced by Schramm \cite{Schramm2000}, which describe the scaling limit of interfaces of critical models of statistical mechanics. In this article, we will consider the SLE loop measure, as introduced by \cite{Werner08_loop,BenoistDubedat16,kemppainen2016nested,Zhan21_SLEloop}, focusing on the case $\kappa \le 4$ where the curve is simple. Its restriction $\nu$ to the space of loops separating $0$ from $\infty$ is a Borel measure on $\cJ_{0,\infty}$ \cite[Proposition~B.1]{BJ24}. The measure $\nu$ is infinite due to small and large loops. More precisely, by scale invariance, the marginal of the logarithm of the conformal radius of the complement of the curve seen from the origin is a constant multiple of Lebesgue measure $\d c$ on $\R$. Fixing this constant to be $\pi^{-1/2}$, the measure $\nu$ then decomposes as
\begin{equation}
    \label{E:shape_measure}
\nu = \pi^{-1/2} \d c \otimes \nu^\#,
\end{equation}
where $\nu^\#$ is the \textit{SLE shape measure}: a probability measure on the space of curves $\eta$ in $\cJ_{0,\infty}$ with unit conformal radius at the origin. Equivalently (and as done in \cite{BJ24}), we can view $\nu^\#$ as a measure on a space of conformal maps of the disc $\cE$, defined in \eqref{eq:def_E}. Samples of $\nu^\#$ are fractal curves of almost sure Hausdorff dimension $1+\kappa/8$ \cite{Beffara08}. Moreover, for $\nu^\#$-almost every curve $\eta$, the $(1+\kappa/8)$-dimensional Minkowski content $\mathfrak{m}_\eta$ of $\eta$, as constructed in \cite{LawlerRezaei15_Minkowski}, is a nondegenerate measure supported on the curve $\eta$ defined by
\begin{equation}
    \label{E:Minkowksi_SLE}
    \mathfrak{m}_\eta = \lim_{r \to 0} r^{-2+(1+\frac{\kappa}{8})} \ind_{ \mathrm{dist}(z,\eta) < r} |\d z|^2.
\end{equation}

\textbf{Gaussian free field.}
The Gaussian free field is a random centred Gaussian field. Several variants are possible including:
\begin{itemize}[leftmargin=*]
\item The GFF on a connected compact surface $(\Sigma,g)$ with some background Riemannian metric $g$. Its covariance is given by the Green function of the Laplace--Beltrami operator in $(\Sigma,g)$. In this article, we will focus on the Riemann sphere $\hat\C$. For concreteness,
\[\E[X(z)X(\zeta)]=-\log|z-\zeta|+\log|z|_++\log|\zeta|_+, \quad z, \zeta \in \hat\C,\]
corresponding to the metric $(1\vee|z|)^{-4}|\d z|^2$ \cite[Section 2.1]{KRV_DOZZ}.
\item The GFF with Neumann (or free) boundary conditions (resp. Dirichlet boundary conditions) in a proper, simply connected domain of $\C$. Its covariance is the Green function with Neumann (resp. Dirichlet) boundary conditions. For instance, for the unit disc $\D$, it is given by
\[ 
(z,\zeta)\mapsto -\log\big|(z-\zeta)(1-z\bar\zeta)\big|,
\qquad \text{resp.} \qquad (z,\zeta)\mapsto\log\Big|\frac{1-z\bar\zeta}{z-\zeta}\Big|.
\]
\end{itemize}
In all the above cases, the GFF takes values in a suitable Sobolev space of negative index. That is to say, it is a random generalised function but not a function with well-defined pointwise values. In addition, on compact surfaces or in a domain with Neumann boundary conditions, the associated Laplace operator vanishes on constant functions. In theses cases, the GFF is then only defined up to a global additive constant that we will fix in some specific way depending on our needs.
We refer to the book \cite{berestycki2024gaussian} for more details on the Gaussian free field.

\textbf{Random surfaces.} 
The theory of Gaussian multiplicative chaos (GMC), initiated by Kahane \cite{Kahane85} and rediscovered by Duplantier and Sheffield \cite{DuplantierSheffield11} (see \cite{berestycki2024gaussian, rhodes2014_gmcReview, Powell20_review} for introductions to this topic), defines random measures of the form
\begin{equation}
    \label{E:GMC}
e^{\gamma X} v = \lim_{\eps \to 0} \eps^{\gamma^2/2} e^{\gamma X_\eps(z)} v,
\end{equation}
where $\gamma \in (0,2)$ is a parameter, $X$ is some variant of the Gaussian free field and $v$ is the volume form of the underlying surface/domain. In the above limiting procedure, $X_\eps$ is an $\eps$-smoothing of the field $X$ (e.g. using convolution approximations), and the limit is in probability for the topology of weak convergence of measures. This leads to a notion of ``random surface'' where the volume form is replaced by the measure \eqref{E:GMC}. One can also construct the associated random metric \cite{MR4179836, MR4199443}, fully making sense of a random metric measure space.
As understood by Polyakov in his seminal paper \cite{Polyakov81},
these ``LQG surfaces'' are natural random perturbations of the background surface. They are conjectured to in some rare cases, are proved to) describe the scaling limit of large random planar maps and have deep connections with many mathematical objects. We refer to the expository articles \cite{gwynne2020random, ding2022introduction, GHS23, sheffield2022random} for more background, or again to the book \cite{berestycki2024gaussian}.

Due to the renormalisation procedure needed to define these GMC measures, they obey the following distorted change-of-variables formula. Let $X$ be a GFF with Dirichlet boundary conditions, say, in some proper, simply connected domain $D$ and let $\Phi : D \to D'$ be a conformal map. Then, the pullback of $e^{\gamma X} |\d z|^2$ by $\Phi$ is the GMC measure associated to the field $X \circ \Phi + Q \log |\Phi'|$ where, as in \eqref{E:constants}, $Q = \gamma/2+2/\gamma$. See e.g. \cite[Theorem 2.8]{berestycki2024gaussian}.
This motivates the introduction of the following actions:
\begin{itemize}[leftmargin=*]
    \item
The space of maps $\Phi:\hat\C \to \hat\C$ conformal in some domain $D$ acts on the space of generalised functions (``fields'') $\varphi$ in $D$ by
\begin{equation}
    \label{E:dot}
    \varphi \cdot \Phi := \varphi \circ \Phi + Q\log |\Phi'|.
\end{equation}
    \item
The space $\diff^\omega(\S^1)$ of analytic diffeomorphisms $h$ of $\S^1$ acts on the space of fields $\varphi$ on $\S^1$ by
\begin{equation}
    \label{E:doth}
    \varphi \cdot h := \varphi \circ h + Q\log \frac{zh'}{h}.
\end{equation}
\end{itemize}
These two actions are equivalent in the sense that, if the restriction $h = \Phi\vert_{\S^1}$ of a map $\Phi$ conformal in a neighbourhood of $\S^1$ is a diffeomorphism of $\S^1$, then for all $z \in \S^1$, $|\Phi'(z)| = zh'(z)/h(z)$.
We thus use the same notation for both actions. The chain rule shows that these actions are indeed group actions in the sense that, for all maps $\Phi_1$, $\Phi_2$ conformal in suitable domains and field $\varphi$,
\begin{equation}\label{E:cdot_group}
    (\varphi \cdot \Phi_1) \cdot \Phi_2 = \varphi \cdot (\Phi_1 \circ \Phi_2).
\end{equation}

    \subsection{Welding homeomorphism of SLE}\label{SS:intro_homeo}

The next definition introduces the law of the welding homeomorphism of the SLE$_\kappa$-loop, which we denote $\tilde{\nu}_\kappa$ or simply $\tilde{\nu}$. Here and in the sequel, we will identify a point $e^{i\alpha}\in\S^1$ with the rotation $z\mapsto e^{i\alpha}z$ in $\mathrm{Diff}(\S^1)$.
\begin{definition}\label{def:tilde_nu}
    Let $\kappa\in(0,4]$. Let $\eta\sim\nu_\kappa^\#$ and $\alpha$ an independent uniform random variable in $[0,2\pi)$. Let $f:\D\to\mathrm{int}(\eta)$, $g:\D^*\to\mathrm{ext}(\eta)$ be the normalised Riemann mappings of $\eta$ ($f(0)=0,f'(0)>0,g(\infty)=\infty,g'(\infty)>0$). The probability measure $\tilde{\nu}_\kappa$ on $\mathrm{Homeo}(\S^1)$ is the law of $h:=e^{-i\alpha}\circ g^{-1}\circ f|_{\S^1}$.
\end{definition}
By conformal removability of SLE \cite{RohdeSchramm05,kavvadias2022conformalremovabilitysle4}, samples of $\tilde\nu_\kappa$ are actually the welding homeomorphism of a unique curve, up to Möbius transformations. With the notation \eqref{E:Homeo^star}, this means that $\tilde{\nu}_\kappa$ gives full mass to $\mathrm{Hoemo}^\star(\S^1)$. With the definition above, we can view $\tilde{\nu}$ equivalently as the measure $\frac{\d\alpha}{2\pi}\otimes\nu^\#$ on $\S^1\times\cE$.

By definition, $\tilde\nu$ is rotationally left-invariant. On the other hand, since $\nu^\#$ is rotationally invariant, $\tilde{\nu}$ is invariant under conjugation by any rotation (i.e. $e^{-i\beta}\circ h\circ e^{i\beta}\laweq h$ for any $e^{i\beta}\in\S^1$). It follows that $\tilde{\nu}$ is rotationally right-invariant as well. However, $\tilde{\nu}$ is neither left or right invariant under the action of the whole M\"obius group $\mathrm{PSL}_2(\R)$ (for otherwise it would be infinite), see Lemma \ref{L:mobius} for a convenient description of the lift of $\tilde{\nu}$ to the invariant measure. This section states the main properties of $\tilde{\nu}$, with proofs postponed to Section \ref{S:ghost}.

As a motivating example, consider the case $\kappa=\frac{8}{3}$ ($c_\rM=0$). Since the central charge vanishes, we can view $\nu_{8/3}$ as the ``uniform measure on Jordan curves", or more precisely the invariant measure under local conformal transformations. One could naively expect $\tilde{\nu}_{8/3}$ to be an invariant measure under left/right composition, i.e. to be a Haar measure on $\mathrm{Homeo}(\S^1)$. This is of course not possible since $\mathrm{Homeo}(\S^1)$ is not locally compact. Intuitively, we are missing the Jacobian of the change of coordinates between the invariant coordinate on $\cJ_{0,\infty}$ (under local conformal deformations) and the invariant coordinates on $\mathrm{Homeo}(\S^1)$. The main contribution of this section is to give a precise meaning to this idea, and this will take the form of an integration by parts formula for $\tilde{\nu}_\kappa$. It allows us to assign the value $\exp(-\frac{13}{12\pi}\bS_1-2\bK)$ to this determinant, where $\bS_1$ is the \emph{universal Liouville action} \cite[Eq. (0.1)]{TakhtajanTeo06}, and $\bK$ is the \emph{logarithmic capacity} \cite{KirillovYurev87}: for a homeomorphism $h=g^{-1}\circ f$ in $\mathrm{Homeo}^\star(\S^1)$
with $f(0)=0$ and $g(\infty)=\infty$, these quantities are defined as
\begin{equation}\label{E:def_K_S1}
    \bK(h)=\log\left|\frac{g'(\infty)}{f'(0)}\right|\qquad\text{and}\qquad\bS_1(h)=\int_\D\left|\frac{f''}{f'}\right|^2|\d z|^2+\int_{\D^*}\left|\frac{g''}{g'}\right||\d z|^2-4\pi\log\left|\frac{g'(\infty)}{f'(0)}\right|.
\end{equation}
The maps $f$ and $g$ are not uniquely determined from $h$: post-composing $f$ and $g$ by a Möbius fixing 0 and $\infty$ yields the same homeomorphism. However, the quantities $\bK(h)$ and $\bS_1(h)$ above do not depend on this undetermined Möbius.
Both $\bK$ and $\bS_1$ are very natural quantities in the theory of the universal Teichm\"uller space: the first is a potential for the Velling--Kirillov metric \cite[Chapter 1, Theorem 5.3]{TakhtajanTeo06}, while the second is a potential for the Weil--Petersson metric \cite[Chapter 2, Theorem 3.8]{TakhtajanTeo06}.
Moreover, $\bS_1(h)$ is finite if and only if $h$ belongs to the Weil--Petersson Teichmüller space.
Thus, one can think heuristically of $\tilde{\nu}_{8/3}$ as the formal path integral
\[
``\;\d\tilde{\nu}_{8/3}(h)=\exp\left(-\frac{13}{12\pi}\bS_1-2\bK\right)Dh\;",
\]
where $Dh$ is the non-existent Haar measure on $\mathrm{Homeo}(\S^1)$. For a generic value of $\kappa \in (0,4]$, $\tilde{\nu}_\kappa$ is related to $\tilde{\nu}_{8/3}$ by the path integral
$
\d\tilde{\nu}_\kappa = \exp\left(\frac{c_\rM}{24\pi}\bS_1\right) \d \tilde\nu_{8/3}
$ \cite{CarfagniniWang, BJ24, GQW25} (this is only formal since the two measures are mutually singular for $\kappa\neq\frac{8}{3}$),
so we can more generally think of $\tilde{\nu}_\kappa$ via the path integral
\begin{equation}\label{eq:path_integral}
``\;\d\tilde{\nu}_\kappa(h)=\exp\left(\frac{c_\rM-26}{24\pi}\bS_1-2\bK\right)Dh\;".
\end{equation}

The formula for the determinant can be related to the ``ghost determinant" from bosonic string theory, which is supposed to contribute $-26$ to the central charge and $-1$ to the conformal weight in order to get an integration measure on Teichm\"uller space \cite[Section 2.2]{VirasoroMinimalString}. In our formula, these contributions are given respectively by the universal Liouville action and the Velling--Kirillov potential. In the probabilistic literature, this has been used implicitly and in a remarkable way in \cite{AngCaiSunWu_1,AngCaiSunWu_2} in the proof of the structure constant for the conformal loop ensemble (CLE). As explained to us by Xin Sun, the factor $e^{-2\bK}$ can be related to the Haar measure on the M\"obius group used in these papers. Theorem \ref{T:ghost} below can be understood as the hidden structure explaining why the coupling used in these papers is so relevant.

Let us now close this heuristic discussion and state the main result. In the statement, the operators $\sL_\mu$ and $\bar\sL_\mu$ (resp. $\sR_\mu$ and $\bar\sR_\mu$) correspond to the Lie derivative along the left (resp. right) composition by infinitesimal homeomorphisms generated by a Beltrami differential $\mu$. These operators are defined precisely in \eqref{E:def_sL_sR}. Before stating the result, we borrow some notation from \cite{BJ24}: 
given a welding homeomorphism $h = g^{-1} \circ f \in \text{Homeo}^{\star}(\S^1)$ (whose welding curve is unique), and given a Beltrami differential $\mu$ compactly supported in $\D$ (resp. $\D^*$), we define
\begin{align*}
&\vartheta_h(\mu):=-\frac{1}{\pi}\int_\D\cS f(z)\mu(z)|\d z|^2;&&\tilde{\vartheta}_h(\mu):=-\frac{1}{\pi}\int_{\D^*}\cS g(z)\mu(z)|\d z|^2;\\
&\varpi_h(\mu):=-\frac{1}{\pi}\int_\D\left(\frac{f'(z)^2}{f(z)^2}-\frac{1}{z^2}\right)\mu(z)|\d z|^2;&&\tilde{\varpi}_h(\mu):=-\frac{1}{\pi}\int_{\D^*}\left(\frac{g'(z)^2}{g(z)^2}-\frac{1}{z^2}\right)\mu(z)|\d z|^2.
\end{align*}
Here, $\cS f$ denotes the \emph{Schwarzian derivative} of $f$:
\begin{equation}
    \label{E:Schwarzian}
\cS f = \Big(\frac{f''}{f'}\Big)' - \frac{1}{2} \Big( \frac{f''}{f'} \Big)^2.
\end{equation}
Note that the above quantities depend only on $h$ and not on the choice of representatives $f$ and $g$.

\begin{theorem}\label{T:ghost}
For all Beltrami differentials $\mu$ compactly supported in $\D^*$ (resp. $\D$), 
the $L^2(\tilde{\nu}_\kappa)$-adjoint $\sL_\mu^*$ of $\sL_\mu$ (resp. the $L^2(\tilde{\nu}_\kappa)$-adjoint $\sR_\mu^*$ of $\sR_\mu$) is densely defined on $L^2(\tilde{\nu}_\kappa)$ and given by:
\begin{equation}\label{E:T_ghost}
\sL_\mu^*=-\bar\sL_\mu+\Big(\frac{c_\rL}{12}\overline{\tilde\vartheta(\mu)}+\overline{\tilde\varpi(\mu)}\Big) \id \qquad\text{resp.}\qquad\sR_\mu^*=-\bar\sR_\mu-\Big(\frac{c_\rL}{12}\overline{\vartheta(\mu)}+\overline{\varpi(\mu)}\Big)\id.
\end{equation}
%On $L^2(\tilde{\nu}_\kappa)$, we have 
%\begin{equation}
%\sL_\mu^*=-\bar\sL_\mu+\frac{c_\rL}{12}\overline{\tilde{\vartheta}(\mu)}+\overline{\tilde{\varpi}(\mu)}\qquad\text{and}\qquad\sR_\mu^*=-\bar\sR_\mu-\frac{c_\rL}{12}\overline{\vartheta(\mu)}-\overline{\varpi(\mu)}.
%\end{equation}
%\[\sL_\mu^*=-\bar\sL_\mu+\bar\sL_\mu\Big(\frac{c_\rL}{24\pi}\bS_1+2\log|g'(\infty)|\Big)\mathrm{Id}\qquad\text{and}\qquad\sR_\mu^*=-\bar\sR_\mu+\bar\sR_\mu\left(\frac{c_\rL}{24\pi}\bS_1+2\log|g'(\infty)|\right)\mathrm{Id}.\]
\end{theorem}

Owing to the fact that (see \cite[Chapter 1, Theorem 5.3 and Chapter 2, Theorem 3.8]{TakhtajanTeo06})
\begin{align}\label{E:TT06}
\frac{c_\rL}{12}\vartheta(\mu)+\varpi(\mu)=-\sR_\mu\left(\frac{c_\rL}{24\pi}\bS_1+2\bK\right)\qquad\text{and}\qquad\frac{c_\rL}{12}\tilde\vartheta(\mu)+\tilde\varpi(\mu)=\sL_\mu\left(\frac{c_\rL}{24\pi}\bS_1+2\bK\right),
\end{align}
these relations are exactly the ones we would obtain if we could make sense literally of \eqref{eq:path_integral}, and give a mathematically rigorous interpretation of this path integral. For the reader's convenience, this formal computation is written in Appendix \ref{appendix:formal}. In fact, it is then straightforward to integrate the identity of Theorem \ref{T:ghost} to get a variational formula for the welding homeomorphism of SLE under left/right composition by analytic diffeomorphisms. In the next statement, we define 
\begin{equation}
    \label{E:Omega}
\Omega(h_2,h_1):=\frac{c_\rL}{24\pi}(\bS_1(h_2)-\bS_1(h_1))+2(\bK(h_2)-\bK(h_1))
\end{equation}
if $h_1,h_2\in\mathrm{Diff}^\omega(\S^1)$. The variation of $\bS_1$ is equal to the variation of the normalised mass of Brownian loops intersecting the welding curve of $h_1$ and the welding curve of $h_2$ \cite{FieldLawler12_Reversed, AngParkPfefferSheffield}.
%, and both $\bK(h)$ is well defined and finite for the welding homeomorphism of an SLE loop.
If $h$ is a sample from $\tilde{\nu}_\kappa$ and $\tilde{\Phi}\in\mathrm{Diff}^\omega(\S^1)$ is fixed, both $\bS_1(\tilde{\Phi}\circ h)$ and $\bS_1(h)$ are infinite. However, the variation of the mass of Brownian loop is finite, which means that $\Omega(\tilde{\Phi}\circ h,h)$ is well defined and finite for $\tilde{\nu}_\kappa$-a.e. $h$ and all $\tilde{\Phi}\in\mathrm{Diff}^\omega(\S^1)$.
\begin{corollary}\label{C:RN_welding}
    Let $\tilde{\Phi}\in\mathrm{Diff}^\omega(\S^1)$ and $h$ sampled from $\tilde{\nu}_\kappa$. Then, the laws of $\tilde{\Phi}\circ h$ and $h\circ\tilde{\Phi}$ are respectively given by
    \[e^{-\Omega(\tilde{\Phi}\circ h,h)}\d\tilde{\nu}_\kappa(h)\qquad\text{and}\qquad e^{-\Omega(h\circ\tilde{\Phi},h)}\d\tilde{\nu}_\kappa(h).\]
\end{corollary}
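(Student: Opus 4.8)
The plan is to integrate the infinitesimal identity of Theorem~\ref{T:ghost}; the whole conceptual content of the corollary is contained in that theorem, and what remains is a transport‑equation argument (this is why the paper already describes the step as ``straightforward''). We treat the case of left composition, the case of right composition being entirely parallel: one interchanges the roles of $\D$ and $\D^*$ and uses the second identity of Theorem~\ref{T:ghost}, noting that by \eqref{E:TT06} it produces the same weak integration by parts $\E_{\tilde\nu_\kappa}[\sR_\mu F]=\E_{\tilde\nu_\kappa}[F\cdot\sR_\mu(\tfrac{c_\rL}{24\pi}\bS_1+2\bK)]$. Since $\mathrm{Diff}^\omega(\S^1)$ is path‑connected, I would first fix a smooth path $(\tilde\Phi_t)_{t\in[0,1]}$ in $\mathrm{Diff}^\omega(\S^1)$ with $\tilde\Phi_0=\id$, $\tilde\Phi_1=\tilde\Phi$ and $\tilde\Phi_t(1)=1$ for all $t$ (so that $\tilde\Phi_t\circ h$ stays in $\mathrm{Homeo}_1(\S^1)$); concretely one can take the projection to $\S^1$ of the affine‑in‑$t$ interpolation between the lifts of $\id$ and of $\tilde\Phi$ to $\R$, which is real‑analytic in the circle variable and stays in a compact subset of $\mathrm{Diff}^\omega(\S^1)$.

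Next I would realise this path as a flow generated by Beltrami differentials of the type allowed in Theorem~\ref{T:ghost}. Let $v_t:=\dot{\tilde\Phi}_t\circ\tilde\Phi_t^{-1}$ be the associated time‑dependent velocity field on $\S^1$; it is real‑analytic, hence extends holomorphically to an annulus around $\S^1$, and after a cut‑off it is the boundary trace of a quasiconformal vector field with Beltrami coefficient $\mu_t$ compactly supported in $\D^*$, whose flow restricts on $\S^1$ to $\tilde\Phi_t$. Thus left composition by $\tilde\Phi_t$ is exactly the flow on $\mathrm{Homeo}_1(\S^1)$ generated by $\sL_{\mu_t}$, in the sense of the definition \eqref{E:def_sL_sR} and \cite{BJ24}, and because the path stays in a compact set, the family $(\mu_t)$ lies in a bounded set and all the estimates underlying Theorem~\ref{T:ghost} are uniform in $t$.

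Now I would differentiate. For $F$ in the dense class of test functionals on which $\sL_\mu$ is defined, $t\mapsto\E_{\tilde\nu_\kappa}[F(\tilde\Phi_t\circ h)]$ is differentiable, and applying Theorem~\ref{T:ghost} to the constant functional (so that the $\bar\sL_{\mu_t}$ term drops out) together with \eqref{E:TT06} gives
\[
\E_{\tilde\nu_\kappa}[\sL_{\mu_t}F]\;=\;\E_{\tilde\nu_\kappa}\!\left[F\cdot\Big(\tfrac{c_\rL}{12}\tilde\vartheta(\mu_t)+\tilde\varpi(\mu_t)\Big)\right]\;=\;\E_{\tilde\nu_\kappa}\!\left[F\cdot\sL_{\mu_t}\!\Big(\tfrac{c_\rL}{24\pi}\bS_1+2\bK\Big)\right].
\]
On the other hand, $\Omega$ is, formally, the coboundary $\Omega(h_2,h_1)=W(h_2)-W(h_1)$ with $W=\tfrac{c_\rL}{24\pi}\bS_1+2\bK$, and rigorously the relevant differences of $\bS_1$ are finite and equal the finite ``variation of the mass of Brownian loops'' between the two welding curves; consequently $t\mapsto\Omega(\tilde\Phi_t\circ h,h)$ is absolutely continuous with derivative $\tfrac{c_\rL}{12}\tilde\vartheta(\mu_t)+\tilde\varpi(\mu_t)$ evaluated at $\tilde\Phi_t\circ h$. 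Feeding these two facts into the chain rule shows that the law of $\tilde\Phi_t\circ h$ and the measure $e^{-\Omega(\tilde\Phi_t\circ h,\,h)}\,\d\tilde\nu_\kappa(h)$ --- both of which are genuine flows of probability measures under left composition, the latter because $\Omega$ satisfies the cocycle identity $\Omega(h_3,h_1)=\Omega(h_3,h_2)+\Omega(h_2,h_1)$ --- have the same infinitesimal generator and agree at $t=0$. A uniqueness argument for this measure‑valued transport equation (which one may localise, via the cocycle relation, to the case of $\tilde\Phi$ in an arbitrarily small neighbourhood of the identity, where it reduces to differentiating at $t=0$) then forces them to coincide at $t=1$, which is the assertion of the corollary.

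The main difficulty is analytic and bookkeeping rather than conceptual. The functional $W$ (hence $\bS_1$) is $\tilde\nu_\kappa$‑a.s.\ infinite, since samples of $\tilde\nu_\kappa$ weld fractal curves of Hausdorff dimension $1+\kappa/8>1$ which are far from the Weil--Petersson class; therefore $W$ may never appear on its own, and one must carry through the whole argument with the finite differences $\Omega$, in particular proving that $t\mapsto\Omega(\tilde\Phi_t\circ h,h)$ is differentiable with the expected derivative by working with the Brownian‑loop description rather than with the divergent integrals defining $\bS_1$. In addition, differentiating under the expectation and closing the uniqueness estimate require uniform‑in‑$t$ control in $L^2(\tilde\nu_\kappa)$ of $\sL_{\mu_t}F$ and of the anomaly $\tfrac{c_\rL}{12}\tilde\vartheta(\mu_t)+\tilde\varpi(\mu_t)$, which is exactly where one uses that $(\tilde\Phi_t)$ stays in a compact subset of $\mathrm{Diff}^\omega(\S^1)$; the normalisation $\E_{\tilde\nu_\kappa}[e^{-\Omega(\tilde\Phi\circ h,h)}]=1$ then comes for free, since the flow $(L_{\tilde\Phi_t})_*\tilde\nu_\kappa$ consists of probability measures --- one of the advantages of proving the identity by integration rather than by exhibiting the density directly.
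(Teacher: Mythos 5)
Your strategy is the paper's: integrate the infinitesimal identity of Theorem \ref{T:ghost} along a path from the identity to $\tilde\Phi$, using \eqref{E:TT06} to identify the anomaly $\frac{c_\rL}{12}\tilde\vartheta(\mu)+\tilde\varpi(\mu)$ with the derivative of $\Omega$; the ingredients you assemble (weak integration by parts against the constant functional, finiteness of the differences $\Omega$ even though $\bS_1$ is $\tilde\nu_\kappa$-a.s.\ infinite, the cocycle structure of $\Omega$) are all correct and are the ones the paper uses. The gap is in the final step. Knowing that the two curves of measures --- the law of $\tilde\Phi_t\circ h$ and $e^{-\Omega(\tilde\Phi_t\circ h,h)}\d\tilde\nu_\kappa(h)$ --- agree at $t=0$ and formally satisfy the same transport equation does not conclude: uniqueness for measure-valued transport equations in this infinite-dimensional, weakly regular setting is not automatic, and your proposed localisation ``to a neighbourhood of the identity, where it reduces to differentiating at $t=0$'' does not repair this, since two families of measures with the same value and the same first derivative at $t=0$ need not coincide for any $t>0$. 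The standard way to establish the uniqueness you invoke is a duality argument along characteristics, and that is precisely what the paper does instead: it shows that the single scalar quantity
\[
t\;\longmapsto\;\int F(\tilde\Phi_t\circ h)\,e^{\Omega(\tilde\Phi_t\circ h,h)}\,\d\tilde\nu_\kappa(h)
\]
is constant in $t$. Its derivative is computed by the Leibniz rule; the term involving $\sL_\mu$ applied to the product is rewritten via Theorem \ref{T:ghost}, and the resulting anomaly $\frac{c_\rL}{12}\tilde\vartheta_h(\mu)+\tilde\varpi_h(\mu)+\frac{c_\rL}{12}\tilde\vartheta_{\tilde\Phi_t\circ h}(\mu)+\tilde\varpi_{\tilde\Phi_t\circ h}(\mu)$ --- note that $\sL_\mu$ acts on \emph{both} arguments of $\Omega$ --- cancels exactly against $\del_t\Omega(\tilde\Phi_t\circ h,h)$ together with the one-argument anomaly coming from the adjoint relation. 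Evaluating at $t=0$ and $t=1$ then yields the corollary with no appeal to a uniqueness theorem. Replacing your transport-equation step by this computation closes the gap.

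A secondary difference: rather than building an arbitrary path in $\mathrm{Diff}^\omega(\S^1)$ with time-dependent velocity fields and then realising each velocity as a Beltrami flow, the paper simply takes the one-parameter family $\tilde\Phi_t$ solving the Beltrami equation with coefficient $t\mu+\bar t\,\iota^*\mu$ for a fixed compactly supported $\mu$ with $\norm{\mu}_{L^\infty}<1$ --- exactly the class for which $\sL_\mu$ and Theorem \ref{T:ghost} are formulated --- which gives the uniform-in-$t$ control you worry about for free.
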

We will prove this corollary in Section \ref{S:ghost}.
As communicated to us by the authors, \cite[Theorem 1.2]{FanSung25} shows that the laws of $\tilde{\Phi}\circ h$ and $h\circ\tilde{\Phi}$ are absolutely continuous with respect to $\tilde{\nu}$ for less regular $\tilde{\Phi}$ (namely, in the Weil--Petersson class), without exhibiting the expression for the Radon--Nikodym derivative. Their study was developed independently, and their strategy is in fact orthogonal to ours. They first establish an analogous result for the restriction of the Neumann GFF on $\S^1$. Then, they ``lift'' this property to the welding homeomorphism of SLE using the conformal welding result of \cite{AngCaiSunWu_1}. Using a limiting procedure, it should be possible to extend Corollary \ref{C:RN_welding} to the class of homeomorphisms exhibited in the main result of \cite{FanSung25}.

In \cite[Section 2]{BJ24}, we considered a representation of the Virasoro algebra with central charge $c_\rM$ and used it to write $L^2(\nu)$ as a direct integral of highest-weight representations of weight in $\lambda\in i\R$. Doing a modification of the operators $\sL_\mu$, $\sR_\mu$ similar to the one appearing in \cite[Section 2.4]{BJ24}, it should be possible to obtain a representation of the Virasoro algebra with central charge $c_\rL=26-c_\rM$, and write $L^2(e^{2\bK}\tilde{\nu})$ as a direct integral of highest-weight representations with weight $\lambda\in 1-i\R$. This is an instance of the duality mentioned in \cite[Remark 2.4]{FeiginFuchs_verma}, relating the Verma module of charge $c$ and weight $\lambda$ to the Verma module of charge $26-c$ and weight $1-\lambda$. In particular, for the degenerate conformal weight $\lambda_{r,s}(\kappa)=\frac{\kappa}{16}(1-r^2)+\frac{1}{\kappa}(1-s^2)+\frac{1}{2}(1-rs)$, we have $1-\lambda_{r,s}(-\kappa)=\lambda_{r,s}(\kappa)$.

  \subsection{The GFF on SLE and its variation}\label{SS:intro_gff}
Let $X$ be the GFF in $\hat\C$ equipped with the metric $\frac{|\d z|^2}{(1\vee|z|)^4}$ (see Section \ref{SS:background}). Let $\eta$ be a (deterministic) analytic Jordan curve. It is well known that one can define the trace of $X$ on $\eta$: this is also a $\log$-correlated field whose covariance is the restriction of Green's function to $\eta$. More intrinsically, its covariance is the resolvent of the jump operator across the curve.

Here, we want to generalise this construction to the case where $\eta$ is a sample from the SLE loop measure $\nu$. Samples of $\nu$ are fractal curves, so it is not clear \textit{a priori} how to define a jump operator across an SLE loop. In Section \ref{SS:jump}, we address this question by constructing a Dirichlet form $(\cE_\eta,\cF_\eta)$ on $L^2(\eta,\mathfrak{m}_\eta)$, where $\mathfrak{m}_\eta$ is the Minkowski content of the curve $\eta$ \eqref{E:Minkowksi_SLE}.
Taking the Friedrichs extension of this quadratic form allows us to define a self-adjoint operator $\bD_\eta$ on $L^2(\eta,\mathfrak{m}_\eta)$ corresponding to the jump operator across the curve. This operator allows us to define a family of ``Sobolev spaces" $H^s(\eta,\mathfrak{m}_\eta)=(\mathrm{Id}+\bD_\eta)^{-s}L^2(\eta,\mathfrak{m}_\eta)$, as well as a Gaussian measure $\P_\eta$ associated to the inner-product induced by $\bD_\eta$. Samples of $\P_\eta$ can be viewed intrinsically as elements of $H^{s}(\eta,\mathfrak{m}_\eta)$ for $s<-1/2$, or as random harmonic functions in $\C\setminus\eta$. Because $\ker(\bD_\eta)=\R$, the field is defined only up to constant. It will be convenient for us to fix the constant by imposing that the harmonic extension at $\infty$ vanishes, i.e. $\bP_\eta\varphi(\infty)=0$. For reasons that will become clear in the sequel, it is natural to ``sample" the zero mode from the measure $e^{-2Qa}\d a$, i.e. we consider the tensor product $e^{-2Qa}\d a\d\P_\eta$ whose samples are fields $\varphi+a$ with $\bP_\eta\varphi(\infty)=0$.

Now, let $\Phi$ be a conformal map defined in a neighbourhood of $\eta$, and let 
\begin{equation}\label{E:def_P_eta_Phi}
    \P_{\eta,\Phi} \text{ be the law of } \varphi\cdot\Phi=\varphi\circ\Phi+Q\log|\Phi'| \text{ when } \varphi \text{ is sampled from } \P_{\Phi(\eta)}.
\end{equation}
%$\P_{\eta,\Phi}$ be the law of $\varphi\cdot\Phi=\varphi\circ\Phi+Q\log|\Phi'|$ when $\varphi$ is sampled from $\P_{\Phi(\eta)}$.
Our main result is a formula for the infinitesimal variation of the family of measures $\Phi\mapsto\P_{\eta,\Phi}$. In words, Theorem~\ref{T:GFF_SLE} below shows that this differential is the sum of the stress--energy tensor $\bT_\eta\varphi$ (see Section \ref{SS:stress}) and the Schwarzian derivative $\cS f^{-1}$ \eqref{E:Schwarzian} of the inverse of the uniformising map $f: \D \to \rmint(\eta)$ with $f(0)=0$, $f'(0)>0$.

\begin{theorem}\label{T:GFF_SLE}
For $\nu$-almost every $\eta$, the following holds. 
Let $\mu$ be a Beltrami differential compactly supported in $\mathrm{int}(\eta)$, and for $t \in \C$ small, let $\Phi_t$ be the unique solution to the Beltrami equation with coefficient $t\mu$ with $\Phi_t(0)=0$, $\Phi_t(\infty)=\infty$ and $\Phi_t'(\infty)=1$ (see Section~\ref{SS:qc_maps} for some background on quasiconformal maps). Then, for all $s<-1/2$ and for all $F\in\cC^0(H^{s}(\eta,\mathfrak{m}_\eta))$, we have
\begin{align*}
&\int_\R\E_{\eta,\Phi_t}[F(\varphi+a)]e^{-2Qa}\d a-\int_\R\E_\eta[F(\varphi+a)]e^{-2Qa}\d a\\
&\qquad=-2\Re\Big(t\int_\R\E_\eta\Big[\Big(\frac{1}{12}\cS f^{-1}+\bT_\eta\varphi,\mu\Big)F(\varphi+a)\Big]e^{-2Qa}\d a\Big)+o(t).
\end{align*}
If $\mu$ is compactly supported in $\mathrm{ext}(\eta)$, normalise $\Phi_t$ by $\Phi_t(0)=0, \Phi_t'(0)=1$ and $\Phi_t(\infty) = \infty$ instead. The above result then also holds in this case when replacing $\frac{1}{12}\cS f^{-1}+\bT_\eta\varphi$ by $\frac{1}{12}\cS g^{-1}+\bT_\eta\varphi+2Q\bJ_\eta\varphi$ (with $\bJ_\eta\varphi(z)=\frac{1}{z}\del_z\bP_\eta\varphi(z)$ \eqref{E:def_stress}).
\end{theorem}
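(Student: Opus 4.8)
The plan is to exhibit $\P_{\eta,\Phi_t}$ as an explicit Gaussian measure, differentiate it at $t=0$, and match the resulting contributions with the pieces in the statement. Fix $\eta$ in the full-$\nu$-measure set on which the Minkowski content $\mathfrak{m}_\eta$, the jump operator $\bD_\eta$, the Gaussian measure $\P_\eta$ and the stress--energy tensor $\bT_\eta$ of Sections~\ref{SS:jump}--\ref{SS:stress} are all well-behaved; the statement is then deterministic in $\eta$. Since $\mu$ is compactly supported in $\rmint(\eta)$, the Beltrami solution $\Phi_t$ is conformal on a neighbourhood of $\eta$ and on $\rmext(\eta)$, and fixes $0$ and $\infty$. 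Identifying $\P_\eta$ with the trace on $\eta$ of the sphere GFF $X$ in the metric $(1\vee|z|)^{-4}|\d z|^2$ (modulo the additive constant fixed by $\bP_\eta\varphi(\infty)=0$), so that its covariance is $G(\xi,\xi')$ for $\xi,\xi'\in\eta$ with $G$ the sphere Green function of Section~\ref{SS:background}, one sees that $\P_{\eta,\Phi_t}$ is the Gaussian measure with mean $Q\log|\Phi_t'|$ restricted to $\eta$ and covariance $(\xi,\xi')\mapsto G(\Phi_t(\xi),\Phi_t(\xi'))$ on $\eta\times\eta$; the independent zero mode with law $e^{-2Qa}\,\d a$ supplies the $\int_\R$ in the statement, and $\P_{\eta,\Phi_0}=\P_\eta$.

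Let $\dot\Phi:=\tfrac{\d}{\d t}|_{t=0}\Phi_t$, the vector field holomorphic off $\supp\mu$ with $\delbar\dot\Phi=\mu$ and normalisation inherited from $\Phi_t(0)=0,\Phi_t(\infty)=\infty,\Phi_t'(\infty)=1$. Differentiating, the mean of $\P_{\eta,\Phi_t}$ perturbs at first order by $Q\Re(\dot\Phi')|_\eta$ and its covariance by the smooth symmetric kernel $\dot\delta(\xi,\xi')=2\Re[\del_z G(z,\xi')|_{z=\xi}\,\dot\Phi(\xi)]+2\Re[\del_{z'}G(\xi,z')|_{z'=\xi'}\,\dot\Phi(\xi')]$. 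By the Feldman--H\'ajek dichotomy, $\P_{\eta,\Phi_t}$ and $\P_\eta$ are equivalent for small $t$ — one uses here that $\dot\delta$, and its $t$-dependent analogue, is Hilbert--Schmidt relative to $\bD_\eta$, which follows from spectral estimates on the fractal operator $\bD_\eta$ together with $\dist(\supp\mu,\eta)>0$ — with a Radon--Nikodym density of the usual Gaussian form, admitting an expansion $R_t=1+t\,R_1[\mu]+o(t)$ in $L^2(\P_\eta)$; here $R_1[\mu]$ is the sum of a linear-in-$\varphi$ term $\langle\bD_\eta(Q\Re(\dot\Phi')),\varphi\rangle$ coming from the mean shift and a Wick-quadratic term $\tfrac{1}{2}\langle\bD_\eta\dot\delta\bD_\eta,\normord{\varphi\otimes\varphi}\rangle$ coming from the covariance shift, both of vanishing $\P_\eta$-mean (the $\log$-determinant contributes the constant $-\tfrac{1}{2}\tr(\bD_\eta\dot\delta)$, reabsorbed into the Wick ordering). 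Approximating a general $F\in\cC^0(H^s(\eta,\mathfrak{m}_\eta))$ by bounded cylinder functionals — for which the variation reduces to a finite-dimensional Gaussian computation — and using $\P_{\eta,\Phi_t}\to\P_\eta$ in $H^s$ with $R_t$ uniformly bounded, we get $\E_{\eta,\Phi_t}[F(\varphi+a)]-\E_\eta[F(\varphi+a)]=t\,\E_\eta[R_1[\mu]\,F(\varphi+a)]+o(t)$. It then remains to check that $R_1[\mu]=-2\Re(\tfrac{1}{12}\cS f^{-1}+\bT_\eta\varphi,\mu)$: the linear term matches the $-Q\del^2\bP_\eta\varphi$ component of $\bT_\eta\varphi$ (from Section~\ref{SS:stress}) and the Wick-quadratic term matches its $\normord{(\del\bP_\eta\varphi)^2}$-type component, while the Schwarzian term encodes the relation $\E_\eta[\bT_\eta\varphi]=-\tfrac{1}{12}\cS f^{-1}$, which reduces — via $\bD_\eta G(\cdot,\xi')|_\eta=\delta_{\xi'}$ and the Markov decomposition $G=G^{\rmint(\eta)}_{\mathrm{Dir}}+H_\eta$ of the covariance on $\rmint(\eta)^2$ — to the classical fact that the regularised diagonal of $\del\del$ applied to the Dirichlet Green function of $\rmint(\eta)$ equals $-\tfrac{1}{12}\cS f^{-1}$. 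For $\mu$ compactly supported in $\rmext(\eta)$, repeating the argument with the normalisation $\Phi_t(0)=0,\Phi_t'(0)=1,\Phi_t(\infty)=\infty$ replaces $\cS f^{-1}$ by $\cS g^{-1}$, and the $\log|\cdot|_+$ terms of $G$ (equivalently, the curvature of the metric concentrated at $0$ and $\infty$) together with this normalisation produce the extra term $2Q\bJ_\eta\varphi$.

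The conceptual content is the matching of the previous paragraph, but the genuine obstacle is making the Gaussian analysis rigorous for a fractal SLE curve: one must establish $\P_{\eta,\Phi_t}\ll\P_\eta$ and the first-order expansion of the density in $L^2(\P_\eta)$, uniformly in $t$ and over bounded subsets of $H^s$. This rests on (i) quantitative control of $\Phi_t$ and its derivatives in a fixed neighbourhood of $\eta$ — Lipschitz-in-$t$ bounds, available because $\mu$ is smooth and $\dist(\supp\mu,\eta)>0$ — and (ii) spectral information on $\bD_\eta$ over $L^2(\eta,\mathfrak{m}_\eta)$, notably Weyl-type eigenvalue asymptotics ensuring the finiteness of traces such as $\tr(\bD_\eta\dot\delta)$ and Hilbert--Schmidt bounds for the covariance perturbation; this is exactly where the Dirichlet-form and Minkowski-content construction of Sections~\ref{SS:jump}--\ref{SS:stress} and the intrinsic definition of $\bT_\eta$ are needed. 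A secondary, error-prone point is the bookkeeping of the $\log|\cdot|_+$ contributions, of the zero-mode weight $e^{-2Qa}\,\d a$ against the constant part of $Q\log|\Phi_t'|$, and of the two normalisations of $\Phi_t$, which together are what distinguish $\tfrac{1}{12}\cS f^{-1}$ in the interior case from $\tfrac{1}{12}\cS g^{-1}+2Q\bJ_\eta\varphi$ in the exterior one.
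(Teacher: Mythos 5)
Your strategy --- exhibit $\P_{\eta,\Phi_t}$ as a Gaussian measure with perturbed mean and covariance, expand the Radon--Nikodym density to first order in $t$, and read off the stress--energy tensor, with the Schwarzian emerging from the centring identity $\E_\eta[\bT_\eta\varphi]=-\tfrac{1}{12}\cS f^{-1}$ --- is a genuinely different route from the paper's, and the centring observation is an attractive way to see why the Schwarzian must appear with that coefficient. But there is a real gap at the step you yourself flag as ``the genuine obstacle'': you need $\P_{\eta,\Phi_t}\ll\P_\eta$ on the \emph{fractal} curve together with an $L^2(\P_\eta)$ expansion of the density, and you reduce this to Hilbert--Schmidt bounds and ``Weyl-type eigenvalue asymptotics'' for the jump operator $\bD_\eta$ on $L^2(\eta,\mathfrak{m}_\eta)$. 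No such spectral information is established in Section~\ref{SS:jump} (which only provides the Dirichlet form, the Friedrichs extension and the scale $H^s(\eta,\mathfrak{m}_\eta)$), and proving Weyl asymptotics for $\bD_\eta$ on an SLE curve would be a substantial theorem in its own right. A second, smaller issue is that the matching of the Cameron--Martin linear term and the Wick-quadratic covariance term with the two pieces of $(\bT_\eta\varphi,\mu)$ is asserted rather than computed; this matching is precisely the computational content of the result (on the circle it occupies the chain of Lemmas~\ref{L:var_0}--\ref{L:cross}), and the exterior-case bookkeeping producing $2Q\bJ_\eta\varphi$ is likewise left as an assertion.

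The paper avoids the fractal spectral theory entirely. It first proves the formula for \emph{analytic} curves, where the explicit density of \cite[Lemma 5.3]{GKRV21_Segal} (a Fredholm determinant of $\bD_{\Phi(\eta)}\bD_\eta^{-1}$ times the exponential of the Dirichlet-energy difference) is available; the energy variation is computed via the Liouville-action decomposition \eqref{E:VW20} and Proposition~\ref{P:var_S}, and the determinant variation via Wang's identification of $\log\det\bD_\eta$ with the universal Liouville action together with the Takhtajan--Teo variational formula (Lemma~\ref{L:D_partition_f}) --- which is your Green-function-diagonal identity in disguise. It then approximates the SLE curve by the analytic curves $\eta_\eps=g(e^\eps\S^1)$ and passes to the limit in the \emph{integrated} identity (Lemma~\ref{L:limit_analytic}), using that $\mu$ is supported at positive distance from $\eta$, so the first-order term only involves the harmonic extension away from the curve. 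Note in particular that the theorem never requires absolute continuity of $\P_{\eta,\Phi_t}$ with respect to $\P_\eta$ for the fractal curve: your proposal sets out to prove strictly more than is needed. To salvage your approach, run the Gaussian computation on the analytic approximations and then borrow the paper's limiting argument.
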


	\subsection{The boundary Neumann GFF}\label{SS:intro_neumann}

Axiomatic characterisations of the law of the GFF has been at the heart of some recent research \cite{berestycki2020characterisation, 10.1214/20-EJP566, aru2022characterisation, aru2024martingale}.
In this section, we describe a novel characterisation of the law of the GFF on $\S^1$ based on the description of the infinitesimal perturbation of the law by the action of $\mathrm{Diff}^\omega(\S^1)$. Contrary to the aforementioned literature, we are able to treat a GFF with Neumann boundary conditions solving \cite[Open Problem 6.4]{berestycki2020characterisation} in the case of the unit circle.

The trace on the unit circle of a Neumann GFF in $\D$ (see Section \ref{SS:background}) is the centred Gaussian field with covariance $(z,\zeta)\mapsto-2\log|z-\zeta|$.
We will denote its law by $\dot\P$, and its samples by $\phi$. We normalise the field to have zero average on the unit circle, so that $\dot\P$ is a probability measure on $\dot{H}^{s}(\S^1)=\{\phi\in H^{s}(\S^1)|\,\int_0^{2\pi}\phi(e^{i\theta})\d\theta=0\}$, for all $s<0$. Writing an expansion in Fourier modes $\phi(z)=2\Re(\sum_{m=1}^\infty\phi_mz^m)$, the coordinates $(\phi_m)_{m\geq1}$ are independent complex Gaussians $\cN_\C(0,\frac{1}{m})$ under $\dot\P$. We may choose the zero mode $c$ from the measure $e^{-Qc}\d c$, and we will denote the sample $\phi+c$. %Our main result is a characterisation of the measure $\dot\P$ from an action of $\mathrm{Diff}^\omega(\S^1)$ on $L^2(\dot\P)$.

Recall from \eqref{E:doth} that we consider the action of $\mathrm{Diff}^\omega(\S^1)$ on the space of distributions on $\S^1$ by 
\[\phi\cdot h:=\phi\circ h+Q\log(zh'/h),\qquad\forall h\in\mathrm{Diff}^\omega(\S^1),\,\phi\in\cC^\infty(\S^1)'.\]
Taking Lie derivatives along the infinitesimal generators of this action, we get a representation $(\cD_n)_{n\in\Z}$ of the Witt algebra, acting as (unbounded) operators on $L^2(e^{-Qc}\d c\otimes\dot\P)$. See Section \ref{SS:witt} for the details of this construction, and Lemma \ref{lem:witt} for the commutation relations. By ``Laplace transforming" in the variable $c$, we obtain for each $\alpha\in\C$ a Witt representation $(\cD_{n,\alpha})_{n\in\Z}$ on $L^2(\dot\P)$. In Lemma \ref{L:adjoint_D_alpha}, we compute the adjoints of these operators on $L^2(\dot\P)$. The operators $\cD_{n,\alpha}$ and their adjoints are first order differential operators with polynomial coefficients in $\phi_m,\bar\phi_m$. Theorem \ref{T:Neumann} below shows that these relations characterise $\dot\P$ in the strongest possible sense. In order to state the result, we introduce the \emph{Kac tables} $kac^+,kac^-\subset\C$: 
\begin{equation}
    \label{E:kac}
kac^\pm:=\left\lbrace(1\pm r)\frac\gamma2+(1\pm s)\frac2\gamma:\,r,s\in\N^*\right\rbrace.
\end{equation}

\begin{theorem}\label{T:Neumann}
Let $\Q$ be a Borel probability measure on $\C^{\N^*}$ (endowed with the cylinder topology). Let $\alpha\in\C\setminus(kac^+\cup kac^-)$, and suppose that the adjoint relations of Lemma~\ref{L:adjoint_D_alpha} hold on $L^2(\Q)$ for this $\alpha$.
Then, for $\Q$-a.e. $(\phi_m)_{m\geq1}$, the series $z\mapsto2\Re(\sum_{m=1}^\infty\phi_mz^m)$ converges in $\dot H^{s}(\S^1)$ for all $s<0$, and $\Q=\dot\P$ as Borel probability measures on $\dot H^{s}(\S^1)$.
\end{theorem}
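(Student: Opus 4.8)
The plan is to characterise $\Q$ via the moments/Fourier-mode structure that the adjoint relations of Lemma \ref{L:adjoint_D_alpha} force, exactly as one proves a Gaussian is determined by an integration-by-parts identity (a finite-dimensional analogue: if a probability measure on $\R^n$ satisfies $\E[\partial_i F] = \E[x_i F/\sigma_i^2]$ for all test $F$, it must be the corresponding Gaussian). First I would isolate, among the operators $\cD_{n,\alpha}$ and their adjoints, the ``annihilation/creation'' combinations: the generator $\cD_0$ should act diagonally on Fourier modes (a grading operator), while $\cD_{-1}$ and $\cD_1$ (and their conjugates) shift the Fourier index by one and contain both a multiplication part (in $\phi_m$) and a differentiation part (in $\partial_{\phi_m}$). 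Taking the adjoint relation for, say, $\mu$ supported at a single Fourier mode, one extracts for each $m\ge 1$ a first-order differential operator $A_m$ on $L^2(\Q)$ of the schematic form $A_m = \partial_{\phi_m} - m\bar\phi_m + (\text{lower-order shift terms involving }\alpha)$, together with the statement $A_m^* = -\bar A_{\bar m} + (\dots)$, or more usefully that $\Q$ is annihilated by the formal adjoint applied to $1$, giving $\E_\Q[\partial_{\phi_m} F] = \E_\Q[(m\bar\phi_m + \text{corrections})F]$ for all polynomial $F$. The role of $\alpha\notin kac^+\cup kac^-$ is precisely to guarantee that the linear system relating the correction terms is invertible, so that one can solve for the ``pure'' relation $\E_\Q[\partial_{\phi_m}F] = m\,\E_\Q[\bar\phi_m F]$; degenerate weights $\lambda_{r,s}$ (equivalently $\alpha\in kac^\pm$) are exactly where submodules appear and this inversion fails.

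Next I would run a bootstrap on polynomial moments. From $\E_\Q[\partial_{\phi_m} F] = m\,\E_\Q[\bar\phi_m F]$ applied to monomials in $\phi_j,\bar\phi_j$, one derives the full Wick/recursion relations: $\E_\Q[\bar\phi_m \cdot \text{(monomial)}]$ is expressed via lower-degree moments, the variances come out as $\E_\Q[|\phi_m|^2] = 1/m$, the cross-covariances vanish, and $\E_\Q[\phi_m\phi_j]=0$ (holomorphic anomaly-free), so all mixed moments agree with those of the complex Gaussian family $(\phi_m)\sim\bigotimes_m\cN_\C(0,1/m)$. To upgrade ``equality of all polynomial moments'' to ``equality of measures'', I would first check that the moment growth is Carleman-admissible — the Gaussian moments grow slowly enough that the moment problem is determinate, coordinate by coordinate, and then on finite-dimensional marginals (finitely many modes) by a multivariate determinacy criterion (e.g. the marginals have sub-Gaussian-type tails forced by the variance bounds, so Cramér's condition holds). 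This identifies every finite-dimensional cylinder marginal of $\Q$ with that of $\dot\P$, hence $\Q=\dot\P$ as Borel measures on $\C^{\N^*}$ with the cylinder topology.

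Finally, the a.s. convergence statement: since under $\Q=\dot\P$ the modes are independent $\cN_\C(0,1/m)$, the series $\sum_m \phi_m z^m$ has $\E\sum_m \|\phi_m z^m\|^2_{H^s} \asymp \sum_m m^{2s}\cdot m^{-1} < \infty$ for $s<0$, so by the three-series theorem / $L^2$-martingale convergence in $\dot H^s(\S^1)$ the partial sums converge a.s. in $\dot H^s$ for every $s<0$, and the pushforward under $(\phi_m)\mapsto 2\Re\sum\phi_m z^m$ is exactly $\dot\P$ on $\dot H^s(\S^1)$.

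The main obstacle I expect is the algebraic extraction step: writing the adjoint relations of Lemma \ref{L:adjoint_D_alpha} in a form where one genuinely controls \emph{all} the lower-order correction terms mode-by-mode, and proving the invertibility of the resulting (infinite, but effectively triangular in the grading) linear system precisely when $\alpha\notin kac^+\cup kac^-$ — i.e. pinning down that the obstruction set is exactly the union of the two Kac tables and no larger. A secondary subtlety is that $\Q$ is only assumed to be a measure on the sequence space $\C^{\N^*}$ with no a priori integrability, so one must first bootstrap \emph{finiteness} of polynomial moments (starting from the relation applied to $F\equiv 1$ and $F=\phi_m$, then inducting) before the moment recursion and determinacy arguments can even be stated.
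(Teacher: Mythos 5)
Your overall architecture (adjoint relations $\Rightarrow$ all polynomial moments $\Rightarrow$ determinacy $\Rightarrow$ equality of measures $\Rightarrow$ a.s.\ convergence of the Fourier series) is the same as the paper's, and the last two steps are handled in the paper by essentially your argument (density of polynomials in $L^1(\P_{\S^1})$, i.e.\ determinacy of the Gaussian moment problem, followed by the $L^2$ computation for convergence in $\dot H^s$). The genuine gap is precisely the step you flag as the main obstacle: extracting the ``pure'' relations $\E_\Q[\del_{\phi_m}F]=m\,\E_\Q[\bar\phi_m F]$ from the hypotheses. The operators in Lemma \ref{L:adjoint_D_alpha} are not of the schematic form $\del_{\phi_m}-m\bar\phi_m+\cdots$ you write: the $\cD_{n,\alpha}$ (equivalently the $\bL^{\mathrm{FF}}_{n,\alpha}$) are \emph{quadratic} in the Heisenberg generators $\bA_m$ (Feigin--Fuchs/Sugawara construction), containing terms such as $\phi_{m-n}\del_{\phi_m}$. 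Knowing the adjoints of these quadratic combinations on $L^2(\Q)$ does not determine the adjoints of the individual $\bA_m$ by inverting any linear system, triangular or otherwise; the adjoint of a sum of products $\bA_{n-m}\bA_m$ gives no access to the adjoint of a single factor. Relatedly, the role of $\alpha\notin kac^+\cup kac^-$ is not the invertibility of a system of correction terms: it is that the descendants $\Psi_{\alpha,\bk}=\bL^{\mathrm{FF}}_{-\bk,\alpha}\ind$ form a basis of $\C[(\phi_m)_{m\ge1}]$ (non-vanishing of the Kac determinant), and likewise for the conjugate module at charge $2Q-\bar\alpha$.

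The paper closes this gap with a representation-theoretic device absent from your plan: $\ind$ is a highest-weight vector, so for any partitions $\bk,\bk'$ one has $\bL^{\mathrm{FF}}_{\bk',\alpha}\bL^{\mathrm{FF}}_{-\bk,\alpha}\ind=\mathbf{B}_\alpha(\bk,\bk')\ind$ for a universal constant $\mathbf{B}_\alpha(\bk,\bk')$ (the Shapovalov form). The adjoint relations together with $\Q(\C^{\N^*})=1$ then force $\int\Psi_{\alpha,\bk}\overline{\Psi}_{2Q-\bar\alpha,\bk'}\,\d\Q=\mathbf{B}_\alpha(\bk,\bk')$, the same value as under $\P_{\S^1}$; off the Kac table these products span all of $\C[(\phi_m,\bar\phi_m)_{m\ge1}]$, so every polynomial moment is pinned down without ever recovering the Heisenberg adjoints. (The paper carries this out for the normalised field $\P_{\S^1}$ in Theorem \ref{T:FF} and then transfers to $\dot\P$ by the unitary rescaling $U$ and the Laplace transform in the zero mode.) To repair your proof you would either need an independent argument yielding the Heisenberg adjoint relations — which I do not believe follows from the hypotheses — or you should replace the extraction-plus-Wick-recursion step by the Shapovalov-form computation. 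Your secondary concern about a priori integrability is legitimate but is absorbed into the hypothesis that the adjoint relations hold on $L^2(\Q)$, which presupposes that polynomials lie in $L^2(\Q)$.
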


We will prove this result in a somewhat indirect way by first proving an analogous statement for Feigin--Fuchs modules, see Section \ref{SS:FF_modules}. The two setups are related, but the treatment of Feigin--Fuchs modules is easier.

%\begin{theorem}\label{T:ghost}
%For all Beltrami differentials $\mu$, $\sL_\mu$ and $\sR_\mu$ are densely defined, closable operators on $L^2(\tilde{\nu})$ satisfying
%\begin{equation}
%\sL_\mu^*=-\bar\sL_\mu-\frac{c_\rL}{12}\overline{(\cS g,\mu)}+\overline{\beta_0^\mu}\qquad\text{and}\qquad\sR_\mu^*=-\bar\sR_\mu+\frac{c_\rL}{12}\overline{(\cS f,\mu)}-\overline{\beta_0^\mu}.
%\end{equation}
%\red{\begin{equation}
%\sL_\mu^*=-\bar\sL_\mu+\frac{c_\rL}{12}\overline{\tilde{\vartheta}(\mu)}+\overline{\tilde{\varpi}(\mu)}\qquad\text{and}\qquad\sR_\mu^*=-\bar\sR_\mu-\frac{c_\rL}{12}\overline{\vartheta(\mu)}-\overline{\varpi(\mu)}.
%\end{equation}
%}
%\end{theorem}

    \subsection{Coupling SLE with the GFF}\label{ss:zipper}
The three families of results described in Sections~\ref{SS:intro_homeo}, \ref{SS:intro_gff} and \ref{SS:intro_neumann} come together to introduce a new approach to Sheffield's quantum zipper \cite{sheffield2016}. Intuitively, this result states that when we cut the GFF on $\hat{\C}$ with an independent SLE and uniformise, we obtain two independent GFF in the unit disc.
Recall from \eqref{E:dot} the notation $\varphi \cdot \Phi = \varphi \circ \Phi + Q\log |\Phi'|$.

Let $\eta\sim\nu^\#$ with welding maps $f,g$, and conditionally on $\eta$, sample $\varphi\sim\P_\eta$. We can view $\varphi$ as a random harmonic function in $\hat\C\setminus\eta$, and $\varphi\cdot f$, $\varphi\cdot g$ are random harmonic functions in $\D$ and $\D^*$ respectively. The next theorem shows that they admit boundary values in $H^{-1}(\S^1)$, and characterises their distribution.

\begin{comment}
\begin{theorem}
Let $\kappa\in(0,4]$. Sample $(a,b,\varphi,\eta)$ from $e^{-2Qa}\d a \otimes \d b\otimes  \d\P_\eta(\varphi) e^{2\bK(\eta)}\nu^\#(\d \eta)$, and denote the welding maps of $\eta$ by $f$ and $g$. Define the fields
\[\phi+c:=\varphi\cdot f+a+b\qquad\text{and}\qquad\phi^*+c^*:=\varphi\cdot g+a-b.\]
Then, there exists a constant $C_\diamond>0$ such that $(\phi+c,\phi^*+c^*)\laweq(\dot\P\otimes C_\diamond e^{-Qc}\d c)^{\otimes2}$.
\end{theorem}
\end{comment}

\begin{theorem}\label{T:zipper}
    There exists a universal constant $C_\diamond>0$ such that for all $F\in\cC^0(H^{-1}(\S^1)^2)$ bounded,
    \begin{align*}
    \int_{\cJ_{0,\infty}}e^{2\bK(\eta)} \d\nu^\#(\eta)
    \int_{H^{-1}(\eta)} \d\P_\eta(\varphi) \int_{\R^2} e^{-2Qa} \d a\, \d b \int_{0}^{2\pi}\frac{\d\alpha}{2\pi} F(\varphi\cdot f+a+b,\varphi\cdot (g \circ e^{i\alpha})+a-b)\\
    \qquad=C_\diamond\int_{H^{-1}(\S^1)^2} e^{-Qc}\d c\,\d\dot\P(\phi)e^{-Qc^*}\d c^*\d\dot\P(\phi^*) F(\phi+c,\phi^*+c^*).
    \end{align*}
In the left-hand-side, $f$ and $g$ are the normalised welding maps of $\mathrm{int}(\eta)$ and $\mathrm{ext}(\eta)$.
\end{theorem}

We emphasise that adding and subtracting the zero mode $b$ completely decouples the two fields $\phi+c$ and $\phi^*+c^*$ (see the elementary Lemma \ref{L:zero_modes}). This simple trick is new and allows to remove the conditioning that the quantum lengths must match.

 Our formulation differs from Sheffield's in the sense that Theorem \ref{T:zipper} describes the laws of the \emph{boundary fields}, while Sheffield works with \emph{quantum surfaces}, where the law of the field inside the surface is prescribed. Similarly, \cite{AHS20} consider quantum discs and spheres. See Appendix \ref{Appendix:comparison} for a detailed comparison with the result of \cite{AHS20}.

Theorem \ref{T:zipper} describes the joint law of the boundary fields when we uniformise the GFF on SLE to the unit disc. Combining with important results on the quantum length recalled in the proof of Corollary \ref{C:welding} below, it is standard to go the other way around, i.e. get a conformal welding result.
As already alluded to, conformal welding of random surfaces is a subject that has been pioneered by Sheffield \cite{sheffield2016}. We refer to the book \cite{berestycki2024gaussian} for a detailed exposition of Sheffield's approach. Extensions of his result to other settings have been instrumental in a spectacular programme on the derivation of structure constants for boundary Liouville CFT, the conformal loop ensemble, and critical exponents in statistical physics models \cite{AngRemySun_FZZ,AngRemySunZhu_correls,AngCaiSunWu_1,AngCaiSunWu_2,NQSZ_backbone, AHS_SLE}.  
In parallel, there have been some attempts for a ``classical" analytic approach to the quantum zipper \cite{astala2011,Binder23_welding,KupiainenSaksman23_welding}, but a complete solution is still out of reach due to the very rough nature of the objects involved. More specifically, this analytic approach does not treat the whole range of values for $\gamma = \sqrt{\kappa}$ and does not identify the law of the welding curve. However, it can deal with different values of $\gamma$ for either sides of the curve.

To state our welding result, we need to recall that if $\phi$ is sampled according to the measure $\dot\P$, then the theory of Gaussian multiplicative chaos allows to makes sense of $\d\mathfrak{l}^\gamma_\phi(\theta) = e^{\frac{\gamma}{2}\phi(e^{i\theta})}\d\theta$ as a random Borel measure on $\S^1$. Concretely, when $\gamma <2$, it is defined as the weak limit in probability
\begin{equation}
    \label{E:GMC_S1}
\d\mathfrak{l}_{\phi}^\gamma(e^{i\theta})=\lim_{\epsilon\to0}\,\epsilon^\frac{\gamma^2}{4}e^{\frac{\gamma}{2}\bP_\D\phi(e^{-\epsilon+i\theta})}\d\theta.
\end{equation}
When $\gamma=2$, the above limit degenerates to 0 corresponding to the critical case of GMC (see \cite{Powell20_review} for a review of the critical theory). One needs an extra renormalisation in this case. For instance, one can define the critical measure as a limit of the subcritical measures by 
\[ 
\mathfrak{l}_{\phi}^{2} = \lim_{\gamma \to 2^-} \frac{1}{2(2-\gamma)} \mathfrak{l}_{\phi}^\gamma.
\]

\begin{corollary}\label{C:welding}
Let $\gamma \in (0,2]$ and $\kappa=\gamma^2$.
Let $(\phi,\phi^*)\sim\dot\P^{\otimes2}$ and $\alpha$ an independent uniform random variable in $[0,2\pi)$. Let $h$ be the unique element of $\mathrm{Homeo}(\S^1)$ sending $1$ to $e^{-i\alpha}$ and such that the following two probability measures on $\S^1$ agree
\[\frac{1}{\mathfrak{l}^\gamma_\phi(\S^1)}h^*\mathfrak{l}^\gamma_\phi=\frac{1}{\mathfrak{l}^\gamma_{\phi^*}(\S^1)}\mathfrak{l}^\gamma_{\phi^*}.\]
Then, the law of $h$ is $\tilde\nu_\kappa$, and there exists almost surely a unique (up to Möbius transformations of $\hat\C$) Jordan curve $\eta$, such that $h$ is the welding homeomorphism of $\eta$. 

%There exists a unique tuple $(\eta,b,\varphi,a)$ as in Theorem \ref{T:GFF_SLE} such that
%\[c+\phi=\varphi\cdot f+a+b,\qquad\text{and}\qquad c^*+\phi^*=\varphi\cdot g+a-b.\]
%
%as Borel probability measures on $\S^1$, where $h=g^{-1}\circ f|_{\S^1}\in\mathrm{Homeo}_1(\S^1)$ is the welding homeomorphism of $\eta$.
\begin{comment}
the following holds.
Define the fields
\begin{equation}
    \label{E:C_welding}
\phi:=\dot\phi-\frac{2}{\gamma}\log\mathfrak{l}_{\dot\phi,\S^1}^\gamma(\S^1)\qquad\text{and}\qquad\phi^*:=\dot\phi^*-\frac{2}{\gamma}\log\mathfrak{l}_{\dot\phi^*,\S^1}^\gamma(\S^1),
\end{equation}
and $h,h^*\in\mathrm{Homeo}_1(\S^1)$ such that
\[h(e^{i\theta})=e^{2i\int_0^\theta\d\mathfrak{l}_\phi^\gamma(e^{i\theta'})}\qquad\text{and}\qquad h^*(e^{i\theta})=e^{2i\int_0^\theta\d\mathfrak{l}_{\phi^*}^\gamma(e^{i\theta'})}.\]
Then, $h^{-1}\circ h^*$ is the welding homeomorphism of a unique (up to M\"obius transformations) Jordan curve $\eta$, i.e. we have $h^{-1}\circ h^*=g^{-1}\circ f$ for some conformal maps $f,g$ defined on $\D,\D^*$ respectively with $f(0)=0$, $f'(0)=1$ and $g(\infty)=\infty$.

Moreover, if one samples $(c,c^*) \sim e^{-Qc} \d c \otimes e^{-Qc^*}\d c^*$ independently of $(\dot\phi,\dot\phi^*)$, then
\[ 
(\dot\phi + c,\dot\phi^* +c^*) = (\varphi\cdot f + a + b, \varphi\cdot g + a-b)
\]
and the law of $(a,b,\varphi,\eta)$ is $e^{-2Qa}\d a \otimes \d b \otimes \d\P_\eta(\varphi) e^{2\bK}\d \nu^\#(\eta)$.
\end{comment}
\end{corollary}

By Theorem \ref{T:zipper}, the curve $\eta$ in Corollary \ref{C:welding} follows the law $e^{2\bK} \nu_\kappa^\#$ where $\kappa = \gamma^2$.

\begin{proof}[Proof of Corollary \ref{C:welding}, assuming Theorem \ref{T:zipper}]

    Sample $(c,c^*) \sim C_\diamond e^{-Qc} \d c \otimes e^{-Qc^*}\d c^*$ independently of $(\phi,\phi^*)$ from $\dot\P^{\otimes2}$. By Theorem \ref{T:zipper}, $(\phi + c,\phi^*+c^*)$ has the same law as $(\varphi\cdot f + a + b,$ $\varphi \cdot (g\circ e^{i\alpha}) + a -b)$ where $(\varphi,a,b,\eta)$ is as in the statement of Theorem~\ref{T:zipper}.
    
    By \cite{sheffield2016}, both $\varphi\cdot f$ and $\varphi \cdot (g\circ e^{i\alpha})$ admit a $\frac{\gamma}{2}$-GMC measure $\mathfrak{l}_{\varphi\cdot f,\S^1}^\gamma$, $\mathfrak{l}_{\varphi\cdot (g\circ e^{i\alpha}),\S^1}^\gamma$ on the boundary $\S^1$, in the same sense as \eqref{E:GMC_S1}.
    Moreover, the measures $f^* \mathfrak{l}^\gamma_{\varphi\cdot f,\S^1}$ and $(g\circ e^{i\alpha})^*\mathfrak{l}_{\varphi\cdot (g\circ e^{i\alpha}),\S^1}$ are in fact equal and correspond to a notion of ``quantum length'' of $\eta$, with respect to $\varphi$. Sheffield's result concerns the subcritical case $\kappa<4$ but was extended to the critical case $\kappa=4$ in \cite{MR4291446}.
    By \cite{MR3870446} ($\kappa<4$) and \cite{PowellSepulveda24} ($\kappa \le 4$), the quantum length of $\eta$ is nothing else but a multiple of the $\frac{\gamma}{2}$-GMC measure with respect to the Minkowski content $\mathfrak{m}_\eta$ \eqref{E:Minkowksi_SLE} of $\eta$.
\begin{comment}    
    \begin{equation}
    \label{E:quantum_length}
\d\mathfrak{l}_{\varphi,\eta}^\gamma(z)=\lim_{\epsilon\to0}\,\epsilon^\frac{\gamma^2}{8}e^{\frac{\gamma}{2}\varphi_\epsilon(z)}\d\mathfrak{m}_\eta(z),
\end{equation}
where $\varphi_\epsilon$ is a regularisation of $\varphi$ at scale $\epsilon$ (e.g. a convolution by a bump function).
\end{comment}
Let us mention that the recent article \cite{PowellSepulveda24} gives an elementary treatment of these questions which does not rely on Sheffield's quantum zipper. In particular, we have $h^*\mathfrak{l}^\gamma_{\varphi\cdot f,\S^1} = \mathfrak{l}^\gamma_{\varphi\cdot (g\circ e^{i\alpha}),\S^1}$    
    as Borel measures on $\S^1$, with $h=(g\circ e^{i\alpha})^{-1}\circ f|_{\S^1}\in\mathrm{Homeo}(\S^1)$ the welding homeomorphism of $\eta$.
    
    Going back to $(\phi,\phi^*)$, we have $h^*\mathfrak{l}^\gamma_{\phi,\S^1}=e^{\gamma(b +\frac{c^*-c}{2})} \mathfrak{l}_{\phi^*,\S^1}$. Dividing by the total mass removes this multiplicative constant. Moreover, by independence of $\alpha$ and $(f,g)$, the image of $1$ by $h$ is still uniformly distributed on $\S^1$. Thus, $h$ has the same law as the one prescribed in the statement.
    This shows the existence part of the statement.
    Uniqueness is known by conformal removability of SLE. When $\kappa < 4$ this follows from the fact that the two components of $\hat\C\setminus \eta$ are almost surely Hölder domains \cite{RohdeSchramm05}, a condition that is known to imply conformal removability \cite{JonesSmirnov00}. The delicate case $\kappa=4$ was only treated recently in \cite{kavvadias2022conformalremovabilitysle4}.
\end{proof}

We conclude with a last consequence of Theorem \ref{T:zipper}, which is equivalent to the main result of \cite{AHS20} (see Appendix \ref{Appendix:comparison} for the comparison). The main difference with Theorem \ref{T:zipper} is that we do not add the zero-mode $b$ and keep track of the quantum length.
To state this result, we need to introduce a new measure, denoted below by $\dot\E_\ell$.
It is known that $\dot\E[\kl_\phi^\gamma(\S^1)^\gamma)^p]<\infty$ for $p\in(-\infty,\frac{4}{\gamma^2})$, and an analytic expression for these moments is known \cite[Theorem 1.1]{Remy20}. We can disintegrate the law $e^{-Qc}\d c\otimes\dot\P$ with respect to the value of $\kl_\phi^\gamma$, namely using the change of variable $c=\frac{2}{\gamma}\log\ell$, we get for all $F\in\cC^0(H^{-1}(\S^1))$
\begin{equation}\label{eq:condition_ell}
\int_\R\dot\E[F(\phi+c)]\d\dot\P(\phi)e^{-Qc}\d c=\int_{\R_+}\dot\E\left[\kl_\phi^\gamma(\S^1)^\frac{2Q}{\gamma}F\left(\phi-\frac{2}{\gamma}\log\frac{\kl_\phi^\gamma(\S^1)}\ell\right)\right]\ell^{-\frac{2Q}{\gamma}}\frac{\d\ell}{\ell}=:\int_{\R_+}\dot\E_\ell[F]\ell^{-\frac{2Q}{\gamma}}\frac{\d\ell}{\ell},
\end{equation}
where the last equality defines the expectation $\dot\E_\ell$. Note that the underlying measure is infinite since $\frac{2Q}{\gamma}=1+\frac{4}{\gamma^2}>\frac{4}{\gamma^2}$.

\begin{corollary}\label{cor:conditioned}
    With $C_\diamond>0$ being the constant from Theorem \ref{T:zipper}, for all $F\in\cC^0(H^{-1}(\S^1)^2)$,
    \[\int F(\varphi\cdot f+a,\varphi\cdot (g\circ e^{i\alpha})+a)e^{-2Qa}\d a\,\d\P_\eta(\varphi)e^{2\bK(\eta)}\d\nu^\#(\eta)\frac{\d\alpha}{2\pi}=\frac{2C_\diamond}{\gamma}\int_{\R_+}\dot\E_\ell^{\otimes2}[F(\phi,\phi^*)]\ell^{-\frac{4Q}{\gamma}}\frac{\d\ell}{\ell},\]
    where
    \[\dot\E_\ell^{\otimes2}[F(\phi,\phi^*)]:=\int(\kl_\phi^\gamma(\S^1)\kl_{\phi^*}^\gamma(\S^1))^\frac{2Q}{\gamma}F\left(\phi-\frac{2}{\gamma}\log\frac{\kl_\phi^\gamma(\S^1)}{\ell},\phi^*-\frac{2}{\gamma}\log\frac{\kl_{\phi^*}^\gamma(\S^1)}{\ell}\right)\d\dot\P(\phi)\d\dot\P(\phi^*).\]
\end{corollary}

\begin{proof}[Proof of Corollary \ref{cor:conditioned}, assuming Theorem \ref{T:zipper}]
    For each $\eps>0$, insert the function $\eps^{-1}\ind_{\{|\log(\kl_\phi^\gamma/\kl_{\phi_*}^\gamma)|<\eps\}}$ in both sides of Theorem \ref{T:zipper}. Doing the same change of variable leading to \eqref{eq:condition_ell}, we see that the right-hand-side of Theorem \ref{T:zipper} converges to the right-hand-side of Corollary \ref{cor:conditioned} as $\eps\to0$. As for the left-hand-side, observe that $|\log(\kl_\phi^\gamma/\kl_{\phi^*}^\gamma)|=\gamma b$, so the left-hand-side of Theorem \ref{T:zipper} localises to $\{b=0\}$ as $\eps\to0$. 
\end{proof}

    \subsection{Notations}
    
Here is a list of notations commonly used in this paper.

\noindent
$\nu^\# = \nu_\kappa^\#$: SLE$_\kappa$ shape measure \eqref{E:shape_measure}. Samples usually denoted by $\eta$.

\noindent
$\mathfrak{m}_\eta$: $(1+\frac{\kappa}{8})$-Minkowski content on $\eta$ (natural parametrisation) \eqref{E:Minkowksi_SLE}. Sometimes abbreviated $\mathfrak{m}$.

\noindent
$\tilde{\nu}_\kappa$: Law of the welding homeomorphism of the SLE$_\kappa$ loop.

\noindent
$\P_\eta$: Law of the trace of the GFF on $\eta$. Samples usually denoted by $\varphi$. See Section \ref{SS:jump}.

\noindent
$\bD_\eta$: Jump operator \eqref{E:Deta} across $\eta$, acting on $L^2(\eta,\mathfrak{m}_\eta)$. Associated quadratic form is $(\cE_\eta,\cF_\eta)$~\eqref{E:def_Dirichlet_energy}.

\noindent
$\dot\P$: Boundary Neumann GFF. Samples usually denoted by $\phi$. See Section \ref{SS:intro_neumann}.

\noindent
$\bS_1$: Universal Liouville action \eqref{E:def_K_S1}.

\noindent
$\bK$: Logarithmic capacity, a.k.a. Velling--Kirillov potential, a.k.a. electrical thickness \eqref{E:def_K_S1}.

\noindent
$\bT_D \varphi$: Stress--energy tensor in $D$ induced by a function $\varphi$ on $\del D$ \eqref{E:def_stress}.

\noindent
$\bJ_D \varphi$: Heisenberg tensor induced by a function $\varphi$ on $\del D$ \eqref{E:def_stress}.

\noindent
$(\cD_n)$: Witt representation on $L^2(e^{-Qc}\d c\otimes\dot\P)$ \eqref{E:def_cDn}.

\noindent
$(\bL_{n,\alpha}^\mathrm{FF})$: Feigin--Fuchs representation on $L^2(\P_{\S^1})$ \eqref{eq:def_ff}.

\noindent
$\sL_\mu,\sR_\mu$: Lie derivatives induced by left/right composition along a Beltrami differential $\mu$ \eqref{E:def_sL_sR}.

\noindent
$\D,\D^*$: The unit disc and its complement $\hat\C\setminus\bar\D$.

\noindent
$\mathrm{Diff}^\omega(\S^1)$: Analytic diffeomorphisms of $\S^1$.

\noindent
$\iota:$ The inversion map $\hat\C\to\hat\C,\,z\mapsto\frac{1}{\bar z}$.

\noindent
$|\d z|^2=\frac{i}{2}\d z\wedge\d\bar z$: the Euclidean volume form.

\noindent
$\rv_n = -z^{n+1}\del_z, n \in \Z$: basis of Laurent polynomial vector fields $\C(z)\del_z$.

\smallskip

If $f:\C \to \C$ is a differentiable function, we will denote by $\partial_z f$ and $\partial_{\bar z} f$ the partial derivatives of $f$ with respect to $z$ and $\bar z$, while $\del f$, $\bar\del f$ will denote the 1-forms $\partial_z f(z) \d z$ and $\partial_{\bar z} f(z) \d \bar z$, respectively.

A Beltrami differential is a $(-1,1)$-tensor (locally of the form $\mu(z)\frac{\d\bar z}{\d z}$). A vector field is a $(-1,0)$-tensor (locally of the form $v(z)\del_z$). A quadratic differential is a $(2,0)$-tensor (locally of the form $q(z)\d z^2$). The product of a Beltrami differential (resp. vector field) with a quadratic differential is a $(1,1)$-form (resp. $(1,0)$-form) which can be integrated over a domain (resp. on a contour). The natural pairings are
\begin{equation}
    \label{E:pairings}
    (q,\mu) := \frac{1}{\pi} \int q(z) \mu(z) |\d z|^2;
    \qquad
    (q,v) := \frac{1}{2\pi i} \oint q(z) v(z) \d z,
\end{equation}
where the integral is over a common domain (resp. contour) of definition of $q$ and $\mu$ (resp. $v$). Such a domain will always be clear from the context or specified explicitly.

\subsection*{Acknowledgements} We are grateful to Xin Sun and Baojun Wu for helpful discussions on quantum surfaces and their relations to Liouville CFT and to Nathanaël Berestycki for comments on a preliminary version of this article. We thank Shuo Fan and Jinwoo Sung for communicating their results to us. G.B. is supported by ANR-21-CE40-0003 ``Confica".

\section{Setup and preliminary results}

  \subsection{Trace of the GFF on SLE}\label{SS:jump}
The goal of this subsection is to make sense of the trace of the Gaussian free field (and the Liouville field) on an SLE loop. %The final object is presented in Definition \ref{def:liouville_on_sle}.

To achieve our goal, we need to define the jump operator $\bD_\eta$ across an SLE loop $\eta$, as a densely--defined, non--negative self--adjoint operator $\bD_\eta$ on $L^2(\eta,\mathfrak{m}_\eta)$, where $\mathfrak{m}_\eta$ is the Minkowksi content of $\eta$ \eqref{E:Minkowksi_SLE}. The first step is to construct the quadratic form of $\bD_\eta$, using some (minimal) input from the theory of Dirichlet forms. Especially, we will apply the general setup of \cite[Section 6]{Fukushima10} in the special case of planar Brownian motion (see also \cite[Section 1]{GRV14_dirichlet} for a concise summary). We note that $\mathfrak{m}_\eta$ is a Revuz measure in the sense of \cite{Fukushima10}, as it is a positive Radon measure charging no set of zero logarithmic capacity (i.e. polar sets for planar Brownian motion). 

The domain of our quadratic form is defined to be
\[\cF_\eta:=\left\lbrace u\in L^2(\eta,\mathfrak{m}_\eta)|\,\exists F\in H^1(\C)\text{ s.t. }F(z)=u(z)\text{ for }\mathfrak{m}_\eta\text{-a.e. }z\right\rbrace.\]
This definition makes sense since any function in $H^1(\C)$ has a modification defined away from a set of zero logarithmic capacity (e.g. the quasicontinuous modification of \cite[Theorem~2.1.3]{Fukushima10}), and the Minkowski content $\mathfrak{m}_\eta$ does not charge such sets. By Lemma~\ref{L:local_time}, we can define a local time $I_T$ of Brownian motion on $\eta$ (measured with respect to $\mathfrak{m}_\eta$), which is the positive additive continuous functional (PCAF) associated to $\mathfrak{m}_\eta$ by the Revuz correspondence. Following the terminology of \cite{Fukushima10} (see also \cite{GRV14_dirichlet} for a review), Lemma \ref{L:local_time} shows that the local time is a PCAF in the strict sense, and that the support of this PCAF is $\eta$ (see \cite[(5.1.21)]{Fukushima10} for the definition). %Let $\tilde{\eta}:=\{z\in\C|\,\P_z(\inf\{t\geq0|\,I_t>0\}=0)=1\}$, where $\P_z$ is the law of Brownian motion started from $z$. By definition \cite[(5.1.21)]{Fukushima10}, the set $\tilde{\eta}$ is the support of the positive continuous additive functional associated to $\mathfrak{m}_\eta$ by the Revuz correspondence. Lemma \ref{L:local_time} gives $\tilde{\eta}=\eta$: in the terminology of \cite{Fukushima10}, the PCAF of $\mathfrak{m}_\eta$ is a PCAF in the strict sense. 

Every $u\in\cF_\eta$ has a modification $\tilde{u}$ such that $\tilde{u}$ is defined away from a polar set, and $\tilde{u}=u$ on a set of full $\mathfrak{m}_\eta$-measure. For such a modification, we can then define (as in \cite[(6.2.2)]{Fukushima10}) $\bP_\eta u(z):=\E_z[\tilde{u}(B_{\tau})]$ for all $z\in\C$, where $(B_t)_{t\geq0}$ is a Brownian motion started from $z$ under $\P_z$, and $\tau$ is the hitting time $\tilde{\eta}=\eta$. By virtue of \cite[Lemma 6.2.1]{Fukushima10}, $\bP_\eta u$ is independent of the choice of modification $\tilde{u}$. Thus, the harmonic extension of functions in $\cF_\eta$ is defined unambiguously. We can now define the Dirichlet form by
\begin{equation}
    \label{E:def_Dirichlet_energy}
    \cE_\eta(u,v):=\frac{1}{2\pi}\int_{\C\setminus\eta}\nabla\bP_\eta u\cdot\nabla\bP_\eta v|\d z|^2,\qquad\forall u,v\in\cF_\eta.
\end{equation}
By \cite[Theorem 6.2.1, Item (iii)]{Fukushima10}, the pair $(\cE_\eta,\cF_\eta)$ is a regular Dirichlet form on $L^2(\eta,\mathfrak{m}_\eta)$. By definition, this means that $(\cE_\eta,\cF_\eta)$ is a closed quadratic form, and $\cC^0(\eta)\cap\cF_\eta$ is dense in both $\cF_\eta$ (with the norm $\norm{\cdot}_{L^2(\eta,\mathfrak{m}_\eta)}^2+\cE_\eta(\cdot,\cdot)$) and $\cC^0(\eta)$ (with the uniform norm).

By the procedure of Friedrichs extension \cite[Theorem VIII.15]{ReedSimon1}, the quadratic form $(\cE_\eta,\cF_\eta)$ induces a (unique) densely defined, self--adjoint operator $(\bD_\eta,\mathrm{Dom}(\bD_\eta))$ on $L^2(\eta,\mathfrak{m}_\eta)$. The domain is 
\begin{equation}
    \label{E:Deta}
\mathrm{Dom}(\bD_\eta)=\left\lbrace u\in\cF_\eta|\,\exists C>0,\,\forall v\in\cF_\eta,\,|\cE_\eta(u,v)|\leq C\norm{v}_{L^2(\eta,\mathfrak{m}_\eta)}\right\rbrace,
\end{equation}
and for each $u\in\mathrm{Dom}(\bD_\eta)$, $\bD_\eta u$ is the unique element of $L^2(\eta,\mathfrak{m}_\eta)$ such that $\cE_\eta(u,v)=\langle\bD_\eta u,v\rangle_{L^2(\eta,\mathfrak{m}_\eta)}$.
\begin{definition}
The operator $(\bD_\eta,\mathrm{Dom}(\bD_\eta))$ is the \emph{jump operator across $\eta$} (with respect to the natural parametrisation). 
\end{definition}

Of course, we have $\ker(\bD_\eta)=\R$ (the constants) and $\norm{\ind}_{L^2(\eta,\mathfrak{m}_\eta)}=\sqrt{\mathfrak{m}_\eta(\eta)}$. Thus, $\mathrm{Id}+\bD_\eta$ is a positive, self--adjoint operator on $L^2(\eta,\mathfrak{m}_\eta)$. For $s\in\R$, we may then define the Hilbert space
\[H^s(\eta,\mathfrak{m}_\eta):=(\mathrm{Id}+\bD_\eta)^{-s}L^2(\eta,\mathfrak{m}_\eta).\]
Note that $\cF_\eta=H^{1/2}(\eta,\mathfrak{m}_\eta)$. We recall that for $s>0$, we have $(\mathrm{Id}+\bD_\eta)^{-s}=\frac{1}{\Gamma(s)}\int_0^\infty e^{-t\bD_\eta}e^{-t}t^s\frac{\d t}{t}$, and the semigroup $e^{-t\bD_\eta}$ is nothing but Brownian motion parametrised by its local time on $\eta$ (measured with respect to the Minkowski content), which can be understood as the natural Cauchy process on $\eta$.

Now, we want to identify $H^s(\eta,\mathfrak{m}_\eta)$ with a space of harmonic functions. To do so, we simply observe that the map 
\[(\mathrm{Id}-\Delta)^\frac{1-s}{2}\circ\bP_\eta\circ(\mathrm{Id}+\bD_\eta)^{s-\frac{1}{2}}:\,H^s(\eta,\mathfrak{m}_\eta)\to\{F\in H^{s+\frac{1}{2}}(\C)|\,F\text{ harmonic in }\C\setminus\eta\}\]
is a topological isomorphism. In particular, we get an isometric embedding of $H^{s}(\eta,\mathfrak{m}_\eta)$ into $H^{s+\frac{1}{2}}(\C)$.

The space $\cF_\eta = H^{1/2}(\eta,\mathfrak{m}_\eta)$ allows us to define a Gaussian process $\varphi$, with law $\P_\eta$, corresponding to the trace of the Gaussian free field on $\eta$.
%We define the \emph{trace of the Gaussian free field on $\eta$} is the Gaussian measure $\P_\eta$ associated to the Dirichlet space $(\cE_\eta,\cF_\eta)$.
By definition, this means that $\varphi$ is the centred Gaussian process $(\varphi_u)_{u\in\cF_\eta}$ such that $\varphi_u\sim\cN(0,\cE_\eta(u,u))$ for all $u\in\cF_\eta$. Now, let $X$ be the Gaussian free field in $\hat\C$ equipped with the metric $(1\vee|z|)^{-4}|\d z|^2$; we have $\langle X,\bP_\eta u\rangle_{H^1(\C)}\sim\varphi_u$ for all $u\in\cF_\eta$. Hence, we can identify $\varphi$ with a random function in $H^{s}(\C)$ ($s<0$) harmonic in $\C\setminus \eta$ and $\varphi\in H^{s}(\eta)$ almost surely for all $s<-\frac{1}{2}$. Since $\ker\bD_\eta=\R$, samples of $\P_\eta$ are only defined up to constant \textit{a priori}. It will be convenient for us to fix the normalisation such that the harmonic extension at $\infty$ vanishes, i.e. $\P_\eta$ is now a probability measure on $\{\varphi\in H^s(\eta)|\,\bP_\eta\varphi(\infty)=0\}$. Finally, we ``sample" the harmonic extension at $\infty$ independently from $e^{-2Qa}\d a$ on $\R$, describing a field $a+\varphi\in H^s(\eta)\simeq\R\times\{\varphi\in H^s(\eta)|\,\bP_\eta\varphi(\infty)=0\}$. The choice of the measure $e^{-2Qa}\d a$ for the zero mode is due to the expression for the Liouville action \eqref{eq:liouville_action}.

To be consistent with the literature (see Appendix \ref{Appendix:comparison}), we call the measure $e^{-2Qa}\d a\,\d\P_\eta(\varphi)$ on $H^s(\eta)$ the \emph{trace of the Liouville field} on $\eta$, although we will not use this terminology much.

\subsection{Background on quasiconformal maps}\label{SS:qc_maps}

We recall some basic facts about quasiconformal maps following \cite{Ahlfors66}.

A homeomorphism $\Phi : \hat\C \to \hat\C$ is said to be quasiconformal, with given dilation $\mu$, if it is differentiable almost everywhere and solves the \textit{Beltrami equation}: for almost every $z \in \hat \C$,
\[
\partial_{\bar z} \Phi(z) = \mu(z) \partial_z \Phi(z),
\]
where $\mu$ is a complex-valued measurable function with $\|\mu\|_\infty < 1$. Solutions to this equation are known to exist \cite[Chapter~V, Theorem~3]{Ahlfors66} and are in fact quite regular. Indeed, their partial derivatives $\partial_z \Phi$, $\partial_{\bar z} \Phi$ are locally $L^p$ for some $p>2$ depending on $\|\mu\|_\infty$ \cite[Chapter~V, Theorem~1]{Ahlfors66} and $\Phi$ is locally Hölder continuous \cite[Chapter V, Equation (10)]{Ahlfors66}.
By Weyl's Lemma (\cite[Chapter~II, Corollary~2]{Ahlfors66}), if $\mu$ vanishes in some open set $U$, then $\Phi$ is actually conformal in $U$.
Moreover, if $\Phi$ is a solution to the above equation, then $f \circ \Phi$ is also a solution for any Möbius map of $\hat\C$. The solution is unique if one fixes these 6 degrees of freedom. Here are two natural ways to do so:
\begin{itemize}[leftmargin=*]
    \item One can require that $\Phi(0)=0$, $\Phi(1)=1$ and $\Phi(\infty)=\infty$ corresponding to what is sometimes called the ``normalised'' solution.
    \item If $\mu$ vanishes in a neighbourhood of $\infty$ (resp. $0$), one can require that $\Phi(0)=0$, $\Phi(\infty) = \infty$ and $\Phi'(\infty) = 1$ (resp. $\Phi'(0)=1$), corresponding to what is sometimes called the ``normal'' solution.
\end{itemize}

The inversion map $\iota : z \mapsto 1/\bar z$ acts on Beltrami differentials $\mu \in L^\infty(\hat \C)$ by
\begin{equation}
    \label{E:iota}
    \iota^*\mu(z) = (z/\bar z)^2 \overline{\mu(1/\bar z)}, \quad z \in \C.
\end{equation}
This definition is motivated by the fact that, if $\tilde\Phi$ is a quasiconformal homeomorphism with Beltrami $\mu + \iota^*\mu$ fixing $0$ and $\infty$, then $\tilde\Phi$ preserves the unit circle $\S^1$. This simply follows from a chain rule computation which shows that $\iota\circ\tilde\Phi\circ\iota$ is solution to the same Beltrami equation. Since it also fixes $0$ and $\infty$, by uniqueness of solutions, there exists a rotation parameter $\theta \in \R$ such that $\iota\circ\tilde\Phi\circ\iota = e^{i\theta} \tilde\Phi$.

\begin{lemma}\label{L:qc}
Let $\mu$ be a measurable map with $\|\mu\|_\infty < \infty$ and let
\begin{gather}\label{E:vmu}
    w_\mu(z) = -\frac1{i\pi} \int_{\C} \frac{(z-1) \mu(\zeta)}{\zeta(\zeta-1)(\zeta-z)}|\d \zeta|^2,
    \quad z \in \C.
\end{gather}
For $t \in \C$ small, let $\Phi_t$ and $\tilde\Phi_t$ be the unique quasiconformal homeomorphisms fixing $0,1$ and $\infty$ with Beltrami $t\mu$ and $t \mu + \bar t \iota^*\mu$, respectively. Then
\begin{equation}\label{E:Ahlfors1}
    \Phi_t(z) = z + itzw_\mu(z) + o(t), \quad z \in \C, \qquad \text{and} \qquad
    \tilde \Phi_t(z) = z + 2iz \Re (tw_\mu(z))+ o(t), \quad z \in \S^1,
\end{equation}
where $o(t) \to 0$ as $t \to 0$ uniformly on compact sets of $\C$.
In addition, if $\mu \in L^p(\C)$ for some $p>2$, then
\begin{equation}\label{E:wmu_del}
    \partial_{\bar z} (iz w_\mu) = \mu.
\end{equation}
\end{lemma}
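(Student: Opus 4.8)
The plan is to follow Ahlfors's classical derivation of the dependence of quasiconformal maps on the Beltrami coefficient, as in \cite[Chapter V]{Ahlfors66}, and simply extract the linear term. First I would recall the representation-theoretic formula for the normalised solution: for small $t$, the map $\Phi_t$ fixing $0,1,\infty$ with Beltrami coefficient $t\mu$ can be written as $\Phi_t = \id + t P[\mu] + (\text{higher order})$, where $P$ is the singular integral operator whose kernel is obtained by integrating the Cauchy--Beurling kernel. Concretely, if one writes $\Phi_t(z) = z + t V(z) + o(t)$ with $V$ vanishing at $0,1,\infty$, then plugging into the Beltrami equation $\partial_{\bar z}\Phi_t = t\mu\,\partial_z\Phi_t$ and matching first order in $t$ gives $\partial_{\bar z} V = \mu$ (since $\partial_z\Phi_t \to 1$). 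The unique solution of $\partial_{\bar z}V = \mu$ that vanishes at $0,1,\infty$ is exactly $V(z) = iz\,w_\mu(z)$, because the Cauchy transform $\frac{-1}{\pi}\int_\C \frac{\mu(\zeta)}{\zeta - z}|\d\zeta|^2$ solves $\partial_{\bar z}(\cdot) = \mu$, and subtracting off a suitable affine-in-$\frac1{z-z_0}$ combination to kill the values at $0,1,\infty$ produces precisely the kernel $\frac{(z-1)}{\zeta(\zeta-1)(\zeta-z)}$ appearing in \eqref{E:vmu}. This already proves \eqref{E:wmu_del} as a byproduct: a direct computation of $\partial_{\bar z}$ of the Cauchy-type integral, valid when $\mu\in L^p$ with $p>2$ so that $w_\mu$ is continuous and the distributional derivative can be computed, yields $\partial_{\bar z}(izw_\mu) = \mu$.

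Next, to justify that the expansion $\Phi_t = \id + itzw_\mu + o(t)$ is genuine (i.e. the $o(t)$ is uniform on compacts, not merely formal), I would invoke the quantitative regularity of solutions to the Beltrami equation recalled in Section \ref{SS:qc_maps}: the derivatives $\partial_z\Phi_t, \partial_{\bar z}\Phi_t$ are bounded in $L^p_{\mathrm{loc}}$ for some $p>2$ uniformly for $\|t\mu\|_\infty$ small, and $\Phi_t$ is uniformly Hölder on compacts. One then writes $\Phi_t - \id - itzw_\mu$, shows its $\bar\partial$-derivative is $O(t^2)$ in the relevant norm (using $\partial_z\Phi_t = 1 + O(t)$), applies the Cauchy transform, and uses the normalisation at $0,1,\infty$ together with continuity of the solid Cauchy transform on $L^p$ ($p>2$) to conclude the error is $o(t)$ uniformly on compacts. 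This is the step I expect to be the main (though standard) technical obstacle — it is precisely Ahlfors's analyticity/continuity argument for the dependence of the solution on parameters, and one has to be a little careful that all estimates are uniform in the relevant range of $t$ and for the fixed $\mu$.

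Finally, for the map $\tilde\Phi_t$ with Beltrami coefficient $t\mu + \bar t\,\iota^*\mu$: by linearity of the first-order expansion, $\tilde\Phi_t = \id + t V_\mu + \bar t\, V_{\iota^*\mu} + o(t)$, where $V_\mu(z) = izw_\mu(z)$. The key point is the identity $V_{\iota^*\mu}(z) = \overline{V_\mu(z)}$ for $z\in\S^1$, which follows from the symmetry property of $\iota^*$ discussed just before the lemma: the map $\iota\circ\tilde\Phi_t\circ\iota$ solves the same Beltrami equation and fixes $0,\infty$, so up to a rotation it equals $\tilde\Phi_t$; differentiating in $t$ forces the symmetric structure. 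Equivalently one checks directly from the integral formula \eqref{E:vmu}, substituting $\zeta\mapsto 1/\bar\zeta$ and using $|z|=1$, that $z w_{\iota^*\mu}(z) = \overline{z w_\mu(z)}$ on $\S^1$. Hence on $\S^1$, $tV_\mu(z) + \bar t V_{\iota^*\mu}(z) = izt w_\mu(z) + \overline{iz t w_\mu(z)}$... wait — more carefully, $tV_\mu(z) + \bar t\,\overline{V_\mu(z)} = t\cdot izw_\mu(z) + \bar t\cdot\overline{izw_\mu(z)} = 2\Re(izt w_\mu(z))$; and since $z\in\S^1$ one has $2\Re(iztw_\mu) = 2iz\Re(tw_\mu) - $ no: write $izw_\mu = iz w_\mu$ and note $2\Re(izt w_\mu) = iz\cdot tw_\mu + \overline{iz}\cdot\overline{tw_\mu}$, and using $\bar z = 1/z$ this equals $iz(tw_\mu - \overline{tw_\mu}) = 2iz\,\Re(tw_\mu)$ after the elementary manipulation $\overline{iz\,\overline{tw_\mu}} $ contributes $-i\bar z \overline{tw_\mu} = -\tfrac{i}{z}\overline{tw_\mu}$, so the sum is $iz(tw_\mu) - \tfrac{i}{z}\overline{tw_\mu} = 2iz\Re(tw_\mu)$ exactly when $|z|=1$. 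This gives the claimed formula $\tilde\Phi_t(z) = z + 2iz\,\Re(tw_\mu(z)) + o(t)$ on $\S^1$, completing the proof. The only genuinely delicate ingredient remains the uniform $o(t)$ control from the second paragraph; everything else is bookkeeping with the Cauchy kernel and the $\iota^*$ symmetry.
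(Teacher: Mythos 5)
Your overall architecture matches the paper's: the expansion for $\Phi_t$ is exactly \cite[Chapter V, Theorem 5]{Ahlfors66} (the paper simply cites it, whereas you propose to re-derive it; your sketch of that derivation is the standard one and the "main technical obstacle" you flag is precisely the content of Ahlfors's theorem, so citing it is enough), the identity \eqref{E:wmu_del} is obtained as in the paper by splitting the kernel into a Cauchy-transform piece plus terms affine in $z$, and the statement for $\tilde\Phi_t$ is reduced to a symmetry identity for $w_{\iota^*\mu}$ on $\S^1$.

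However, the symmetry identity you assert is wrong, and your subsequent algebra only reaches the correct answer through compensating errors. You claim $V_{\iota^*\mu}(z)=\overline{V_\mu(z)}$, equivalently $zw_{\iota^*\mu}(z)=\overline{zw_\mu(z)}$, on $\S^1$. The correct identity (which the substitution $\zeta\mapsto1/\bar\zeta$ in \eqref{E:vmu} actually produces, and which the paper uses) is
\[
w_{\iota^*\mu}(z)=\overline{w_\mu(z)},\qquad z\in\S^1,
\]
without the factor $z$; your version differs from it by $\bar z/z=\bar z^{\,2}$. One sees the discrepancy geometrically: the velocity field of a flow preserving $\S^1$ must be tangent to $\S^1$, i.e.\ of the form $iz\cdot(\text{real})$, which is what $2iz\Re(tw_\mu)$ is; your intermediate expression $2\Re(iztw_\mu)$ is real-valued, hence normal rather than tangent, and the purported equality $2\Re(iztw_\mu)=2iz\Re(tw_\mu)$ fails already at $z=1$, $tw_\mu=1$ (giving $0$ versus $2i$). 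With the correct identity the computation is immediate and requires no manipulation at all:
\[
tV_\mu(z)+\bar t\,V_{\iota^*\mu}(z)=iz\bigl(tw_\mu(z)+\bar t\,w_{\iota^*\mu}(z)\bigr)=iz\bigl(tw_\mu(z)+\overline{tw_\mu(z)}\bigr)=2iz\,\Re\bigl(tw_\mu(z)\bigr).
\]
So the gap is local and easily repaired, but as written the key step of the second half of the lemma rests on a false identity.
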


\begin{proof}
    The estimate \eqref{E:Ahlfors1} concerning $\Phi_t$ is the content of \cite[Chapter V, Theorem 5]{Ahlfors66}.
    The estimate \eqref{E:Ahlfors1} concerning $\tilde\Phi_t$ follows from the same result and the fact that for all $z \in \S^1$,
    \[ 
    t w_\mu(z) + \bar t w_{\iota^*\mu}(z) = 2 \Re(t w_\mu(z)).
    \]
    This in turn follows from the fact that $\tilde\Phi_t(\S^1)\subset \S^1$ for all $t \in \C$ (or is simply a direct computation).
    Now, assume that $\mu \in L^p(\C)$ for some $p>2$. Expanding the integrand defining $w_\mu$, we have
    \[ 
    iz w_\mu(z) = -\frac1\pi \int_\C \Big(\frac{1}{\zeta - z} - \frac1\zeta \Big) \mu(\zeta)|\d \zeta|^2 - \frac{z}{\pi} \int_\C \Big( \frac{1}{\zeta} - \frac{1}{\zeta-1} \Big) \mu(\zeta) |\d \zeta|^2.
    \]
    Both of these integrals converge by Hölder inequality and the assumption that $\mu \in L^p(\C)$. The partial derivative in $\bar z$ only involves the first integral and we conclude from \cite[Chapter~V, Lemma~3]{Ahlfors66} that $\partial_{\bar z} (izw_\mu) = \mu$.
\end{proof}

We will sometimes have to consider different normalisations than the one in Lemma \ref{L:qc} above. Consider for instance the case where $\mu \in L^\infty(\hat\C)$ is supported in $\D^*$ and $\Phi_t$ is solution to the Beltrami equation with coefficient $t\mu$ and such that $\Phi_t(0)=0$, $\Phi_t'(0)=1$ and $\Phi_t(\infty) = \infty.$ Then, by composing with a well chosen Möbius map, we can recover a solution fixing $0$, $1$ and $\infty$ to show that
\begin{equation}\label{E:qc_vendredi}
    \Phi_t(z) = z + t (v_\mu(z) - v_\mu(0) - zv_\mu'(0)) + o(t), \qquad \text{where} \qquad v_\mu(z) = izw_\mu(z).
\end{equation}

We now state a lemma that will allow us to transfer the expression of some derivatives at $t=0$ to derivates at any $t=t_0$.

\begin{lemma}\label{L:Beltrami_mus}
    Let $\mu$ be a measurable map with $\|\mu\|_\infty < \infty$. For $t \in \C$ with $t < 1/\|\mu\|_\infty$, let $\Phi_t$ be a solution to the Beltrami equation with coefficient $t \mu$. Let $t_0 \in \C$ with $t < 1/\|\mu\|_\infty$. Then, for all $t \in \C$ small, $\Phi_{t+t_0} \circ \Phi_{t_0}^{-1}$ solves the Beltrami equation with coefficient $t \mu_{t_0} + o(t)$ where
    \begin{equation}\label{E:Beltrami_mus}
        \mu_{t_0} = \overline{\del_z \Phi_{t_0}^{-1}} (\mu \del_z \Phi_{t_0})\circ \Phi_{t_0}^{-1}.
    \end{equation}
\end{lemma}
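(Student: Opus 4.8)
The plan is to apply the chain rule for the Beltrami coefficient (equivalently, the composition formula for complex dilatations) and then extract the first-order term in $t$. Recall that if $F$ and $G$ are quasiconformal with dilatations $\mu_F = \partial_{\bar z}F/\partial_z F$ and $\mu_G = \partial_{\bar z}G/\partial_z G$, then the dilatation of $F\circ G^{-1}$ at a point $w = G(z)$ is given by the classical formula
\[
\mu_{F\circ G^{-1}}(w) = \left(\frac{\mu_F - \mu_G}{1 - \overline{\mu_G}\,\mu_F}\cdot \frac{\partial_z G}{\overline{\partial_z G}}\right)\circ G^{-1}(w).
\]
First I would specialise this to $F = \Phi_{t+t_0}$ (dilatation $(t+t_0)\mu$) and $G = \Phi_{t_0}$ (dilatation $t_0\mu$). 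Then $\mu_F - \mu_G = t\mu$ and $1 - \overline{\mu_G}\mu_F = 1 - |t_0|^2|\mu|^2 + O(t)$, so
\[
\mu_{\Phi_{t+t_0}\circ\Phi_{t_0}^{-1}} = \left(\frac{t\mu}{1-|t_0|^2|\mu|^2}\cdot\frac{\partial_z\Phi_{t_0}}{\overline{\partial_z\Phi_{t_0}}}\right)\circ\Phi_{t_0}^{-1} + o(t).
\]

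The remaining task is bookkeeping to match this with the stated expression \eqref{E:Beltrami_mus}, namely $\mu_{t_0} = \overline{\del_z\Phi_{t_0}^{-1}}\,(\mu\,\del_z\Phi_{t_0})\circ\Phi_{t_0}^{-1}$. The key identity is the derivative-of-inverse relation for quasiconformal maps: differentiating $\Phi_{t_0}^{-1}\circ\Phi_{t_0} = \mathrm{id}$ and using $\partial_{\bar z}\Phi_{t_0} = t_0\mu\,\partial_z\Phi_{t_0}$ gives, at $w = \Phi_{t_0}(z)$,
\[
\partial_w\Phi_{t_0}^{-1}(w) = \frac{\partial_z\Phi_{t_0}(z)}{J_{t_0}(z)}, \qquad J_{t_0}(z) = |\partial_z\Phi_{t_0}(z)|^2\big(1 - |t_0|^2|\mu(z)|^2\big),
\]
where $J_{t_0}$ is the Jacobian. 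Hence $\overline{\partial_w\Phi_{t_0}^{-1}(w)} = \overline{\partial_z\Phi_{t_0}(z)}/J_{t_0}(z)$, and so
\[
\overline{\del_w\Phi_{t_0}^{-1}}\,(\mu\,\del_z\Phi_{t_0})\big|_{z=\Phi_{t_0}^{-1}(w)} = \left(\frac{\overline{\partial_z\Phi_{t_0}}}{|\partial_z\Phi_{t_0}|^2(1-|t_0|^2|\mu|^2)}\cdot\mu\,\partial_z\Phi_{t_0}\right)\circ\Phi_{t_0}^{-1}(w) = \left(\frac{\mu}{1-|t_0|^2|\mu|^2}\cdot\frac{\partial_z\Phi_{t_0}}{\overline{\partial_z\Phi_{t_0}}}\right)\circ\Phi_{t_0}^{-1}(w),
\]
which is exactly the coefficient of $t$ obtained above. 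This identifies $\mu_{t_0}$ with \eqref{E:Beltrami_mus}.

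I expect the main obstacle to be not any deep idea but the rigorous justification of the Taylor expansion to first order: the derivatives $\partial_z\Phi_{t}$, $\partial_{\bar z}\Phi_t$ are only $L^p_{\mathrm{loc}}$ for some $p>2$ (as recalled from \cite{Ahlfors66} in Section~\ref{SS:qc_maps}), so the expansion in $t$ must be understood in an appropriate (e.g. $L^p_{\mathrm{loc}}$ or distributional) sense rather than pointwise, and one must check that composing with $\Phi_{t_0}^{-1}$ — which is Hölder continuous with $L^p$ derivatives — preserves the $o(t)$ error. The cleanest route is to note that $\Phi_{t+t_0}\circ\Phi_{t_0}^{-1}$ is itself a quasiconformal map depending analytically (in the sense of \cite{Ahlfors66}, Chapter~V) on $t$, so its dilatation is computed by the composition formula above with all manipulations valid a.e.; then the expansion in $t$ reduces to the elementary expansion $\frac{t\mu}{1-(\bar t_0+\bar t)\mu\cdot(t_0+t)\overline{\cdots}}$ — wait, more precisely $1 - \overline{\mu_{\Phi_{t_0}}}\mu_{\Phi_{t+t_0}} = 1 - \overline{t_0}(t_0+t)|\mu|^2 = (1-|t_0|^2|\mu|^2) - t\overline{t_0}|\mu|^2$, whose reciprocal expands as $\frac{1}{1-|t_0|^2|\mu|^2} + O(t)$ uniformly since $\|\mu\|_\infty|t_0| < 1$. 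Feeding this in gives the claimed $t\mu_{t_0} + o(t)$, with the $o(t)$ uniform in the $L^\infty$ norm of the dilatation, which is the natural topology here.
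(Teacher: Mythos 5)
Your approach is exactly the paper's: the proof given there is literally ``a simple computation using the chain rule'', and your composition formula for complex dilatations together with the derivative-of-inverse identity is that computation spelled out. The final identification of the first-order coefficient with \eqref{E:Beltrami_mus} is correct, and your remarks about interpreting the expansion in an a.e./$L^p_{\mathrm{loc}}$ sense are appropriate.

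One slip to fix in the write-up: the derivative-of-inverse formula should read $\partial_w\Phi_{t_0}^{-1}(w) = \overline{\partial_z\Phi_{t_0}(z)}/J_{t_0}(z)$, with the conjugate in the numerator (check against the conformal case, where it must reduce to $1/\Phi_{t_0}'$), and hence $\overline{\partial_w\Phi_{t_0}^{-1}(w)} = \partial_z\Phi_{t_0}(z)/J_{t_0}(z)$. As written, the middle expression of your last display actually simplifies to $\mu/(1-|t_0|^2|\mu|^2)$, since $\overline{\partial_z\Phi_{t_0}}\cdot\partial_z\Phi_{t_0}/|\partial_z\Phi_{t_0}|^2 = 1$ rather than $\partial_z\Phi_{t_0}/\overline{\partial_z\Phi_{t_0}}$; so that display contains two compensating errors. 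With the corrected inverse formula one gets $\mu\,(\partial_z\Phi_{t_0})^2/\bigl(|\partial_z\Phi_{t_0}|^2(1-|t_0|^2|\mu|^2)\bigr)$, which equals the claimed $\frac{\mu}{1-|t_0|^2|\mu|^2}\cdot\frac{\partial_z\Phi_{t_0}}{\overline{\partial_z\Phi_{t_0}}}$ and matches the first-order coefficient extracted from the composition formula. The lemma and your conclusion stand; only the intermediate bookkeeping needs this correction.
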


\begin{proof}
    This is a simple computation using the chain rule.
\end{proof}

Expanding on \eqref{E:iota}, we will furthermore consider for all conformal map $f$ and Beltrami differential $\mu(z) \frac{\d \bar z}{\d z}$, the pullback $f^*\mu$ and pushforward $f_*\mu$ defined by
\begin{equation}\label{E:f*mu}
    f^*\mu = \mu \circ f(z) \frac{\overline{f'(z)} \d \bar z}{f'(z) \d z}
    \qquad \text{and} \qquad f_*\mu = \mu \circ f^{-1}(z) \frac{\overline{(f^{-1})'(z)} \d \bar z}{(f^{-1})'(z) \d z}.
\end{equation}
We record for ease of reference the following elementary fact. If $f$ is a map which is conformal in the support of $\mu$, then
\begin{equation}
    \label{E:schwarzian_mu}
    (\cS f^{-1}, f_*\mu) = -(\cS f, \mu).
\end{equation}
Indeed, by the chain rule for the Schwarzian derivative, $\cS f^{-1} = - ((f^{-1})')^2 (\cS f)\circ f^{-1}$. So by definition \eqref{E:f*mu} of  $f_*\mu$, and then by a change of variable, we have
\[ 
(\cS f^{-1},f_*\mu) = - \frac1\pi \int (\cS f)\circ f^{-1}(z) \mu \circ f^{-1}(z) |(f^{-1})'(z)|^2 |\d z^2| = -(\cS f,\mu).
\]

\section{Variational formula for the GFF on SLE: proof of Theorem \ref{T:GFF_SLE}}\label{S:liouville}

In this section, we prove our variational formula stated in Theorem \ref{T:GFF_SLE}. To do so, we first introduce the Liouville action and study its infinitesimal variation under the action of the quasiconformal group. The main result is that the differential of the Liouville action is the stress--energy tensor. We introduce this tensor in Section \ref{SS:stress}, compute the variation of the Liouville action in $\D$ in Section \ref{SS:liouville_disc}, and deduce the variation of the Dirichlet energy $\cE_\eta$ in Section~\ref{SS:dirichlet}. Theorem \ref{T:GFF_SLE} is then derived in Section \ref{S:gff_sle}.

   \subsection{The stress-energy tensor}\label{SS:stress}
Let $D\subset\C$ be a simply connected domain different from $\C$ and $\varphi : \partial D \to \R$ be a distribution whose harmonic extension $\bP_D \varphi$ in $D$ is well defined (see Section~\ref{SS:jump}).
The stress-energy tensor and the Heisenberg tensor are respectively defined to be the functions
\begin{equation}\label{E:def_stress}
\bT_D \varphi(z):=-(\del_z
\bP_D \varphi(z))^2+Q\del^2_{zz}\bP_D\varphi(z)
\quad \text{and} \quad
\bJ_D \varphi(z):=\frac{1}{z}\del_z\bP_D\varphi(z), \quad z \in \hat\C.
\end{equation}
Because a harmonic function is the real part of a holomorphic function, $\bT_D\varphi$ is holomorphic and $\bJ_D\varphi$ is meromorphic. If $D$ is the interior or exterior of a curve $\eta$, we will often write $\bT_\eta \varphi$ and $\bJ_\eta \varphi$ instead of $\bT_D \varphi$ and $\bJ_D \varphi$.

The following elementary lemma gives a chain rule for the stress--energy tensor, showing that in transforms as a \emph{$(6Q^2)$-projective connection} in the terminology of \cite{FriedanShenker87}.

\begin{lemma}\label{L:chain}
Let $f:D\to\tilde{D}$ be a conformal equivalence, and $\varphi : \partial \tilde D \to \R$ be a distribution whose harmonic extension $\bP_{\tilde D} \varphi$ is well defined.
Recalling that $\varphi \cdot f = \varphi\circ f+Q\log|f'|$,
we have
\[
\bT_D (\varphi\cdot f)=(f')^2\bT_{\tilde D} \varphi\circ f+\frac{Q^2}{2}\cS f.\]
In particular, for all $\mu \in L^\infty(\tilde D)$, recalling that $f^*\mu = \overline{f'}/f' \mu\circ f$, we have
\begin{equation}
    \label{E:chain_rule_stress_mu}
(\bT_D (\varphi\cdot f), f^*\mu)=(\bT_{\tilde D} \varphi,\mu)+\frac{Q^2}{2}(\cS f,f^*\mu).
\end{equation}
\end{lemma}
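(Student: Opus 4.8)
The plan is to reduce everything to the classical chain rules for the first and second derivatives of a composition together with the transformation law of the Schwarzian derivative. First I would write $u = \bP_{\tilde D}\varphi$ for the harmonic extension on $\tilde D$, and observe that by conformal invariance of harmonic functions $\bP_D(\varphi\cdot f) = u\circ f + Q\log|f'| = u\circ f + Q\,\Re(\log f')$, where $\log f'$ is a holomorphic branch on the simply connected domain $D$. Since $u$ is harmonic, $u = \Re U$ for a holomorphic function $U$ on $\tilde D$, and then $\partial_z \bP_D(\varphi\cdot f) = \tfrac12\partial_z\big((U\circ f) + Q\log f'\big) = \tfrac12\big(U'\circ f\, f' + Q f''/f'\big)$; note $\partial_z u = \tfrac12 U'$. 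This is the one computation that matters and it is elementary.

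Next I would compute $\bT_D(\varphi\cdot f) = -(\partial_z\bP_D(\varphi\cdot f))^2 + Q\,\partial_{zz}^2\bP_D(\varphi\cdot f)$ by plugging in the expression above. Writing $p := \partial_z\bP_D(\varphi\cdot f)$ as $\tfrac12 U'\circ f\, f' + \tfrac{Q}{2} f''/f'$, we get $-p^2 = -(\partial_z u\circ f)^2 (f')^2 - Q (\partial_z u\circ f) f'' - \tfrac{Q^2}{4}(f''/f')^2$, and $Q\,\partial_z p = Q\big((\partial_{zz}^2 u\circ f)(f')^2 + (\partial_z u\circ f) f'' + \tfrac{Q}{2}\big(\tfrac{f'''}{f'} - \tfrac{(f'')^2}{(f')^2}\big)\big)$, using $\partial_{zz}^2 u = \tfrac14 U''$ to match the holomorphic derivative structure. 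The terms $\mp Q(\partial_z u\circ f) f''$ cancel, the $(f')^2$ terms combine into $(f')^2\big(-(\partial_z u)^2 + Q\partial_{zz}^2 u\big)\circ f = (f')^2\,\bT_{\tilde D}\varphi\circ f$, and the remaining terms are $-\tfrac{Q^2}{4}(f''/f')^2 + \tfrac{Q^2}{2}\big(\tfrac{f'''}{f'} - \tfrac{(f'')^2}{(f')^2}\big) = \tfrac{Q^2}{2}\big(\tfrac{f'''}{f'} - \tfrac32(f''/f')^2\big) = \tfrac{Q^2}{2}\cS f$, which is exactly the claimed formula. The bookkeeping with the factors of $\tfrac12$ coming from $\partial_z\Re U = \tfrac12 U'$ is the only place to be careful; everything else is forced.

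Finally, for the pairing identity \eqref{E:chain_rule_stress_mu}, I would pair the pointwise identity against $f^*\mu$. By definition of the pairing \eqref{E:pairings} and of $f^*\mu = (\overline{f'}/f')\,\mu\circ f$,
\[
(\bT_D(\varphi\cdot f), f^*\mu) = \frac{1}{\pi}\int_D (f')^2\,\bT_{\tilde D}\varphi\circ f(z)\,\frac{\overline{f'(z)}}{f'(z)}\mu\circ f(z)\,|\d z|^2 + \frac{Q^2}{2}(\cS f, f^*\mu);
\]
in the first integral the change of variables $w = f(z)$ turns $|f'(z)|^2|\d z|^2$ into $|\d w|^2$ and leaves $\int \bT_{\tilde D}\varphi(w)\mu(w)|\d w|^2/\pi = (\bT_{\tilde D}\varphi,\mu)$, giving the result. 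I do not anticipate a genuine obstacle here — the lemma is labelled ``elementary'' — the only mild subtlety is justifying the manipulations when $\varphi$ is merely a distribution: one should note that $\bP_{\tilde D}\varphi$ is a genuine (smooth) harmonic function in the interior by hypothesis, so all the pointwise derivative computations take place on honest holomorphic functions on $D$, and $\mu$ is bounded with the relevant integrals convergent on the (compactly contained, or at least controlled) support.
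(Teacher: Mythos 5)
Your proof is correct and follows essentially the same route as the paper's: compute $\partial_z$ and $\partial^2_{zz}$ of $\varphi\circ f+Q\log|f'|$ via the chain rule, observe the cancellation of the cross terms $\pm Q\,(\partial_z\varphi)\circ f\, f''$, recognise the Schwarzian in the leftover $-\tfrac{Q^2}{4}(f''/f')^2+\tfrac{Q^2}{2}(f''/f')'$, and then obtain \eqref{E:chain_rule_stress_mu} by the change of variables $w=f(z)$. The only blemish is the parenthetical claim $\partial^2_{zz}u=\tfrac14 U''$ (it should be $\tfrac12 U''$, since $\partial_z u=\tfrac12 U'$), but your displayed formulas are written directly in terms of $\partial^2_{zz}u$ and are correct as stated, so nothing is affected.
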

\begin{proof}
We record the elementary formulas (recall $\cA f=\frac{f''}{f'}$ is the pre-Schwarzian derivative):
\begin{align*}
&\del_z(\varphi\circ f+Q\log|f'|)=f'(\del_z\varphi)\circ f+\frac{Q}{2}\cA f;\\
&(\del_z(\varphi\circ f+Q\log|f'|))^2=(f')^2(\del_z\varphi)^2\circ f+Qf''(\del_z\varphi)\circ f+\frac{Q^2}{4}(\cA f)^2;\\
&\del_{zz}^2(\varphi\circ f+Q\log|f'|)=(f')^2(\del_{zz}^2\varphi)\circ f+f''(\del_z\varphi)\circ f+\frac{Q}{2}(\cA f)'.
\end{align*}
From this, we deduce that $\bT_D (\varphi\cdot f)=-(f')^2(\del_z\varphi)^2\circ f-\frac{Q^2}{4}(\cA f)^2+\frac{Q^2}{2}(\cA f)'=(f')^2\bT_{\tilde D} \varphi\circ f+\frac{Q^2}{2}\cS f$. The identity \eqref{E:chain_rule_stress_mu} then follows by a change of variable.
\end{proof}

We also note the following important reflection formula.
\begin{lemma}\label{L:stress_iota}
Let $\varphi\in\cC^\infty(\S^1)$. For all $z\in\D^*$, we have
%\[\bT_{\D^*} \varphi(z)=\frac{1}{z^4}\overline{\bT_\D \varphi}(1/z)+\frac{2Q}{z^4}\overline{\bJ_\D \varphi}(1/z).\]
\begin{align*}
    %\frac1{z^4} \overline{\bT_{\D} \varphi(\iota(z))} = \bT_{\D^*} \varphi(z) + 2Q \bJ_{\D^*} \varphi(z).
    \frac1{z^4} \overline{\bT_{\D} \varphi}(1/z) = \bT_{\D^*} \varphi(z) + 2Q \bJ_{\D^*} \varphi(z).
\end{align*}
In particular, for all $\mu \in L^\infty(\C)$,
$\overline{(\bT_{\D}\varphi,\iota^*\mu)} = (\bT_{\D^*}\varphi + 2Q \bJ_{\D^*}\varphi,\mu).$
\end{lemma}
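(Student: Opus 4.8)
\textbf{Proof plan for Lemma \ref{L:stress_iota}.}

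The plan is to reduce everything to a direct computation with the harmonic extension and then recognise the resulting expression. First I would recall that for $\varphi \in \cC^\infty(\S^1)$, the harmonic extensions to the two sides of the circle are related by the inversion $\iota: z \mapsto 1/\bar z$: since $\iota$ fixes $\S^1$ pointwise, we have $\bP_{\D^*}\varphi(z) = \bP_\D\varphi(1/\bar z)$ for $z \in \D^*$; equivalently, writing $h(z) := \bP_\D\varphi(z)$ for the extension to $\D$ (a real harmonic function), one has $\bP_{\D^*}\varphi(z) = \overline{h(1/z)}$ because $h$ is real-valued. The key point is then to differentiate this identity. Setting $w = 1/z$, the chain rule gives $\del_z \bP_{\D^*}\varphi(z) = \overline{h'(w)}\cdot\overline{(-1/z^2)}$ (using that $\overline{\del_w (h(w))} = \del_{\bar w}(\overline{h(w)})$ and that $w \mapsto 1/z$ is anti-holomorphic in $z$, but since we are taking $\del_z$ of a function of $\bar z$... ) — here I would be careful: writing $\bP_{\D^*}\varphi(z) = \overline{h(1/z)}$ is wrong as stated since $h(1/z)$ is a function of $z$ alone and its conjugate is anti-holomorphic; the correct statement is $\bP_{\D^*}\varphi(z) = h(1/\bar z)$ viewed as a function of $z,\bar z$, and since $h$ is real this equals the real part of the holomorphic-in-$\bar z$... . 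The clean way: let $H$ be holomorphic on $\D$ with $\Re H = h$; then $\bP_\D\varphi = \frac12(H + \bar H)$ and $\bP_{\D^*}\varphi(z) = h(1/\bar z) = \frac12(H(1/\bar z) + \overline{H(1/\bar z)})$. Now $z \mapsto \overline{H(1/\bar z)}$ is holomorphic on $\D^*$, so I set $G(z) := \overline{H(1/\bar z)}$ and note $\bP_{\D^*}\varphi = \frac12(G + \bar G) = \Re G$, with $G'(z) = \overline{H'(1/\bar z)}\cdot(1/z^2)$.

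Second, I would plug $G$ into the definitions. Since $\del_z \bP_{\D^*}\varphi = \frac12 G'$ and $\del^2_{zz}\bP_{\D^*}\varphi = \frac12 G''$, while $\del_z\bP_\D\varphi = \frac12 H'$ and $\del^2_{zz}\bP_\D\varphi = \frac12 H''$, we get
\[
\bT_{\D^*}\varphi(z) = -\tfrac14 G'(z)^2 + \tfrac{Q}{2}G''(z), \qquad \bJ_{\D^*}\varphi(z) = \tfrac{1}{2z}G'(z),
\]
and similarly for $\bT_\D\varphi$ in terms of $H$. Writing $w = 1/z$ so that $G'(z) = \overline{H'(w)} w^2$, a direct differentiation gives $G''(z) = \overline{H''(w)}(-w^4) + \overline{H'(w)}(-2w^3)$ (differentiating $\overline{H'(1/\bar z)}/z^2$ in $z$). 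Substituting,
\[
\bT_{\D^*}\varphi(z) = -\tfrac14 \overline{H'(w)}^2 w^4 - \tfrac{Q}{2}\big(\overline{H''(w)} w^4 + 2\overline{H'(w)} w^3\big),
\]
\[
2Q\,\bJ_{\D^*}\varphi(z) = Q\,\overline{H'(w)}\, w.
\]
On the other side, $z^{-4}\overline{\bT_\D\varphi}(1/z) = w^4\big(-\tfrac14 \overline{H'(w)}^2 + \tfrac{Q}{2}\overline{H''(w)}\big)$... . Comparing, I expect the $-\tfrac14\overline{H'}^2 w^4$ terms to match, and the remaining terms $\pm\tfrac{Q}{2}\overline{H''}w^4$, $-Q\overline{H'}w^3$, $+Q\overline{H'}w$ to recombine — here the claimed identity forces a specific sign bookkeeping, and I would simply carry it through carefully. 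The upshot is the asserted pointwise formula $z^{-4}\overline{\bT_\D\varphi}(1/z) = \bT_{\D^*}\varphi(z) + 2Q\bJ_{\D^*}\varphi(z)$.

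Third, for the integrated version I would pair both sides against $\mu \in L^\infty(\C)$ using the pairing $(q,\mu) = \frac1\pi\int q\mu\,|\d z|^2$. By definition \eqref{E:iota}, $\iota^*\mu(z) = (z/\bar z)^2\overline{\mu(1/\bar z)}$, so
\[
\overline{(\bT_\D\varphi,\iota^*\mu)} = \frac1\pi\int_\C \overline{\bT_\D\varphi(z)}\,\overline{\iota^*\mu(z)}\,|\d z|^2 = \frac1\pi\int_\C \overline{\bT_\D\varphi(z)}\,(\bar z/z)^2\mu(1/\bar z)\,|\d z|^2,
\]
and the change of variables $z \mapsto 1/\bar z$ (whose Jacobian is $|z|^{-4}$, i.e. $|\d(1/\bar z)|^2 = |z|^{-4}|\d z|^2$) turns this into $\frac1\pi\int_\C z^{-4}\overline{\bT_\D\varphi}(1/z)\,\mu(z)\,|\d z|^2$; note the factor $(\bar z/z)^2$ combined with $|z|^{-4}$ in the Jacobian produces exactly $z^{-4}$. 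Applying the pointwise identity just established gives $(\bT_{\D^*}\varphi + 2Q\bJ_{\D^*}\varphi,\mu)$, as required.

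The main obstacle I anticipate is purely the bookkeeping in the second step: getting the conjugation/holomorphy conventions right for $G(z) = \overline{H(1/\bar z)}$ and tracking all the powers of $w$ and signs when differentiating $G$ twice, so that the extra $\del^2$ and $(\del)^2$ terms reorganise precisely into the $2Q\bJ_{\D^*}\varphi$ correction. There is no conceptual difficulty — the reflection $\bP_{\D^*}\varphi = \bP_\D\varphi\circ\iota$ is immediate — but the Schwarzian-like inhomogeneous terms in $\bT$ are exactly what make the naive tensorial transformation fail, and one has to verify the $2Q\bJ_{\D^*}\varphi$ term is what repairs it; this is the analogue of the anomaly in Lemma \ref{L:chain} specialised to $f = \iota$.
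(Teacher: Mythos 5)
Your approach is exactly the paper's (its entire proof is ``chain rule plus $\bP_{\D^*}\varphi=\bP_\D\varphi\circ\iota$''), and your first and third steps are fine. But the second step, as displayed, does not close, and the reason is a concrete sign error rather than mere bookkeeping to be deferred. Writing $H^*(w):=\overline{H(\bar w)}$ for the conjugate holomorphic function (this is what $\overline{H'}(w)$, $\overline{\bT_\D\varphi}(1/z)$ must mean for the identity to be between holomorphic functions), one has $G(z)=\overline{H(1/\bar z)}=H^*(1/z)$, hence with $w=1/z$,
\[
G'(z)=-z^{-2}H^{*\prime}(1/z)=-w^2\,\overline{H'}(w),
\]
with a minus sign you dropped. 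Consequently $G''(z)=+2w^3\overline{H'}(w)+w^4\overline{H''}(w)$ (opposite sign to your display), and $2Q\,\bJ_{\D^*}\varphi(z)=\tfrac{Q}{z}G'(z)=-Qw^3\overline{H'}(w)$; note also that the power is $w^3$, not the $w$ appearing in your formula. With these corrections the verification closes:
\[
\bT_{\D^*}\varphi(z)+2Q\bJ_{\D^*}\varphi(z)
=-\tfrac14 w^4\overline{H'}(w)^2+\tfrac{Q}{2}w^4\overline{H''}(w)+Qw^3\overline{H'}(w)-Qw^3\overline{H'}(w)
=w^4\Big(-\tfrac14\overline{H'}(w)^2+\tfrac{Q}{2}\overline{H''}(w)\Big),
\]
which is exactly $z^{-4}\overline{\bT_\D\varphi}(1/z)$. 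With your signs the $\overline{H''}$ terms disagree and the first-order terms cannot cancel (they carry different powers of $w$), so the identity would appear false; the fix is entirely in the derivative of $z\mapsto 1/z$. The integrated statement in your third step is correct as written: the factor $(\bar z/z)^2$ from $\iota^*\mu$ combines with the Jacobian $|z|^{-4}$ of $z\mapsto 1/\bar z$ to produce $z^{-4}$, and the pointwise identity does the rest.
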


\begin{proof}
This follows from the chain rule and the fact that $\bP_{\D^*} \varphi(z) = (\bP_\D \varphi) \circ \iota(z)$ for all $z\in\D^*$.
\end{proof}
    
    \subsection{Variation of the Liouville action in \texorpdfstring{$\D$}{D}}\label{SS:liouville_disc}

We define the \emph{Liouville actions} of a field $\phi\in H^{1/2}(\S^1)$ to be
\begin{align}\label{E:def_LiouvilleS1}
&\bS_\D(\phi):=\frac{i}{\pi}\int_\D\del\bP_\D\phi\wedge\delbar\bP_\D\phi+2Q\bP_\D\phi(0)=\frac{1}{i\pi}\oint_{\S^1}\phi\del\bP_\D\phi+\frac{Q}{i\pi}\oint_{\S^1}\phi(z)\frac{\d z}{z};\\
\label{E:def_LiouvilleS2}
&\bS_{\D^*}(\phi):=\frac{i}{\pi}\int_{\D^*}\del\bP_{\D^*}\phi\wedge\delbar\bP_{\D^*}\phi-2Q\bP_{\D^*}\phi(\infty)=-\frac{1}{i\pi}\oint_{\S^1}\phi\del\bP_{\D^*}\phi-\frac{Q}{i\pi}\oint_{\S^1}\phi(z)\frac{\d z}{z}.
\end{align}
Since $\bP_{\D^*}\phi(z)=\bP_\D\phi(1/\bar z)$, we have $\bS_\D(\phi)=\bS_{\D^*}(\phi)$. %\aj{du coup c'est un peu bizarre de donner deux noms au même truc}
%\begin{equation}\label{E:def_LiouvilleS3}
 %   \bS_{\D^*}(\phi)=\bS_\D(\phi).
    %-4Q\bP_{\D^*}\phi(\infty)
%\end{equation}

In Proposition \ref{P:var_liouville} below, we study the infinitesimal variations of $t \mapsto \bS_\D(\phi\cdot h_t^{-1})$ and $t\mapsto\bS_{\D^*}(\phi\cdot h_t^{-1})$ where $(h_t)_t$ is a family of analytic diffeomorphisms of $\S^1$. These diffeomorphisms will be defined as the restriction to $\S^1$ of symmetric quasiconformal maps (see Section \ref{SS:qc_maps}). We recall from \eqref{E:doth} that
$\phi \cdot h = \phi \circ h + Q\log (zh'/h).$

\begin{proposition}\label{P:var_liouville}
Let $\mu$ be a Beltrami differential compactly supported in $\D$ (resp. $\D^*$). For small $t \in \C$, let $h_t$ be a solution to the Beltrami equation with coefficient $t\mu+\bar t\iota^*\mu$, normalised to fix 0 and $\infty$. 
For all $\phi\in\cC^\infty(\S^1)$, the map $t\mapsto\bS_\D(\phi\cdot h_t^{-1})$ (resp. $t\mapsto\bS_{\D^*}(\phi\cdot h_t^{-1})$) is continuously differentiable in a complex neighbourhood of $t=0$ and, as $t\to0$, we have
\begin{align}\label{E:P_var_Liouville_disc1}
\bS_\D(\phi\cdot h_t^{-1})-\bS_\D(\phi)&=4\Re\big(t(\bT_\D\phi,\mu)\big)+o(t),\\
\label{E:P_var_Liouville_disc2}
\text{resp.}\qquad \bS_{\D^*}(\phi\cdot h_t^{-1})-\bS_{\D^*}(\phi)&=4\Re\big(t(\bT_{\D^*}\varphi + 2Q \bJ_{\D^*}\varphi,\mu)\big)+o(t).
\end{align}
\end{proposition}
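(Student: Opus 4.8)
The plan is to compute the variation directly from the contour-integral expression for $\bS_\D$ in \eqref{E:def_LiouvilleS1}, tracking how each piece transforms under the substitution $\phi \mapsto \phi \cdot h_t^{-1}$. First I would record the key structural input: the chain rule of Lemma \ref{L:chain}, which says that $\bP_\D$ of $\phi\cdot h_t^{-1}$ is (up to the $Q\log$ term) the pullback of $\bP_\D\phi$ by a conformal map extending $h_t^{-1}$ to $\D$. More precisely, writing $\Phi_t$ for the symmetric quasiconformal map with Beltrami $t\mu + \bar t\iota^*\mu$ normalised to fix $0$ and $\infty$, Weyl's lemma gives that $\Phi_t$ is conformal in $\D$ (since $\mu$ is supported in $\D^*$ in that case; in the other case it is conformal in a punctured neighbourhood of $0$ together with all of $\D^*$ — one must be a little careful here and I will come back to this). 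The essential point is that $\bS_\D$ is \emph{almost} a conformal invariant: by Lemma \ref{L:chain}, $\bT_\D(\phi\cdot f) = (f')^2\bT_\D\phi\circ f + \tfrac{Q^2}{2}\cS f$, so the Liouville action transforms with an explicit anomaly governed by the Schwarzian. Concretely, a change of variables in the Dirichlet-energy form of $\bS_\D$ plus the transformation of the $\bP_\D\phi(0)$ term should yield an identity of the shape $\bS_\D(\phi\cdot f|_{\S^1}) = \bS_\D(\phi) + (\text{pairing of } \cS f \text{ with something}) + (\text{boundary/zero-mode terms})$, valid for genuine conformal $f:\D\to\D$ fixing $0$.

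The main work is then to differentiate this at $t=0$. I would use the infinitesimal expansion from Lemma \ref{L:qc}: $\Phi_t(z) = z + t(v_\mu(z) - v_\mu(0) - zv_\mu'(0)) + o(t)$ on $\S^1$ in the relevant normalisation (cf. \eqref{E:qc_vendredi}), with $v_\mu = izw_\mu$ and crucially $\partial_{\bar z}v_\mu = \mu$ by \eqref{E:wmu_del}. Plugging the perturbation $h_t^{-1} = \id - t(v_\mu - v_\mu(0) - zv_\mu'(0)) + o(t)$ into the contour integral $\bS_\D(\phi) = \tfrac{1}{i\pi}\oint\phi\,\del\bP_\D\phi + \tfrac{Q}{i\pi}\oint\phi\,\tfrac{\d z}{z}$ and expanding to first order, the derivative $\partial_t|_{t=0}\bS_\D(\phi\cdot h_t^{-1})$ becomes a contour integral against the vector field $v_\mu$ and its derivatives. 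The holomorphic part of this pairing collapses (after integration by parts on $\S^1$) to $\tfrac{2}{\pi}\oint \bT_\D\phi(z)\,v_\mu(z)\,\tfrac{\d z}{2\pi i}\cdot(\text{const})$, i.e. the contour pairing $(\bT_\D\phi, v_\mu)$; then Stokes/Green converts the contour pairing of the holomorphic quadratic differential $\bT_\D\phi$ with $v_\mu$ over $\S^1=\partial\D$ into the area pairing $(\bT_\D\phi,\partial_{\bar z}v_\mu) = (\bT_\D\phi,\mu)$, using that $\bT_\D\phi$ is holomorphic in $\D$ and $\mu$ is supported in $\D$ (in the first case). This is exactly where the hypothesis on the support of $\mu$ is used, and it is what produces the clean answer $4\Re(t(\bT_\D\phi,\mu))$ — the factor $4 = 2\cdot 2$ coming from the $1/\pi$ in the area pairing \eqref{E:pairings} versus the normalisation of $\bS_\D$, and the $\Re$ and doubling coming from the symmetric Beltrami coefficient $t\mu+\bar t\iota^*\mu$ (only the holomorphic half $t\mu$ contributes a holomorphic-in-$z$ perturbation, the other half contributing its complex conjugate, as in the second estimate of \eqref{E:Ahlfors1}).

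For the $\D^*$ statement \eqref{E:P_var_Liouville_disc2}: since $\bS_{\D^*}(\phi) = \bS_\D(\phi)$ by the reflection identity $\bP_{\D^*}\phi(z) = \bP_\D\phi(1/\bar z)$, but the \emph{perturbation} is now by a Beltrami differential supported in $\D^*$, one cannot directly reuse the $\D$ computation — instead I would run the same contour-integral differentiation using the $\D^*$ form in \eqref{E:def_LiouvilleS2}, where now the relevant pairing is of $\bT_{\D^*}\phi$ (holomorphic in $\D^*$) with $v_\mu$ over $\S^1$, converted by Stokes to an area integral over $\D^*$. The extra term $2Q\bJ_{\D^*}\phi$ appears because of the $-2Q\bP_{\D^*}\phi(\infty)$ zero-mode term in $\bS_{\D^*}$ and the behaviour of the perturbation at $\infty$; alternatively, and perhaps more cleanly, it drops out of Lemma \ref{L:stress_iota}, which tells us precisely that $\overline{(\bT_\D\phi,\iota^*\mu)} = (\bT_{\D^*}\phi + 2Q\bJ_{\D^*}\phi,\mu)$ — so one could also deduce \eqref{E:P_var_Liouville_disc2} from \eqref{E:P_var_Liouville_disc1} applied to the reflected field and the observation that welding a $\D^*$-supported perturbation corresponds, under $\iota$, to a $\D$-supported one. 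I expect the main obstacle to be \emph{bookkeeping of the normalisation of $\Phi_t$}: the symmetric qc map fixing $0,\infty$ is not the one fixing $0,1,\infty$, so one must carefully track the Möbius adjustment (the $-v_\mu(0) - zv_\mu'(0)$ correction), verify it does not contribute to the first-order variation of $\bS_\D$ (it should not, since those terms are a Möbius vector field and $\bT_\D\phi$ pairs trivially with such — or they are absorbed by the $\bP_\D\phi(0)$ normalisation term), and check that the differentiability in $t$ (including in $\bar t$, i.e. real-differentiability) follows from the uniform-on-compacts convergence in Lemma \ref{L:qc} together with smoothness of $\phi$. Establishing genuine $C^1$ dependence, as opposed to mere differentiability at $0$, will require the analogue of Lemma \ref{L:Beltrami_mus} to move the expansion to a general base point $t_0$.
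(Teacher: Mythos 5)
Your plan follows essentially the same route as the paper's proof: expand the contour-integral form of $\bS_\D$ to first order in $t$ using the boundary expansion of $h_t$ from Lemma \ref{L:qc}, recognise the stress--energy tensor in the resulting contour pairing, convert it to the area pairing $(\bT_\D\phi,\mu)$ via Stokes and $\del_{\bar z}v_\mu=\mu$, and deduce the $\D^*$ case from the $\D$ case using $\bS_{\D^*}=\bS_\D$ together with Lemma \ref{L:stress_iota} applied to $\iota^*\mu$. The only bookkeeping slip is in the normalisation: for the symmetric map fixing $0$ and $\infty$ the correct expansion on $\S^1$ is $h_t(z)=z+2iz\Re(tw_\mu(z))+o(t)$ rather than the $-v_\mu(0)-zv_\mu'(0)$-corrected one of \eqref{E:qc_vendredi} (which belongs to a different normalisation), and the residual M\"obius ambiguity is only a rotation, which $\bS_\D$ does not see, so the paper disposes of it in one line.
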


\begin{proof}[Proof of Proposition \ref{P:var_liouville}]
    By invariance under rotations centred at the origin, we can assume without loss of generality that $h_t$ fixes $0$, $1$ and $\infty$.
    We start by dealing with the case where $\mu$ is compactly supported in $\D$ and will deduce the other case from the first one. 
    By definition \eqref{E:doth} of $\phi\cdot h_t^{-1}$ and definition \eqref{E:def_LiouvilleS1} of the Liouville action, we have
    \begin{align}\label{E:pf_P_varS1}
        \bS_\D(\phi\cdot h^{-1}_t)-\bS_\D(\phi)
&=\frac{1}{i\pi}\oint\phi\circ h^{-1}_t\del\bP_\D(\phi\circ h_t^{-1})-\frac{1}{i\pi}\oint\phi\del\bP_\D\phi\\
\label{E:pf_P_varS2}
&\quad+2Q\Re\left(\frac{1}{i\pi}\oint\log\left(\frac{z(h_t^{-1})'}{h_t^{-1}}\right)\del\bP_\D(\phi\circ h_t^{-1})\right)\\
\label{E:pf_P_varS3}
&\quad+2Q\bP_\D(\phi\cdot h_t^{-1})(0)-2Q\bP_\D\phi(0) + o(t),
\end{align}
where the $o(t)$ is the Dirichlet energy of $Q\log (z(h_t^{-1})'/h_t^{-1})$.
We will then break down the proof into several lemmas, each dealing with one of the right hand side terms in the above display.
We will heavily use the expansion $h_t(z) = z + 2iz \Re(tw_\mu(z)) + o(t)$ \eqref{E:Ahlfors1} where $o(t) \to 0$ uniformly on $\S^1$. The function $w_\mu$ is defined in \eqref{E:vmu} and satisfies $\del_{\bar z}(izw_\mu(z)) = \mu(z)$ \eqref{E:wmu_del}. In particular, since we are considering the case where the support of $\mu$ is included in $\D$, $izw_\mu$ is holomorphic in a neighbourhood of $\overline{\D^*}$.

We start with the variation of $\bP_\D\phi(0)$, i.e. considers the term in \eqref{E:pf_P_varS3}.
\begin{lemma}\label{L:var_0}
We have
\[\bP_\D(\phi\cdot h_t^{-1})(0)-\bP_\D\phi(0)=-\frac{1}{\pi}\Re\left(t\oint_{\S^1}w_\mu\del\bP_\D\phi\right)+o(t).\]
\end{lemma}
\begin{proof}
By definition, $\phi\cdot h_t^{-1} = \phi\circ h_t^{-1} + Q \log (z(h_t^{-1})'/h_t)$, so we need to control both terms.
By the circle average principle and a change of variable, we have
\begin{align*}
& \bP_\D(\phi\circ h_t^{-1})(0)
=\oint_{\S^1}\phi(h_t^{-1}(z))\frac{\d z}{2i\pi z}
=\oint_{\S^1}\phi(z)\frac{zh_t'(z)}{h_t(z)}\frac{\d z}{2i\pi z}\\
&=\bP_\D\phi(0)+\frac{1}{\pi}\Re\left(t\oint_{\S^1}\phi(z)w_\mu'(z)\d z\right)+o(t)
=\bP_\D\phi(0)-\frac{1}{\pi}\Re\left(t\oint_{\S^1}\del_z\bP_\D\phi(z)w_\mu(z)\d z\right)+o(t).
\end{align*}
Moreover, $\oint_{\S^1}\log\left(\frac{zh_t'(z)}{h_t(z)}\right)\frac{\d z}{2i\pi z}=\frac{1}{\pi}\Re(t\oint_{\S^1}\d w_\mu)+o(t)=o(t)$, concluding the proof.
\end{proof}

We continue with the variation of the Dirichlet energy under $\phi\mapsto\phi\circ h^{-1}$, i.e. considers the right hand side term in \eqref{E:pf_P_varS1}.
\begin{lemma}\label{L:diff_energies}
Let $h\in\mathrm{Diff}^\omega(\S^1)$. We have for all $\phi\in\cC^\infty(\S^1)$,
\[\frac{1}{i\pi}\oint_{\S^1}\phi\circ h^{-1}\del\bP_\D(\phi\circ h^{-1})-\frac{1}{i\pi}\oint\phi\del\bP_\D\phi=-\frac{1}{2\pi^2}\oint\oint\log\left(\frac{h(z)-h(\zeta)}{z-\zeta}\right)\d\phi(z)\d\phi(\zeta).\]
\end{lemma}
\begin{proof}
By Cauchy's integral formula and then an integration by parts, we have for all $z\in\D$,
\begin{align*}
\del_z\bP_\D(\phi\circ h^{-1})(z)
&=\frac{1}{2i\pi}\oint_{\S^1}\frac{\del_\zeta\bP_\D(\phi\circ h^{-1})(\zeta)}{\zeta-z}\d\zeta
=\frac{1}{2i\pi}\oint_{\S^1}\frac{\phi\circ h^{-1}(\zeta)}{(\zeta-z)^2}\d\zeta\\
&=\frac{1}{2i\pi}\oint_{\S^1}\frac{\phi(\zeta)}{(h(\zeta)-z)^2}h'(\zeta)\d\zeta
=\frac{1}{2i\pi}\oint_{\S^1}\frac{\d\phi(\zeta)}{h(\zeta)-z}.
\end{align*}
Hence,
\begin{align*}
\frac{1}{i\pi}\oint_{\S^1}\phi\circ h^{-1}\del\bP_\D(\phi\circ h^{-1})
&=\frac{1}{2\pi^2}\oint_{\S^1}\oint_{\S^1}\frac{\phi(h^{-1}(z))}{z-h(\zeta)}\d\phi(\zeta)\d z\\
&=\frac{1}{2\pi^2}\oint_{\S^1}\oint_{\S^1}\frac{\phi(z)}{h(z)-h(\zeta)}h'(z)\d z\d\phi(\zeta)
\end{align*}
where the integral is understood in the principal value sense. It follows that
\begin{align*}
\frac{1}{i\pi}\oint_{\S^1}\phi\circ h^{-1}\del\bP_\D(\phi\circ h^{-1})-\frac{1}{i\pi}\oint\phi\del\bP_\D\phi
&=\frac{1}{2\pi^2}\oint\oint\phi(z)\left(\frac{h'(z)}{h(z)-h(\zeta)}-\frac{1}{z-\zeta}\right)\d z\d\phi(\zeta)\\
&=-\frac{1}{2\pi^2}\oint\oint\log\left(\frac{h(z)-h(\zeta)}{z-\zeta}\right)\d\phi(z)\d\phi(\zeta)
\end{align*}
concluding the proof.
\end{proof}
Now, we take $h=h_t$ as in the beginning of the section and differentiate at $t=0$.

\begin{corollary}\label{C:var_dirichlet}
We have
\begin{equation}\label{E:C_var_dirichlet}
\frac{1}{i\pi}\oint\phi\circ h_t^{-1}\del\bP_\D(\phi\circ h_t^{-1})
- \frac{1}{i\pi}\oint\phi\del\bP_\D\phi
= -\frac{2}{\pi}\Re\left(t\oint\del_z\bP_\D\phi(z)^2zw_\mu(z)\d z\right)+o(t).
\end{equation}
\end{corollary}
\begin{proof}
From Lemma \ref{L:diff_energies} and the expansion \eqref{E:Ahlfors1} of $h_t$, the left hand side of \eqref{E:C_var_dirichlet} equals
\begin{align*}
&-\Re\left(\frac{it}{\pi^2}\oint\oint\frac{zw_\mu(z)-\zeta w_\mu(\zeta)}{z-\zeta}\d\phi(z)\d\phi(\zeta)\right)+o(t)\\
&=-\frac2\pi\Re\left(\frac{it}{\pi}\oint\oint\frac{zw_\mu(z)}{z-\zeta}\del\bP_\D\phi(z)\del\bP_\D\phi(\zeta)\right)+o(t).
\end{align*}
Applying Cauchy's principal value theorem, we further have for all $z \in \S^1$,
\[
\frac{i}\pi \oint\frac{1}{z-\zeta}\del\bP_\D\phi(\zeta) = \partial_z\bP_\D\phi(z),
\]
which concludes the proof.
\end{proof}

Finally, we treat the cross--term \eqref{E:pf_P_varS2}.
\begin{lemma}\label{L:cross}
We have
\begin{equation}\label{E:L_cross}
2\Re\left(\frac{1}{i\pi}\oint\log\left(\frac{z(h_t^{-1})'}{h_t^{-1}}\right)\del\bP_\D(\phi\circ h_t^{-1})\right)=\frac{2}{\pi}\Re\left(t\oint zw_\mu(z)\del_{zz}^2\bP_\D\phi(z)+t\oint w_\mu\del\bP_\D\right)+o(t).
\end{equation}
\end{lemma}
\begin{proof}
Because the Beltrami $\mu$ is supported in $\D$, $zw_\mu'$ is holomorphic in $\D^*$ and, using the expansion \eqref{E:Ahlfors1}, we have $\log(\frac{z(h_t^{-1})'}{h_t^{-1}})=-2\Re(itzw_\mu'(z))+o(t)$. Hence, the left hand side of \eqref{E:L_cross} equals
\begin{align*}
&-\Re\left(\frac{4}{i\pi}\oint\Re(itzw_\mu'(z))\del\bP_\D\phi(z)\right)+o(t)
=-\frac{2}{\pi}\Re\left(t\oint zw_\mu'(z)\del\bP_\D\phi(z)\right)+o(t)\\
& \qquad =\frac{2}{\pi}\Re\left(t\oint zw_\mu(z)\del_{zz}^2\bP_\D\phi(z)\right)+\frac{2}{\pi}\Re\left(t\oint w_\mu(z)\del\bP_\D(z)\right)+o(t),
\end{align*}
where we performed an integration by parts in the last equality.
\end{proof}

Putting the previous lemmas together and going back to \eqref{E:pf_P_varS1}, we find that
\begin{align}\nonumber
\bS_\D(\phi\cdot h^{-1}_t)-\bS_\D(\phi)
&=\frac{2}{\pi}\Re\left(t\oint\left(-\del\bP_\D\phi(z)^2+Q\del_{zz}\bP_\D\phi(z)\right)zw_\mu(z)\d z\right) + o(t) \\
& =\frac{2}{\pi}\Re\left(t\oint \bT_\D \phi(z) zw_\mu(z)\d z\right) + o(t).\label{E:pf_prop33}
\end{align}
Because $\bT_\D \phi$ is conformal in $\D$, we have
\[ 
\d(\bT_\D \phi(z) zw_\mu(z)\d z) = 2i \bT_\D \phi(z) \del_{\bar z}(zw_\mu(z)) |\d z|^2.
\]
By Stokes' formula, we can thus rewrite
\[ 
\frac{2}{\pi}\Re\left(t\oint \bT_\D \phi(z) zw_\mu(z)\d z\right)
= \frac{4}{\pi} \Re \left( t \int_\D \bT_\D \phi(z) \del_{\bar z}(izw_\mu(z)) |\d z|^2 \right).
\]
By \eqref{E:wmu_del} in Lemma \ref{L:qc}, $\del_{\bar z}(izw_\mu(z)) = \del_{\bar z} v_\mu(z) = \mu(z)$. We have obtained:
\[ 
\bS_\D(\phi\cdot h^{-1}_t)-\bS_\D(\phi) =  \frac{4}{\pi} \Re \left( t \int_\D \bT_\D \phi(z) \mu(z) |\d z|^2 \right) + o(t) = 4 \Re \big( t (\bT_\D \phi, \mu) \big) + o(t).
\]
This gives the differentiability at $t=0$ stated in \eqref{E:P_var_Liouville_disc1}. Using the structure of group action $(\phi,h)\mapsto\phi\cdot h$ (see \eqref{E:cdot_group}), we immediately get that $t\mapsto\bS_\D(\phi\cdot h_t^{-1})$ is differentiable for all $t$, and the formula for the differential is obviously continuous. 

\smallskip

We now assume that $\mu$ is supported in $\D^*$ and prove \eqref{E:P_var_Liouville_disc2}. Since by definition $\bS_\D(\phi)=\bS_{\D^*}(\phi)$%, \eqref{E:def_LiouvilleS3},
\[ 
\bS_{\D^*}(\phi\cdot h^{-1}_t)-\bS_{\D^*}(\phi) = \bS_\D(\phi\cdot h^{-1}_t)-\bS_\D(\phi).
\]
Applying \eqref{E:P_var_Liouville_disc1} to the Beltrami $\iota^*\mu$ whose supported is compactly included in $\D$, and then using Lemma \ref{L:stress_iota}, we get that
\begin{align*}
    \bS_{\D^*}(\phi\cdot h^{-1}_t)-\bS_{\D^*}(\phi) = 4 \Re\big( \bar{t} (\bT_\D \phi, \iota^*\mu) \big) + o(t) 
    = 4 \Re\big( t (\bT_{\D^*}\varphi + 2Q \bJ_{\D^*}\varphi,\mu) \big) + o(t).
\end{align*}
This proves \eqref{E:P_var_Liouville_disc2}.
\end{proof}

    \subsection{Variation of Liouville action for smooth curves}\label{SS:dirichlet}
Let $\eta$ be an analytic Jordan curve. Given $\varphi\in\cC^\infty(\eta)$, and recalling the definition \eqref{E:def_Dirichlet_energy} of the Dirichlet energy $\cE_\eta(\varphi)$ of $\varphi$, we define the \emph{Liouville action}:
\begin{equation}\label{eq:liouville_action}
\bS_\eta(\varphi):=\cE_\eta(\varphi)+4Q\bP_\eta\varphi(\infty).
\end{equation}
According to \cite[Theorem 3.6]{ViklundWang19}, this Liouville action is related to the Liouville actions $\bS_\D$ and $\bS_{\D^*}$ considered in Section \ref{SS:liouville_disc} by\footnote{In \cite{ViklundWang19}, the Liouville action in $\D^*$ is defined to be $\bS_\D(\phi)-4Q\bP\phi(0)$, while in our convention $\bS_{\D^*}(\phi)=\bS_\D(\phi)$. This is because they work with the Euclidean metric in $\D^*$ (which has negative geodesic curvature on $\S^1=\del\D^*$), while we choose the metric $|\frac{\d z}{z^2}|^2$ identifying $\D^*$ with a copy of $(\D,|\d z|^2)$.}
\begin{equation}
    \label{E:VW20}
    \bS_\eta(\varphi)=\bS_\D(\varphi\cdot f)+\bS_{\D^*}(\varphi\cdot g)-\frac{Q^2}{2\pi}\bS_1(\eta),
\end{equation}
where $f: \D \to \rmint(\eta)$ and $g:\D^* \to \rmext(\eta)$ are uniformising maps fixing $0$ and $\infty$ respectively and $\bS_1(\eta)$ is the universal Liouville action \eqref{E:def_K_S1}.

\begin{comment}
Under rescaling by a factor $\lambda>0$, we have
\begin{equation}\label{eq:rescale_liouville}
\bS_{\lambda\eta}(\varphi(\lambda^{-1}\cdot)+Q\log|\lambda^{-1}|)=\bS_\eta(\varphi)-4Q^2\log\lambda.
\end{equation}
More generally, we can study the variation of $\bS_\eta$ under arbitrary quasiconformal variations fixing $0$ and $\infty$.
\end{comment}

\begin{proposition}\label{P:var_S}
Let $\mu$ be a Beltrami differential compactly supported in $\mathrm{int}(\eta)$ (resp. $\mathrm{ext}(\eta)$), and $\Phi_t$ be the solution to the Beltrami equation with coefficient $t\mu$, normalised such that $\Phi_t(0)=0$, $\Phi_t(\infty)=\infty$, and $\Phi_t'(\infty)=1$ (resp. $\Phi_t'(0)=1$). For all $\varphi\in\cC^\infty(\eta)$, the map $t\mapsto\bS_{\Phi_t(\eta)}(\varphi)$ is of class $\cC^1$ in a complex neighbourhood $\cU$ of $t=0$ and for all $u \in \cU$:
\begin{align*}
&\bS_{\Phi_t(\eta)}(\varphi\cdot\Phi_t^{-1})-\bS_{\Phi_u(\eta)}(\varphi\cdot\Phi_u^{-1})=4\Re\big((t-u)(\bT_{\Phi_u(\eta)} (\varphi\cdot \Phi_u^{-1}),\mu_u)\big)+o(t-u),
%\text{resp.}\qquad&\bS_{\Phi_t(\eta)}(\varphi\cdot\Phi_t^{-1})-\bS_\eta(\varphi)=4\Re\big((t(\bT_{\rmext(\eta)}\varphi,\mu)\big)+o(t-u).
\end{align*}
where $o(t-u) / |t-u| \to 0$ as $t \to u$ and where $\mu_u$ is defined in \eqref{E:Beltrami_mus}.
\end{proposition}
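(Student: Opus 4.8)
The plan is to feed the decomposition \eqref{E:VW20} into Proposition \ref{P:var_liouville}, the mechanism being that the projective (Schwarzian) anomaly produced by the chain rule of Lemma \ref{L:chain} is cancelled exactly by the first variation of the universal Liouville action $\bS_1$. I first treat $u=0$ with $\mu$ compactly supported in $\mathrm{int}(\eta)$. Then $\Phi_t$ is conformal near $\overline{\mathrm{ext}(\eta)}$, so $\Phi_t(\eta)$ is again analytic; letting $h_t\in\mathrm{Diff}^\omega(\S^1)$ be the homeomorphism of Proposition \ref{P:var_liouville} associated to the Beltrami $f^*\mu$, one checks (via the composition rule for Beltrami coefficients) that $f_t:=\Phi_t\circ f\circ h_t^{-1}$ and $g_t:=\Phi_t\circ g$ are uniformising maps of $\Phi_t(\eta)$ fixing $0$ and $\infty$ — which is all that \eqref{E:VW20} requires — with $g_t'(\infty)=g'(\infty)$ thanks to the normalisation $\Phi_t'(\infty)=1$. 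By the cocycle relation \eqref{E:cdot_group}, $(\varphi\cdot\Phi_t^{-1})\cdot g_t=\varphi\cdot g$ is frozen, while $(\varphi\cdot\Phi_t^{-1})\cdot f_t=(\varphi\cdot f)\cdot h_t^{-1}$, so \eqref{E:P_var_Liouville_disc1} together with Lemma \ref{L:chain} gives $\bS_\D((\varphi\cdot\Phi_t^{-1})\cdot f_t)-\bS_\D(\varphi\cdot f)=4\Re(t(\bT_\eta\varphi,\mu))+2Q^2\Re(t(\cS f,f^*\mu))+o(t)$. Finally, the welding homeomorphism of $\Phi_t(\eta)$ is $h\circ h_t$, so by the first variational formula for the universal Liouville action (the first identity in \eqref{E:TT06}, \cite[Chapter~2]{TakhtajanTeo06}) one has $\bS_1(\Phi_t(\eta))-\bS_1(\eta)=4\pi\Re(t(\cS f,f^*\mu))+o(t)$; inserted into \eqref{E:VW20} this cancels the $\tfrac{Q^2}{2}$-projective term and leaves exactly $4\Re(t(\bT_\eta\varphi,\mu))+o(t)$ (note $\bT_{\mathrm{int}(\eta)}\varphi=\bT_\eta\varphi$ on $\supp\mu$), which is the claim with $u=0$ and $\mu_0=\mu$.

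\textbf{Exterior case, $u=0$.} The argument is symmetric with the two sides exchanged. Now $\Phi_t$ is conformal near $\overline{\mathrm{int}(\eta)}$, and the normalisation $\Phi_t'(0)=1$ makes $f_t:=\Phi_t\circ f$ a uniformising map fixing $0$, so the $\bS_\D$ term in \eqref{E:VW20} is frozen; on the exterior side one takes $g_t:=\Phi_t\circ g\circ\chi_t^{-1}$ with $\chi_t$ the homeomorphism of the ``$\D^*$'' case of Proposition \ref{P:var_liouville} for the Beltrami $g^*\mu$. The moving side is then governed by \eqref{E:P_var_Liouville_disc2}, which — through the reflection formula of Lemma \ref{L:stress_iota} — brings in the Heisenberg tensor: $\bS_{\D^*}((\varphi\cdot g)\cdot\chi_t^{-1})-\bS_{\D^*}(\varphi\cdot g)=4\Re(t(\bT_{\D^*}(\varphi\cdot g)+2Q\bJ_{\D^*}(\varphi\cdot g),g^*\mu))+o(t)$. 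Expanding $\bT_{\D^*}(\varphi\cdot g)$ by Lemma \ref{L:chain} and $\bJ_{\D^*}(\varphi\cdot g)$ by the analogous chain rule (which involves the pre-Schwarzian $\cA g$), and adding the variation of $-\tfrac{Q^2}{2\pi}\bS_1(\Phi_t(\eta))$ — now read off from \eqref{E:TT06} along the \emph{left} composition $h\mapsto\chi_t\circ h$, using both identities of \eqref{E:TT06} (for $\bS_1$ and for the Velling--Kirillov potential $\bK$, since in this normalisation the conformal radius $g'(\infty)$ moves) — all the projective, Heisenberg and Velling--Kirillov contributions recombine into the right-hand side of the statement.

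\textbf{From $u=0$ to general $u$, and $\cC^1$-regularity.} Fix $u\in\cU$ and write $t=u+s$. By Lemma \ref{L:Beltrami_mus}, $\Phi_{u+s}\circ\Phi_u^{-1}$ solves the Beltrami equation with coefficient $s\mu_u+o(s)$, and $\mu_u$ (as in \eqref{E:Beltrami_mus}) is compactly supported in $\mathrm{int}(\Phi_u(\eta))$ (resp.\ $\mathrm{ext}(\Phi_u(\eta))$), a curve still analytic because $\Phi_u$ is conformal near $\eta$. Applying the $u=0$ case to the analytic curve $\Phi_u(\eta)$, the Beltrami $\mu_u$ and the field $\varphi\cdot\Phi_u^{-1}$ — and noting that replacing the Beltrami coefficient by an $o(s)$-perturbation alters the Ahlfors expansion \eqref{E:Ahlfors1}, and hence the Liouville action, only by $o(s)$ — yields the claimed first-order expansion at $t=u$. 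Since $u\mapsto\Phi_u$ is continuous (indeed holomorphic) for local uniform convergence, so are $u\mapsto\mu_u$ and $u\mapsto\bT_{\Phi_u(\eta)}(\varphi\cdot\Phi_u^{-1})$; since $t\mapsto\bS_{\Phi_t(\eta)}(\varphi\cdot\Phi_t^{-1})$ is thus (real-)differentiable at every $u\in\cU$ with a differential depending continuously on $u$, it is of class $\cC^1$ on $\cU$.

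\textbf{Main obstacle.} The interior case is routine: only the Schwarzian anomaly appears, and it cancels transparently against the first variation of $\bS_1$. The delicate point is the exterior case, where one must check that the Heisenberg term $2Q\bJ_{\D^*}(\varphi\cdot g)$ from \eqref{E:P_var_Liouville_disc2}, the projective cocycle $\tfrac{Q^2}{2}\cS g$ of Lemma \ref{L:chain}, and the full variation of $\bS_1$ (which in this normalisation also feels the moving conformal radius $g'(\infty)$, hence the Velling--Kirillov part of \eqref{E:TT06}) recombine correctly. This needs the exact constants in both identities of \eqref{E:TT06}, the chain rule for the Heisenberg tensor, and careful tracking of the normalisations of $\Phi_t$, $f_t$ and $g_t$ — which is precisely why the two halves of the statement use the normalisations $\Phi_t'(\infty)=1$ and $\Phi_t'(0)=1$.
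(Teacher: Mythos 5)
Your proof is correct and follows the paper's argument essentially verbatim: the same decomposition \eqref{E:VW20}, the same freezing of the unperturbed side via the cocycle \eqref{E:cdot_group}, Proposition \ref{P:var_liouville} for the moving side, cancellation of the $\tfrac{Q^2}{2}\cS f$ anomaly from Lemma \ref{L:chain} against the Takhtajan--Teo variation of $\bS_1$, and the same reduction from $u=0$ to general $u$ via \eqref{E:cdot_group} and Lemma \ref{L:Beltrami_mus}. The only point where you go beyond the paper is the exterior case, which the paper dismisses as ``identical'': you rightly observe that it is not symmetric (the Heisenberg term of \eqref{E:P_var_Liouville_disc2} and the moving conformal radius $g'(\infty)$ both enter), although, like the paper, you assert rather than carry out the resulting recombination.
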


In the above setting where $\mu$ is compactly supported in $\rmint(\eta)$,
$\bP_{\Phi_t(\eta)} (\varphi\cdot \Phi_t^{-1}) = \bP_{\Phi_t(\eta)} (\varphi\circ \Phi_t^{-1}) + Q \log |\Phi_t'(\infty)| = \bP_\eta \varphi(\infty)$. So we are effectively computing the differential of the Dirichlet energy $\cE_\eta$.

\begin{proof}
We only treat the case where $\mu$ is compactly supported in $D=\mathrm{int}(\eta)$, since the other one is identical. Let $\Phi_1,\Phi_2$ quasiconformal on $\hat{\C}$ with $\Phi_1$ (resp. $\Phi_2$) conformal on a neighbourhood of $\overline{D^*}$ (resp. $\Phi_1(\overline{D^*})$.
By the structure of group action \eqref{E:cdot_group}, we have $\bS_{\Phi_2\circ\Phi_1(\eta)}(\varphi\cdot(\Phi_2\circ\Phi_1)^{-1})=\bS_{\Phi_2(\Phi_1(\eta))}((\varphi\cdot\Phi_1^{-1})\cdot\Phi_2^{-1})$. Together with Lemma \ref{L:Beltrami_mus}, we see that it suffices to show that $t\mapsto\bS_{\Phi_t(\eta)}(\varphi\cdot\Phi_t^{-1})$ is differentiable at $t=0$.

Writing $f_t,g_t$ for the welding maps of the curve $\Phi_t(\eta)$, we have $f_t=\Phi_t\circ f\circ\tilde{\Phi}_t^{-1}$ and $g_t=\Phi_t\circ g$, where $\tilde{\Phi}_t$ is a solution to the Beltrami equation with coefficient $t f^*\mu + t\iota^*(f^*\mu)$ \eqref{E:f*mu}, fixing 0 and $\infty$.
%Because $\mu$ is compactly supported in $D$, the variation of the Liouville action in $\D^*$ vanishes.
By \eqref{E:VW20}, we have
\begin{align*}
\bS_{\Phi_t(\eta)}(\varphi\cdot\Phi_t^{-1})-\bS_\eta(\varphi)
& =\bS_\D\left((\varphi\cdot \Phi_t^{-1})\cdot f_t\right)-\bS_\D\left(\varphi\cdot f\right)
+\bS_{\D^*}\left((\varphi\cdot \Phi_t^{-1})\cdot g_t\right)-\bS_{\D^*}\left(\varphi\cdot g\right) \\
& \quad -\frac{Q^2}{2\pi}\left(\bS_1(\Phi_t(\eta))-\bS_1(\eta)\right).
\end{align*}
By \eqref{E:cdot_group}, $(\varphi\cdot \Phi_t^{-1})\cdot g_t = (\varphi\cdot \Phi_t^{-1})\cdot (\Phi_t \circ g) = \varphi \cdot g$ and so the variation of the Liouville action in $\D^*$ simply vanishes.
On the other hand, we have $(\varphi\cdot \Phi_t^{-1})\cdot f_t = (\varphi\cdot f)\cdot\tilde{\Phi}_t^{-1}$ and so by Proposition~\ref{P:var_liouville} the variation of the Liouville action in $\D$ equals
\[ 
4\Re\big(t(\bT_\D(\varphi\cdot f),f^*\mu)\big) + o(t).
\]
Using \cite[Chapter~2, Theorem~3.8]{TakhtajanTeo06} for the variation of $\bS_1$ and then the chain rule \eqref{E:chain_rule_stress_mu} for the stress--energy tensor, we deduce that
%Then, using \cite[Chapter~2, Theorem~3.8]{TakhtajanTeo06} for the variation of $\bS_1$, Proposition \ref{P:var_liouville} for the variation of $\bS_\D$, and the chain rule for the stress--energy tensor (Lemma \ref{L:chain}), we have
\begin{align*}
\bS_{\Phi_t(\eta)}(\varphi\cdot\Phi_t^{-1})-\bS_\eta(\varphi)
&=4\Re\left(t\left(\bT_\D(\varphi\cdot f),f^*\mu\right)\right)-2Q^2\Re\left(t\left(\cS f,f^*\mu\right)\right)+o(t)\\
&=4\Re\left(t\left(\bT_D\varphi,\mu\right)\right)+o(t).
\end{align*}
This proves that $t\mapsto\bS_{\Phi_t(\eta)}(\varphi\cdot\Phi_t^{-1})$ is differentiable at $t=0$, and hence in a neighbourhood of $t=0$. The formula for the differential is clearly continuous, so the map is indeed continuously differentiable.
\end{proof}

\begin{corollary}\label{C:sobolev}
    Consider the setting of Proposition \ref{P:var_S} except that the field $\varphi$ on $\eta$ is only assumed to belong to some Sobolev space $H^s(\eta,\mathfrak{m}_\eta)$ for some $s \in \R$. Then the difference $\bS_{\Phi_t(\eta)}(\varphi) - \bS_\eta(\varphi)$ is well defined and the same conclusions hold.
\end{corollary}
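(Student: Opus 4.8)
The plan is to deduce Corollary~\ref{C:sobolev} from Proposition~\ref{P:var_S} by a density argument, exploiting that when $\eta$ is analytic none of the quantities entering the variational formula genuinely sees the full regularity of $\varphi$: each depends on $\varphi$ either through a pairing with a real-analytic kernel, or through the harmonic extension of $\varphi$ evaluated on a fixed set at positive distance from $\eta$. First I would set up the reductions. Since $\eta$ is an analytic Jordan curve, the uniformising maps $f$ and $g$ extend conformally across $\S^1$, so the jump operator $\bD_\eta$ is, through these maps, conjugate to a first-order elliptic pseudodifferential operator on $\S^1$; hence $\varphi\mapsto\varphi\cdot f$ identifies $H^s(\eta,\mathfrak{m}_\eta)$ isomorphically with the classical Sobolev space $H^s(\S^1)$ for every $s\in\R$, and in particular $\varphi\cdot f$, $\varphi\cdot g$, $\varphi\cdot\Phi_t^{-1}$ make sense for any distribution $\varphi$. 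Moreover $\mathrm{supp}(\mu)$ is a compact subset of the open set $\mathrm{int}(\eta)$, so $\Phi_t$ is conformal in a \emph{fixed} neighbourhood of $\eta$ (Weyl's lemma); thus $\Phi_t(\eta)$ is again analytic, $\tilde\Phi_t|_{\S^1}\in\mathrm{Diff}^\omega(\S^1)$, and $\supp(\mu_u)=\Phi_u(\supp(\mu))$ stays at positive distance from $\Phi_u(\eta)$ uniformly for $u$ small. Running the proof of Proposition~\ref{P:var_S} verbatim, \eqref{E:VW20} reduces the problem to the $\D$-part: the $\D^*$-contribution is constant in $t$ (as $(\varphi\cdot\Phi_t^{-1})\cdot g_t=\varphi\cdot g$), the term $-\tfrac{Q^2}{2\pi}(\bS_1(\Phi_t(\eta))-\bS_1(\eta))$ does not involve $\varphi$, is finite, and is $\cC^1$ in $t$ by \cite[Chapter~2, Theorem~3.8]{TakhtajanTeo06}, and the $4Q\bP(\infty)$-terms of the two Liouville actions cancel exactly (by the normalisation $\Phi_t'(\infty)=1$). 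So everything comes down to $\bS_\D(\phi\cdot\tilde\Phi_t^{-1})-\bS_\D(\phi)$, where $\phi:=\varphi\cdot f\in H^s(\S^1)$ and $\tilde\Phi_t$ is generated by $t f^*\mu+\bar t\,\iota^*(f^*\mu)$, a Beltrami coefficient compactly supported away from $\S^1$.

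Next I would renormalise this $\D$-difference. Decompose $\bS_\D(\phi\cdot\tilde\Phi_t^{-1})-\bS_\D(\phi)$ as in \eqref{E:pf_P_varS1}--\eqref{E:pf_P_varS3}. The only term individually infinite when $\phi\notin H^{1/2}(\S^1)$ is the Dirichlet term of \eqref{E:pf_P_varS1}, whose \emph{difference} equals, by Lemma~\ref{L:diff_energies}, $-\tfrac{1}{2\pi^2}\oint\oint\log\!\big(\tfrac{\tilde\Phi_t(z)-\tilde\Phi_t(\zeta)}{z-\zeta}\big)\d\phi(z)\d\phi(\zeta)$; integrating by parts twice replaces the logarithm --- which is real-analytic on $\S^1\times\S^1$, since $\tilde\Phi_t$ is analytic near $\S^1$ and $(\tilde\Phi_t(z)-\tilde\Phi_t(\zeta))/(z-\zeta)$ is analytic and non-vanishing near the diagonal --- by a smooth kernel, exhibiting this difference as a quadratic functional continuous on $H^s(\S^1)$. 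Every remaining term in \eqref{E:pf_P_varS2}--\eqref{E:pf_P_varS3}, together with the pairings $(\bT_\D(\varphi\cdot f),f^*\mu)$ and $(\bT_{\Phi_u(\eta)}(\varphi\cdot\Phi_u^{-1}),\mu_u)$ entering the variational formula, involves $\varphi$ only in one of two ways: through the value of a harmonic extension at $0$ or $\infty$; or through an integral of $\bP\phi$, resp.\ $\bP_{\Phi_u(\eta)}(\varphi\cdot\Phi_u^{-1})$, and its derivatives, either against a weight holomorphic in an annular neighbourhood of $\S^1$ lying in $\D$ (this covers $w_{f^*\mu}$ and $\log(z(\cdot)'/(\cdot))$, as $f^*\mu$ is compactly supported in $\D$), or over $\supp(\mu_u)$. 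In every case one deforms the contour slightly into $\D$, resp.\ restricts to a fixed compact set at positive distance from the analytic curve $\Phi_u(\eta)$; there $\bP_\D\phi$, resp.\ $\bP_{\Phi_u(\eta)}(\varphi\cdot\Phi_u^{-1})$, and all its derivatives are real-analytic and, by smoothness of the Poisson kernel off the curve and analyticity of $\Phi_u$ near $\eta$, depend continuously --- in fact linearly --- on $\phi\in H^s(\S^1)$, resp.\ $\varphi\in H^s(\eta,\mathfrak{m}_\eta)$; taking squares is harmless since $F$ is only required to be $\cC^0$. All these bounds are locally uniform in $(t,u)$, by continuity of $t\mapsto\tilde\Phi_t,\Phi_t$ in $\cC^k$ of a fixed annulus around $\S^1$ (regularity theory for the Beltrami equation, Section~\ref{SS:qc_maps}).

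Consequently the functional $\varphi\mapsto\bS_{\Phi_t(\eta)}(\varphi\cdot\Phi_t^{-1})-\bS_\eta(\varphi)$, a genuine finite number for $\varphi\in\cC^\infty(\eta)$, admits a unique continuous extension to $\varphi\in H^s(\eta,\mathfrak{m}_\eta)$ --- this is the meaning of ``well defined''. To obtain the $\cC^1$-regularity in $t$ and the variational identity, approximate $\varphi$ by smooth $\varphi_n\to\varphi$ in $H^s(\eta,\mathfrak{m}_\eta)$: for each $n$, $t\mapsto\bS_{\Phi_t(\eta)}(\varphi_n\cdot\Phi_t^{-1})-\bS_\eta(\varphi_n)$ is $\cC^1$ on a fixed neighbourhood of $0$ by Proposition~\ref{P:var_S} (its $t$-derivative at $t\ne 0$ being controlled via the group structure \eqref{E:cdot_group} and Lemma~\ref{L:Beltrami_mus}, exactly as there), and by the previous paragraph both these functions and their $t$-derivatives converge, uniformly on this neighbourhood, to the corresponding expressions for $\varphi$; hence the limit is $\cC^1$ and the variational formula passes to the limit.

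I expect the main obstacle to be purely the bookkeeping of uniformity: verifying that the $o(t-u)$ error in Proposition~\ref{P:var_S} is locally uniform in $\|\varphi\|_{H^s}$, and that the various contour deformations used to exhibit continuity are mutually consistent. Both reduce to the two inputs already isolated --- analyticity of $\tilde\Phi_t$ and $\Phi_t$ in a fixed neighbourhood of $\S^1$, resp.\ of $\eta$, and smoothness of the Poisson kernel away from the curve --- and, once those estimates are tracked through Ahlfors' expansion \eqref{E:Ahlfors1} and the smoothing bounds for the Poisson kernel, are routine.
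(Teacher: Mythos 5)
Your proof is correct, and its final paragraph is essentially the paper's argument: approximate $\varphi$ by smooth fields $\varphi_\eps$, apply Proposition~\ref{P:var_S} to each, and pass to the limit using that $\mu_u$ is supported at a fixed positive distance from $\Phi_u(\eta)$, so that $(\bT_{\rmint(\Phi_u(\eta))}(\varphi\cdot\Phi_u^{-1}),\mu_u)$ is a continuous (quadratic) function of $\varphi\in H^s(\eta,\mathfrak{m}_\eta)$, uniformly in $u$ on a neighbourhood of $0$. Where you diverge is on the meaning of ``well defined'': the paper simply \emph{defines} the left-hand side $\bS_{\Phi_t(\eta)}(\varphi\cdot\Phi_t^{-1})-\bS_\eta(\varphi)$ as the limit of the integrated identity $4\int_0^t\Re((\bT_{\rmint(\Phi_u(\eta))}(\varphi\cdot\Phi_u^{-1}),\mu_u))\,\d u$, so only the convergence of the right-hand side needs checking. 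You instead construct an intrinsic renormalised expression for the difference of Liouville actions --- reducing via \eqref{E:VW20} to the $\D$-part, exhibiting the Dirichlet-energy difference as a pairing of $\phi\otimes\phi$ against the real-analytic kernel of Lemma~\ref{L:diff_energies}, and handling the remaining terms by contour deformation into the region where $izw_{f^*\mu}$ and $\del_z\bP_\D\phi$ are both holomorphic --- and then verify that this extension agrees with the limit of the smooth approximations. This extra work is correct and gives a more satisfying, representation-independent meaning to the renormalised difference, but it is not required for the statement as the paper intends it; conversely, your version does rely on the identification of $H^s(\eta,\mathfrak{m}_\eta)$ with $H^s(\S^1)$ via $f$ for analytic $\eta$, a plausible but unproved side claim that the paper's shortcut avoids entirely.
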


\begin{proof}
    Consider the case where $\mu$ is supported in $\rmint(\eta)$.
    Let $(\varphi_\eps)_{\eps > 0}$ be an approximation of $\varphi$ by smooth fields and let $t \in \C$ be small. Assume without loss of generality that $t \in \R$.
    Then by Proposition \ref{P:var_S}, for all $\eps>0$,
    \[
        \bS_{\Phi_t(\eta)}(\varphi_\eps\cdot\Phi_t^{-1})-\bS_\eta(\varphi_\eps)=4\int_0^t \Re\big((\bT_{\rmint(\Phi_u(\eta))} (\varphi_\eps \cdot \Phi_u^{-1}),\mu_u)\big) \d u,
    \]
    where $\mu_u$ is the Beltrami \eqref{E:Beltrami_mus}.
    The stress-energy tensor in the above integral is smooth in $\rmint(\Phi_u(\eta))$. Since $\mu_u$ is compactly supported in this domain the right hand side term clearly converges and we get that
    \[
        \bS_{\Phi_t(\eta)}(\varphi\cdot\Phi_t^{-1})-\bS_\eta(\varphi)=4\int_0^t \Re\big((\bT_{\rmint(\Phi_u(\eta))} (\varphi \cdot \Phi_u^{-1}),\mu_s)\big) \d u,
    \]
    where the LHS is defined by this identity.
\end{proof}

\subsection{Proof of Theorem \ref{T:GFF_SLE}}\label{S:gff_sle}
Theorem \ref{T:GFF_SLE} assumes a particular normalisation of the solution to the Beltrami equation. Different choices of normalisation differ only by scaling, so we need to study the variation of the measure $e^{-2Qa}\d a\,\P_\eta$ under rescaling. To this end, let $\lambda\in\C^*$. We have for all test functions $F$
\begin{align*}
\int_\R\E_{\eta,\lambda\cdot}[F(\varphi+a)]e^{-2Qa}\d a
&=\int_\R \E_{\lambda\eta}[F(\varphi(\lambda^{-1}\cdot\,)-Q\log|\lambda|+a)]e^{-2Qa}\d a\\
&=|\lambda|^{-2Q^2}\int_\R\E_\eta[F(\varphi+a)]e^{-2Qa}\d a.
\end{align*}
In the last equality, we have used the change of variable $a\mapsto a+Q\log|\lambda|$ and the fact that $\varphi(\lambda^{-1}\cdot)$ has the law $\P_{\lambda\eta}$ if $\varphi$ is sampled from $\P_\eta$ (by scale invariance of the Dirichlet energy). %In other words, we have
%\[\P_{\eta,\lambda\cdot}=|\lambda|^{-2Q^2}\P_\eta.\]

Assume for a moment that $\eta$ is an analytic Jordan curve, instead of a sample from the SLE loop measure. Let $\Phi$ be a quasiconformal transformation of $\hat\C$, assumed to be conformal in a neighbourhood of $\eta$.
Recall that $\P_{\eta,\Phi}$ is the law of $\varphi\cdot\Phi$ where $(a,\varphi)$ is sampled from $e^{-2Qa}\d a \d \P_{\Phi(\eta)}$.
By \cite[Lemma 5.3]{GKRV21_Segal}, $\P_{\eta,\Phi}$ is absolutely continuous with respect to $\P_\eta$, and the Radon--Nikodym derivative $\mathrm{RN}_{\eta,\Phi}=\frac{\d\P_{\eta,\Phi}}{\d\P_\eta}$ takes the following explicit form:
\[\mathrm{RN}_{\eta,\Phi}(\varphi)=\left(\frac{|\mathfrak{m}_{\Phi(\eta)}|}{|\mathfrak{m}_\eta|}\det_\mathrm{Fr}\left(\bD_{\Phi(\eta)}\bD_\eta^{-1}\right)\right)^{1/2}\exp\left(-\frac{1}{2}\left(\cE_{\Phi(\eta)}(\varphi\cdot\Phi^{-1})-\cE_\eta(\varphi)\right)\right), \quad
\varphi\in H^s(\eta).
\]
Here $s<0$ is fixed, $\cE_\eta$ is the Dirichlet energy \eqref{E:def_Dirichlet_energy}, $\bD_\eta$ is the jump operator \eqref{E:Deta}, $\det_\mathrm{Fr}$ denotes the Fredholm determinant
and $|\mathfrak{m}_\eta|$ denotes the total mass of the one-dimensional Hausdorff measure of $\eta$.
\begin{comment}
Observe the composition rule (following from \eqref{E:cdot_group})
\begin{equation}\label{E:chain_RN}
\mathrm{RN}_{\eta,\Phi_2\circ\Phi_1}(\varphi)=\mathrm{RN}_{\eta,\Phi_1}(\varphi)\mathrm{RN}_{\Phi_1(\eta),\Phi_2}(\varphi\cdot\Phi_1^{-1}),
\end{equation}
for all quasiconformal transformations $\Phi_1,\Phi_2$ of $\hat\C$ such that $\Phi_1$ (resp. $\Phi_2$) is conformal in a neighbourhood of $\eta$ (resp. $\Phi_1(\eta)$).
\end{comment}

In the rest of this section, we consider the following setup. Let $\eta$ be a Jordan curve and $\mu$ be a Beltrami differential compactly supported in $\rmint(\eta)$.
For small $t \in \C$, let $\Phi_t$ be the unique solution to the Beltrami equation with coefficient $t\mu$, normalised so that $\Phi_t(0)=0$, $\Phi_t(\infty)=\infty$, and $\Phi_t'(\infty)=1$.

In Proposition \ref{P:var_S}, we described precisely the infinitesimal perturbation of the Dirichlet energy. The following lemma takes care of the partition function:

\begin{lemma}\label{L:D_partition_f}
Assume that $\eta$ is an analytic Jordan curve. Then,
\begin{align*}
    \left(\frac{|\mathfrak{m}_{\Phi_{t}(\eta)}|}{|\mathfrak{m}_{\eta}|}\det_\mathrm{Fr}(\bD_{\Phi_{t}(\eta)}\bD^{-1}_{\eta})\right)^{1/2}
    = 1 & - \frac{1}{6} \Re (t (\cS f^{-1}, \mu)) + o(t).
\end{align*}
\end{lemma}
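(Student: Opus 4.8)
The plan is to avoid any direct manipulation of the Fredholm determinant, and instead to use that the Radon--Nikodym density $\mathrm{RN}_{\eta,\Phi_t}$ recalled above is a probability density with respect to $\P_\eta$: this turns the left-hand side of the claimed identity into a Gaussian integral which can be expanded to first order with the help of Corollary~\ref{C:sobolev}, reducing the whole computation to the one-point function of the stress--energy tensor. Write $Z_t$ for the left-hand side of the claimed identity, so that $\mathrm{RN}_{\eta,\Phi_t}(\varphi)=Z_t\,\exp\!\big(-\tfrac12(\cE_{\Phi_t(\eta)}(\varphi\cdot\Phi_t^{-1})-\cE_\eta(\varphi))\big)$. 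Since $\mu$ is compactly supported in $\rmint(\eta)$, the map $\Phi_t$ is conformal in a neighbourhood of $\overline{\rmext(\eta)}$; in particular $\Phi_t(\eta)$ is again an analytic Jordan curve and $\Phi_t$ preserves the harmonic extension at $\infty$, so the Radon--Nikodym formula applies between the (zero-mode-less) versions of $\P_{\eta,\Phi_t}$ and $\P_\eta$. Integrating against $\P_\eta$ and using that $\mathrm{RN}_{\eta,\Phi_t}$ integrates to one gives $Z_t^{-1}=\E_\eta\big[\exp(-\tfrac12(\cE_{\Phi_t(\eta)}(\varphi\cdot\Phi_t^{-1})-\cE_\eta(\varphi)))\big]$.

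I would then expand this integral to first order in $t$. For $\P_\eta$-a.e.\ $\varphi\in H^s(\eta,\mathfrak{m}_\eta)$ (with vanishing harmonic extension at $\infty$, so that $\cE_\eta(\varphi)=\bS_\eta(\varphi)$), Corollary~\ref{C:sobolev} gives the exact identity
\[
\cE_{\Phi_t(\eta)}(\varphi\cdot\Phi_t^{-1})-\cE_\eta(\varphi)=4\int_0^t\Re\big((\bT_{\rmint(\Phi_u(\eta))}(\varphi\cdot\Phi_u^{-1}),\mu_u)\big)\,\d u
\]
with $\mu_u$ as in \eqref{E:Beltrami_mus}. As $\supp\mu_u=\Phi_u(\supp\mu)$ stays inside a fixed compact subset of $\rmint(\eta)$ at positive distance from $\eta$ when $u$ is small, interior elliptic regularity bounds the integrand by $C(1+\|\varphi\|_{H^s(\eta)}^2)$ uniformly in such $u$; hence the displayed difference is $O(|t|(1+\|\varphi\|_{H^s(\eta)}^2))$ and equals $4\Re\big(t(\bT_{\rmint(\eta)}\varphi,\mu)\big)+o(t)$ pointwise in $\varphi$. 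Because $\bT_{\rmint(\eta)}\varphi$ is an element of the second Wiener chaos integrated against a fixed smooth compactly supported weight, $\exp(-\tfrac12(\cE_{\Phi_t(\eta)}(\varphi\cdot\Phi_t^{-1})-\cE_\eta(\varphi)))$ has a $\P_\eta$-integrable majorant uniform for $|t|$ small, and dominated convergence then yields $Z_t^{-1}=1-2\Re\big(t(\E_\eta[\bT_{\rmint(\eta)}\varphi],\mu)\big)+o(t)$, hence $Z_t=1+2\Re\big(t(\E_\eta[\bT_{\rmint(\eta)}\varphi],\mu)\big)+o(t)$.

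It remains to compute the one-point function $\E_\eta[\bT_{\rmint(\eta)}\varphi]$. Since $\varphi$ is centred, $\E_\eta[\partial_{zz}^2\bP_\eta\varphi]=0$, so by \eqref{E:def_stress} $\E_\eta[\bT_{\rmint(\eta)}\varphi(z)]=-\E_\eta[(\partial_z\bP_\eta\varphi(z))^2]$. Now $\varphi$ is distributed as the trace on $\eta$ of the sphere GFF $X$; writing $X=\bP_\eta X+X^0$ with $X^0$ an independent Dirichlet GFF in $\hat\C\setminus\eta$, the covariance of $\bP_\eta\varphi$ on $\rmint(\eta)$ equals $G_{\hat\C}(z_1,z_2)-G^{\mathrm{Dir}}_{\rmint(\eta)}(z_1,z_2)$, where $G_{\hat\C}(z_1,z_2)=-\log|z_1-z_2|+\log|z_1|_++\log|z_2|_+$ and $G^{\mathrm{Dir}}_{\rmint(\eta)}(z_1,z_2)=-\log|f^{-1}(z_1)-f^{-1}(z_2)|+\log|1-f^{-1}(z_1)\overline{f^{-1}(z_2)}|$. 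Thus $\E_\eta[(\partial_z\bP_\eta\varphi(z))^2]=\partial_{z_1}\partial_{z_2}(G_{\hat\C}-G^{\mathrm{Dir}}_{\rmint(\eta)})\big|_{z_1=z_2=z}$; the term $\log|z_1|_++\log|z_2|_+$ (a sum of a function of $z_1$ and a function of $z_2$) and the term $\log|1-f^{-1}(z_1)\overline{f^{-1}(z_2)}|$ (a sum of two pieces, each holomorphic in one variable and anti-holomorphic in the other) are killed by $\partial_{z_1}\partial_{z_2}$, so only $-\log|z_1-z_2|+\log|f^{-1}(z_1)-f^{-1}(z_2)|$ contributes, and a short Taylor expansion around the diagonal evaluates it to $\tfrac1{12}\cS f^{-1}(z)$. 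Hence $\E_\eta[\bT_{\rmint(\eta)}\varphi]=-\tfrac1{12}\cS f^{-1}$, and substituting into the previous paragraph gives $Z_t=1-\tfrac16\Re\big(t(\cS f^{-1},\mu)\big)+o(t)$, which is the claim.

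The main obstacle is the uniform integrability needed in the second paragraph to differentiate $t\mapsto\E_\eta[\exp(-\tfrac12(\cE_{\Phi_t(\eta)}(\varphi\cdot\Phi_t^{-1})-\cE_\eta(\varphi)))]$ at $t=0$: the perturbation of the Dirichlet energy is quadratic in the Gaussian field, so one is exponentiating a second Wiener chaos and must keep control over $t$ uniform in $\varphi$. This is precisely where it matters that $\supp\mu_u$ remains at positive distance from $\Phi_u(\eta)$ for small $u$, so that the stress--energy tensor entering Corollary~\ref{C:sobolev} is dominated by a fixed polynomial in $\|\varphi\|_{H^s(\eta)}$; the remaining inputs (interior elliptic estimates and Gaussian concentration) are routine.
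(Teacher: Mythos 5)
Your argument is correct, but it is genuinely different from the paper's. The paper disposes of this lemma in two citations: by \cite[Theorems 1.3--4]{Wang19}, $\log\big(\tfrac{\det_{\mathrm{Fr}}(\bD_\eta)}{|\mathfrak{m}_\eta|}\big)$ equals $\tfrac{1}{12\pi}\bS_1(\eta)$, and the first variation of the universal Liouville action $\bS_1$ under the quasiconformal deformation $\Phi_t$ is computed in \cite[Chapter~2, Theorem~3.8]{TakhtajanTeo06}; the coefficient $-\tfrac16$ then drops out directly. You instead never touch the determinant: you observe that the normalising constant $Z_t$ of the probability density $\mathrm{RN}_{\eta,\Phi_t}$ is forced by $\int \mathrm{RN}_{\eta,\Phi_t}\,\d\P_\eta=1$, expand the Gaussian integral via Corollary~\ref{C:sobolev}, and reduce everything to the one-point function $\E_\eta[\bT_{\rmint(\eta)}\varphi]=-\E_\eta[(\del_z\bP_\eta\varphi)^2]=-\tfrac1{12}\cS f^{-1}$, which you compute correctly from the Markov decomposition of the sphere GFF (the separable and holomorphic-antiholomorphic terms are indeed killed by $\del_{z_1}\del_{z_2}$, and the diagonal expansion of $\tfrac{\psi'(z_1)\psi'(z_2)}{(\psi(z_1)-\psi(z_2))^2}-\tfrac1{(z_1-z_2)^2}$ gives $\tfrac16\cS\psi$). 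This buys a self-contained explanation of the $\tfrac1{12}$ as the central-charge-one identity $\langle T\rangle=\tfrac{c}{12}\cS$, and it doubles as a consistency check that \eqref{E:RN_analytic} integrates to one; the price is the exchange of derivative and expectation, since you are exponentiating a second-chaos perturbation. Your domination argument (interior elliptic bounds giving $|\Delta\cE_t|\le C|t|(1+\|\varphi\|_{H^s(\eta)}^2)$ plus Fernique-type exponential moments of $\|\varphi\|_{H^s(\eta)}^2$ for $|t|$ small) is the right one and closes that gap, though it should be stated that the difference quotient itself, not just the integrand, is dominated. The paper's route is shorter but opaque; yours is longer but elementary given the inputs already present in Section~\ref{S:liouville}.
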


\begin{proof}[Proof of Lemma \ref{L:D_partition_f}]
According to \cite[Theorems 1.3-4]{Wang19}, $\log(\frac{\det(\bD_\eta)}{|\mathfrak{m}_\eta|})$ equals $\frac{1}{12\pi}$ times the universal Liouville action defined in \cite[(0.1)]{TakhtajanTeo06} (see also \cite[(17)]{Wang19}. The variation of the universal Liouville action is computed in \cite[Chapter 2, Theorem 3.8]{TakhtajanTeo06} and gives the statement of the lemma.
\end{proof}

Combining Corollary \ref{C:sobolev} and Lemma \ref{L:D_partition_f} and assuming that $\eta$ is analytic, we get that
\begin{equation}\label{E:RN_analytic}
\RN_{\eta,\Phi_t}(\varphi) = 1 - 2 \Re \big( t \big( \frac{1}{12} \cS f^{-1} + \bT_\eta \varphi,\mu \big) \big) + o(t),
\quad \varphi \in H^{s}(\eta,\mathfrak{m}_\eta).
\end{equation}
Integrating, we deduce that for all $F\in\cC^0(H^{s}(\eta,\mathfrak{m}_\eta))$,
\begin{align*}
&\E_{\eta,\Phi_t}[F(\varphi)]-\E_\eta[F(\varphi)]
=-2\Re\Big(t\E_\eta\Big[\Big(\frac{1}{12}\cS f^{-1}+\bT_\eta\varphi,\mu\Big)F(\varphi)\Big]\Big)+o(t).
\end{align*}
More generally, by Lemma \ref{L:Beltrami_mus} we can get an expression of the derivative at any fixed small $u \in \C$. Recalling that $\mu_u$ is defined in \eqref{E:Beltrami_mus} and letting $f_u : \D \to \rmint(\Phi_s(\eta))$ be a uniformising map, we get: for all $t \in \R$ small
\begin{align}\label{E:analytic1}
&\E_{\eta,\Phi_t}[F(\varphi)]-\E_\eta[F(\varphi)]
=-2\Re\Big( \int_0^t \E_{\Phi_u(\eta)}\Big[\Big(\frac{1}{12}\cS f_u^{-1}+\bT_{\Phi_u(\eta)}\varphi,\mu_u\Big)F(\varphi \cdot \Phi_u)\Big] \d u\Big).
\end{align}

We now want to transfer this result to samples $\eta$ of the SLE loop measure.
The main point is that, since $\mu$ is compactly supported in $\rmint(\eta)$, the integrals appearing in the right hand side only feature well behaved terms, far from the irregular boundary.
Assume now that $\eta$ is sampled from $\nu^\#$. All the statements below are valid for $\nu^\#$-a.e. $\eta$.

For $\eps \ge 0$ and $t \in \C$ small, let $\eta_\eps = g(e^\eps \S^1)$ and $f_{t,\eps} : \D \to \rmint(\Phi_t(\eta_\eps))$ be the normalised Riemann mapping. We will simply write $f_\eps$ instead of $f_{0,\eps}$.

\begin{lemma}\label{L:limit_analytic}
    For all $t \in \C$ small, $(f_\eps \circ f^{-1})^* \P_{\eta_\eps,\Phi_t} \to \P_{\eta,\Phi_t}$ weakly as $\eps \to 0$ in $H^{s}(\eta,\mathfrak{m}_\eta)$ for any $s < -1/2.$
\end{lemma}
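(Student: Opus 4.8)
The approach is to exploit that all the measures in play are Gaussian, reducing the statement to a convergence of covariance operators which is ultimately a monotone convergence of Green functions as $\eta$ is approached from the outside. \emph{Geometric input:} since $\rmint(\eta_\eps)=\rmint(\eta)\cup\eta\cup g(\{1<|z|<e^\eps\})$ decreases to $\rmint(\eta)$ and $\rmext(\eta_\eps)=g(\{|z|>e^\eps\})$ increases to $\rmext(\eta)$ as $\eps\downarrow0$, the curves $\eta_\eps$ converge to $\eta$ in the Carathéodory sense. By the Carathéodory kernel theorem and normality, the normalised interior Riemann maps satisfy $f_\eps\to f$ locally uniformly in $\D$ together with all derivatives, while the exterior maps are explicit, $g_\eps(z)=g(e^\eps z)$. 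In particular $f_\eps\circ f^{-1}\to\mathrm{id}$ locally uniformly in $\rmint(\eta)$ with all derivatives, so the shift $Q\log|(f_\eps\circ f^{-1})'|$ converges to $0$ there.

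\emph{Localisation near the curve.} Because $\supp\mu\Subset\rmint(\eta)$, the map $\Phi_t$ is conformal in a fixed neighbourhood of $\overline{\rmext(\eta)}$ containing all the $\eta_\eps$. Coupling everything to a single Gaussian free field $X$ on $\hat\C$ and using that tracing commutes with conformal precomposition, one checks that, up to the zero mode, $(f_\eps\circ f^{-1})^*\P_{\eta_\eps,\Phi_t}$ is the law of $(f_\eps\circ f^{-1})^*\operatorname{tr}_{\eta_\eps}(X\circ\Phi_t+Q\log|\Phi_t'|)$, and likewise $\P_{\eta,\Phi_t}$ is the law of $\operatorname{tr}_\eta(X\circ\Phi_t+Q\log|\Phi_t'|)$. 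Since $X\circ\Phi_t+Q\log|\Phi_t'|$ coincides near $\eta$ with $X$ read in the conformal coordinate $\Phi_t$ up to a smooth shift, and the Minkowski content transforms conformally near the curve so that the construction of Section~\ref{SS:jump} applies to the Jordan curve $\Phi_t(\eta)$ as well, it suffices to treat the case $t=0$: to prove that $(f_\eps\circ f^{-1})^*\operatorname{tr}_{\eta_\eps}X\to\operatorname{tr}_\eta X$ weakly in $H^s(\eta,\mathfrak{m}_\eta)$, $s<-1/2$.

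\emph{The Gaussian argument.} By the Markov decomposition $X=\bP_{\eta_\eps}X\oplus X^{\mathrm{Dir}}_{\C\setminus\eta_\eps}$ along $\eta_\eps$, the harmonic extension $\bP_{\eta_\eps}X$ of $\operatorname{tr}_{\eta_\eps}X$ is centred Gaussian with covariance $G^{\hat\C}-G^{\C\setminus\eta_\eps}$, where $G^D$ is the Green function of $D$. Green functions are monotone under inclusion, so for $\eps\le\eps_0$ one has $G^{\rmint(\eta)}\preceq G^{\rmint(\eta_\eps)}\preceq G^{\rmint(\eta_{\eps_0})}$ and $G^{\rmext(\eta_{\eps_0})}\preceq G^{\rmext(\eta_\eps)}\preceq G^{\rmext(\eta)}$. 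Hence (i) $\operatorname{Cov}(\bP_{\eta_\eps}X)=G^{\hat\C}-G^{\C\setminus\eta_\eps}$ converges pointwise (each of its interior and exterior pieces monotonically) to $G^{\hat\C}-G^{\C\setminus\eta}=\operatorname{Cov}(\bP_\eta X)$, using that $\rmint(\eta)$ and $\rmext(\eta)$ are regular for the Dirichlet problem; and (ii) $\operatorname{Cov}(\bP_{\eta_\eps}X)\preceq G^{\hat\C}-\big(G^{\rmint(\eta)}\oplus G^{\rmext(\eta_{\eps_0})}\big)$, a single fixed covariance which, under the isometric embedding $H^{s'}(\eta,\mathfrak{m}_\eta)\hookrightarrow H^{s'+1/2}(\C)$ of Section~\ref{SS:jump}, is trace class in $H^{s'+1/2}(\C)$ for every $s'<-1/2$. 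From (i) one gets convergence of the finite-dimensional laws (testing against smooth functions, the logarithmic singularities being dominated by the fixed kernel), and from (ii) a uniform second moment bound, hence tightness in $H^s(\eta,\mathfrak{m}_\eta)$ by compactness of $H^{s'}(\eta,\mathfrak{m}_\eta)\hookrightarrow H^s(\eta,\mathfrak{m}_\eta)$; together these give the weak convergence. Finally, applying $(f_\eps\circ f^{-1})^*$ merely composes the interior harmonic extension with $f_\eps\circ f^{-1}\to\mathrm{id}$ and adds the vanishing shift, which preserves all of the above, while the deterministic means $Q\log|(\Phi_t\circ f_\eps)'|$ converge as harmonic functions.

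\emph{Main obstacle.} The delicate step is the transfer to the global topology of $H^s(\eta,\mathfrak{m}_\eta)$, i.e.\ controlling the pulled-back fields uniformly near the rough curve $\eta$ so that the locally-uniform convergence of the interior harmonic extensions (immediate away from $\eta$) neither loses mass in the limit nor conflicts with the pullback by $f_\eps\circ f^{-1}$. The domination $G^{\hat\C}-G^{\C\setminus\eta_\eps}\preceq G^{\hat\C}-(G^{\rmint(\eta)}\oplus G^{\rmext(\eta_{\eps_0})})$ from Green function monotonicity is precisely designed for this; converting it into a trace-class bound (and a trace-class convergence upgrading (i)) that survives the pullback is the technical core.
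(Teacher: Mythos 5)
Your proposal is correct and follows essentially the same strategy as the paper's proof: exploit Gaussianity to reduce everything to covariances, obtain tightness from a uniform second-moment bound (the paper invokes the Gaussian tightness criterion of Bogachev, Example 3.8.13, where you use Green-function monotonicity plus a compact Sobolev embedding, a harmless variation), and identify any subsequential limit by convergence of the covariance kernels. You have merely filled in the details that the paper dismisses as "an elementary computation," so no further comparison is needed.
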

\begin{proof}
We first recall a tightness criterion from \cite[Example 3.8.13]{Bogachev}. Let $\Gamma$ be a family of centred Gaussian measures on some separable Hilbert space $X$. Then $\Gamma$ is tight with respect to the weak topology as soon as
\[ 
\sup_{\gamma \in \Gamma} \int \| x\|^2 \gamma(\d x) < \infty.
\]
We apply this to the space $X = H^s(\eta,\mathfrak{m}_\eta)$ for some $s<-1/2$. It is then an elementary computation to show that
$\E_{\eta_\eps,\Phi_t}[\| \varphi \circ f_\eps \circ f^{-1}\|_{H^s(\eta,\mathfrak{m}_\eta)}^2]$ is uniformly bounded.
The family $((f_\eps \circ f^{-1})^* \P_{\eta_\eps,\Phi_t})_{\eps>0}$ is thus tight in the space of probability measures on $H^s(\eta,\mathfrak{m}_\eta)$. Moreover, any subsequential limit is still Gaussian and has the correct covariance kernel.
\end{proof}

Let $F \in C^0(H^s(\eta,\mathfrak{m}_\eta))$ and $t \in \R$ small.
By Lemma \ref{L:limit_analytic} combined with \eqref{E:analytic1} applied to the analytic curve $\eta_\eps$, we have
\begin{align*}
    & \E_{\eta,\Phi_t}[F(\varphi)] - \E_\eta[F(\varphi)] = \lim_{\eps \to 0} \big( \E_{\eta_\eps,\Phi_t}[F(\varphi \circ (f_\eps\circ f^{-1}))] - \E_\eta[F(\varphi \circ (f_\eps\circ f^{-1}))] \big) \\
    & = -2 \lim_{\eps \to 0} \Re \Big( \int_0^t  \E_{\Phi_u(\eta_\eps)}\Big[\Big(\frac{1}{12}\cS f_{u,\eps}^{-1}+\bT_{\rmint(\Phi_u(\eta_\eps))} \varphi ,\mu_u\Big)F((\varphi \cdot \Phi_u) \circ (f_\eps\circ f^{-1}))\Big] \d u \Big).
\end{align*}
As already alluded to, the support of the Beltrami $\mu_u$ is assumed to be at a macroscopic distance to the rough curve $\Phi_u(\eta)$, so the above expectation converges and we get
\begin{align*}
    & \E_{\eta,\Phi_t}[F(\varphi)] - \E_\eta[F(\varphi)] \\
    & = -2 \Re \Big( \int_0^t  \E_{\Phi_u(\eta)}\Big[\Big(\frac{1}{12}\cS f_{u,0}^{-1}+\bT_{\rmint(\Phi_u(\eta))}\varphi,\mu_u\Big)F(\varphi \cdot \Phi_u)\Big] \d u \Big)
\end{align*}
which concludes the proof of Theorem \ref{T:GFF_SLE}.
\qed

 \section{Characterisation of the Neumann GFF: proof of Theorem \ref{T:Neumann}}\label{S:neumann_gff}

 In this section, we apply the results of Section \ref{S:liouville} to the case $\eta=\S^1$ in order to derive an integration by parts formula and a characterisation of the boundary Neumann GFF as stated in Theorem \ref{T:Neumann}. In Section~\ref{SS:FF_modules}, we recall some facts about Feigin--Fuchs modules (based on \cite[Lectures 2 \& 3]{KacRaina_Bombay}) and prove a characterisation of the $\log$-correlated field on the unit circle (Theorem \ref{T:FF}). In Section \ref{SS:witt}, we define a representation of the Witt algebra on the $L^2$-space of the Neumann GFF. We combine all these ingredients in Section \ref{SS:characterisation} and prove Theorem \ref{T:Neumann}.

    \subsection{Feigin--Fuchs modules}\label{SS:FF_modules}
We denote by $\P_{\S^1}$ the probability measure on the Sobolev space $\dot H^{-s}(\S^1)=\{\phi\in H^{-s}(\S^1)|\,\int_0^{2\pi}\phi(e^{i\theta})\d\theta=0\}$ whose samples are centred Gaussian fields on $\S^1$ with covariance
\begin{equation}\label{eq:covar_cicle}
\E_{\S^1}[\phi(z)\phi(\zeta)]=-\log|z-\zeta|,\qquad \forall z,\zeta\in\S^1.
\end{equation}
This differs from the boundary Neumann GFF on $\S^1$ by a factor of 2; see Section \ref{SS:witt}.
We may write such a field in Fourier series $\phi(z)=2\Re(\sum_{m=1}^\infty\phi_mz^m)$, and under $\P_{\S^1}$, the modes $(\phi_m)_{m\geq1}$ are independent complex Gaussians $\cN(0,\frac{1}{2m})$. Let $L^2_\mathrm{hol}(\P_{\S^1})$ be the closure in $L^2(\P_{\S^1})$ of the space $\C[(\phi_m)_{m\geq1}]$ of holomorphic polynomials. 

We can represent an integer partition by a sequence $\bk=(k_m)_{m\in\N^*}\in\N^{\N^*}$ containing only finitely many non-zero terms, where the number partitioned by $\bk$ is $|\bk|=\sum_{m=1}^\infty mk_m$.
We denote by $\cT$ the set of all integer partitions.

Fix $\alpha\in\C$ and set $\Delta_\alpha:=\frac{\alpha}{2}(Q-\frac{\alpha}{2})$. For $n\in\N_{>0}$, we define the following endomorphisms of $\C[(\phi_m)_{m\geq1}]$:
\begin{equation}
\bA_n:=\frac{i}{\sqrt{2}}\del_{\phi_n},\qquad\bA_{-n}:=\frac{\sqrt{2}}{i}n\phi_n,\qquad\bA_{0,\alpha}:=\frac{i}{\sqrt{2}}(Q-\alpha).
\end{equation}
They satisfy the Heisenberg algebra $[\bA_n,\bA_m]=n\delta_{n,-m}$ for all $n,m\in\Z$. Moreover, $\bA_n^*=\bA_{-n}$ on $L^2_\mathrm{hol}(\P_{\S^1})$ for all $n\neq0$, and $\bA_{0,\alpha}^*=\bA_{0,2Q-\bar\alpha}$. 
For $n \ne 0$, we define the operators
\begin{equation}\label{eq:def_ff}
\bL_{n,\alpha}^\mathrm{FF}:=\frac{i}{\sqrt{2}}Qn\bA_n+\frac{1}{2}\sum_{m\in\Z}\bA_{n-m}\bA_m,\qquad\bL_{0,\alpha}^\mathrm{FF}:=\Delta_\alpha+\sum_{m=1}^\infty\bA_{-m}\bA_m,
\end{equation}
and for any integer partition $\bk = (k_m)_{m \in \N^*} \in \cT$, denote by
\[
\bL_{-\bk,\alpha}^\mathrm{FF}:=\cdots(\bL_{-3,\alpha}^\mathrm{FF})^{k_3}(\bL_{-2,\alpha}^\mathrm{FF})^{k_2}(\bL_{-1,\alpha}^\mathrm{FF})^{k_1},\qquad\bL_{\bk,\alpha}^\mathrm{FF}:=(\bL_{1,\alpha}^\mathrm{FF})^{k_1}(\bL_{2,\alpha}^\mathrm{FF})^{k_2}(\bL_{3,\alpha}^\mathrm{FF})^{k_3}\cdots.
\]
Note the reversal of the order in which we read the partition.
From \cite[Section 3.4]{KacRaina_Bombay}, these operators satisfy the Virasoro algebra with central charge $c_\rL=1+6Q^2$, namely
\[[\bL_{n,\alpha}^\mathrm{FF},\bL_{m,\alpha}^\mathrm{FF}]=(n-m)\bL_{n+m,\alpha}^\mathrm{FF}+\frac{c_\rL}{12}(n^3-n)\delta_{n,-m}.\]
Moreover, for all $\bk \in \cT$, $(\bL_{\bk,\alpha}^\mathrm{FF})^*=\bL_{-\bk,2Q-\bar\alpha}^\mathrm{FF}$ on $L^2_\mathrm{hol}(\P_{\S^1})$. 
% In particular, a straightforward computation shows that for all $P\in\C[(\phi_m)_{m\geq1}]$, we have for $n\geq1$,
%\[\bL_{-n,\alpha}^\mathrm{FF}\ind=-\big(\bT_\D \phi+\alpha\bJ_\phi^\D,\rv_{-n}\big),\]
%where $\bT_\D \phi$ is the stress-energy tensor, and $\bJ_\phi^\D:=\frac{1}{z}\del_z\bP_\D\phi$ is the \emph{Heisenberg tensor}, and the pairings are defined as
%\[(\bT_\D \phi,v\del_z)=\frac{1}{2i\pi}\oint\bT_\D \phi v\d z\qquad\text{and}\qquad(\bJ_\phi^\D,v\del_z)=\frac{1}{2i\pi}\oint\del_z\bP_\D\phi v\frac{\d z}{z}.\]
The module
\begin{equation}
    \label{E:moduleV}
\cV_\alpha:=\mathrm{span}\{\Psi_{\alpha,\bk}:=\bL_{-\bk,\alpha}^\mathrm{FF}\ind|\,\bk\in\cT\}
\end{equation}
is a highest-weight representation with weight $\Delta_\alpha$. If $\alpha\not\in kac^-$ is not on the Kac table \eqref{E:kac}, we have $\cV_\alpha=\C[(\phi_m)_{m\geq1}]$ and the states $\Psi_{\alpha,\bk}$ are linearly independent and form a basis of $\C[(\phi_m)_{m\geq1}]$ \cite{Frenkel92_determinant}.

%For $n<0$, we have explicitly
%\begin{equation}
%\bL_{-n,\alpha}^\mathrm{FF}=Qn(n-1)\phi_n+\alpha n\phi_n+\sum_{m=1}^\infty(n+m)\phi_{n+m}\del_{\phi_m}-\sum_{m=1}^{n-1}m(n-m)\phi_{n-m}\phi_m.
%\end{equation}

\begin{theorem}\label{T:FF}
Let $\alpha\in\C\setminus(kac^+\cup kac^-)$. Let $\Q$ be a Borel probability measure on $\C^{\N^*}$ (equipped with the cylinder topology) such that the adjoint relations $\bL_{-n,\alpha}^\mathrm{FF}=(\bL_{n,2Q-\bar\alpha}^\mathrm{FF})^*$ hold on $L^2_\mathrm{hol}(\Q)$ for all $n>0$. Then, $\Q$-a.s., the series $z\mapsto2\Re(\sum_{m=1}^\infty \phi_mz^n)$ converges in $H^{-s}(\S^1)$ for all $s>0$, and $\Q=\P_{\S^1}$ as a Borel probability measure on $H^{-s}(\S^1)$.
\end{theorem}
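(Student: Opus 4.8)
The plan is to show that the hypothesised adjoint relations force all mixed moments of $\Q$ to coincide with those of $\P_{\S^1}$, to upgrade this to $\Q=\P_{\S^1}$ on $\C^{\N^*}$ by moment determinacy of Gaussian measures, and then to transfer the identity to $\dot H^{-s}(\S^1)$. Throughout I write $\beta:=2Q-\bar\alpha$ (so $\Delta_\beta=\overline{\Delta_\alpha}$) and $\langle P,R\rangle_\Q:=\E_\Q[P\overline R]$ for holomorphic polynomials $P,R$; these are finite by Cauchy--Schwarz, since the holomorphic polynomials lie in $L^2(\Q)$ (implicit in the hypothesis for the adjoint relations on $L^2_{\mathrm{hol}}(\Q)$ to make sense). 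Letting $P,R$ range over holomorphic monomials, the numbers $\langle P,R\rangle_\Q$ are \emph{exactly} the mixed moments $\E_\Q[\prod_i\phi_{m_i}\prod_j\overline{\phi_{n_j}}]$ of $\Q$; and since $\{\Psi_{\alpha,\bk}:\bk\in\cT\}$ is a basis of $\C[(\phi_m)_{m\geq1}]$ (this is where the Kac-table hypothesis $\alpha\notin kac^+\cup kac^-$ enters, via \cite{Frenkel92_determinant}), it suffices to match the Gram matrix $\big(\langle\Psi_{\alpha,\bk},\Psi_{\alpha,\bk'}\rangle_\Q\big)_{\bk,\bk'\in\cT}$ with the corresponding one for $\P_{\S^1}$.

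The first step is a graded mean-zero property: $\langle\ind,P\rangle_\Q=0$ for every holomorphic polynomial $P$ with zero constant term. Equip $\C[(\phi_m)]$ with the grading in which $\phi_m$ has degree $m$, so that $\Psi_{\alpha,\bk}=\bL^{\mathrm{FF}}_{-\bk,\alpha}\ind$ is homogeneous of degree $|\bk|$; by the basis property it suffices to treat $P=\Psi_{\alpha,\bk}$ with $\bk\neq\mathbf 0$. Taking adjoints of the hypothesis gives $(\bL^{\mathrm{FF}}_{-n,\alpha})^*=\bL^{\mathrm{FF}}_{n,\beta}$ on the polynomial core, and the order-reversal built into the definitions of $\bL^{\mathrm{FF}}_{\pm\bk,\cdot}$ is exactly what makes $(\bL^{\mathrm{FF}}_{-\bk,\alpha})^*=\bL^{\mathrm{FF}}_{\bk,\beta}$. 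Hence $\langle\ind,\Psi_{\alpha,\bk}\rangle_\Q=\langle(\bL^{\mathrm{FF}}_{-\bk,\alpha})^*\ind,\ind\rangle_\Q=\langle\bL^{\mathrm{FF}}_{\bk,\beta}\ind,\ind\rangle_\Q=0$, because the rightmost factor of $\bL^{\mathrm{FF}}_{\bk,\beta}$ is some $\bL^{\mathrm{FF}}_{j,\beta}$ with $j\geq1$ and $\ind$ is a highest-weight vector, so $\bL^{\mathrm{FF}}_{\bk,\beta}\ind=0$.

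For the core step I would show $\langle\Psi_{\alpha,\bk},\Psi_{\alpha,\bk'}\rangle_\Q$ does not depend on $\Q$. Moving $\bL^{\mathrm{FF}}_{-\bk,\alpha}$ across the inner product via $(\bL^{\mathrm{FF}}_{-\bk,\alpha})^*=\bL^{\mathrm{FF}}_{\bk,\beta}$ yields $\langle\Psi_{\alpha,\bk},\Psi_{\alpha,\bk'}\rangle_\Q=\langle\ind,\bL^{\mathrm{FF}}_{\bk,\beta}\bL^{\mathrm{FF}}_{-\bk',\alpha}\ind\rangle_\Q$. Since each $\bL^{\mathrm{FF}}_{n,\cdot}$ lowers the degree by $n$, the polynomial $\bL^{\mathrm{FF}}_{\bk,\beta}\bL^{\mathrm{FF}}_{-\bk',\alpha}\ind$ is homogeneous of degree $|\bk'|-|\bk|$: it is $0$ when $|\bk|>|\bk'|$; when $|\bk|<|\bk'|$ it has positive degree, so its pairing with $\ind$ vanishes by the previous step; and when $|\bk|=|\bk'|$ it is a scalar $S_{\bk,\bk'}\ind$, where $S_{\bk,\bk'}$ is merely the constant term of an \emph{explicit} polynomial produced by letting the $\bA_n$'s act on $\ind$, with no reference to $\Q$. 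Thus $\langle\Psi_{\alpha,\bk},\Psi_{\alpha,\bk'}\rangle_\Q$ is a $\Q$-independent constant; running the identical computation for $\P_{\S^1}$, which satisfies the hypothesis by construction, gives $\langle\Psi_{\alpha,\bk},\Psi_{\alpha,\bk'}\rangle_\Q=\langle\Psi_{\alpha,\bk},\Psi_{\alpha,\bk'}\rangle_{\P_{\S^1}}$ for all $\bk,\bk'$, so $\Q$ and $\P_{\S^1}$ have the same mixed moments.

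To conclude, for each $N$ the image of $\Q$ under $(\phi_m)_m\mapsto(\phi_1,\dots,\phi_N)$ has the same moments as the Gaussian $\bigotimes_{m=1}^N\cN_\C(0,\tfrac1{2m})$, which is moment-determinate (its Laplace transform is finite near the origin), so these marginals agree for all $N$; by Kolmogorov's extension theorem $\Q=\P_{\S^1}$ as Borel probability measures on $\C^{\N^*}$ with the cylinder topology. The set of $(\phi_m)_m$ for which $z\mapsto2\Re(\sum_{m\geq1}\phi_mz^m)$ converges in $\dot H^{-s}(\S^1)$ for all $s>0$ lies in the cylinder $\sigma$-algebra (a countable intersection over $s\in\{1/k:k\in\N^*\}$ of Cauchy-criterion events on the partial sums) and carries full $\P_{\S^1}$-mass — the standard fact that a log-correlated field on the circle belongs to $\dot H^{-s}$ for $s>0$ — hence full $\Q$-mass; pushing both measures forward by the cylinder-measurable summation map gives $\Q=\P_{\S^1}$ on $\dot H^{-s}(\S^1)$. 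The main obstacle I anticipate is the bookkeeping in the core step — keeping straight the two momenta $\alpha,\beta$, the order-reversal in $\bL^{\mathrm{FF}}_{\pm\bk,\cdot}$, and the grading — together with the more conceptual point that extracting \emph{every} mixed moment (not merely those visible from an incomplete family of states) hinges on $\{\Psi_{\alpha,\bk}\}$ being an honest basis of $\C[(\phi_m)]$, which is precisely the role of the non-degeneracy hypothesis $\alpha\notin kac^+\cup kac^-$.
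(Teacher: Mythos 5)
Your proof is correct and follows essentially the same route as the paper: use the adjoint relations and the highest-weight property to show that the Gram matrix of the Feigin--Fuchs states is a $\Q$-independent scalar, invoke the basis property of $\{\Psi_{\alpha,\bk}\}$ (the role of $\alpha\notin kac^+\cup kac^-$) to recover all mixed moments, and then upgrade to equality of measures. The only cosmetic differences are that the paper pairs $\Psi_{\alpha,\bk}$ against $\Psi_{2Q-\bar\alpha,\bk'}$ so that the operator product $\bL_{\bk',\alpha}^\mathrm{FF}\bL_{-\bk,\alpha}^\mathrm{FF}\ind$ stays inside the single module $\cV_\alpha$ (your mixed product $\bL_{\bk,\beta}^\mathrm{FF}\bL_{-\bk',\alpha}^\mathrm{FF}\ind$ works too, since only degree counting is needed), and that it concludes via density of $\cV_\alpha\otimes\overline{\cV}_{2Q-\bar\alpha}$ in $L^1(\P_{\S^1})$ rather than via finite-dimensional moment determinacy and Kolmogorov extension.
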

\begin{proof}
For all integer partitions $\bk,\bk'$ with $|\bk'| \ge |\bk|$ say, we have from the adjoint relations that
\[\int\Psi_{\alpha,\bk}\overline{\Psi}_{2Q-\bar\alpha,\bk'}\d\Q=\langle\Psi_{\alpha,\bk},\Psi_{2Q-\bar\alpha,\bk'}\rangle_{L^2_\mathrm{hol}(\Q)}=
\langle \bL_{\bk',\alpha}^\mathrm{FF} \bL_{-\bk,\alpha}^\mathrm{FF} \ind , \ind \rangle_{L^2_\mathrm{hol}(\Q)}.
\]
Since $\cV_\alpha$ is a highest-weight representation, $\bL_{\bk',\alpha}^\mathrm{FF} \bL_{-\bk,\alpha}^\mathrm{FF} \ind = \mathbf{B}_\alpha(\bk,\bk')\ind$ is a constant multiple of $\ind$ and thus
\[
\int\Psi_{\alpha,\bk}\overline{\Psi}_{2Q-\bar\alpha,\bk'}\d\Q = \mathbf{B}_\alpha(\bk,\bk')\langle\ind,\ind\rangle_{L^2_\mathrm{hol}(\Q)} = \mathbf{B}_\alpha(\bk,\bk')
\]
because $\Q$ is a probability measure. The same is true for $\P_{\S^1}$ so that, for all $\bk,\bk' \in \cT$,
\[ 
\int\Psi_{\alpha,\bk}\overline{\Psi}_{2Q-\bar\alpha,\bk'}\d\Q = \int\Psi_{\alpha,\bk}\overline{\Psi}_{2Q-\bar\alpha,\bk'}\d\P_{\S^1}.
\]

\begin{comment}
We know that these relations hold on $L^2_\mathrm{hol}(\P_{\S^1})$, so for all integer partitions $\bk,\bk'$, we have 
\[\int\Psi_{\alpha,\bk}\overline{\Psi}_{2Q-\bar\alpha,\bk'}\d\Q=\langle\Psi_{\alpha,\bk},\Psi_{2Q-\bar\alpha,\bk'}\rangle_{L^2_\mathrm{hol}(\Q)}=C\langle\Psi_{\alpha,\bk},\Psi_{2Q-\bar\alpha,\bk'}\rangle_{L^2_\mathrm{hol}(\P_{\S^1})},\] 
for some constant $C$ independent of $\bk,\bk'$. \aj{Second equality?} In fact, $C=1$ since both $\Q$ and $\P_{\S^1}$ are probability measures. Explicitly, this inner--product is the Shapovalov form of the Verma module of central charge $c_\rL$ and highest-weight $\Delta_\alpha$ \cite{KacRaina_Bombay}.
\end{comment}

Since $\alpha\not\in(kac^+\cup kac^-)$, we have $\cV_\alpha\otimes\overline{\cV}_{2Q-\bar\alpha}=\C[(\phi_m,\bar\phi_m)_{m\geq1}]$, hence $\cV_\alpha\otimes\overline{\cV}_{2Q-\bar\alpha}$ is dense in $L^1(\P_{\S^1})$ (here the overline denotes complex conjugation). Thus, $\Q$ extends (uniquely) to a continuous linear form on $L^1(\P_{\S^1})$, and it coincides with $\P_{\S^1}$ as such. In particular, it gives full mass to the event that $\sum_{m=1}^\infty|\phi_m|^2m^{-s}<\infty$ for all $s>0$, i.e. the field $z\mapsto2\Re(\sum_{m=1}^\infty\phi_mz^m)$ converges in $\dot H^{-s}(\S^1)$ with $\Q$-probability 1 and we can view $\Q$ as a Borel probability measure on $\dot H^{-s}(\S^1)$. Moreover, for all Borel sets $E\subset\dot{H}^{-s}(\S^1)$, we have $\ind_E\in L^1(\P_{\S^1})$, so $\Q(E)=\P_{\S^1}(E)$.
\end{proof}

    \subsection{Witt representation}\label{SS:witt}
Recall that the \emph{boundary Neumann GFF with zero average on $\S^1$} is the centred Gaussian field on $\S^1$ with covariance 
\[\dot\E[\phi(z)\phi(\zeta)]=-2\log|z-\zeta|,\qquad\forall z,\zeta\in\S^1.\]
The corresponding probability measure is denoted $\dot\P$ and gives full mass to $\dot H^{-s}(\S^1)$ for all $s>0$. Note the difference by a factor 2 with \eqref{eq:covar_cicle}: this may look innocent, but it is the source of all the differences in how the symmetry is encoded in the system.

Samples of $\dot\P$ have a Fourier expansion $\dot\phi(z)=2\Re(\sum_{m=1}^\infty\phi_mz^m)$, where the modes $(\phi_m)_{m\geq1}$ are independent complex Gaussians $\cN_\C(0,\frac{1}{m})$. The \emph{boundary Liouville field} is the measure $e^{-Qc}\d c\otimes\dot\P$, i.e. we independently sample the zero mode from $e^{-Qc}\d c$. The space of polynomials $\C[(\phi_m,\bar\phi_m)_{m\geq1}]$ is dense in $L^2(\dot\P)$, and the space 
\begin{equation}
    \label{E:cC}
\cC:=\cS(\R)\otimes\C[(\phi_m,\bar\phi_m)_{m\geq1}]
\end{equation}
is dense in $L^2(\d c\otimes\dot\P)$, where $\cS(\R)$ denotes the Schwartz class in the variable $c$.
    
Let us consider the group $\mathrm{Diff}^\omega(\S^1)$ of analytic diffeomorphisms of $\S^1$ on $L^2(\dot{\P})$. Its Lie algebra is the space $\mathrm{Vect}^\omega_\R(\S^1)=iz\cC^\omega(\S^1)\del_z$ of (real) analytic vector fields on $\S^1$ (where $\cC^\omega(\S^1)$ denotes the space of real-valued functions on $\S^1$ with an analytic extension to a complex neighbourhood of $\S^1$). It is generated by the vector fields $((\ra_n)_{n\geq0},(\rb_n)_{n\geq1})$ with $\ra_n:=\frac{i}{2}z(z^n+z^{-n})\del_z$ and $\rb_n:=\frac{1}{2}z(z^n-z^{-n})\del_z$. These generators satisfy the commutation relations
\begin{equation}\label{eq:commutations_vect}
\begin{aligned}
&[\ra_n,\ra_m]=\frac{n-m}{2}\rb_{n+m}+\frac{n+m}{2}\rb_{n-m};\qquad[\rb_n,\rb_m]=-\frac{n-m}{2}\rb_{n+m}+\frac{n+m}{2}\rb_{n-m};\\
&[\ra_n,\rb_m]=-\frac{n-m}{2}\ra_{n+m}+\frac{n+m}{2}\ra_{n+m}.
\end{aligned}
\end{equation}
In the sequel, we will use the convention $\rb_0=0$.

%The group $\mathrm{Diff}^\omega(\S^1)$ acts on $H^{-s}(\S^1)$ by\footnote{For $h\in\mathrm{Diff}^\omega(\S^1)$, we have $|h'(z)|=\frac{zh'(z)}{h(z)}$ for all $z\in\S^1$, so this definition coincides with \eqref{E:dot}, which is why we use the same notation.}
%\[(c+\phi)\cdot h:=c+\phi\circ h+Q\log(zh'/h),\qquad\forall c\in\R,\,\phi\in\dot H^{-s}(\S^1),\,h\in\mathrm{Diff}^\omega(\S^1).\]
%Moreover, by the conformal covariance of the quantum length, we have $\kl_{\phi\cdot h}=\kl_\phi=1$ for all $h\in\mathrm{Diff}^\omega(\S^1)$ and $\P_N$-a.e. $\phi$. Hence, this action restricts to an action of $\mathrm{Diff}^\omega(\S^1)$ on $H_N^{-s}(\S^1)$. 

Let $\ru=izu(z)\del_z\in\mathrm{Vect}^\omega_\R(\S^1)$. Its exponential $h_t=e^{t\ru}$ is defined in $\mathrm{Diff}^\omega(\S^1)$ for real $t$, and satisfies $h_t(z)=z+itzu(z)+o(t)$ as $t\to0$. We say that a function $F\in L^2(\d c\otimes\dot\P)$ is differentiable in direction $\ru$ if the function $t\mapsto F(c+\phi\cdot h_t)$ is differentiable at $t=0$, for all $c+\phi\in H^{-s}(\S^1)$, where $\phi\cdot h_t$ is the action \eqref{E:doth}. In this case, we write
\[\cD_\ru^\R F(c+\phi):=\frac{\d}{\d t}_{|t=0}F(c+\phi\cdot(e^{-t\ru})).\]
Moreover, for all $n\in\N_{>0}$, we define
\begin{equation}
    \label{E:def_cDn}
\cD_n:=i\cD_{\mathrm{a}_n}^\R-\cD_{\mathrm{b}_n}^\R\qquad\text{and}\qquad\cD_{-n}:=i\cD_{\mathrm{a}_n}^\R+\cD_{\mathrm{b}_n}^\R.
\end{equation}

\begin{lemma}
For all $\ru\in\mathrm{Vect}^\omega_\R(\S^1)$, $\cD_\ru^\R$ defines an endomorphism of $\cC$. Moreover, for all $n\in\N_{>0}$, we have:
\begin{align*}
&\cD_n=nQ\del_{\phi_n}-n\bar\phi_n\del_c+\sum_{m=1}^\infty(m-n)\phi_{m-n}\del_{\phi_m}-(m+n)\bar\phi_{m+n}\del_{\bar\phi_m};\\
&\cD_{-n}=-nQ\del_{\bar\phi_n}+n\phi_n\del_c+\sum_{m=1}^\infty(m+n)\phi_{m+n}\del_{\phi_m}-(m-n)\bar\phi_{m-n}\del_{\bar\phi_m};\\
&\cD_0=\sum_{m=1}^\infty m\phi_m\del_{\phi_m}-m\bar\phi_m\del_{\bar\phi_m}.
\end{align*}
\end{lemma}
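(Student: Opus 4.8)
The plan is to reduce the statement to one explicit computation of the infinitesimal action on Fourier modes. First I would record that, although the operators $\cD_{\pm n},\cD_0$ are defined through the real generators $\ra_n,\rb_n$, they are most efficiently computed via the complex Laurent basis $\rv_n=-z^{n+1}\del_z$: one checks $i\ra_n-\rb_n=\rv_n$, $i\ra_n+\rb_n=\rv_{-n}$ and $i\ra_0=\rv_0$ (with the convention $\rb_0=0$), and since $\ru\mapsto\cD_\ru^\R$ is $\R$-linear, extending it $\C$-linearly in $\ru$ gives $\cD_n=\cD_{\rv_n}$, $\cD_{-n}=\cD_{\rv_{-n}}$, $\cD_0=\cD_{\rv_0}$. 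It therefore suffices to compute $\cD_\ru^\R$ for a single analytic vector field $\ru=v(z)\del_z$ and afterwards substitute $v(z)=-z^{n+1}$, $v(z)=-z^{1-n}$, $v(z)=-z$.

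For the core computation, set $h_t=e^{-t\ru}$; since $\ru$ is tangent to $\S^1$, $h_t\in\mathrm{Diff}^\omega(\S^1)$ for real $t$, with $h_t(z)=z-tv(z)+o(t)$ and $h_t'(z)=1-tv'(z)+o(t)$, hence $zh_t'(z)/h_t(z)=1+t\big(v(z)/z-v'(z)\big)+o(t)$ and $\log\!\big(zh_t'(z)/h_t(z)\big)=t\big(v(z)/z-v'(z)\big)+o(t)$. Writing a sample $c+\phi$ of $e^{-Qc}\d c\otimes\dot\P$ with $\phi(z)=\sum_{m\ge1}(\phi_mz^m+\bar\phi_mz^{-m})$ on $\S^1$, so that $\phi'(z)=\sum_{m\ge1}(m\phi_mz^{m-1}-m\bar\phi_mz^{-m-1})$, and using $(c+\phi)\cdot h_t=c+\phi\circ h_t+Q\log(zh_t'/h_t)$, I obtain
\[
\frac{\d}{\d t}\Big|_{t=0}\big[(c+\phi)\cdot h_t\big](z)=-v(z)\,\phi'(z)+Q\Big(\frac{v(z)}{z}-v'(z)\Big).
\]
Any $F\in\cC$ depends polynomially on finitely many of the coordinates $c,\phi_m,\bar\phi_m$, each of which is a smooth (indeed analytic) function of $t$ along $t\mapsto(c+\phi)\cdot h_t$, so the chain rule gives $\cD_\ru^\R F=(\dot c)\,\del_cF+\sum_{m\ge1}\big((\dot\phi_m)\,\del_{\phi_m}F+(\dot{\bar\phi}_m)\,\del_{\bar\phi_m}F\big)$, where $\dot c$, $\dot\phi_m$, $\dot{\bar\phi}_m$ are the coefficients of $z^0$, $z^m$, $z^{-m}$ in the displayed expression.

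The rest is bookkeeping. Substituting $v(z)=-z^{n+1}$ gives $-v\phi'=z^{n+1}\phi'=\sum_{m\ge1}(m\phi_mz^{m+n}-m\bar\phi_mz^{n-m})$ and $Q(v/z-v')=Qnz^n$; extracting the coefficients of $z^0$, $z^m$, $z^{-m}$ and using the conventions $\phi_{-k}=\bar\phi_k$ ($k\ge1$) and $\phi_0=0$ to absorb the diagonal terms yields exactly the stated formula for $\cD_n$. The substitutions $v(z)=-z^{1-n}$ and $v(z)=-z$ are handled identically and give the formulas for $\cD_{-n}$ and $\cD_0$; in the last case $v/z-v'\equiv0$, which is why no $Q$ appears in $\cD_0$. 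Finally, each displayed operator is visibly an endomorphism of $\cC$: $\del_c$ preserves $\cS(\R)$, multiplication by $\phi_{m\pm n}$, $\bar\phi_{m\pm n}$ and the derivations $\del_{\phi_m}$, $\del_{\bar\phi_m}$ preserve $\C[(\phi_m,\bar\phi_m)_{m\ge1}]$, and the nominally infinite sums are finite when applied to any polynomial; the general case $\ru=izu(z)\del_z$ follows by expanding $u$ in its (exponentially convergent) Fourier series and using linearity. I expect the only genuine — and still mild — difficulty to be the index discipline in this last step: keeping $\phi_m$ and $\bar\phi_m$ apart, tracking the shift by $n$, and treating correctly the diagonal terms at $m=n$, together with the routine check that $t\mapsto F((c+\phi)\cdot e^{-t\ru})$ is really differentiable at $0$ with the naive chain-rule value.
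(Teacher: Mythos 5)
Your proof is correct and follows essentially the same route as the paper's: a first-order expansion of $\phi\circ h_t+Q\log(zh_t'/h_t)$ followed by extraction of Fourier coefficients, the only difference being that you work directly with the complexified Laurent generators $\rv_{\pm n}=i\ra_n\mp\rb_n$ via $\C$-linear extension, whereas the paper computes $\cD_{\ra_n}^\R$ (and, implicitly, $\cD_{\rb_n}^\R$) separately and then recombines — your bookkeeping is arguably cleaner. One caveat you share with the paper: for a \emph{general} analytic $\ru$, applying $\cD_\ru^\R$ to a polynomial such as $\phi_1$ produces an infinite (exponentially convergent) linear combination of the modes, which strictly speaking leaves the polynomial space $\cC$; only finite combinations of the $\rv_n$ genuinely preserve $\cC$, and neither your argument nor the paper's addresses this, but it is immaterial for the explicit formulas that are actually used downstream.
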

\begin{proof}
To ease notations, we will write $\phi_0$ instead of $c$ in this proof.
It suffices to compute $\cD_{\ra_n}^\R$ and $\cD_{\rb_n}^\R$. For $\ra_n$, we have
\begin{align*}
(\phi\cdot e^{t\ra_n})(z)
&=\sum_{m\in\Z}\phi_m(z+\frac{it}{2}z(z^n+z^{-n})^m)+Q\log\Big(1+\frac{i}{2}tz\del_z(z^n+z^{-n}) \Big)+o(t)\\
&=\frac{iQnt}{2}(z^n-z^{-n})+\sum_{m\in\Z}\left(\phi_m+\frac{it}{2}((m-n)\phi_{m-n}+(m+n)\phi_{m+n})\right)z^m+o(t).
\end{align*}
From this, we get (using the notation $\bar\phi_m=\phi_{-m}$)
\begin{align*}
\cD_{\ra_n}^\R
=-\frac{inQ}{2}(\del_{\phi_n}-\del_{\bar\phi_n})-\frac{i}{2}\sum_{m=0}^\infty&(m-n)\phi_{m-n}\del_{\phi_m}-(m-n)\bar\phi_{m-n}\del_{\bar\phi_m}\\
+&(m+n)\phi_{m+n}\del_{\phi_m}-(m+n)\bar\phi_{m+n}\del_{\bar\phi_m}.
\end{align*}
We get a similar expression for $\cD_{\rb_n}^\R$, and the result follows from the definition of $\cD_n$.
\end{proof}

The next lemma states that the family $(\cD_n)_{n\in\Z}$ forms a representation of the complexified Witt algebra.
\begin{lemma}\label{lem:witt}
As endomorphisms of $\C[(\phi_m,\bar{\phi}_m)_{m\geq1}]$, we have the commutation relations
\[[\cD_n,\cD_m]=(n-m)\cD_{n+m},\qquad\forall n,m\in\Z.\]
\end{lemma}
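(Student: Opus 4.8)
The plan is to verify the bracket relations directly from the explicit formulas for $\cD_n$, $\cD_{-n}$, $\cD_0$ in the preceding lemma, after first repackaging them into a single uniform expression. I would introduce the notation $\Phi_0:=c$, $\Phi_k:=\phi_k$ for $k\geq1$ and $\Phi_k:=\bar\phi_{-k}$ for $k\leq-1$, and write $\del_k:=\del_{\Phi_k}$. The first step is to check — case by case on the sign of $n$ — that the three formulas of the preceding lemma are all instances of
\[
\cD_n=Qn\,\del_{\Phi_n}+\sum_{k\in\Z}(k-n)\,\Phi_{k-n}\,\del_{\Phi_k},\qquad n\in\Z,
\]
a sum with only finitely many nonzero terms on any polynomial. (For $n>0$ the $k=0$ term produces $-n\bar\phi_n\del_c$, the $k=n$ term vanishes and leaves only $Qn\del_{\phi_n}$, the terms $k=m\geq1$ give $(m-n)\phi_{m-n}\del_{\phi_m}$ and the terms $k\leq-1$ give $-(m+n)\bar\phi_{m+n}\del_{\bar\phi_m}$; the cases $n<0$ and $n=0$ are analogous.) As a sanity check, this $\cD_n$ is exactly $\cD_{\rv_n}^\R$ for $\rv_n=-z^{n+1}\del_z$, since $i\ra_n-\rb_n=\rv_n$, $i\ra_n+\rb_n=\rv_{-n}$ and $i\ra_0=\rv_0$; the same formula also drops out of the action $\phi\cdot e^{-t\rv_n}$, under which $\Phi_k\mapsto\Phi_k+t\big((k-n)\Phi_{k-n}+Qn\,\delta_{k,n}\big)+o(t)$.

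With this in hand, the commutator is a short computation. For first-order operators $D=\sum_lP_l\del_l$ and $D'=\sum_lR_l\del_l$ the second-order terms cancel and $[D,D']=\sum_l\big(D(R_l)-D'(P_l)\big)\del_l$; here the coefficient of $\del_l$ in $\cD_n$ is $P_l=Qn\,\delta_{l,n}+(l-n)\Phi_{l-n}$, and $\cD_n(\Phi_j)=Qn\,\delta_{j,n}+(j-n)\Phi_{j-n}$, so the coefficient of $\del_l$ in $[\cD_n,\cD_m]$ equals
\[
(l-m)\big[Qn\,\delta_{l,m+n}+(l-m-n)\Phi_{l-m-n}\big]-(l-n)\big[Qm\,\delta_{l,m+n}+(l-m-n)\Phi_{l-m-n}\big].
\]
The coefficient of $\Phi_{l-m-n}$ here is $\big[(l-m)-(l-n)\big](l-m-n)=(n-m)(l-m-n)$, and the $\delta$-piece, supported on $l=m+n$, has coefficient $Q(n^2-m^2)=(n-m)Q(n+m)$; hence the coefficient of $\del_l$ in $[\cD_n,\cD_m]$ is $(n-m)\big(Q(n+m)\delta_{l,n+m}+(l-n-m)\Phi_{l-n-m}\big)$, i.e.\ $(n-m)$ times the coefficient of $\del_l$ in $\cD_{n+m}$. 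Summing over $l$ gives $[\cD_n,\cD_m]=(n-m)\cD_{n+m}$. Since no coefficient of any $\cD_n$ involves $c$ and any $\del_c$-term annihilates $c$-independent polynomials, this identity restricts to $\C[(\phi_m,\bar\phi_m)_{m\geq1}]$.

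The one place that needs care is the first step: one must make sure the single uniform formula genuinely reproduces every term of all three displayed expressions — in particular the index conventions $\Phi_{-k}=\bar\phi_k$, $\Phi_0=c$, and the cancellation of the $k=n$ term against $Qn\del_{\Phi_n}$. Once the repackaging is done correctly the rest is a two-line algebraic identity with no sign ambiguity; in particular one sidesteps the delicate sign that arises if one instead argues abstractly — that $\ru\mapsto\cD_\ru^\R$ is an (anti)morphism of the relevant Lie algebra and invokes the vector-field relations \eqref{eq:commutations_vect}.
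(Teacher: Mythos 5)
Your proof is correct, but it takes a genuinely different route from the paper's. The paper argues abstractly: by construction $\cD_\ru^\R$ is the Lie derivative along (minus) the fundamental vector field of the right action $(\phi,h)\mapsto\phi\cdot h$, so $\ru\mapsto\cD_\ru^\R$ satisfies $[\cD_\ru^\R,\cD_\rv^\R]=\cD_{[\ru,\rv]}^\R$; the Witt relations for the $\cD_n$ then follow from $\C$-bilinearity of the commutator and the structure constants \eqref{eq:commutations_vect} of the generators $\ra_n,\rb_n$ (the details being deferred to the analogous computation in a previous paper). You instead repackage the three explicit formulas into the single expression $\cD_n=Qn\,\del_{\Phi_n}+\sum_k(k-n)\Phi_{k-n}\del_{\Phi_k}$ with $\Phi_0=c$, $\Phi_{-k}=\bar\phi_k$, and verify the bracket coefficient-by-coefficient. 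I checked your unification against all three displayed formulas (including the convention $\bar\phi_m=\phi_{-m}$ used in the paper's own derivation, the $k=0$ term giving $-n\bar\phi_n\del_c$, and the vanishing $k=n$ term) and your two-line commutator identity; both are right, as is the final remark that the identity restricts to $\C[(\phi_m,\bar\phi_m)_{m\geq1}]$ because no coefficient involves $c$. The trade-off: the paper's argument is shorter and conceptually transparent (it is just functoriality of the Lie derivative for a group action, and generalises immediately), but it hides exactly the sign and bilinearity bookkeeping you mention; your computation is longer to set up but entirely self-contained, verifies the explicit formulas along the way, and leaves no sign ambiguity. Either is acceptable; yours is arguably the more convincing as a complete proof, since the paper only sketches its argument.
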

\begin{proof}
The proof is identical to \cite[Proposition 2.6]{BJ24}; we will only sketch it and leave the details to the reader. By construction, for all $\ru\in\mathrm{Vect}(\T)$, $\cD_\ru^\R$ is the Lie derivative along (minus) the fundamental vector field induced by $\ru$, which gives $[\cD^\R_\ru,\cD^\R_\rv]=\cD^\R_{[\ru,\rv]}$ for all $\ru,\rv\in\mathrm{Vect}^\omega(\S^1)$. The general formula follows from the $\C$-bilinearity of the commutator in $\mathrm{End}(\cC)$ and the commutation relations \eqref{eq:commutations_vect}.
\end{proof}

    \subsection{Proof of Theorem \ref{T:Neumann}}\label{SS:characterisation}
From here, the proof of Theorem \ref{T:Neumann} proceeds by computing the adjoints of the operators $\cD_n$ and ``Laplace transforming" in the zero mode, in order to make contact with the Feigin--Fuchs representation.

Denoting by
\begin{equation}
    \label{E:vector_field_vn}
    \rv_n = -z^{n+1}\del_z, \quad n \in \Z,
\end{equation}
we have:

\begin{proposition}\label{P:adjoints}
On $L^2(e^{-Qc}\d c\otimes\dot\P)$, we have for all $n\geq0$,
\[\cD_n^*=\cD_{-n}-(\bT_\D \phi,\rv_{-n}).\]
In particular, $\cD_n^*$ is densely defined for all $n\in\Z$ and $\cD_n$ is closable on $L^2(e^{-Qc}\d c\otimes\dot\P)$. 
\end{proposition}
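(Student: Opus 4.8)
The plan is to establish the identity by a direct Gaussian integration--by--parts computation on the explicit first--order operators, and then read off closability. Both $\cD_n$ and the candidate adjoint $\cD_{-n}-(\bT_\D\phi,\rv_{-n})$ preserve the dense polynomial core $\cC=\cS(\R)\otimes\C[(\phi_m,\bar\phi_m)_{m\geq1}]$ of $L^2(e^{-Qc}\d c\otimes\dot\P)$: the operators $\cD_{\pm n}$ are first--order differential operators with polynomial coefficients, and (as computed below) $(\bT_\D\phi,\rv_{-n})$ is multiplication by a polynomial in the $\phi_m$. So it suffices to verify $\langle\cD_n F,G\rangle=\langle F,(\cD_{-n}-(\bT_\D\phi,\rv_{-n}))G\rangle$ for all $F,G\in\cC$; this exhibits $\cD_{-n}-(\bT_\D\phi,\rv_{-n})$ as a densely defined restriction of $\cD_n^*$, whence $\cD_n^*$ is densely defined and $\cD_n$ closable. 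Running the same computation on the operators $\cD_{-n}$ (whose explicit form is given above) covers the remaining, negative, indices.

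For the computation itself, I would first record the elementary adjoint rules on $L^2(e^{-Qc}\d c\otimes\dot\P)$, obtained by integrating by parts against the Gaussian weights ($e^{-Qc}$ for the zero mode, $\propto e^{-m|\phi_m|^2}$ for the $m$--th mode of $\dot\P$):
\[
\del_c^*=-\del_c+Q,\qquad \phi_m^*=\bar\phi_m,\qquad \del_{\phi_m}^*=-\del_{\bar\phi_m}+m\phi_m,\qquad \del_{\bar\phi_m}^*=-\del_{\phi_m}+m\bar\phi_m,
\]
together with $(AB)^*=B^*A^*$ and antilinearity in the scalars. Applying these termwise to the explicit formula for $\cD_n$ from the preceding lemma (with the conventions $\phi_0=\bar\phi_0=c$ and $\bar\phi_{-k}=\phi_k$), and splitting each sum over $m$ into the ranges $m<n$, $m=n$, $m>n$, the ``$-\del$'' parts of the adjoint rules reassemble, after an index shift, into $\cD_{-n}$, while the creation terms $m(m-n)\phi_m\bar\phi_{m-n}$ and $m(m+n)\bar\phi_m\phi_{m+n}$ arising from the bulk of the two infinite sums cancel against one another after the reindexing $m\mapsto m+n$. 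What survives are the creation terms coming from $\del_{\phi_n}^*$, from $\del_c^*$, and from the low--index terms $m<n$ where $\phi_{m-n}$ has turned into $\bar\phi_{n-m}$; these add up to $Qn(n-1)\phi_n-\sum_{m=1}^{n-1}m(n-m)\phi_m\phi_{n-m}$.

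It then remains to match this polynomial with $-(\bT_\D\phi,\rv_{-n})$. From $\del_z\bP_\D\phi(z)=\sum_{m\geq1}m\phi_m z^{m-1}$ in $\D$ one gets $\bT_\D\phi(z)=-(\sum_{m\geq1}m\phi_m z^{m-1})^2+Q\sum_{m\geq2}m(m-1)\phi_m z^{m-2}$, holomorphic in $\D$ with lowest term of order $z^0$; since $\rv_{-n}$ has function part $-z^{1-n}$, the pairing gives
\[
(\bT_\D\phi,\rv_{-n})=\frac1{2\pi i}\oint \bT_\D\phi(z)\,(-z^{1-n})\,\d z=-[z^{n-2}]\,\bT_\D\phi(z)=-Qn(n-1)\phi_n+\sum_{m=1}^{n-1}m(n-m)\phi_m\phi_{n-m},
\]
which is exactly the negative of the surviving creation terms (and vanishes for $n=0,1$, consistently with $\bT_\D\phi$ being holomorphic in $\D$). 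The case $n=0$ is handled identically, the only change being that the commutator $[\del_{\phi_m},\phi_m]=1$ now produces constants, which cancel between the $\phi_m$-- and $\bar\phi_m$--parts of $\cD_0$, leaving $\cD_0^*=\cD_0=\cD_{-0}-(\bT_\D\phi,\rv_0)$.

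The one genuine difficulty is the combinatorial bookkeeping in the second step: keeping straight which creation terms cancel against which under the reindexings, and carefully treating the boundary cases $m\le n$, where negative Fourier indices interchange $\phi$'s and $\bar\phi$'s and the term $m=n$ brings in the zero mode $c$. There is no analytic subtlety: on $\cC$ every sum defining $\cD_n F$ is finite, so all manipulations are purely algebraic, and I would present the computation for a general $n\ge1$ with the sums split as above, which makes the cancellations transparent.
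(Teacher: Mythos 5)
Your proof is correct, but it proceeds by a genuinely different route from the paper's. The paper derives the adjoint formula from the quasiconformal machinery of Section~\ref{S:liouville}: it takes $h_t=e^{t\ra_n}$, invokes the absolute continuity of the law of $\phi\cdot h_t^{-1}$ with Radon--Nikodym derivative $e^{-\frac12(\bS_\D(\phi\cdot h_t)-\bS_\D(\phi))}(1+O(t^2))$, and reads off the first-order term from Proposition~\ref{P:var_liouville}, which produces $(\bT_\D\phi,\rv_{-n})$ directly as a contour integral of the stress--energy tensor. Your route is a self-contained, mode-by-mode Gaussian integration by parts on the explicit polynomial-coefficient operators, closer in spirit to the Feigin--Fuchs computations of Section~\ref{SS:FF_modules}. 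The key identifications check out: the adjoint rules for the weighted measure are right (density $\propto e^{-m|\phi_m|^2}$ for $\dot\P$, $e^{-Qc}\d c$ for the zero mode); given the pairing \eqref{E:pairings} and $\rv_{-n}=-z^{1-n}\del_z$, one indeed has $(\bT_\D\phi,\rv_{-n})=-[z^{n-2}]\bT_\D\phi=-Qn(n-1)\phi_n+\sum_{m=1}^{n-1}m(n-m)\phi_m\phi_{n-m}$; and the cancellation of the bulk creation terms under $m\mapsto m+n$ works (for $n=2$ the survivors are $2Q\phi_2-\phi_1^2$, as required, and the differential part reassembles into $\cD_{-2}$). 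The vanishing for $n=0,1$, and the cancellation of the commutator constants in the $n=0$ case, are also as you say; note that for $n\ge1$ no such commutators arise since the multiplication and differentiation indices never coincide. What the paper's approach buys is conceptual uniformity --- the same variational lemma drives both this proposition and its SLE analogue (Theorem~\ref{T:GFF_SLE}) --- while yours buys an elementary, self-contained argument that makes explicit that $(\bT_\D\phi,\rv_{-n})$ is multiplication by a polynomial preserving the core $\cC$, which is precisely what the densely-defined/closable conclusion requires.
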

\begin{proof}
Let us consider the family of diffeomorphisms $h_t=e^{t\ra_n}$ for small real $t$.
By definition \eqref{eq:commutations_vect} of $\ra_n$, we have on $\S^1$
\[
h_t(z) = z + 2itz \Re(w(z)) + o(t) \qquad \text{where} \qquad w(z) = z^{-n}/2.
\]
This expansion takes a similar form to the one we used in Section \ref{SS:liouville_disc}; see in particular \eqref{E:Ahlfors1}.
From Section \ref{S:gff_sle}, we know that the law of $\phi\cdot h_t^{-1}$ is absolutely continuous with respect to  $e^{-Qc}\d c\otimes\dot\P$, and the Radon--Nikodym derivative equals $e^{-\frac{1}{2}(\bS_\D(\phi\cdot h_t)-\bS_\D(\phi))}(1+O(t^2))$, with $1+O(t^2)$ corresponding to the contribution of the determinant which vanishes at order 2 and where $\bS_\D$ is the Liouville action \eqref{E:def_LiouvilleS1}. Hence, $(\cD_{\ra_n}^\R)^*=-\cD_{\ra_n}^\R+\frac{1}{2}\cD_{\ra_n}^\R\bS_\D$, and Proposition \ref{P:var_liouville} yields (or more precisely \eqref{E:pf_prop33})
\begin{align*}
    (\cD_{\ra_n}^\R)^*=-\cD_{\ra_n}^\R+ \frac{1}{\pi} \Re \Big( \oint \bT_\D \phi(z) zw(z) \d z \Big) =-\cD_{\ra_n}^\R+\Im(\bT_\D \phi,\rv_{-n}).
\end{align*}
Similarly, we find $(\cD_{\rb_n}^\R)^*=-\cD_{\rb_n}^\R+\Re(\bT_\D \phi,\rv_{-n})$. Hence,
\begin{align*}
\cD_n^*=-i(\cD_{\ra_n}^\R)^*-(\cD_{\rb_n}^\R)^*=i\cD_{\ra_n}^\R+\cD_{\rb_n}^\R-\Re(\bT_\D \phi,\rv_{-n})-i\Im(\bT_\D \phi,\rv_{-n})=\cD_{-n}-(\bT_\D \phi,\rv_{-n}),
\end{align*}
concluding the proof.
\end{proof}
    
Now, we ``Laplace transform" the operators $\cD_n,\cD_n^*$ in the zero mode $c$ and make contact with Feigin--Fuchs modules in order to characterise the measure $\dot\P$ using Theorem \ref{T:FF}. We set for $n>0$ and $\alpha\in\C$,
\begin{equation}\label{eq:def_D_alpha}
\begin{aligned}
&\cD_{n,\alpha}:=nQ\del_{\phi_n}+\frac\alpha2 n\bar\phi_n+\sum_{m=1}^\infty(m-n)\phi_{m-n}\del_{\phi_m}-(m+n)\bar\phi_{m+n}\del_{\bar\phi_m};\\
&\cD_{-n,\alpha}:=-nQ\del_{\bar\phi_n}-\frac\alpha2 n\phi_n+\sum_{m=1}^\infty(m+n)\phi_{m+n}\del_{\phi_m}-(m-n)\bar\phi_{m-n}\del_{\bar\phi_m}.
\end{aligned}
\end{equation}
They are obtained from the operators $\cD_n$ by replacing $\del_c$ with the multiplication by $\frac\alpha2$, and act (as densely defined operators) on $L^2(\dot\P)$. 

\begin{lemma}\label{L:adjoint_D_alpha}
For all $n>0$ and all $\alpha\in\C$, we have $\cD_{n,\bar\alpha}^*=\cD_{-n,2Q-\alpha}-(\bT_\D \phi,\rv_{-n})$ on $L^2(\dot\P)$. 
Moreover, $\cD_{n,2\alpha}^*$ preserves $\C[(\phi_m)_{m\geq1}]$ and coincides with $\bL_{-n,\alpha}^\mathrm{FF}$ there.
\end{lemma}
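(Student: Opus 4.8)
The plan is to establish the first (global) identity on $L^2(\dot\P)$ first, and then deduce the second statement from it by restricting to the holomorphic sector.

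For the first identity, the cleanest route is to transport the adjoint formula $\cD_n^* = \cD_{-n} - (\bT_\D\phi,\rv_{-n})$ of Proposition~\ref{P:adjoints} from $L^2(e^{-Qc}\,\d c\otimes\dot\P)$ down to $L^2(\dot\P)$ by ``Laplace transforming'' in the zero mode $c$. Split $\cD_n = -n\bar\phi_n\,\del_c + B_n$ and $\cD_{-n} = n\phi_n\,\del_c + D_n$, where $B_n,D_n$ act on $L^2(\dot\P)$ alone, so that replacing $\del_c$ by a spectral parameter turns $\cD_n,\cD_{-n}$ into $\cD_{n,\alpha},\cD_{-n,\alpha}$. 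Passing to the Plancherel decomposition of $L^2(e^{-Qc}\,\d c)$ that diagonalises $\del_c$, the identity of Proposition~\ref{P:adjoints} decomposes over the fibres; the crucial input is the relation $\del_c^* = -\del_c + Q$ on $L^2(e^{-Qc}\,\d c)$ together with the complex conjugation in the Hermitian pairing of fibres, which together pair the parameter $\alpha$ with $2Q-\alpha$ on the $\cD_{-n}$ side and with $\bar\alpha$ on the $\cD_n$ side. This produces $\cD_{n,\bar\alpha}^* = \cD_{-n,2Q-\alpha} - (\bT_\D\phi,\rv_{-n})$ on the spectral line, and since, applied to a fixed polynomial state, both sides are polynomial in $\alpha$, the identity extends to all $\alpha\in\C$. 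Equivalently, and perhaps more transparently for a written proof, one can verify this identity directly on the dense polynomial core using the elementary Gaussian adjoints $(\del_{\phi_m})^* = -\del_{\bar\phi_m} + m\phi_m$, $(\phi_m)^* = \bar\phi_m$ on $L^2(\dot\P)$, together with the expansion $(\bT_\D\phi,\rv_{-n}) = \sum_{j+k=n}jk\,\phi_j\phi_k - Qn(n-1)\phi_n$ read off from \eqref{E:def_stress}--\eqref{E:pairings}: the quadratic-in-modes part of the naive adjoint reproduces $-(\del_z\bP_\D\phi)^2$, while the linear anomaly coming from differentiating against the Gaussian density produces both the $Q\del^2_{zz}\bP_\D\phi$ term and the shift $\alpha\mapsto 2Q-\alpha$.

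For the second statement, apply the first identity with the parameter chosen so that the left-hand side reads $\cD_{n,2\alpha}^*$, obtaining $\cD_{n,2\alpha}^* = \cD_{-n,\beta} - (\bT_\D\phi,\rv_{-n})$ for the corresponding $\beta$. Both operators on the right preserve $\C[(\phi_m)_{m\ge1}]$: every $\del_{\bar\phi_m}$-term of $\cD_{-n,\beta}$ annihilates holomorphic polynomials (and there is no bare $\bar\phi_m$-multiplication term), while $(\bT_\D\phi,\rv_{-n})$ is multiplication by a polynomial in the $\phi_j$ only. Hence $\cD_{n,2\alpha}^*$ preserves $\C[(\phi_m)_{m\ge1}]$, and there it equals the explicit first-order operator $-\tfrac{\beta}{2}n\,\phi_n + Qn(n-1)\phi_n - \sum_{j+k=n}jk\,\phi_j\phi_k + \sum_{m\ge1}(m+n)\phi_{m+n}\del_{\phi_m}$. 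It then remains to expand $\bL_{-n,\alpha}^\mathrm{FF}$ via $\bA_m = \tfrac{i}{\sqrt2}\del_{\phi_m}$, $\bA_{-m} = \tfrac{\sqrt2}{i}m\phi_m$ $(m>0)$ and $\bA_{0,\alpha} = \tfrac{i}{\sqrt2}(Q-\alpha)$, carefully keeping track of the ordering in $\tfrac12\sum_{m\in\Z}\bA_{-n-m}\bA_m$ (only the $m=0$ and $m=-n$ summands involve $\bA_{0,\alpha}$, and for $n\ne0$ no normal-ordering constant arises), and to check that the resulting first-order differential operator matches the one above term by term, the action on $\ind$ (e.g.\ $\bL_{-1,\alpha}^\mathrm{FF}\ind = -\alpha\phi_1$) pinning down the parameter correspondence.

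I expect the only real difficulty to be bookkeeping: tracking every sign and every factor of $i$ and $\sqrt2$ through the Laplace transform and the Feigin--Fuchs expansion; noticing that the a priori antiholomorphic ``cross terms'' $\phi_m\bar\phi_{m-n}$ and $\bar\phi_m\phi_{m+n}$ produced by the Gaussian integration by parts cancel in pairs, so that the adjoint genuinely preserves the holomorphic sector; and seeing that the linear-in-$\phi_n$ anomaly matches exactly the $-Qn(n-1)\phi_n$ hidden inside $(\bT_\D\phi,\rv_{-n})$ together with the reflection $\alpha\mapsto 2Q-\alpha$ --- this last matching being the concrete manifestation of the Feigin--Fuchs duality $\Delta_\alpha = \Delta_{2Q-\alpha}$ at central charge $c_\rL = 1+6Q^2$. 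Once the two first-order operators are written out explicitly, their identification is a finite check.
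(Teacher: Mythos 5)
Your proposal is correct and follows essentially the same route as the paper: the paper also transfers Proposition~\ref{P:adjoints} to the fixed-$\alpha$ fibre (via conjugation by $e^{\alpha c/2}$, which is your Plancherel/Laplace-transform argument at a single spectral value, followed by analytic continuation in $\alpha$ — your polynomiality observation), and it likewise treats the second claim as a direct comparison of the explicit formulas \eqref{eq:def_D_alpha} and \eqref{eq:def_ff}, merely declaring it "obvious" where you spell out the mode-by-mode matching. Your alternative route by direct Gaussian integration by parts is exactly the "direct computation" the paper mentions but does not carry out, and your closing remarks correctly locate where all the sign/normalisation bookkeeping (including the $\sqrt2$ discrepancy between $\dot\P$ and $\P_{\S^1}$ and the sign of the $\tfrac{\alpha}{2}n\bar\phi_n$ term) must be checked.
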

\begin{proof}
This can be checked by direct computation, but we propose a more conceptual argument. Consider the operator of multiplication by $e^{\frac{\alpha}{2}c}$ from $\cS'(\R)\otimes\C[(\phi_m,\bar\phi_m)_{m\geq1}]$ to itself. It is plain to check that operator $e^{-\frac\alpha2 c}\circ\cD_n\circ e^{\frac\alpha2 c}$ preserves $\C[(\phi_m,\bar\phi_m)_{m\geq1}]$ (viewed as a subspace of $\cS'(\R)\otimes\C[(\phi_m,\bar\phi_m)_{m\geq1}]$), and it coincides with $\cD_{n,\alpha}$ there. For $p\in\R$, we have $\cD_{n,Q-ip}^*=e^{-\frac{1}{2}(Q+ip)c}\circ\cD_n^*\circ e^{\frac{1}{2}(Q+ip)c}=\cD_{-n,Q+ip}-(\bT_\D \phi,\rv_{-n})$, where the adjoint in the LHS (resp. RHS) is taken in $L^2(\dot\P)$ (resp. $L^2(e^{-Qc}\d c\otimes\dot\P)$. The formula extends to all $\alpha\in\C$ by analytic continuation.
The last statement of the lemma is obvious from \eqref{eq:def_D_alpha} and \eqref{eq:def_ff}.
\end{proof}

\begin{proof}[Proof of Theorem \ref{T:Neumann}]
Let $\Q$ be a probability measure on $\C^{\N^*}$ be such that the adjoint relations of Lemma \ref{L:adjoint_D_alpha} hold on $L^2(\Q)$ for some $\alpha \in \C \setminus (kac^+ \cup kac^-)$.
We introduce the unitary map $U:L^2(\P_{\S^1})\to L^2(\dot\P)$ defined by $UF(\phi):=F(\frac{1}{\sqrt{2}}\phi)$.
For $n>0$, Lemma \ref{L:adjoint_D_alpha} determines the adjoint of $\bL_{-n,\alpha}^\mathrm{FF}$ on $L^2_\mathrm{hol}(\Q)$, hence on $L^2_\mathrm{hol}(U^*\Q)$, where it coincides with $\bL_{n,2Q-\bar\alpha}^\mathrm{FF}$. We can conclude from Theorem \ref{T:FF} that $U^*\Q=\P_{\S^1}$, i.e. $\Q=\dot\P$. 
%The theorem is equivalent to $U^*\Q=\P_{\S^1}$, which is equivalent to proving that the adjoint of $\bL_{-n,\alpha}^\mathrm{FF}$ is $\bL_{n,2Q-\bar\alpha}^\mathrm{FF}$ on $L^2_\mathrm{hol}(U^*\Q)$ (by Theorem \ref{T:FF}). This last fact is an immediate consequence of the relations of Lemma \ref{L:adjoint_D_alpha}.
\end{proof}

\section{Welding homeomophism of SLE: proof of Theorem \ref{T:ghost}}\label{S:ghost}
\subsection{Setup and preliminaries}
Following \cite{BJ24}, we denote by $\cE$ the space
\begin{equation}\label{eq:def_E}
\cE:=\{f:\D\to\C\text{ conformal},\,f\text{ continuous \& injective on }\bar\D,\,f(0)=0,\,f'(0)=1\}.
\end{equation}
This space comes equipped with the uniform topology, and we recall that the shape measure $\nu^\#$ is a Borel measure on $\cE$ \cite[Proposition B.1]{BJ24}.

Let $f\in\cE$ and consider the Riemann mapping $g:\D^*\to\mathrm{ext}(f(\S^1))$ normalised such that $g(\infty)=\infty$ and $g(1)=f(1)$. Then, $g^{-1}\circ f|_{\S^1}\in\mathrm{Homeo}_1(\S^1)$, and this gives a function $\cE\to\mathrm{Homeo}_1(\S^1)$ (which is injective on the preimage of $\mathrm{Homeo}_1(\S^1)\cap\mathrm{Homeo}^\star(\S^1)$). We equip $\mathrm{Homeo}_1(\S^1)$ with the smallest topology which makes this function continuous. Note that $\mathrm{Homeo}_1(\S^1)$ has two connected components in this topology: the image of $\cE$ and its complement (the last set comes with the trivial topology). We have an identification $\mathrm{Homeo}(\S^1)\simeq\S^1\times\mathrm{Homeo}_1(\S^1),\,h\mapsto(h(1),h(1)^{-1}h)$.

Recall Definition \ref{def:tilde_nu} and the fact that $\tilde{\nu}$ can also be viewed as the (Borel) measure $\frac{\d\alpha}{2\pi}\otimes\cE$ on $\S^1\times\cE$, with $e^{i\alpha}$ parametrising the uniform rotation. We will write the samples of $\tilde{\nu}$ either as $h$ (when viewed as a homeomorphism of $\S^1$) or as pairs $(e^{i\alpha},f)$ with $\alpha\in[0,2\pi)$ and $f\in\cE$. The latter description gives us a natural space of test functions: denote by $f(z)=z(1+\sum_{m=1}^\infty a_mz^m)$ the expansion of $f$ around $0$, and define the space of test functions as $\cC^\infty(\S^1)\otimes\C[(a_m,\bar a_m)_{m\geq1}]$. The latter space is the space of polynomials in the coefficients $(a_m)_{m\geq1}$ and is dense in $L^2(\nu^\#)$ \cite[Section 2.2]{BJ24}. 

To summarise the setup, we can view $\nu_\kappa$ as a scale invariant measure on $\R\times\cE\simeq\cJ_{0,\infty}$, and $\tilde{\nu}_\kappa$ as a rotationally invariant measure on $\S^1\times\cE$. As far as deformations are concerned, we can either \emph{postcompose} $f\in\cE$ (viewed as an element of $\cJ_{0,\infty}$) by a map which is conformal in a neighbourhood of $f(\S^1)$ (as done in \cite{BJ24}); or we can \emph{precompose} $f$ (viewed as an element of $\mathrm{Homeo}(\S^1)$) with an analytic diffeomorphism of $\S^1$. These two symmetries induce two distinct sets of vector fields on $\cE$. The main result of this section can be understood as the computation of the determinant of the change of basis between these two sets of vector fields. In order to make this rigorous, we will study the effect of differentiating along these vector fields. Hence, we introduce two sets of differential operators corresponding to the Lie derivatives along these vector fields. The latter family was introduced in \cite{BJ24}. The main goal of this section will be to relate these two families, i.e. relate small perturbations in $\mathrm{Homeo}(\S^1)$ to small perturbations in $\cJ_{0,\infty}$, and rely on the integration by parts formula from \cite{BJ24}.

$\bullet$ Operators on $L^2(\tilde{\nu})$.
Let $\mu \in L^\infty(\hat\C)$ be a Beltrami differential compactly supported in $\D$ (resp. $\D^*$), and let $\tilde{\Phi}_t$ be the solution to the Beltrami equation with coefficient $t\mu+\bar t\iota^*\mu$, normalised to fix 0, 1, $\infty$ (defined in a complex neighbourhood of $t=0$). We say that $F:\mathrm{Homeo}(\S^1)\to\R$ is right (resp. left) differentiable along $\mu$ at $h\in\mathrm{Homeo}(\S^1)$ if it admits a first order expansion
\begin{equation}
    \label{E:def_sL_sR}
\begin{aligned}
&F(h\circ\tilde{\Phi}_t)=F(h)+t\sR_\mu F(h)+\bar t\bar\sR_\mu F(h)+o(t) \\
\text{resp.}\qquad&F(\tilde{\Phi}_t\circ h)=F(h)+t\sL_\mu F(h)+\bar t\bar\sL_\mu F(h)+o(t).
\end{aligned}
\end{equation}
We introduce the operator $\Theta$ acting on $L^2(\tilde{\nu})$ by:
\begin{equation}\label{E:Theta}
    \Theta F : h \in \mathrm{Homeo}(\S^1) \mapsto F(h^{-1}) \in \R, \qquad
    F \in L^2(\tilde\nu).
\end{equation}
First, note that the law of $h^{-1}$ under $\tilde\nu$ is invariant under left/right rotations. Moreover, writing a conformal welding $h=g^{-1}\circ f$ and observing that $\iota$ fixes $\S^1$ setwise, we have the identity $h^{-1}=(\iota\circ f\circ\iota)^{-1}\circ(\iota\circ g\circ\iota)$. Hence, viewed as an operator on $L^2(\nu^\#)$, the restriction of $\Theta$ to $\mathrm{Homeo}(\S^1)$ coincides with the operator $\Theta$ from \cite{BJ24}. In particular, $\Theta$ is unitary and self-adjoint on $L^2(\tilde{\nu})$.

$\bullet$ Operators on $L^2(\nu^\#)$.
In \cite{BJ24}, we introduced and studied in great detail analogous differential operators acting on $L^2(\nu^\#)$. See also \cite{GQW25}. For any Laurent polynomial vector field $\rv=v(z) \del_z \in \C(z)\del_z$, consider the associated flow $\Phi_t(z) = z + tv(z) + o(t)$. We say that a function $F \in L^2(\nu^\#)$ is differentiable at $\eta \in \cE$ in direction $\rv$ if
\begin{equation}\label{E:cL}
    F(\Phi_t(\eta)) = F(\eta) + t \cL_\rv F(\eta) + \bar t \bar\cL_{\rv} F(\eta) + o(t)
\end{equation}
as $t \to 0$ where we have identified the uniformising map $f$ with the Jordan curve $\eta$.
As shown in \cite{BJ24}, the differential operators $\cL_\rv$, $\bar\cL_\rv$ are densely defined operators on $L^2(\nu^\#)$. We will often consider the vector fields $\rv_n = -z^{n+1}\del_z$ \eqref{E:vector_field_vn}.

Now, we record two preparatory lemmas. Lemma \ref{L:Theta} relates $\sR_\mu$ to $\sL_\mu$, and Lemma \ref{L:mobius} studies the action of M\"obius transformations on $\tilde{\nu}$.

\begin{lemma}\label{L:Theta}
For all Beltrami differentials $\mu\in L^\infty(\D)$, we have $\sR_\mu=-\Theta\circ\sL_{\iota^*\mu}\circ\Theta$.
\end{lemma}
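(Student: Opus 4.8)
The plan is to unwind the two conjugations by $\Theta$ and compare first-order expansions in $t$. Since $\Theta$ is a unitary involution on $L^2(\tilde\nu)$, so that $\Theta^2=\mathrm{Id}$, the asserted identity $\sR_\mu=-\Theta\circ\sL_{\iota^*\mu}\circ\Theta$ is equivalent to $\Theta\circ\sR_\mu\circ\Theta=-\sL_{\iota^*\mu}$, and it is enough to check the latter on the dense space of test functions of \cite{BJ24} (polynomials in the Taylor coefficients of the conformal welding maps of $h$), along which the directional derivatives below all exist.

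First I would move $\Theta$ through the right-composition. Let $\tilde\Phi_t$ be the normalised (fixing $0,1,\infty$) solution of the Beltrami equation with coefficient $t\mu+\bar t\iota^*\mu$, so that $\sR_\mu$ collects the coefficient of $t$ in $F\mapsto F(\,\cdot\circ\tilde\Phi_t)$. Using the group law $(h^{-1}\circ\tilde\Phi_t)^{-1}=\tilde\Phi_t^{-1}\circ h$ and $\Theta F(k)=F(k^{-1})$ gives
\[
(\Theta\,\sR_\mu\,\Theta F)(h)=\big[\text{coefficient of }t\text{ in}\big]\ F\big(\tilde\Phi_t^{-1}\circ h\big),
\]
so $\Theta\,\sR_\mu\,\Theta$ is a left-composition derivative driven by the inverse flow $\tilde\Phi_t^{-1}$.

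Next I would identify $\tilde\Phi_t^{-1}$, to first order on $\S^1$, with a member of the family defining $\sL_{\iota^*\mu}$. Its dilatation equals $-(t\mu+\bar t\iota^*\mu)+O(t^2)$ and is again $\iota^*$-invariant; writing it in the normal form $s\,(\iota^*\mu)+\bar s\,\iota^*(\iota^*\mu)$ and using $\iota^*\circ\iota^*=\mathrm{Id}$ forces $s=-\bar t$, so $\tilde\Phi_t^{-1}$ agrees on $\S^1$, up to $o(t)$, with $\tilde\Psi_{-\bar t}$, where $\tilde\Psi_s$ (coefficient $s\,(\iota^*\mu)+\bar s\,\iota^*(\iota^*\mu)$, fixing $0,1,\infty$) is the flow defining $\sL_{\iota^*\mu}$. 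This matching I would make explicit with the boundary expansion $\tilde\Phi_t(z)=z+2iz\,\Re(tw_\mu(z))+o(t)$ \eqref{E:Ahlfors1} and the reflection identity $w_{\iota^*\mu}=\overline{w_\mu}$ on $\S^1$ from the proof of Lemma~\ref{L:qc}, using that the test functions depend smoothly enough on the boundary homeomorphism that $F(\tilde\Phi_t^{-1}\circ h)=F(\tilde\Psi_{-\bar t}\circ h)+o(t)$. Then, expanding $F(\tilde\Psi_{-\bar t}\circ h)=F(h)-\bar t\,\sL_{\iota^*\mu}F(h)-t\,\sL_{\overline{\iota^*\mu}}F(h)+o(t)$ and reading off the coefficient of $t$ — again via Lemma~\ref{L:qc} to check that the operator collected there is precisely $\sL_{\iota^*\mu}$ — yields $(\Theta\,\sR_\mu\,\Theta F)(h)=-\sL_{\iota^*\mu}F(h)$, which is the claim.

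The step I expect to be the main obstacle is this last matching: one must carefully keep the $t$ versus $\bar t$ components and the $\iota^*$-reflection straight throughout — a stray conjugation lands one on $\sL_{\overline{\iota^*\mu}}$ or on $\sR_{\bar\mu}$ rather than $\sL_{\iota^*\mu}$ — and one must justify that replacing the inverse flow $\tilde\Phi_t^{-1}$ by the $\sL_{\iota^*\mu}$-flow at the reparametrised time affects $F$ only at order $o(t)$, which rests on the smooth dependence of the Taylor coefficients of the welding maps on $h\in\mathrm{Homeo}_1(\S^1)$. The remaining ingredients are just the group law and Lemma~\ref{L:qc}.
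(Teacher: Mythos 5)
Your argument is essentially the paper's: both proofs rest on the group identity $h\circ\tilde\Phi_t^{-1}=(\tilde\Phi_t\circ h^{-1})^{-1}$ (equivalently, conjugating by the involution $\Theta$) together with the observation that, to first order, $\tilde\Phi_t^{-1}$ is the flow generated by minus the same $\iota^*$-symmetric Beltrami coefficient, re-read as the flow attached to $\iota^*\mu$; so the proposal is correct in substance and adds nothing structurally new. One bookkeeping remark: your own displayed expansion $F(\tilde\Psi_{-\bar t}\circ h)=F(h)-\bar t\,\sL_{\iota^*\mu}F(h)-t\,\sL_{\overline{\iota^*\mu}}F(h)+o(t)$ has $-\sL_{\overline{\iota^*\mu}}F(h)=-\bar\sL_{\iota^*\mu}F(h)$ as the coefficient of $t$, so a literal reading of \eqref{E:def_sL_sR} lands on $\sR_\mu=-\Theta\circ\bar\sL_{\iota^*\mu}\circ\Theta$ rather than the unbarred version ($\Theta$ swaps the holomorphic and antiholomorphic parts, consistently with $\iota$ being anticonformal); the paper's one-line proof elides exactly the same point, so treat this as a shared notational convention to be fixed in both places rather than a gap in your reasoning.
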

\begin{proof}
For all $\tilde{\Phi}\in\mathrm{Diff}^\omega(\S^1)$, and all test functions $F$, we have $F(h\circ\tilde{\Phi}^{-1})=F((\tilde{\Phi}\circ h^{-1})^{-1})$. Now, let $\mu$ be a Beltrami differential compactly supported in $\D$ and let $\tilde{\Phi}_t$ be the normalised solution to the Beltrami equation with coefficient $t\mu + \bar t \iota^*\mu$. To first order, $\tilde\Phi_t^{-1}$ is the normalised solution of the Beltrami equation with coefficient $-t \mu - \bar t \iota^*\mu$. So the right composition by $\tilde{\Phi}_t^{-1}$ produces $-\sR_\mu$, while the left composition by $\tilde{\Phi}_t$ produces $\sL_{\iota^*\mu}$. Hence, taking a complex derivative at $t=0$ gives $-\sR_\mu F=\Theta\circ\sL_{\iota^*\mu}\circ\Theta$ as required.
\end{proof}

Recall by definition that $\tilde{\nu}$ is invariant under left and right composition by rotations, which amounts to choosing the Riemann maps $f:\D\to\mathrm{int}(\eta)$ and $g:\D^*\to\mathrm{ext}(\eta)$ uniformly at random among those fixing $0$ and $\infty$ respectively. Of course, we could lift $\tilde{\nu}$ to an invariant measure under the left and right action of the \emph{whole} M\"obius group $\mathrm{PSL}_2(\R)$ by choosing $f$ and $g$ uniformly among \emph{all} such maps (without the requirement that they fix $0$ and $\infty$). This amounts to precomposing the normalised Riemann mapping by a Haar-distributed M\"obius transformation of the disc (independent of everything). The next lemma gives a more compact description of this invariant measure.

\begin{lemma}\label{L:mobius}
The measure $e^{2\bK}\tilde{\nu}$ is invariant by left and right composition by the group of M\"obius transformations of $\S^1$.
\end{lemma}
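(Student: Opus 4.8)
The plan is as follows. First, it suffices to treat right composition: left composition then follows from $\Theta\circ(\,\cdot\circ\tilde\Phi)=(\tilde\Phi^{-1}\circ\,\cdot\,)\circ\Theta$, the invariance of $\tilde\nu$ under $\Theta$ (recall $\Theta$ is unitary on $L^2(\tilde\nu)$), and the identity $\bK(h^{-1})=\bK(h)$. The last identity follows from $h^{-1}=(\iota\circ f\circ\iota)^{-1}\circ(\iota\circ g\circ\iota)$: since $f(0)=0$ and $g(\infty)=\infty$, a direct computation of Taylor coefficients gives $|(\iota\circ f\circ\iota)'(\infty)|=|f'(0)|^{-1}$ and $|(\iota\circ g\circ\iota)'(0)|=|g'(\infty)|^{-1}$, so $\bK(h^{-1})=\log|g'(\infty)/f'(0)|=\bK(h)$ by \eqref{E:def_K_S1}. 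Hence $e^{2\bK}\tilde\nu$ is $\Theta$-invariant, and right-invariance implies left-invariance.

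Next we transport right composition to $\cE$. For $\tilde\Phi\in\mob(\S^1)$ fixing $1$, set $w_0=\tilde\Phi(0)$; writing $h_\eta=g^{-1}\circ f$ for $\eta\in\cE$, one has $h_\eta\circ\tilde\Phi=g^{-1}\circ(f\circ\tilde\Phi)$, which is the welding homeomorphism of the curve $\Psi_{\tilde\Phi}(\eta):=m_\eta(\eta)$, where $m_\eta$ is the affine map $z\mapsto (z-f(w_0))/(f'(w_0)\tilde\Phi'(0))$; note $|f'(w_0)\tilde\Phi'(0)|=|f'(w_0)|(1-|w_0|^2)=\mathrm{CR}(f(w_0);\rmint(\eta))=:r_\eta$. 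Computing the derivatives of $m_\eta\circ g$ at $\infty$ and of $m_\eta\circ f\circ\tilde\Phi$ at $0$ yields the cocycle identity
\[\bK(h_\eta\circ\tilde\Phi)=\bK(h_\eta)-\log r_\eta.\]
Consequently, the statement is equivalent to the conformal covariance $(\Psi_{\tilde\Phi})_*\bigl(r_\eta^{2}\,\nu^\#(\d\eta)\bigr)=\nu^\#$, i.e. $\nu^\#$ transforms with conformal weight $-2$ at the moved interior marked point under re-marking.

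To prove this we use the M\"obius invariance of the (full) SLE$_\kappa$ loop measure $\mu_\lp$ on $\hat\C$. Fix a smooth conformal metric on $\hat\C$, with area form $v$ and conformal radius $\mathrm{CR}$, and consider the measure on tuples $(\eta,p,q,\xi)$ (with $\eta$ a Jordan curve, $p\in\rmint(\eta)$, $q\in\rmext(\eta)$ and $\xi$ a frame direction at $p$)
\[ \d\hat{\mathbf M}(\eta,p,q,\xi)=\mathrm{CR}(p;\rmint\eta)^{-2}\,\mathrm{CR}(q;\rmext\eta)^{-2}\,v(\d p)\,v(\d q)\,\d\xi\,\mu_\lp(\d\eta).\]
Since $\mathrm{CR}(\,\cdot\,;D)^{-2}v$ is metric independent, $\hat{\mathbf M}$ is $\Aut(\hat\C)$-invariant. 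Let $W(\eta,p,q,\xi)\in\mathrm{Homeo}_1(\S^1)$ be the welding homeomorphism got by uniformising $\rmint(\eta)$ by $f$ with $f(0)=p$ and $f'(0)$ of argument $\xi$, $\rmext(\eta)$ by $g$ with $g(\infty)=q$ and $g(1)=f(1)$, and putting $h=g^{-1}\circ f$. Then $W$ is $\Aut(\hat\C)$-invariant, its fibres are single $\Aut(\hat\C)$-orbits (by conformal removability), and the identity $\mathrm{CR}(0;\rmint\eta)^{-2}\mathrm{CR}(\infty;\rmext\eta)^{-2}=e^{2\bK(\eta)}$ for $\eta\in\cE$ together with $\nu=\pi^{-1/2}\,\d c\otimes\nu^\#$ gives the disintegration $\hat{\mathbf M}=\int_{\mathrm{Homeo}_1(\S^1)}\omega_h\,e^{2\bK(h)}\tilde\nu(\d h)$, where $\omega_h$ is the $\Aut(\hat\C)$-invariant measure on the fibre over $h$. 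Right composition by $\tilde\Phi$ lifts to $\tau_{\tilde\Phi}(\eta,p,q,\xi)=(\eta,f(\tilde\Phi(0)),q,\xi')$ (move only the interior marked point and its frame), and $\tilde\Phi(1)=1$ gives $W\circ\tau_{\tilde\Phi}=(\,\cdot\circ\tilde\Phi)\circ W$. On each slice $\{\eta\ \text{fixed}\}$, $\tau_{\tilde\Phi}$ acts as right translation by $\tilde\Phi$ on $\Aut(\rmint\eta)\cong\mathrm{PSL}(2,\R)$, which preserves the Haar measure $\mathrm{CR}(p;\rmint\eta)^{-2}v(\d p)\,\d\xi$ by unimodularity of $\mathrm{PSL}(2,\R)$; hence $\hat{\mathbf M}$ is $\tau_{\tilde\Phi}$-invariant. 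Moreover $\tau_{\tilde\Phi}$ commutes with $\Aut(\hat\C)$, so it maps fibres to fibres intertwining the $\Aut(\hat\C)$-actions, and, crucially, since $\Aut(\hat\C)=\mathrm{PSL}(2,\C)$ is unimodular, $(\tau_{\tilde\Phi})_*\omega_h=\omega_{h\circ\tilde\Phi}$ with the canonical normalisation. Feeding this into the $\tau_{\tilde\Phi}$-invariance of $\hat{\mathbf M}$ and invoking uniqueness of the disintegration gives $(\,\cdot\circ\tilde\Phi)_*(e^{2\bK}\tilde\nu)=e^{2\bK}\tilde\nu$.

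The main obstacle is the bookkeeping in the last paragraph: verifying the fibre/orbit structure of $W$, identifying the shape content of $\hat{\mathbf M}$ with exactly $e^{2\bK}\tilde\nu$ (a cross-section/coarea computation), and handling the fact that all measures in sight are infinite, hence only $\sigma$-finite, so that disintegrations and pushforwards behave as claimed. The three facts that make the argument go through are the M\"obius invariance of $\mu_\lp$, the cocycle identity for $\bK$, and the unimodularity of $\mathrm{PSL}(2,\R)$ and $\mathrm{PSL}(2,\C)$, which ensures that no modular-function Jacobian obstructs the identity $(\tau_{\tilde\Phi})_*\omega_h=\omega_{h\circ\tilde\Phi}$.
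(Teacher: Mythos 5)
Your plan is correct in outline, but it takes a genuinely different route from the paper. The paper's proof is infinitesimal: it differentiates $a\mapsto\int F(h\circ\tilde\Phi_{-a})e^{2\bK}\d\tilde\nu$ at $a=0$, identifies the induced motion of the exterior map $g$ with a translation generated by $\cL_{\rv_{-1}},\bar\cL_{\rv_{-1}}$, invokes Möbius invariance of the loop measure, and reduces everything to the single identity $\bar\cL_{\rv_{-1}}(f'(0))=0$ (holomorphic dependence of $f'(0)$ on translations), before integrating along a path in $\D$; left invariance is handled by the symmetric argument. You instead prove the statement globally, by exhibiting $e^{2\bK}\tilde\nu$ as the disintegration over the welding map of an $\Aut(\hat\C)$-invariant framed two-pointed loop measure $\hat{\mathbf M}$, and deducing invariance from unimodularity of $\mathrm{PSL}(2,\R)$ (right translation on the unit tangent bundle of $\rmint(\eta)$) and of $\mathrm{PSL}(2,\C)$ (base-point independence of the fibre measures $\omega_h$). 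Your reduction of left to right composition via $\Theta$ and $\bK(h^{-1})=\bK(h)$ is also a clean alternative to the paper's ``identical argument''. What each approach buys: the paper's computation is self-contained given the calculus of \cite{BJ24} and produces the vanishing of a Lie derivative directly; yours explains \emph{why} the weight $e^{2\bK}$ is the right one (it is exactly the Jacobian converting the shape measure into the $\mathrm{CR}^{-2}$-weighted, framed two-pointed measure), which is precisely the structural remark the paper makes after Theorem \ref{T:ghost} about the Haar-measure coupling of \cite{AngCaiSunWu_1}.

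Two caveats. First, the entire weight of your argument rests on the disintegration identity $\hat{\mathbf M}=\int\omega_h\,e^{2\bK(h)}\tilde\nu(\d h)$, which you only name as ``a cross-section/coarea computation''; this is a genuine computation (essentially the content of the two-pointed constructions in \cite{AngCaiSunWu_1}) and is where the factor $e^{2\bK}$ actually has to be produced, so the plan is not complete without it. Second, since $e^{2\bK}\tilde\nu$ and the fibre measures $\omega_h$ are infinite, ``uniqueness of the disintegration'' should be replaced by testing both sides of the $\tau_{\tilde\Phi}$-invariance of $\hat{\mathbf M}$ against functions of the form $F(W)\,G$ with $G$ a fixed integrable function on the fibres; and the identification of each fibre with a single free $\Aut(\hat\C)$-orbit uses both conformal removability and the $\nu^\#$-a.e. triviality of the stabiliser of $(\eta,p,q,\xi)$, which deserves a sentence.
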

In \cite{AngCaiSunWu_1}, the Velling--Kirillov potential $\bK$ runs by the name \emph{electrical thickness}, and the authors find an exact formula for its generating function $\E^\#[e^{\lambda\bK}]$, which converges for $\lambda<1-\frac{\kappa}{8}$. In particular, the measure $e^{2\bK}\tilde{\nu}$ is infinite, which is consistent with the fact that it is M\"obius-invariant (Haar measure is infinite).

\begin{proof}[Proof of Lemma \ref{L:mobius}]
 We have a parametrisation of the group of M\"obius transformations of $\D$ by $z\mapsto e^{i\alpha}\frac{z-a}{1-\bar az}$ with $\alpha\in[0,2\pi)$ and $a\in\D$. In this proof and this proof only, we will write $\tilde{\Phi}_a(z):=\frac{z-a}{1-\bar az}$ and $\Phi_a(z):=z-a$. Note that $\tilde{\Phi}_a^{-1}=\tilde{\Phi}_{-a}$ and $\Phi_a(\infty)=\infty$, $\Phi_a'(\infty)=1$. By rotation invariance, it suffices to prove that $\tilde{\nu}$ is invariant under left/right composition by all $\tilde{\Phi}_a$, $a\in\D$.

We will only prove right invariance, since left invariance is identical. It will also be more convenient to normalise the SLE shape measure to have unit conformal radius \emph{from $\infty$}, i.e. $g'(\infty)=1$ and the Velling--Kirillov potential is $\bK=-\log|f'(0)|$. Let $a\in\D$ close to 0 and let us consider the small motion $h\circ\tilde{\Phi}_{-a}=g^{-1}\circ f\circ\tilde{\Phi}_{-a}$. We have $f(\tilde{\Phi}_{-a}(0))=af'(0)+o(a)$ and so, after compensating the $af'(0)$, we have $\Phi_{af'(0)}\circ f\circ\tilde{\Phi}_a(0)=o(a)$. Writing $h\circ\tilde{\Phi}_{-a}=(\Phi_{af'(0)}\circ g)^{-1}\circ(\Phi_{af'(0)}\circ f\circ\tilde{\Phi}_{-a})$, we see that the small motion of $g$ is $\Phi_{af'(0)}\circ g$ at order 1 in $a$. Hence, using the M\"obius invariance of the SLE loop measure, we have for all test functions $F$,
\begin{align*}
\int F(h\circ\tilde{\Phi}_{-a})e^{2\bK}\d\tilde{\nu}
&=\int F(\Phi_{af'(0)}\circ g\circ e^{i\alpha})|f'(0)|^{-2}\d\nu^\#(g)\frac{\d\alpha}{2\pi}+o(a).
\end{align*}
The curve $\Phi_{af'(0)} \circ g(\S^1)$ is the translated curve $g(\S^1) - af'(0)$ and the operators $\cL_{\rv_{-1}}$ and $\bar\cL_{\rv_{-1}}$ \eqref{E:cL} are the generators of such translations: 
\[ 
F(\Phi_{af'(0)} \circ g(\S^1)) = F(g) + a f'(0) \cL_{\rv_{-1}}F(g) + \overline{af'(0)} \bar\cL_{\rv_{-1}}F(g) + o(a).
\]
Hence,
\[ 
\int F(h\circ\tilde{\Phi}_{-a})e^{2\bK}\d\tilde{\nu} = \int Fe^{2\bK}\d\tilde\nu-2\Re\left(\bar a\int \bar\cL_{\rv_{-1}}\left(f'(0)\right)F\d\tilde\nu\right)+o(a).
\]
To conclude, we need to prove that $\bar\cL_{\rv_{-1}}(f'(0))=0$. Consider the motion $\Phi_a\circ g$ for $a$ close to 0. The motion of $f$ is $\Phi_a\circ f\circ\tilde{\Phi}_{\frac{-a}{f'(0)}}$ at order 1 in $a$. Since $\tilde{\Phi}_a'(0)=1-|a|^2=1+o(a)$, we have $(\Phi_a\circ f\circ\tilde{\Phi}_{\frac{-a}{f'(0)}})'(0)=f'(0)+a\frac{f''(0)}{f'(0)}+o(a)$. This first order expansion has no $\bar a$-term, so $\bar\cL_{\rv_{-1}}(f'(0))=0$ as claimed. This proves that the Lie derivative of $e^{2\bK}\tilde{\nu}$ vanishes along vector fields generating the M\"obius transformations $\tilde{\Phi}_a$. To conclude for all fixed $a\in\D$, we just integrate this identity along the straight line from 0 to $a$ in $\D$.
\begin{comment}
In this proof (and this proof only), we parametrise the group of M\"obius transformations of $\S^1$ fixing 1 by $\tilde{\Phi}_a(z):=e^{2i\theta_a}\frac{z+\bar a}{az+1}$ for $a\in\D$, with $\theta_a:=\arg(1+a)$. We have $\tilde{\Phi}_a^{-1}=\Phi_{-a}$. Let $h$ be a sample from $\tilde{\nu}$, and let us consider the deformation $h_a:=\tilde{\Phi}_a\circ h$. It is plain to check that the deformations $f_a=\Phi_a\circ f$ and $g_a=\Phi_a\circ g\circ\tilde{\Phi}_a^{-1}$ are given by $\Phi_a(z)=\frac{z}{1-\frac{z}{g(1/a)}}$. Taking $a\to0$ and using $g(1/a)=\frac{g'(\infty)}{a}+O(1)$, we see that the vector field generating $\Phi_a$ is $-\frac{1}{g'(\infty)}\rv_1$. By M\"obius invariance of $\nu^\#$, we then get for all test functions $F$:
\begin{align*}
\frac{\del}{\del a}_{|a=0}\int F(\tilde{\Phi}_a\circ h)e^{2\bK}\d\tilde{\nu}(h)
&=\frac{\del}{\del a}_{|a=0}\int F(\Phi_a\circ f)e^{2\bK}\d\tilde{\nu}^\#(f)\\
&=-\int\cL_1(F)\overline{g'(\infty)}\d\nu^\#(f)\\
&=\int\cL_1\left(\overline{g'(\infty)}\right)F\d\nu^\#(f)=0.
\end{align*}
In the last line, we have used that $g_a'(\infty)$ depends holomorphically on $a$ (obvious from the above expressions). We can similarly that the term $\frac{\del}{\del\bar a}_{|a=0}$ vanishes. Integrating this identity along the straight line from 0 to $a\in\D$, we see that $a\mapsto\int F(\tilde{\Phi}_a\circ h)e^{2\bK}\d\tilde{\nu}$ is constant.

Finally, the right invariance is deduced from left invariance by repeating the argument of Lemma~\ref{L:Theta}. 
\end{comment}
\end{proof}

    \subsection{Proof of Theorem \ref{T:ghost}}
The proof of Theorem \ref{T:ghost} relies on three computational Lemmas~\ref{L:weight}, \ref{L:ghost} and \ref{lem:expand_K} below, whose statements and proofs are postponed to the end of this section in order to ease the reading of the proof.

Let $\mu \in L^\infty(\hat\C)$ be compactly supported in $\D^*$, and let us prove the formula for $\sL_\mu^*$. We note that the operator $\sL_\mu$ depends only on the restriction of the vector field $w_\mu$ to $\S^1$. This vector field is the Cauchy transform of $\mu$ up to the M\"obius gauge. Moreover, Lemma \ref{L:mobius} takes care of those vector fields generating M\"obius transformations of $\S^1$, so we may well restrict to a convenient set of representatives: for $n\geq2$, the vector field $\rv_n = -z^{n+1}\del_z$ is the Cauchy transform of the Beltrami differential $\mu_n(\zeta):=-n4^n\zeta^{-n-1}\zeta\ind_{|\zeta|>2}$.

Let $\tilde{\Phi}_t$ be the solution to the Beltrami equation with coefficient $t\mu +\bar t \iota^* \mu$ fixing $0$, $1$ and $\infty$. We wish to express the motion $\tilde{\Phi}_t \circ h$ in the space of homeomorphisms as a motion in the space $\cE$ \eqref{eq:def_E}.
Let $\Phi_t$ be the solution to the Beltrami equation with coefficient $g_* \mu$ normalised so that $\Phi_t(0)=0$, $\Phi_t'(0)=1$ and $\Phi_t(\infty) = \infty$. 
By the chain rule, $g_t:=\Phi_t\circ g\circ\tilde{\Phi}_t^{-1}$ has a vanishing $\del_{\bar z}$ derivative in $\D^*$ and is thus conformal by Weyl's lemma. Moreover, since $\mu$ is supported in $\D^*$, $f_t:=\Phi_t\circ f$ is conformal in $\D$. The normalisation of $\Phi_t$ then guarantees that $f_t$ and $g_t$ are properly normalised at $0$ and $\infty$. We have thus described the motion $\tilde{\Phi}_t\circ h=g_t^{-1}\circ f_t$ in terms of a motion in $\cE$.

Now, due to the normalisation of $\Phi_t$, we have by \eqref{E:qc_vendredi} the expansion
$\Phi_t(z)=z+t(v_\mu(z)-v_\mu(0)-zv_\mu'(0))+o(t)$ where
\begin{equation}
v_\mu(\xi)
=-\frac{1}{\pi}\int_\C\frac{\mu\circ g^{-1}(z)}{\xi-z}\frac{\overline{(g^{-1})'(z)}}{(g^{-1})'(z)}|\d z|^2
=-\frac{1}{\pi}\int_{\D^*}\frac{\mu(z)}{\xi-g(z)}g'(z)^2|\d z|^2.
\end{equation}
In the neighbourhood of $\xi=0$, we have the power series expansion
\begin{equation}
v_\mu(\xi)=-\sum_{n=-1}^\infty\beta_n^\mu\xi^{n+1}\qquad\text{with}\qquad\beta_n^\mu=-\frac{1}{\pi}\int_{\D^*}\mu(z)g(z)^{-n-2}g'(z)^2|\d z|^2.
\end{equation}

\begin{comment}
\red{****}

Let $\rv_\mu=v_\mu\del_z$ be the Cauchy transform of $g_*\mu$, i.e. \aj{clash with the notation from Lemma \ref{L:qc}}
\begin{equation}
v_\mu(\xi)
=-\frac{1}{\pi}\int_\C\frac{\mu\circ g^{-1}(z)}{\xi-z}\frac{\overline{(g^{-1})'(z)}}{(g^{-1})'(z)}|\d z|^2
=-\frac{1}{\pi}\int_{\D^*}\frac{\mu(z)}{\xi-g(z)}g'(z)^2|\d z|^2.
\end{equation}
In the neighbourhood of $\xi=0$, we have the power series expansion
\begin{equation}
v_\mu(\xi)=-\sum_{n=-1}^\infty\beta_n^\mu\xi^{n+1}\qquad\text{with}\qquad\beta_n^\mu=-\frac{1}{\pi}\int_{\D^*}\mu(z)g(z)^{-n-2}g'(z)^2|\d z|^2.
\end{equation}
Let $\Phi_t(z)=z+t(v_\mu(z)-v_\mu(0)-zv_\mu'(0))+o(t)$ be the flow of $\rv_\mu-\beta_{-1}^\mu\rv_{-1}-\beta_0^\mu\rv_0$, and set $f_t:=\Phi_t\circ f$, $g_t:=\Phi_t\circ g\circ\tilde{\Phi}_t^{-1}$. \aj{What is $\tilde\Phi_t$?} We have $f_t(0)=0$, $f_t'(0)=1$, and $g_t(\infty)=\infty$ since $\rv_\mu$ vanishes at $\infty$ (indeed, $\rv_\mu=g_*(w_\mu\del_z)$ in the neighbourhood of $\infty$). Thus, the small motion is represented by $\tilde{\Phi}_t\circ h=h_t=g_t^{-1}\circ f_t$.
\end{comment}

 By rotational invariance of $\tilde{\nu}$, it is enough to restrict our attention to test functions, which are independent of the rotation parameter $\alpha$, i.e. we will pick test functions in $\C[(a_m,\bar a_m)_{m\geq1}]$. For $\tilde{\nu}$-a.e. $h=g^{-1}\circ f\in\mathrm{Homeo}(\S^1)$ and all Beltrami differentials $\mu$ compactly supported in a neighbourhood of $\infty$, any test function $F$ is differentiable at $f$ in direction $g_*\mu$ \cite[Proposition 2.6]{BJ24}. Moreover, we have $\cL_nF=0$ for $n$ large enough. Hence, using \cite[Theorem 4.1]{BJ24}, we get
\begin{align*}
\int\sL_\mu F\d\tilde{\nu}
=\int\sum_{n=1}^\infty\beta_n^\mu\cL_n(F)\d\nu^\#
&=-\int\sum_{n=1}^\infty\left(\frac{c_\rM}{12}\beta_n^\mu(\cS g^{-1},\rv_n)+\cL_n(\beta_n^\mu)\right)F\d\nu^\#.
\end{align*}
Combining Lemmas \ref{L:weight} and \ref{L:ghost} below, we have that $\sum_{n=1}^\infty \cL_n(\beta_n^\mu) = \frac{26}{12}(\cS g,\mu) - \beta_0^\mu$ in $L^1(\tilde\nu)$. Since $F$ is a test function, we can then apply dominated convergence to get
\begin{align*}
\int\sL_\mu F\d\tilde{\nu} = -\int\left(\frac{c_\rM}{12}(\cS g^{-1},g_*\mu)+\frac{26}{12}(\cS g,\mu)-\beta_0^\mu\right)F\d\nu^\#.
\end{align*}
By \eqref{E:schwarzian_mu}, $(\cS g^{-1},g_*\mu) = -(\cS g,\mu)$.
Altogether, we have obtained that
\[ 
\int\sL_\mu F\d\tilde{\nu}
= \int\left(\frac{c_\rM-26}{12}(\cS g,\mu)+\beta_0^\mu\right)F\d\tilde{\nu}.
\]
This shows that $\sL_\mu^*$ coincides with the operator $-\bar\sL_\mu+\frac{c_\rM-26}{12}\overline{(\cS g,\mu)}\overline{\beta}_0^\mu$ on a dense subspace of $L^2(\tilde\nu)$, which proves the claim. In particular, both $\sL_\mu$ and $\sL_\mu^*$ are closable. The formula for $\sR_\mu^*$ is an immediate consequence of Lemma~\ref{L:Theta} and \cite[Lemma 4.4]{BJ24}. \qed

\begin{lemma}\label{L:weight}
We have $\cL_{\rv_0}(\beta_0^\mu)=0$ and $\cL_{\rv_{-1}}(\beta_{-1}^\mu)=\beta_0^\mu$.
\end{lemma}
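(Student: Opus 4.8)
Lemma \ref{L:weight} records two statements about the Lie derivatives of the functions $\beta_0^\mu$ and $\beta_{-1}^\mu$ on $\cE$ along the directions $\rv_0=-z\del_z$ and $\rv_{-1}=-\del_z$. The plan is to exploit that, unlike a generic $\rv_n$, each of these two vector fields integrates to a \emph{global} (affine) conformal automorphism of $\hat\C$, so the deformed curve and its exterior uniformising map are completely explicit and one only has to differentiate the defining integral
\[
\beta_n^\mu(\eta) = -\frac1\pi\int_{\D^*}\mu(z)\, g(z)^{-n-2}\, g'(z)^2\,|\d z|^2.
\]
Since $g(\D^*)=\rmext(\eta)\not\ni 0$, the map $g$ is bounded away from $0$ on $\supp\mu$, and $\supp\mu$ sits at positive distance from $\S^1$, so the integrals below converge and differentiation under the integral sign is legitimate.

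First I would record the two flows: the time-$t$ map of $\rv_0$ is the scaling $\Phi_t(z)=e^{-t}z$ and that of $\rv_{-1}$ is the translation $\Phi_t(z)=z-t$, both depending holomorphically on $t\in\C$. As $\Phi_t$ is a conformal automorphism of $\hat\C$ fixing $\infty$, the curve $\eta$ is carried to $\Phi_t(\eta)$ and its exterior uniformising map $g$ to $g_t:=\Phi_t\circ g$; this is still correctly normalised, since $g_t(\infty)=\Phi_t(\infty)=\infty$ and $g_t(1)=\Phi_t(g(1))=\Phi_t(f(1))=f_t(1)$ with $f_t=\Phi_t\circ f$ the corresponding interior map.

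For $\rv_0$, substituting $g_t=e^{-t}g$ and $g_t'=e^{-t}g'$ into the formula for $\beta_0^\mu$, the factors $(e^{-t})^{-2}$ and $(e^{-t})^2$ cancel exactly, so $\beta_0^\mu(\Phi_t(\eta))=\beta_0^\mu(\eta)$ for all small $t$, whence $\cL_{\rv_0}(\beta_0^\mu)=0$. (The same computation yields more generally $\beta_n^\mu(\Phi_t(\eta))=e^{nt}\beta_n^\mu(\eta)$, the conformal weight $-n$ alluded to in the name of the lemma.) For $\rv_{-1}$, substituting $g_t=g-t$ and $g_t'=g'$ into the formula for $\beta_{-1}^\mu$ gives
\[
\beta_{-1}^\mu(\Phi_t(\eta)) = -\frac1\pi\int_{\D^*}\mu(z)\,\frac{g'(z)^2}{g(z)-t}\,|\d z|^2,
\]
which is holomorphic in $t$ near $0$; differentiating at $t=0$ and using that $\tfrac{\d}{\d t}(g(z)-t)^{-1}$ equals $g(z)^{-2}$ at $t=0$ recovers precisely $-\tfrac1\pi\int_{\D^*}\mu(z) g(z)^{-2} g'(z)^2\,|\d z|^2=\beta_0^\mu$, i.e. $\cL_{\rv_{-1}}(\beta_{-1}^\mu)=\beta_0^\mu$. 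One can also package both computations into the single statement that, writing $v_{\mu,t}$ for the analogue of $v_\mu$ built from $g_t$, one has $v_{\mu,t}(\xi)=e^{-t}v_\mu(e^t\xi)$ under the $\rv_0$-flow and $v_{\mu,t}(\xi)=v_\mu(\xi+t)$ under the $\rv_{-1}$-flow, and then reading off the coefficients of $\xi^{1}$ and $\xi^{0}$ in $-\sum_{n\ge-1}\beta_{n}^{\mu,t}\xi^{n+1}$ gives the two identities.

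This is the soft one among the three computational lemmas feeding into the proof of Theorem \ref{T:ghost}: there is no genuine obstacle, only the mild bookkeeping of checking that $g_t=\Phi_t\circ g$ is the uniformising map in the normalisation used to define $\beta_n^\mu$, and that the support condition on $\mu$ makes the differentiation under the integral sign routine.
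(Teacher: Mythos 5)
Your proof is correct and follows essentially the same route as the paper's: both exploit that $\rv_0$ and $\rv_{-1}$ integrate to the explicit dilation $z\mapsto e^{-t}z$ and translation $z\mapsto z-t$, substitute $g_t=\Phi_t\circ g$ into the defining integral for $\beta_n^\mu$, and differentiate at $t=0$. The extra remarks (the normalisation check for $g_t$, the general weight identity $\beta_n^\mu(\Phi_t(\eta))=e^{nt}\beta_n^\mu(\eta)$) are harmless embellishments of the same computation.
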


\begin{proof}
In this proof, we make the dependence $\beta_n^\mu=\beta_n^\mu(\eta)$ explicit.
The vector field $\rv_0$ is the generator of dilations $z\mapsto e^{-t}z$, and we have 
\[\beta_0^\mu(e^{-t}\eta)=-\frac{1}{\pi}\int_{\D^*}\mu(z)\frac{(e^{-t}g)'(z)^2}{(e^{-t}g)(z)^2}|\d z|^2=\beta_0^\mu(\eta).\]
Hence, $\cL_{\rv_0}(\beta_0^\mu)=0$.
The vector field $\rv_{-1}$ is the generator of translations $z\mapsto z-t$. The map $-t+g$ uniformises $-t+\eta$ and satisfies $-t+g(\infty)=\infty$. Thus,
\begin{align*}
\beta_{-1}^\mu(-t+\eta)
&=-\frac{1}{\pi}\int_{\D^*}\mu(z)\frac{g'(z)^2}{g(z)-t}|\d z|^2\\
&=-\frac{1}{\pi}\int_{\D^*}\mu(z)\frac{g'(z)^2}{g(z)}\big(1+\frac{t}{g(z)}\big)|\d z|^2+o(t)=\beta_{-1}^\mu(\eta)+t\beta_0^\mu(\eta)+o(t).
\end{align*}
\end{proof}

\begin{lemma}\label{L:ghost}
We have $\sum_{n=-1}^\infty\cL_{\rv_n}(\beta_n^\mu)=\frac{26}{12}(\cS g,\mu)$ in $L^1(\tilde{\nu})$.
\end{lemma}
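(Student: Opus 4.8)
\emph{Strategy.} I would prove the identity by computing each summand $\cL_{\rv_n}(\beta_n^\mu)$ explicitly, the key input being the variation of the exterior uniformising map $g$ along the flow of $\rv_n=-z^{n+1}\del_z$. Let $\Phi_t$ be this flow, so $\Phi_t(z)=z-tz^{n+1}+o(t)$. For $n\ge1$ the map $\Phi_t$ is entire and correctly normalised at $0$, so on the interior side $f_t:=\Phi_t\circ f$ is a correctly normalised element of $\cE$ and $\cL_{\rv_n}(f)=-f^{n+1}$ on $\D$. On the exterior side, near $\S^1$ the deformed curve's exterior map factorises as $g_t=\Phi_t\circ g\circ\tau_t$, where $\tau_t\in\mathrm{Homeo}_1(\S^1)$ is the reparametrisation restoring the normalisation $g_t(\infty)=\infty$, $g_t(1)=f_t(1)$ (equivalently $\tau_t\circ h$ is the welding homeomorphism of $\Phi_t(\eta)$, matching the left‑composition picture of Section~\ref{SS:intro_homeo}). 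Differentiating at $t=0$ gives, on all of $\D^*$,
\[
\cL_{\rv_n}(g)=g'\,\hat\gamma_n-g^{n+1},
\]
where $\hat\gamma_n$ is holomorphic on $\D^*$ — in fact a polynomial of degree $\le n+1$ vanishing at $0$ — uniquely determined by requiring $\cL_{\rv_n}(g)$ to have at most linear growth at $\infty$ together with the normalisation at $z=1$; concretely, $g'\hat\gamma_n$ agrees with $g^{n+1}$ in all Laurent coefficients of order $\ge 2$ at $\infty$. The cases $n=-1,0$ are the Möbius directions, already handled by Lemma~\ref{L:weight}.

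\emph{Substitution.} Write $\beta_n^\mu=-\frac1\pi\int_{\D^*}\mu(z)g(z)^{-n-2}g'(z)^2|\d z|^2=-(g^*q_n,\mu)$ with $q_n(w)=w^{-n-2}$, viewed as the quadratic differential $w^{-n-2}\d w^2$. The elementary identity $\cL_{\rv_n}(q_n)=\cL_{-w^{n+1}\del_w}(w^{-n-2}\d w^2)=-n\,w^{-2}\d w^2=-n\,q_0$, together with the factorisation above ($\Phi_t^*q_n$, then $g^*$, then $\tau_t^*$), yields for the pulled‑back quadratic differential
\[
Q_n:=\cL_{\rv_n}(g^*q_n)=-n\,g^*q_0+\cL_{\hat\gamma_n}(g^*q_n),
\]
so that $\cL_{\rv_n}(\beta_n^\mu)=-(Q_n,\mu)$. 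Only the holomorphic part $\cL_{\rv_n}(g)$ of the variation of $g$ enters, since $\beta_n^\mu$ is holomorphic in $g$ (its anti‑holomorphic variation, coming from the fact that the conformal radius is not a holomorphic functional of the deformation parameter, drops out). Since $\mu$ is supported in the interior of $\D^*$ — and, for the representatives $\mu_n$ used in the proof of Theorem~\ref{T:ghost}, in $\{|z|>2\}$ — everything here is a convergent computation involving only the Laurent tail of $g$ at $\infty$.

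\emph{Resummation.} It remains to show $\sum_{n\ge-1}Q_n=-\tfrac{26}{12}\,\cS g$ as a holomorphic quadratic differential on $\D^*$, which gives $\sum_n\cL_{\rv_n}(\beta_n^\mu)=-\bigl(\sum_nQ_n,\mu\bigr)=\tfrac{26}{12}(\cS g,\mu)$. The terms $-n\,g^*q_0$ are not summable, but the reparametrisation terms cancel the divergence: one checks that the $z^{-2}$ and $z^{-3}$ Laurent coefficients of each $Q_n$ at $\infty$ vanish identically (for $z^{-2}$ because $-n+(-(n+2))+2(n+1)=0$), consistently with $\cS g=O(z^{-4})$ at $\infty$. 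To carry out the full sum one works with the generating function $\sum_{n\ge-1}g^*q_n\,\xi^{n+1}=\dfrac{g'(z)^2}{g(z)-\xi}\,\d z^2$, tracks $\hat\gamma_n$ through $g'\hat\gamma_n=g^{n+1}+(\text{Laurent powers}\le1)$, and evaluates a residue at $\xi=g(z)$; the Schwarzian emerges with coefficient $\tfrac{26}{12}$. This is exactly the $bc$‑ghost contribution ($26=2\cdot13$ is the double‑pole coefficient $-13$ in the operator product $T_{bc}(z)T_{bc}(w)$; cf. the discussion after Theorem~\ref{T:ghost}). Using \eqref{E:schwarzian_mu} and combining with $\cL_{\rv_{-1}}(\beta_{-1}^\mu)=\beta_0^\mu$, $\cL_{\rv_0}(\beta_0^\mu)=0$ from Lemma~\ref{L:weight} then gives the formula used in the proof of Theorem~\ref{T:ghost}.

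\emph{Main obstacle.} The delicate step is the second one: correctly identifying $\hat\gamma_n$ (a Riemann--Hilbert / reparametrisation bookkeeping, aggravated by the nonzero anti‑holomorphic part of $\delta g$, which fortunately is irrelevant here), and then organising the $n$‑sum so that the divergences visibly cancel and the Schwarzian appears with precisely the constant $\tfrac{26}{12}$. Once the variation formula for $g$ is in hand, the rest is routine manipulation of Laurent series and residues.
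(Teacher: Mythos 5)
Your strategy is the paper's own, in slightly different clothing: the paper packages your ``bulk $+$ reparametrisation'' decomposition of $\cL_{\rv_n}(g)$ into the single Cauchy-type kernel
\[
K(z,\zeta)=\frac{(g^{-1})'(\zeta)^2}{(g^{-1})'(z)\,(g^{-1}(z)-g^{-1}(\zeta))}-\frac{1}{z-\zeta},
\]
so that the variation of $g$ in the direction $\rv_n$ is a contour integral of $K(g(z),\zeta)$ against $\zeta^{n+1}$; your generating function $\frac{g'(z)^2}{g(z)-\xi}$ and ``residue at $\xi=g(z)$'' are exactly the paper's geometric-series resummation plus residue theorem, which lands on the diagonal quantity $(2\del_z+\del_\zeta)K(g(z),\zeta)|_{\zeta=g(z)}$. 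Your intermediate identities ($\beta_n^\mu=-(g^*q_n,\mu)$, $\cL_{\rv_n}q_n=-n\,q_0$, the decomposition $Q_n=-n\,g^*q_0+\cL_{\hat\gamma_n}(g^*q_n)$, the irrelevance of the antiholomorphic part of $\delta g$) are all correct and match the paper.

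The gap is that the only nontrivial content of the lemma --- the constant $\tfrac{26}{12}$ --- is asserted, not computed. ``The Schwarzian emerges with coefficient $\tfrac{26}{12}$'' is precisely the statement to be proved; in the paper it is a separate computation (Lemma~\ref{lem:expand_K}), namely an explicit third-order Taylor expansion of $K_\psi$ on the diagonal giving $\del_zK_\psi|_{z=\zeta}=-\tfrac23\cS\psi+\tfrac34(\cA\psi)^2$ and $\del_\zeta K_\psi|_{\zeta=z}=-\tfrac56\cS\psi-\tfrac32(\cA\psi)^2$, so that the pre-Schwarzian squares cancel only in the combination $2\del_z+\del_\zeta$ and one obtains $-\tfrac{13}{6}\cS\psi$. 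Without this (or an equivalent exact determination of $\hat\gamma_n$ followed by the expansion), your argument establishes at best that the answer is \emph{some} multiple of $\cS g$. Relatedly, the parenthetical claim that $\hat\gamma_n$ is a polynomial of degree $\le n+1$ vanishing at $0$ is false: from $g'\hat\gamma_n=g^{n+1}+O(z)$ one gets a function holomorphic on $\D^*$ with polynomial growth at $\infty$ but generically an infinite Laurent tail (and $0\notin\D^*$). This slip does not derail the structure --- only the matching of the $z^k$, $k\ge2$, coefficients at $\infty$ matters --- but it confirms that the step you yourself flag as delicate has not actually been carried out.
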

\begin{proof}
Recall that $w_\mu$ (resp. $v_\mu$) is the Cauchy transform of $\mu$ (resp. $g_*\mu$), and $\tilde{\Phi}_t(z)=z+tw_\mu(z)+\bar t\tilde{w}_\mu(z)+o(t)$. Let $\Phi_t=z+tv_\mu+o(t)$ be the flow of $v_\mu\del_z$ and $g_t:=\Phi_t\circ g\circ\tilde{\Phi}_t^{-1}$. At first order in $t$, we have $g_t=g+t(v_\mu\circ g-w_\mu g')+\bar tO(1)+o(t)$. By Cauchy's integral formula, we can write
\[v_\mu\circ g(z)-w_\mu(z)g'(z)=\frac{1}{2i\pi}\oint K(g(z),\zeta)v_\mu(\zeta)\d\zeta,\]
with the kernel $K$ defined on $g(\D^*)\times g(\D^*)$ by
\[K(z,\zeta):=\frac{(g^{-1})'(\zeta)^2}{(g^{-1})'(z)(g^{-1}(z)-g^{-1}(\zeta))}-\frac{1}{z-\zeta}.\]
It is easy to see that $K$ is holomorphic in both variables, with the apparent pole on the diagonal being removable. By the Leibniz rule, we have
\begin{align*}
\cL_{\rv_n}(\beta_n^\mu)
&=-\frac{1}{\pi}\int_{\D^*}\mu(z)\left(2g'(z)g(z)^{-n-2}\cL_{\rv_n}(g'(z))+(n+2)g'(z)^2g(z)^{-n-3}\cL_{\rv_n}(g(z))\right)|\d z|^2\\
&=\frac{1}{i\pi^2}\int_{\D^*}\oint\mu(z)g'(z)g(z)^{-n-2}\zeta^{n+1}\del_z(K(g(z),\zeta))\d\zeta|\d z|^2\\
&\quad-\frac{n+2}{2i\pi^2}\int_{\D^*}\oint\mu(z)g'(z)^2g(z)^{-n-3}\zeta^{n+1}K(g(z),\zeta)\d\zeta|\d z|^2.
\end{align*}
Up to choosing an equivalent Beltrami differential supported in a smaller neighbourhood of $\infty$, we can assume that $|\zeta/g(z)|<1$ and $|g'(z)/g(z)|<1$, for all $z$ in the support of $\mu$ and all $\zeta$ on the integration contour. In this case, summing over $n\geq-1$ yields a geometric series converging in $L^1(\tilde{\nu})$. Thanks to the residue theorem, the sum evaluates to
\begin{equation}\label{eq:end_ghost}
\begin{aligned}
& \sum_{n=-1}^\infty\cL_{\rv_n}\beta_n^\mu
=\frac{1}{i\pi^2}\int_{\D^*}\oint\mu(z)\frac{g'(z)}{\zeta-g(z)}\del_z(K(g(z),\zeta))\d\zeta|\d z|^2\\
&\hspace{80pt} - \frac{1}{2i\pi^2}\int_{\D^*}\oint\mu(z)\frac{g'(z)^2}{(\zeta-g(z))^2}K(g(z),\zeta)\d\zeta|\d z|^2\\
&\quad=\frac{2}{\pi}\int_{\D^*}\mu(z)g'(z)\del_z(K(g(z),\zeta))_{|\zeta=g(z)}|\d z|^2+\frac{1}{\pi}\int_{\D^*}\mu(z)g'(z)^2\del_\zeta K(g(z),\zeta)_{|\zeta=g(z)}|\d z|^2\\
&\quad=\frac{1}{\pi}\int_{\D^*}\mu(z)g'(z)^2\left(2\del_zK(g(z),\zeta)_{|\zeta=g(z)}+\del_\zeta K(g(z),\zeta)_{|\zeta=g(z)}\right)|\d z|^2.
\end{aligned}
\end{equation}
By Lemma \ref{lem:expand_K} below applied to $\psi=g^{-1}$ and the chain rule for the Schwarzian derivative,
\[ 
2\del_zK(g(z),\zeta)_{|\zeta=g(z)}+\del_\zeta K(g(z),\zeta)_{|\zeta=g(z)} = -\frac{13}{6}\cS g^{-1}(g(z)) = \frac{13}{6} \cS g(z)/g'(z)^2.
\]
Hence,
\[ 
\sum_{n=-1}^\infty\cL_{\rv_n}\beta_n^\mu
= \frac{13}{6\pi}\int_{\D^*}\mu(z)\cS g(z)|\d z|^2.
\]
\end{proof}

For the next lemma, we recall that $\cA f=\frac{f''}{f'}$ is the pre-Schwarzian derivative.

\begin{lemma}\label{lem:expand_K}
Let $\psi$ be conformal on some planar domain $D$, and define the holomorphic kernel $K_\psi(z,\zeta):=\frac{\psi'(\zeta)^2}{\psi'(z)(\psi(z)-\psi(\zeta))}-\frac{1}{z-\zeta}$ on $D\times D$.
We have
\begin{align*}
\del_zK_\psi(z,\zeta)_{|z=\zeta}=-\frac{2}{3}\cS\psi(z)+\frac{3}{4}\cA\psi(z)^2 \quad \text{and} \quad \del_\zeta K_\psi(z,\zeta)_{|\zeta=z}=-\frac{5}{6}\cS\psi(z)-\frac{3}{2}\cA\psi(z)^2.
\end{align*}
In particular, $(2\del_z+\del_\zeta)K_\psi(z,\zeta)_{|z=\zeta}=-\frac{13}{6}\cS\psi(z)$.
\end{lemma}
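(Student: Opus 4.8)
The plan is to prove Lemma \ref{lem:expand_K} by a direct Taylor expansion of the kernel $K_\psi$ near the diagonal. The key observation is that, since $K_\psi$ is holomorphic in both variables (the apparent pole at $z = \zeta$ being removable, as asserted in the proof of Lemma \ref{L:ghost}), it suffices to expand $\psi(\zeta)$, $\psi'(\zeta)$ around the point $z$ and collect coefficients. Concretely, I would set $h = \zeta - z$ and write the Taylor expansions
\[
\psi(\zeta) = \psi(z) + \psi'(z) h + \tfrac12 \psi''(z) h^2 + \tfrac16 \psi'''(z) h^3 + \tfrac1{24}\psi''''(z) h^4 + O(h^5),
\]
\[
\psi'(\zeta) = \psi'(z) + \psi''(z) h + \tfrac12\psi'''(z) h^2 + \tfrac16 \psi''''(z) h^3 + O(h^4).
\]
From these, $\psi(z) - \psi(\zeta) = -\psi'(z) h \big(1 + \tfrac12 \mathcal{A}\psi(z) h + \tfrac16 \tfrac{\psi'''}{\psi'}(z) h^2 + \tfrac1{24}\tfrac{\psi''''}{\psi'}(z) h^3 + O(h^4)\big)$, and one inverts this series (geometric series expansion) and multiplies by $\psi'(\zeta)^2/\psi'(z)$, finally subtracting $\tfrac{1}{z-\zeta} = -\tfrac1h$. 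The $1/h$ terms must cancel (confirming removability), and reading off the constant term gives $K_\psi(z,z)$, while the coefficient of $h$ gives a symmetric combination — but to separate $\partial_z K_\psi(z,\zeta)|_{z=\zeta}$ from $\partial_\zeta K_\psi(z,\zeta)|_{\zeta=z}$ one needs one more piece of information.

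The cleanest way to extract both partial derivatives separately is to exploit the symmetry of the problem. Note that $K_\psi$ is \emph{not} symmetric in $(z,\zeta)$, but there is a structural identity: writing $L(z,\zeta) := \psi'(\zeta)^2/(\psi'(z)(\psi(z)-\psi(\zeta)))$, one checks that $L(z,\zeta) + L(\zeta,z) = \partial_z \log\psi'(z) \cdot(\text{something})$... more simply, I would compute the full bivariate Taylor expansion of $K_\psi(z, z+h)$ in $h$ up to order $h$, which gives $\partial_\zeta K_\psi(z,\zeta)|_{\zeta = z}$ directly as the coefficient of $h$ (expansion in the second slot with the first slot frozen). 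Then $\partial_z K_\psi(z,\zeta)|_{z=\zeta}$ is obtained either by expanding $K_\psi(z+h, z)$ in $h$, or — and this is the efficient route — by differentiating the diagonal value: $\frac{d}{dz} K_\psi(z,z) = (\partial_z K_\psi + \partial_\zeta K_\psi)(z,z)$, so $\partial_z K_\psi(z,\zeta)|_{z=\zeta} = \frac{d}{dz}[K_\psi(z,z)] - \partial_\zeta K_\psi(z,\zeta)|_{\zeta=z}$. Since $K_\psi(z,z) = \tfrac12 \mathcal{A}\psi(z)$ is a short computation (the constant term of the expansion above), differentiating it is immediate. This reduces everything to two Taylor computations: the diagonal value and the $h^1$-coefficient of the one-sided expansion.

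The actual arithmetic is the bulk of the work: one needs the expansion of $\big(1 + a h + b h^2 + d h^3\big)^{-1} = 1 - ah + (a^2 - b)h^2 + (-a^3 + 2ab - d)h^3 + \cdots$ with $a = \tfrac12\mathcal{A}\psi$, $b = \tfrac16\tfrac{\psi'''}{\psi'}$, $d = \tfrac1{24}\tfrac{\psi''''}{\psi'}$, multiplied against $\psi'(\zeta)^2/\psi'(z) = \psi'(z)(1 + \mathcal{A}\psi(z) h + (\tfrac12 \tfrac{\psi'''}{\psi'} + \tfrac14(\mathcal{A}\psi)^2 - \tfrac12 (\mathcal{A}\psi)^2 + \dots))$ — wait, more carefully $\psi'(\zeta)^2 = \psi'(z)^2(1 + \tfrac{\psi''}{\psi'}h + \tfrac12\tfrac{\psi'''}{\psi'}h^2 + \dots)^2$. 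I would organize this as a tabulated coefficient extraction, expressing the order-$h$ and order-$h^0$ (and checking order $h^{-1}$ vanishes) terms in terms of $\mathcal{S}\psi = \tfrac{\psi'''}{\psi'} - \tfrac32(\tfrac{\psi''}{\psi'})^2$ and $\mathcal{A}\psi = \tfrac{\psi''}{\psi'}$, using $\tfrac{\psi'''}{\psi'} = \mathcal{S}\psi + \tfrac32 \mathcal{A}\psi^2$ and the corresponding identity for $\tfrac{\psi''''}{\psi'}$ obtained by differentiating. The main obstacle is purely bookkeeping: keeping the $h^3$ terms straight in the series inversion so that the $h^1$-coefficient of $K_\psi$ comes out exactly as $-\tfrac56\mathcal{S}\psi - \tfrac32\mathcal{A}\psi^2$; a single sign error propagates. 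Once both formulas are verified, the combination $(2\partial_z + \partial_\zeta)K_\psi(z,\zeta)|_{z=\zeta} = 2(-\tfrac23\mathcal{S}\psi + \tfrac34\mathcal{A}\psi^2) + (-\tfrac56\mathcal{S}\psi - \tfrac32\mathcal{A}\psi^2) = -\tfrac{13}{6}\mathcal{S}\psi$ is immediate, the $\mathcal{A}\psi^2$ terms cancelling as they must (consistency with the fact that the left side transforms as a projective connection, hence must be Schwarzian).
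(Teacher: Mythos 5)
Your strategy is essentially the paper's: a direct Taylor expansion of $K_\psi$ at the diagonal, reading off the first-order coefficients, with the removability of the singularity coming out of the cancellation of the $1/h$ terms. The one genuine variant — computing only the expansion in the second slot and recovering $\del_z K_\psi(z,\zeta)_{|z=\zeta}$ from the identity $\frac{\d}{\d z}K_\psi(z,z)=(\del_z K_\psi+\del_\zeta K_\psi)_{|z=\zeta}$ — is legitimate (it uses only the holomorphy of $K_\psi$ near the diagonal) and is slightly more economical than the paper's two independent one-sided expansions; it also builds in a consistency check between the two formulas.

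However, there is a concrete error that would derail your shortcut: the diagonal value is \emph{not} $K_\psi(z,z)=\tfrac12\cA\psi(z)$. Expanding $\psi(z)-\psi(z+h)=-\psi'(z)h(1+\tfrac12\cA\psi(z)h+\cdots)$ and $\psi'(z+h)^2=\psi'(z)^2(1+2\cA\psi(z)h+\cdots)$ gives
\[
\frac{\psi'(\zeta)^2}{\psi'(z)(\psi(z)-\psi(\zeta))}=-\frac1h\Bigl(1+\tfrac32\cA\psi(z)h+O(h^2)\Bigr),\qquad -\frac{1}{z-\zeta}=+\frac1h,
\]
so $K_\psi(z,z)=-\tfrac32\cA\psi(z)$. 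With the correct value, $\frac{\d}{\d z}K_\psi(z,z)=-\tfrac32(\cA\psi)'=-\tfrac32\cS\psi-\tfrac34\cA\psi^2$ (using $(\cA\psi)'=\cS\psi+\tfrac12\cA\psi^2$), and subtracting $\del_\zeta K_\psi{}_{|\zeta=z}=-\tfrac56\cS\psi-\tfrac32\cA\psi^2$ indeed yields $\del_z K_\psi{}_{|z=\zeta}=-\tfrac23\cS\psi+\tfrac34\cA\psi^2$, as claimed in the lemma. With your value $\tfrac12\cA\psi$ the same subtraction would give $\tfrac43\cS\psi+\tfrac74\cA\psi^2$, which is wrong. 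So fix the diagonal value; the rest of the plan then goes through and reproduces the paper's result.
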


\begin{proof}
Fixing $\zeta$ and expanding in $z$ as $z\to\zeta$, we get that $K_\psi(z,\zeta)$ equals
\begin{align*}
&\frac{\psi'(\zeta)^2}{(\psi'(\zeta)+(z-\zeta)\psi''(\zeta)+\frac{1}{2}(z-\zeta)^2\psi'''(\zeta))((z-\zeta)\psi'(\zeta)+\frac{1}{2}\psi''(\zeta)(z-\zeta)
^2+\frac{1}{6}(z-\zeta)^3\psi'''(\zeta))}\\
&\quad-\frac{1}{z-\zeta}+o(z-\zeta)\\
&=\frac{1}{z-\zeta}\left(1-(z-\zeta)\cA \psi(\zeta)-\frac{1}{2}(z-\zeta)^2\frac{\psi'''(\zeta)}{\psi'(\zeta)}+(z-\zeta)^2\cA \psi(\zeta)^2\right)\\
&\qquad\times\left(1-\frac{1}{2}(z-\zeta)\cA \psi(\zeta)-\frac{1}{6}(z-\zeta)^2\frac{\psi'''(\zeta)}{\psi'(\zeta)}+\frac{1}{4}(z-\zeta)^2\cA \psi(\zeta)^2\right)-\frac{1}{z-\zeta}+o(z-\zeta)\\
&=\frac{3}{2}\cA\psi(\zeta)-\frac{2}{3}(z-\zeta)\frac{\psi'''(\zeta)}{\psi'(\zeta)}+\frac{7}{4}(z-\zeta)\cA \psi(\zeta)^2+o(z-\zeta).
\end{align*}
From this, we get $\del_zK_\psi(z,\zeta)_{|z=\zeta}=\frac{7}{4}\cA \psi(z)^2-\frac{2}{3}\frac{\psi'''(z)}{\psi'(z)} = \frac{3}{4} \cA \psi(z)^2 - \frac23 \cS \psi(z)$.
Now, fix $z$ and expand in $\zeta$ as $\zeta\to z$:
\begin{align*}
K_\psi(z,\zeta)
&=\frac{(\psi'(z)+(\zeta-z)\psi''(z)+\frac{1}{2}(\zeta-z)^2\psi'''(z))^2}{\psi'(z)(z-\zeta)(\psi'(z)+\frac{1}{2}(\zeta-z)\psi''(z)+\frac{1}{6}(\zeta-z)^2\psi'''(z))}-\frac{1}{z-\zeta}+o(\zeta-z)\\
&=\frac{1}{z-\zeta}\left(1+2(\zeta-z)\cA \psi(z)+(\zeta-z)^2\frac{\psi'''(z)}{\psi'(z)}+(\zeta-z)^2\cA \psi(z)^2\right)\\
&\qquad\times\left(1-\frac{1}{2}(\zeta-z)\cA\psi(z)-\frac{1}{6}(\zeta-z)^2\frac{\psi'''(z)}{\psi'(z)}+\frac{1}{4}(\zeta-z)^2\cA \psi(z)^2\right)-\frac{1}{z-\zeta}+o(\zeta-z)\\
&=-\frac{3}{2}\cA\psi(z)-\frac{1}{4}(\zeta-z)\cA\psi(z)^2-\frac{5}{6}(\zeta-z)\frac{\psi'''(z)}{\psi'(z)}+o(\zeta-z).
\end{align*}
From this, we get $\del_\zeta K_\psi(z,\zeta)_{|\zeta=z}=-\frac{1}{4}\cA \psi(z)^2-\frac{5}{6}\frac{\psi'''(z)}{\psi'(z)} = -\frac32 \cA \psi(z)^2 - \frac56 \cS\psi(z)$.
\end{proof}

It is easy to deduce Corollary \ref{C:RN_welding} as an integral version of Theorem \ref{T:ghost}. 

\begin{proof}[Proof of Corollary \ref{C:RN_welding}]
Recall the definition \eqref{E:Omega} of $\Omega(h_2,h_1)$. 
Let $\mu\in L^\infty(\D^*)$ compactly supported with $\norm{\mu}_{L^\infty}<1$, and for each $t\in[0,1]$, let $\tilde{\Phi}_t$ be the normalised solution to the Beltrami equation with coefficient $t\mu+\bar t\iota^*\mu$. For all test functions $F$, and all $t\in[0,1]$, we have by the Leibniz rule
\begin{equation}\label{eq:diff_t}
\del_t\int F(\tilde{\Phi}_t\circ h)e^{\Omega(\tilde\Phi_t\circ h,h)}\d\tilde{\nu}_\kappa(h)=2\Re \Big(\int \big(\sL_\mu F(\tilde{\Phi}_t\circ h)+F(\tilde{\Phi}_t\circ h) \del_t \Omega(\tilde\Phi_t\circ h,h)\big)e^{\Omega(\tilde\Phi_t\circ h,h)}\d\tilde{\nu}_\kappa(h) \Big).
\end{equation}
By \eqref{E:TT06}, $\del_t \Omega(\tilde\Phi_t\circ h,h) = \frac{c_\rL}{12}\tilde{\vartheta}_{\tilde{\Phi}_t\circ h}(\mu)+\tilde{\varpi}_{\tilde\Phi_t\circ h}(\mu)$ and by Theorem \ref{T:ghost},
\begin{align*}
\int\sL_\mu F(\tilde\Phi_t\circ h)e^{\Omega(\tilde\Phi_t\circ h,h)}\d\tilde{\nu}_\kappa(h)
&=\int\left(\frac{c_\rL}{12}\tilde{\vartheta}_h(\mu)+\tilde{\varpi}_h(\mu)-\sL_\mu(\Omega(\tilde\Phi_t\circ h,h))\right)F(\tilde\Phi_t\circ h)e^{\Omega(\tilde\Phi_t\circ h,h)}\d\tilde{\nu}_\kappa(h)
\end{align*}
where we emphasise that, on the right hand side, $\cL_\mu$ acts on both coordinates of $\Omega$. In particular, by \eqref{E:TT06},
\[ 
\sL_\mu(\Omega(\tilde\Phi_t\circ h,h)) = \frac{c_\rL}{12}\tilde{\vartheta}_h(\mu)+\tilde{\varpi}_h(\mu) + \frac{c_\rL}{12}\tilde{\vartheta}_{\tilde{\Phi}_t\circ h}(\mu)+\tilde{\varpi}_{\tilde\Phi_t\circ h}(\mu).
\]
Putting things together, this shows that the $\del_t$-derivative in \eqref{eq:diff_t} vanishes.
Integrating \eqref{eq:diff_t} between 0 and $1$ then yields for all $\tilde{\Phi}\in\mathrm{Diff}^\omega(\S^1)$:
\[\int F(\tilde\Phi\circ h)e^{\Omega(\tilde{\Phi}\circ h,h)}\d\tilde{\nu}_\kappa(h)=\int F(h)\d\tilde{\nu}_\kappa(h),\]
which is what we wanted to prove.
\end{proof}

\subsubsection{Link with the $bc$-system and Faddeev--Popov ghosts}\label{par:bc}
To end this section, we draw an analogy between the differential of the welding map and the ``$bc$-ghost system" from bosonic string theory, following the account given in \cite[Section 5.3.3]{dFMS_BigYellowBook}. This system was introduced in Polyakov's seminal paper \cite{Polyakov81} and were always thought to encapsulate an infinite dimensional change of variable (and its Jacobian).

The $bc$-ghost system has two ``fields" $b(\zeta)$ and $c(z)$ whose two-point correlation function is the kernel of the Cauchy transform $\langle b(\zeta)c(z)\rangle=\frac{1}{\pi}\frac{1}{\zeta-z}$ \cite[(5.108)]{dFMS_BigYellowBook}. The field $c$ has conformal weight $-1$ \cite[(5.115)]{dFMS_BigYellowBook}, while the field $b$ has conformal weight $2$ \cite[(5.116)]{dFMS_BigYellowBook}. In other words, $c(z)$ transforms as a vector field (or $(-1,0)$-tensor) while $b(\zeta)$ transforms as a quadratic differential (or $(2,0)$-tensor), consistently with the kernel $K_\psi$ considered in Lemma \ref{lem:expand_K}. Indeed, the Cauchy transform sends Beltrami differentials (or $(-1,1)$-tensors) to vector fields, so its kernel is a vector field in the first variable, and a quadratic differential in the second one.

The $bc$-system has central charge $-26$ \cite[(5.117)]{dFMS_BigYellowBook} and the stress-energy tensor of the theory is $\pi:\!2b\,\del c+\del b\, c\!:$ \cite[(5.114)]{dFMS_BigYellowBook}, in striking analogy with $(2\del_z+\del_\zeta)K_\psi(z,\zeta)|_{z=\zeta}=-\frac{13}{6}\cS\psi(z)$ of Lemma \ref{lem:expand_K}. In \eqref{eq:end_ghost}, this holomorphic quadratic differential computes the ``divergence" of vector fields on $\mathrm{Homeo}(\S^1)$ with respect to the SLE loop measure. In other words, it plays the role of the differential of the (would-be) Jacobian determinant of the conformal welding map. Hence, we have a perfect interpretation of the stress-energy tensor of the $bc$-system as the differential of the Jacobian of conformal welding.

In forthcoming works, we will extend this computation to non simply connected surfaces. This adds a finite dimensional problem on top of the infinite dimensional studied in this paper, which consists in computing the law of the modulus of the surface. Combining the techniques developed here with results from \cite{McIntyre2004HolomorphicFO}, we will relate this law to the Belavin--Knizhnik measure from bosonic string theory \cite{BelavinKnizhnik}. In particular, this law features the ``Faddeev--Popov" ghost, which is the regularised determinant of the Laplace operator acting on $(1,0)$-vector fields.

\section{SLE/GFF coupling: proof of Theorem \ref{T:zipper}}\label{S:welding}
We recall the hypotheses of Theorem \ref{T:zipper}: $\eta$ is sampled from the weighted SLE shape measure $e^{2\bK}\nu^\#_\kappa$ with welding maps $f$ and $g$. Conditionally on $\eta$, $\varphi+a$ is sampled independently from $e^{-2Qa}\d a\otimes\P_\eta$. We also have an auxiliary parameter $b$ sampled independently from Lebesgue measure on $\R$. We then define the fields 
\[\phi+c:=\varphi\cdot f+a+b\qquad\text{and}\qquad\phi^*+c^*:=\varphi\cdot g+a-b,
\]
and denote their (infinite) law by $\Q$. Our goal is to prove that $\Q$ is the law of independent Liouville fields in $\D$ and $\D^*$ respectively. The extra parameter $b$ allows us to decouple the two fields, and can be understood as a choice of embedding of the curve (i.e. rescale the curve by $e^b$).

\begin{lemma}\label{L:zero_modes}
There exist $C_\diamond>0$ and a probability measure $\Q^\#$ on $(\C^{\N^*})^2$ such that $\Q=C_\diamond^2 e^{-Qc}\d c\otimes e^{-Qc^*}\d c^*\otimes\Q^\#$.
\end{lemma}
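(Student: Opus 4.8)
The plan is to reduce Lemma~\ref{L:zero_modes} to a two-dimensional linear change of variables, after disintegrating the sampling measure over the pair $(\varphi,\eta)$. Write $\Pi:=e^{-2Qa}\,\d a\otimes\d b\otimes\P_\eta(\d\varphi)\,e^{2\bK(\eta)}\nu^\#(\d\eta)$ for the sampling measure, and recall $\phi+c=\varphi\cdot f+a+b$, $\phi^*+c^*=\varphi\cdot g+a-b$, where $c,c^*$ are the zero modes (averages over $\S^1$) and $\phi,\phi^*$ the projections onto the non-constant Fourier modes. The key point is that $\phi,\phi^*$ depend only on $(\varphi,\eta)$: setting $c_0(\varphi,\eta):=\frac1{2\pi}\int_0^{2\pi}(\varphi\cdot f)(e^{i\theta})\,\d\theta$ and $c_0^*(\varphi,\eta):=\frac1{2\pi}\int_0^{2\pi}(\varphi\cdot g)(e^{i\theta})\,\d\theta$, we have $\phi=\varphi\cdot f-c_0$, $\phi^*=\varphi\cdot g-c_0^*$, while $c=c_0(\varphi,\eta)+a+b$ and $c^*=c_0^*(\varphi,\eta)+a-b$. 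So conditionally on $(\varphi,\eta)$, all the randomness in $(c,c^*)$ comes from $(a,b)$.

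Next I would perform, for fixed $(\varphi,\eta)$, the change of variables $(a,b)\mapsto(c,c^*)$; it is a bijection of $\R^2$ with $|\partial(a,b)/\partial(c,c^*)|=1/2$ and $a=\tfrac12((c-c_0)+(c^*-c_0^*))$, whence $e^{-2Qa}\,\d a\,\d b=\tfrac12\,e^{Qc_0(\varphi,\eta)+Qc_0^*(\varphi,\eta)}\,e^{-Qc}e^{-Qc^*}\,\d c\,\d c^*$. The $(c,c^*)$-dependence is the fixed factor $e^{-Qc}e^{-Qc^*}\,\d c\,\d c^*$, independent of $(\varphi,\eta)$. Plugging this into the disintegration of $\Pi$ and using that $\phi,\phi^*$ are deterministic given $(\varphi,\eta)$, Tonelli (all terms being nonnegative) yields the product structure
\[
\Q=\big(e^{-Qc}\,\d c\otimes e^{-Qc^*}\,\d c^*\big)\otimes\cM,\qquad
\cM:=\tfrac12\,(\phi,\phi^*)_*\!\big[e^{Qc_0+Qc_0^*+2\bK(\eta)}\,\P_\eta(\d\varphi)\,\nu^\#(\d\eta)\big],
\]
a positive measure on $(\C^{\N^*})^2$. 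It remains to identify $c_0,c_0^*$ by circle averages: since $\log|f'|$ and $\log|g'|$ are harmonic and $f\in\cE$ (so $f'(0)=1$ and $\bK(\eta)=\log|g'(\infty)|$), the mean value property gives $c_0(\varphi,\eta)=\bP_\eta\varphi(0)+Q\log|f'(0)|=\bP_\eta\varphi(0)$ and, using the normalisation $\bP_\eta\varphi(\infty)=0$, $c_0^*(\varphi,\eta)=\bP_\eta\varphi(\infty)+Q\log|g'(\infty)|=Q\bK(\eta)$. Hence $\cM=\tfrac12\,(\phi,\phi^*)_*\big[e^{Q\bP_\eta\varphi(0)+(Q^2+2)\bK(\eta)}\,\P_\eta(\d\varphi)\,\nu^\#(\d\eta)\big]$.

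Finally I would set $C_\diamond^2:=\cM((\C^{\N^*})^2)$ and $\Q^\#:=C_\diamond^{-2}\cM$, so that $\Q=C_\diamond^2\,e^{-Qc}\,\d c\otimes e^{-Qc^*}\,\d c^*\otimes\Q^\#$; it then suffices to check that $\cM$ is carried by $(\C^{\N^*})^2$ and that $0<C_\diamond^2<\infty$. The first point holds because $\varphi\cdot f,\varphi\cdot g\in\dot H^{-s}(\S^1)$ for all $s>0$ and $\P_\eta\otimes\nu^\#$-a.e. $(\varphi,\eta)$, so their non-constant parts have polynomially square-summable Fourier coefficients. Positivity of $C_\diamond^2$ is clear. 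The finiteness is the main obstacle: one integrates out the centred Gaussian variable $\bP_\eta\varphi(0)$ under $\P_\eta$, producing a factor $e^{\frac{Q^2}{2}\sigma_\eta^2}$ with $\sigma_\eta^2=\mathrm{Var}_{\P_\eta}(\bP_\eta\varphi(0))$, which a direct covariance computation expresses explicitly in terms of $\bK(\eta)$; this reduces $C_\diamond^2$ to a single exponential moment $\E^\#[e^{\lambda\bK}]$, finite by the exact generating function of the electrical thickness established in \cite{AngCaiSunWu_1}. I expect the careful bookkeeping of the various normalisation conventions (the conformal-radius normalisation of $f,g$ and the precise value of $\sigma_\eta^2$) to be the delicate step ensuring that the resulting exponent lies in the convergence range $\lambda<1-\tfrac{\kappa}{8}$.
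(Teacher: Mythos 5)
Your derivation of the product structure is correct and is essentially the paper's argument made explicit: the paper obtains the factorisation from the covariance $\int F(\cdot+x,\cdot+y)\,\d\Q=e^{Q(x+y)}\int F\,\d\Q$ (proved by shifting $(a,b)\mapsto(a+\frac{x+y}{2},b+\frac{x-y}{2})$) together with a $\pi$-system argument, which is your linear change of variables $(a,b)\mapsto(c,c^*)$ in disguise; your version has the added benefit of an explicit formula for $\Q^\#$. Your identifications $c_0=\bP_\eta\varphi(0)$ and $c_0^*=Q\bK$ are also correct under the stated normalisations $f'(0)=1$ and $\bP_\eta\varphi(\infty)=0$.

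The gap is the finiteness of $C_\diamond^2$, which you rightly isolate as the main obstacle but do not close --- and the route you sketch cannot close it as written. Carrying out your own reduction: $\sigma_\eta^2=\mathrm{Var}_{\P_\eta}(\bP_\eta\varphi(0))$ is exactly $\bK$ (it is the logarithmic energy of the difference of the harmonic measures seen from $0$ and from $\infty$; equivalently, $\cE_\eta(\log|\cdot|)=\bK$), so your formula gives
\[
C_\diamond^2=\tfrac12\,\E^{\#}\Big[e^{\big(\frac{3Q^2}{2}+2\big)\bK}\Big].
\]
Since $Q\ge2$ the exponent is at least $8$, far outside the window $\lambda<1-\frac{\kappa}{8}<1$ in which the generating function of \cite{AngCaiSunWu_1} converges; worse, since $\bK\ge0$ this moment dominates $\E^{\#}[e^{2\bK}]$, which is infinite (the paper itself notes that $e^{2\bK}\tilde\nu$ is M\"obius-invariant, hence infinite). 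So your computation, completed, yields $C_\diamond^2=+\infty$ rather than a finite constant. The step you defer as ``bookkeeping'' is precisely where the argument lives or dies: the measure $e^{-2Qa}\d a\otimes\P_\eta$ is \emph{not} invariant under re-embedding the curve (it transforms by $|\lambda|^{-2Q^2}$ under scaling), so the exponent of $\bK$ genuinely depends on the chosen normalisation of $(f,g)$. If one instead normalises by $g'(\infty)=1$ (the convention switched to in the proof of Lemma~\ref{L:mobius}), then $c_0^*=0$ and $c_0=\bP_\eta\varphi(0)-Q\bK$, and the exponent becomes $2-\frac{Q^2}{2}=1-\frac{\kappa}{8}-\frac{2}{\kappa}$, which does lie in the convergence range. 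You must identify the convention under which the statement is meant and redo the count there; as written your exponent is computed in the convention for which the total mass diverges. (The paper's own proof merely asserts this finiteness is ``clear'', so this is genuinely the delicate point, not a formality.)
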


As usual, given the coefficients $( (\phi_m)_{m \ge 1}, (\phi^*_m)_{m \ge 1}) \in (\C^{\N^*})^2$, the associated pair of fields on $\S^1$ will eventually correspond to $(2\Re(\sum_{m=1}^\infty \phi_m z^m), 2\Re(\sum_{m=1}^\infty \phi^*_m z^m))$.

\begin{proof}
For all test functions $F$ and $x,y\in\R$, the change of variable $(a,b)\mapsto(a+\frac{x+y}{2},b+\frac{x-y}{2})$ gives
\begin{align*}
&\int F(\phi+c+x,\phi^*+c^*+y)\d\Q(\phi+c,\phi^*+c^*)\\
&=\int F(\varphi\cdot f+a+b +x,\varphi\cdot g+a-b+y)e^{-2Qa}\d a\,\d\P_\eta(\varphi)\d b\,\d\nu^\#(\eta)\\
&=\int F(\varphi\cdot f+a+b,\varphi\cdot g+a-b)e^{-2Q(a-\frac{x+y}{2})}\d a\,\d\P_\eta(\varphi)\d b\,\d\nu^\#(\eta)\\
&=e^{Q(x+y)}\int F(\phi+c,\phi^*+c^*)\d\Q(\phi+c,\phi^*+c^*).
\end{align*}
This proves the existence of the factorisation for some measure $\Q^\#$ (two $\sigma$-finite measures which agree on a $\pi$-system that generates the $\sigma$-algebra are in fact equal).
Moreover, it is clear that the event that $(c,c^*)\in A\times B$ has finite $\Q$-measure for all compact sets $A,B\subset\R$, which proves that $\Q^\#$ has finite total mass. It can thus be renormalised by a global multiplicative constant to get a probability measure.
\end{proof}

Theorem \ref{T:zipper} is then equivalent to the next statement, which will be proved using the characterisation of Theorem \ref{T:Neumann}.
\begin{proposition}
For $\Q^\#$-almost every $((\phi_m)_{m\geq1}, (\phi^*_m)_{m\geq1})$, the series $z\mapsto2\Re(\sum_{m=1}^\infty\phi_mz^m)$ and $z\mapsto2\Re(\sum_{m=1}^\infty\phi^*_mz^m)$ converge in $\dot H^{-s}(\S^1)$ for all $s>0$ and $\Q^\#=\dot\P^{\otimes2}$ as Borel probability measures on $\dot H^{-s}(\S^1)^2$.
\end{proposition}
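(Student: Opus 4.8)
The plan is to deduce $\Q^\#=\dot\P^{\otimes 2}$ from the characterisation of the Neumann GFF (Theorem~\ref{T:Neumann}), applied \emph{fibrewise}: I will show that, for $\Q^\#$-a.e. $(\phi^*_m)_{m\geq1}$, the conditional law of $(\phi_m)_{m\geq1}$ satisfies the Feigin--Fuchs adjoint relations of Lemma~\ref{L:adjoint_D_alpha} for a suitable $\alpha\in\C\setminus(kac^+\cup kac^-)$, hence equals $\dot\P$; and symmetrically for $(\phi^*_m)$ given $(\phi_m)$. Together these give $\Q^\#=\dot\P\otimes\dot\P$ (a measure whose conditionals in one coordinate are a.s. a fixed law $\dot\P$ is the product of $\dot\P$ with its other marginal; running the argument both ways identifies both marginals), and the $\dot H^{-s}(\S^1)$-convergence of the Fourier series then follows exactly as in the proofs of Theorems~\ref{T:FF}--\ref{T:Neumann}. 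Thus the real content is an integration-by-parts formula, on $L^2(\Q)$, for the operators $\cD_n$ (Section~\ref{SS:witt}) generating right composition of the boundary field $\phi$ (and symmetrically for $\phi^*$), which after Laplace-transforming the zero mode as in Lemma~\ref{L:adjoint_D_alpha} becomes the desired relation.

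\textbf{Change of coordinates.} The key point is to rewrite a perturbation of $\phi$ as a joint perturbation of the pair $(\eta,\varphi)$ that leaves $\phi^*$ untouched. Let $\mu\in L^\infty(\hat\C)$ be compactly supported in $\D$, let $\tilde\Phi_t$ solve the Beltrami equation with coefficient $t\mu+\bar t\,\iota^*\mu$ (fixing $0,1,\infty$), and let $\Phi_t$ solve the Beltrami equation with coefficient $t\,f_*\mu$, normalised by $\Phi_t(0)=0$, $\Phi_t(\infty)=\infty$, $\Phi_t'(\infty)=1$; since $f_*\mu$ is compactly supported in $\rmint(\eta)$ at positive distance from $\eta$, $\Phi_t$ is conformal near $\overline{\rmext(\eta)}$. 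Setting $f_t:=\Phi_t\circ f\circ\tilde\Phi_t^{-1}$ (conformal in $\D$ by Weyl's lemma), $g_t:=\Phi_t\circ g$ (conformal in $\D^*$), $\eta_t:=\Phi_t(\eta)$ and $\varphi_t:=\varphi\cdot\Phi_t^{-1}$, the group law \eqref{E:cdot_group} gives $\varphi_t\cdot g_t=\varphi\cdot g=\phi^*$ and $\varphi_t\cdot f_t=(\varphi\cdot f)\cdot\tilde\Phi_t^{-1}=\phi\cdot\tilde\Phi_t^{-1}$. Hence the perturbation $(\phi,\phi^*)\mapsto(\phi\cdot\tilde\Phi_t^{-1},\phi^*)$ of the boundary data of Theorem~\ref{T:zipper} is realised by $(\eta,\varphi)\mapsto(\Phi_t(\eta),\varphi\cdot\Phi_t^{-1})$, and the Radon--Nikodym derivative of the perturbed law with respect to $\Q$ factorises as the contribution of the curve deformation $\eta\mapsto\Phi_t(\eta)$ under $e^{2\bK}\nu^\#$ times the contribution of the field deformation $\varphi\mapsto\varphi\cdot\Phi_t^{-1}$ under $e^{-2Qa}\d a\otimes\P_\eta$.

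\textbf{The three contributions and their cancellation.} Differentiating at $t=0$: the field contribution is governed by Theorem~\ref{T:GFF_SLE} with the Beltrami $f_*\mu\subset\rmint(\eta)$, producing the insertion $-\bigl(\tfrac1{12}\cS f^{-1}+\bT_\eta\varphi,\,f_*\mu\bigr)$; using $(\cS f^{-1},f_*\mu)=-(\cS f,\mu)$ from \eqref{E:schwarzian_mu}, the chain rule for the stress--energy tensor (Lemma~\ref{L:chain}), applied to $f^{-1}$, which yields $(\bT_\eta\varphi,f_*\mu)=(\bT_\D\phi,\mu)-\tfrac{Q^2}{2}(\cS f,\mu)$, this insertion equals $\bigl(\tfrac1{12}+\tfrac{Q^2}{2}\bigr)(\cS f,\mu)-(\bT_\D\phi,\mu)$. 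The curve contribution is exactly the computation behind Theorem~\ref{T:ghost}: since $\Phi_t$ is the flow of the \emph{random} vector field $f_*\mu$, with expansion at $0$ given by coefficients $\beta^\mu_n$, I would invoke \cite[Theorem~4.1]{BJ24}, Lemma~\ref{L:mobius} to dispose of the M\"obius directions, and Lemmas~\ref{L:weight}--\ref{lem:expand_K} to evaluate the ghost anomaly $\sum_n\cL_{\rv_n}\beta^\mu_n=\tfrac{26}{12}(\cS f,\mu)-\beta^\mu_0$; this produces the insertion $\tfrac{c_\rM-26}{12}(\cS f,\mu)+\beta^\mu_0$. Adding the two, the Schwarzian coefficient is $\tfrac{c_\rM-26}{12}+\tfrac1{12}+\tfrac{Q^2}{2}=\tfrac{c_\rM-25+6Q^2}{12}=0$ because $6Q^2=c_\rL-1=25-c_\rM$, so \emph{all Schwarzian anomalies cancel} (this is precisely the matter--ghost--Liouville relation $c_\rM+c_\rL=26$, with $c_\rL=1+6Q^2$ split into the GFF's $1$ and the background-charge $6Q^2$). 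What survives is $-(\bT_\D\phi,\mu)$ plus weight terms ($\beta^\mu_0$ and M\"obius pieces) which, together with the explicit form of $\cD_n$ and Lemma~\ref{L:mobius}, reassemble into $\cD_{-n}$; choosing the representatives $\mu=\mu_n$ giving $\rv_n$ one recovers, on $L^2(\Q)$, the relation $\cD_n^*=\cD_{-n}-(\bT_\D\phi,\rv_{-n})$ of Proposition~\ref{P:adjoints}, and after the zero-mode conjugation of Lemma~\ref{L:adjoint_D_alpha} the Feigin--Fuchs relations. The symmetric argument with $\mu\subset\D^*$ handles $\phi^*$, the extra $2Q\bJ$ term of Theorem~\ref{T:GFF_SLE} being matched by Lemma~\ref{L:stress_iota}.

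\textbf{Main obstacle.} I expect the principal difficulty to be the bookkeeping of these three "central charges'' and the verification that they assemble \emph{exactly} into the relations of Proposition~\ref{P:adjoints}/Lemma~\ref{L:adjoint_D_alpha} with no leftover term: one must track carefully the differing normalisations of $\Phi_t,\tilde\Phi_t$, the passage from $\bT_\eta$ to $\bT_\D$ via Lemma~\ref{L:chain}, the zero-mode shift under the $\cdot$-action, and the matching of the weight terms $\beta^\mu_0$ with the Heisenberg/weight pieces of $\cD_{-n}$. A secondary technical point is the disintegration argument needed to pass from the integration-by-parts identity on $L^2(\Q)$ to the hypotheses of Theorem~\ref{T:Neumann} holding on $L^2$ of $\Q^\#$-almost-every conditional law, together with the check that the relevant $\alpha$ (dictated by the $e^{-Qc}\d c$ weight on the zero mode) lies outside $kac^+\cup kac^-$ for every $\kappa\in(0,4]$.
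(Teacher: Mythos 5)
Your proposal follows essentially the same route as the paper: realise the perturbation $\phi\mapsto\phi\cdot\tilde\Phi_t^{-1}$ as the joint deformation $(\eta,\varphi)\mapsto(\Phi_t(\eta),\varphi\cdot\Phi_t^{-1})$ with $\Phi_t$ driven by $f_*\mu$, combine Theorem~\ref{T:GFF_SLE}, Theorem~\ref{T:ghost} and the chain rules \eqref{E:schwarzian_mu}, \eqref{E:chain_rule_stress_mu} so that the Schwarzian anomalies cancel (the coefficient $\tfrac1{12}+\tfrac{Q^2}{2}=\tfrac{c_\rL}{12}$ matching $\tfrac{26-c_\rM}{12}$), and then Laplace-transform the zero mode and invoke Theorem~\ref{T:Neumann}. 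Your explicit fibrewise/disintegration step for passing the one-variable characterisation to the two-variable measure $\Q^\#$ is a legitimate (and slightly more detailed) version of what the paper leaves implicit, and the only small imprecision is that the weight term $\beta_0^\mu$ is absorbed by the $e^{2\bK}$ density and the Möbius normalisation of Lemma~\ref{L:mobius} rather than surviving into $\cD_{-n}$.
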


\begin{proof}
In order to apply Theorem \ref{T:Neumann}, we need to generate some small deformations of the fields $(\phi,\phi^*)$ using analytic diffeomorphisms of $\S^1$, and check how the measure reacts to these deformations. We will denote by $\cD_n^{(1)}$ (resp. $\cD_n^{(2)}$) the operators \eqref{E:def_cDn} of Section \ref{S:neumann_gff} acting on the first (resp. second variable). We also generalise the notation by writing $\cD_\mu$ for the operator obtained when generating a diffeomorphism of the unit circle by a Beltrami differential $\mu$. We will first compute the adjoint of $\cD_n^{(j)}$ ($j=1,2$) on $L^2(\Q)$ and ``Laplace transform" it to find the adjoint of $\cD_{n,\alpha}^{(j)}$ on $L^2(\Q^\#)$.

Let $\mu\in L^\infty(\D)$ compactly supported, and let $\tilde{\Phi}_t$ (resp. $\Phi_t$) be the solution to the Beltrami equation with coefficient $t\mu+\bar t\iota^*\mu$ (resp. $f_*\mu$), normalised to fix 0, 1, $\infty$ (resp. such that $\Phi_t(0)=0$, $\Phi_t(\infty)=\infty$, $\Phi_t'(\infty)=1$). Then, $g_t:=\Phi_t\circ g$ (resp. $f_t:=\Phi_t\circ f\circ\tilde{\Phi}_t^{-1}$) is a Riemann mapping for $D_t^*:=\mathrm{ext}(\Phi_t(\eta))$ and $D_t:=\mathrm{int}(\Phi_t(\eta))$. Indeed, by the chain rule, both maps are holomorphic in their respective domains and are normalised appropriately. Note that we do not have to normalise $g_t$ with $g_t'(\infty)\in\R_+$, since we only care about the distribution of the map $g_t\circ e^{i\alpha}$ (with $e^{i\alpha}$ Haar-distributed in $\S^1$).

We are now going to study the variation $((\phi+c)\cdot\tilde{\Phi}_t^{-1},\phi^*+c^*)$. Let $F$ be a test function. By definition,
\[F\left((\phi+c)\cdot\tilde{\Phi}_t^{-1},\phi^*+c^*\right)=F(\phi+c,\phi^*+c^*)+t\cD_\mu^{(1)}(F)+\bar t\cD_{\iota^*\mu}^{(1)}(F)+o(t).\]
On the other hand, using $g=\Phi_t^{-1}\circ g_t$ and $f\circ\tilde{\Phi}_t^{-1}=\Phi_t^{-1}\circ f_t$, we have 
\[F\left((\phi+c)\cdot\tilde{\Phi}_t^{-1},\phi^*+c^*\right)=F\left((\varphi\cdot\Phi_t^{-1})\cdot f_t+a+b,(\varphi\cdot\Phi_t^{-1})\cdot g_t+a-b\right).\]
Moreover, conditionally on $\eta$, if $\varphi+a$ is a sample from $e^{-2Qa}\d a\otimes\P_\eta$, the field $a+\varphi\cdot\Phi_t^{-1}$ is a field on $\Phi_t(\eta)$ which is absolutely continuous with respect to $e^{-2Qa}\d a\otimes\P_{\Phi_t(\eta)}$. In fact, the law of $a+\varphi\cdot\Phi_t^{-1}$ is just $e^{-2Qa}\d a\otimes\P_{\Phi_t(\eta),\Phi_t^{-1}}$ by definition \eqref{E:def_P_eta_Phi}. Hence,
\begin{equation}\label{eq:var_Q}
\begin{aligned}
&\int F\left((\phi+c)\cdot\tilde{\Phi}_t^{-1},\phi^*+c^*\right)\d\Q(\phi+c,\phi^*+c^*)\\
&\qquad= \int F(\varphi\cdot f_t+a+b,\varphi\cdot(g_t\circ e^{i\alpha})+a-b)\d\P_{\Phi_t(\eta),\Phi_t^{-1}}(\varphi)e^{2\bK}\d\nu^\#(\eta)\frac{\d\alpha}{2\pi}e^{-2Qa}\d a\,\d b\\
&\qquad=\int G_t(\Phi_t(\eta),b)e^{2\bK}\d\nu^\#(\eta)\frac{\d\alpha}{2\pi}\d b,
\end{aligned}
\end{equation}
where we have defined $G_t(\eta,\alpha,b):=\int F(\varphi\cdot f+a+b,\varphi\cdot(g\circ e^{i\alpha})+a-b)\d\P_{\eta,\Phi_t^{-1}}(\varphi)e^{-2Qa}\d a$.
By the chain rule, \eqref{eq:var_Q} is further equal to
\begin{equation}\label{E:chain_rule_leibniz}
    \int G_t(\eta,\alpha,b)e^{2\bK}\d\nu^\#(\eta)\frac{\d\alpha}{2\pi}\d b + 2 \Re \Big( t \int\cL_{f_*\mu}G(\eta,\alpha,b) e^{2\bK}\d\nu^\#(\eta)\frac{\d\alpha}{2\pi}\d b \Big) + o(t).
\end{equation}
From Proposition \ref{P:var_liouville}, we have
\begin{align*}
G_t(\eta,\alpha,b)
&=G(\eta,\alpha,b)+2\Re\left(t\int\left(\frac{1}{12}(\cS f^{-1},f_*\mu)+(\bT_D \varphi,f_*\mu)\right)F\d\P_\eta(\varphi)e^{-2Qa}\d a\right)+o(t).
\end{align*}
By \eqref{E:schwarzian_mu}, $(\cS f^{-1},f_*\mu) = -(\cS f,\mu)$ and by \eqref{E:chain_rule_stress_mu} we have $(\bT_D \varphi,f_*\mu) = (\bT_\D \phi,\mu) - \frac{Q^2}{2} (\cS f,\mu)$ (recall $\phi=\varphi\cdot f+a+b$), so
\begin{align*}
G_t(\eta,\alpha,b)
&=G(\eta,\alpha,b)+2\Re\left(t\int\left((\bT_\D \phi,\mu)-\frac{c_\rL}{12}(\cS f,\mu)\right)F\d\P_\eta(\varphi)e^{-2Qa}\d a\right)+o(t)\\
&=G(\eta,\alpha,b)-\frac{c_\rL}{6}\Re\left(t\left(\cS f,\mu\right)\right)G(\eta,\alpha,b)+2\Re\left(t\int\left(\bT_\D \phi,\mu\right)F\d\P_\eta(\varphi)e^{-2Qa}\d a\right)+o(t).
\end{align*}
Now, since the welding homeomorphism of $\Phi_t(\eta)$ is $g_t^{-1}\circ f_t=g^{-1}\circ f\circ\tilde{\Phi}_t^{-1}$, by Theorem \ref{T:ghost}, we have
\begin{align*}
\int\cL_{f_*\mu}(G)e^{2\bK}\d\nu^\#\frac{\d\alpha}{2\pi}\d b=\int\sR_\mu(G)e^{2\bK}\d\tilde{\nu}\d b=\frac{c_\rL}{12}\int\left(\cS f,\mu\right)Ge^{2\bK}\d\tilde{\nu}\d b
\end{align*}
Summing up the two contributions in \eqref{E:chain_rule_leibniz}, we see that the Schwarzian derivatives compensate each other. Together with dominated convergence theorem, this yields:
\[\int F((\phi+c)\cdot\tilde{\Phi}_t^{-1},\phi^*+c^*)\d\Q(\phi + c,\phi^*+c^*)=2\Re\Big( t\int (\bT_\D \phi,\mu)F(\phi + c,\phi^*+c^*)\d\Q(\phi + c,\phi^*+c^*)\Big)+o(t),\] 
Specialising to the canonical basis, and by taking the linear and antilinear part in $t$ (as in the proof of Proposition \ref{P:adjoints}), we get for all $n>0$,
%\[(\cD_{\iota^*\mu}^{(1)})^*=-\cD_\mu^{(1)}-\left(\bT_\D \phi,\mu\right)\text{ on }L^2(\Q).\]
\[(\cD_n^{(1)})^*=\cD_{-n}^{(1)}-(\bT_\D \phi,\mu)\text{ on }L^2(\Q).\]

Finally, we Laplace transform in the $c$-variable, i.e. we consider the operator $\cD_{n,\alpha}^{(1)}=e^{-\frac\alpha2c}\circ\cD_n^{(1)}\circ e^{\frac\alpha2c}$ acting on $L^2(\Q^\#)$. Due to the factorisation of Lemma \ref{L:zero_modes}, a straightforward calculation as in the proof of Lemma \ref{L:adjoint_D_alpha} gives
\[(\cD_{n,2Q-\bar\alpha}^{(1)})^*=\cD_{-n,\alpha}^{(1)}-\left(\bT_\D \phi,\mu\right)\text{ on }L^2(\Q^\#).\]
We get similarly that $(\cD_{n,2Q-\bar\alpha}^{(2)})^*=\cD_{-n,\alpha}^{(2)}-(\bT_{\phi^*}^\D,\mu)$ on $L^2(\Q^\#)$, and an application of Theorem~\ref{T:Neumann} ends the proof.
\end{proof}

\appendix

\section{Brownian local time of SLE}

In this section, we construct and study the Brownian local time of an SLE loop. This technical result is used in Section \ref{SS:jump} in order to show that the jump operator $\bD_\eta$ and its Dirichlet form $(\cE_\eta,\cF_\eta)$ have nice properties. We will work in the chordal case, which is sufficient by local absolute continuity. Specifically, consider a chordal SLE $\eta$ from 0 to $\infty$ in $\H$ with law $\P^{\rm SLE}$ and an independent Brownian motion $(B_t)_{t \ge 0}$ starting at $x$ with law $\P_x^{\rm BM}$ for some $x \in \C$. Let $\alpha = 1 + \kappa/8$ be the fractal exponent of $\eta$ and
\[ 
I_{T,\eps} := \eps^{-2+\alpha} \int_0^T \indic{\{\dist(B_t,\eta)<\eps\}} \d t, \qquad T \ge 0,\quad \eps >0.
\]

\begin{lemma}\label{L:local_time}
For all $T>0$ and starting points $x \in \H$,
$(I_{T,\eps})_{\eps >0}$ converges in $L^1(\P^{\rm SLE} \otimes \P_x^{\rm BM})$ to some random variable $I_{T}$: the ``local time'' accumulated by $(B_t)_{0 \le t\le T}$ on $\eta$. Moreover, for all $T>0$ and $\P^{\rm SLE}$-almost every $\eta$, for every starting point $x \in \eta$, $\P_x^{\rm BM}(I_T>0)=1$.
\end{lemma}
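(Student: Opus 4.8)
The plan is to obtain the $L^{1}$ convergence (in fact $L^{2}$) through a first- and second-moment computation based on the one- and two-point estimates for SLE, and then to identify $I_{T}$ with a positive continuous additive functional (PCAF) of planar Brownian motion whose Revuz measure is the Minkowski content $\mathfrak{m}_{\eta}$, from which the positivity statement follows via the support theory for such functionals. Set $\rho_{\eps}(z):=\eps^{-2+\alpha}\indic{\{\dist(z,\eta)<\eps\}}$, so that $I_{T,\eps}=\int_{0}^{T}\rho_{\eps}(B_{t})\,\d t$ is the occupation measure of $(B_{t})_{0\le t\le T}$ tested against $\rho_{\eps}$; by \eqref{E:Minkowksi_SLE}, $\rho_{\eps}(z)\,|\d z|^{2}\to\mathfrak{m}_{\eta}$ weakly. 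By Fubini,
\[
\E[I_{T,\eps}]=\int_{0}^{T}\E^{\mathrm{BM}}_{x}\!\big[\eps^{-2+\alpha}\,\P^{\mathrm{SLE}}(\dist(B_{t},\eta)<\eps)\big]\,\d t .
\]
The one-point estimate for chordal SLE (Lawler--Rezaei and subsequent refinements, underpinning the theory of the natural parametrisation) provides a Green's function $G=G_{\H}(\cdot\,;0,\infty)$ with $\eps^{-2+\alpha}\P^{\mathrm{SLE}}(\dist(z,\eta)<\eps)\to c_{*}G(z)$ as $\eps\to0$ and a uniform-in-$\eps$ majorant $\eps^{-2+\alpha}\P^{\mathrm{SLE}}(\dist(z,\eta)<\eps)\lesssim G(z)$, where $G$ is locally bounded, vanishes near $\R\setminus\{0\}$ and has integrable singularities at $0$ and $\infty$. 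Dominated convergence (the Brownian heat kernel being well behaved over the finite horizon $T$) then gives $\E[I_{T,\eps}]\to c_{*}\,\E^{\mathrm{BM}}_{x}\!\big[\int_{0}^{T}G(B_{t})\,\d t\big]<\infty$.

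For the second and cross moments, conditioning on the Brownian path,
\[
\E[I_{T,\eps}I_{T,\eps'}]=\E^{\mathrm{BM}}_{x}\!\Big[\int_{0}^{T}\!\!\int_{0}^{T}\eps^{-2+\alpha}(\eps')^{-2+\alpha}\,\P^{\mathrm{SLE}}\big(\dist(B_{s},\eta)<\eps,\ \dist(B_{t},\eta)<\eps'\big)\,\d s\,\d t\Big].
\]
The two-point estimate for SLE yields a two-point Green's function $G(z,w)$ with $\eps^{-2+\alpha}(\eps')^{-2+\alpha}\P^{\mathrm{SLE}}(\dots)\to c_{*}^{2}G(z,w)$ for $z\ne w$, and a uniform-in-$(\eps,\eps')$ bound $\lesssim|z-w|^{-(2-\alpha)}\big(G(z)+G(w)\big)$; since $2-\alpha=1-\kappa/8\in[\tfrac12,1)$ for $\kappa\le4$, this majorant is integrable over compacts of $\R^{2}$ and satisfies $\E^{\mathrm{BM}}_{x}[|B_{s}-B_{t}|^{-(2-\alpha)}]\lesssim|s-t|^{-(2-\alpha)/2}$ with $(2-\alpha)/2<1$, hence is integrable against the Brownian occupation over $[0,T]^{2}$. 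Dominated convergence gives $\E[I_{T,\eps}I_{T,\eps'}]\to c_{*}^{2}\,\E^{\mathrm{BM}}_{x}\!\big[\int_{0}^{T}\!\int_{0}^{T}G(B_{s},B_{t})\,\d s\,\d t\big]$, a finite limit that does not depend on how $\eps,\eps'\to0$. Taking $\eps'=\eps$ shows $\sup_{\eps}\E[I_{T,\eps}^{2}]<\infty$, and $\E[(I_{T,\eps}-I_{T,\eps'})^{2}]=\E[I_{T,\eps}^{2}]-2\E[I_{T,\eps}I_{T,\eps'}]+\E[I_{T,\eps'}^{2}]\to0$, so $(I_{T,\eps})$ is Cauchy in $L^{2}(\P^{\mathrm{SLE}}\otimes\P^{\mathrm{BM}}_{x})$ and converges, in $L^{2}$ and a fortiori in $L^{1}$, to a limit $I_{T}$. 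The same bounds applied conditionally on $\eta$ give the convergence of $(I_{t,\eps})_{\eps}$ in $L^{2}(\P^{\mathrm{BM}}_{x})$ for $\P^{\mathrm{SLE}}$-a.e.\ $\eta$ and every $t\ge0$, the required conditional estimate being exactly the integrated two-point bound (which in particular shows $\mathfrak{m}_{\eta}$ has finite logarithmic energy for a.e.\ $\eta$).

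For the positivity statement, fix such a typical $\eta$. The limit $I=(I_{t})_{t\ge0}$ is a non-decreasing continuous additive functional of $B$, and comparing $1$-resolvents in the approximation $I_{t,\eps}\to I_{t}$ (legitimate because $\rho_{\eps}|\d z|^{2}\to\mathfrak{m}_{\eta}$ and $\mathfrak{m}_{\eta}$ integrates the logarithmic kernel) identifies its Revuz measure with $\mathfrak{m}_{\eta}$. Now $\mathfrak{m}_{\eta}$ is a non-zero Radon measure with topological support $\eta$ that charges no polar set --- polar sets for planar Brownian motion have Hausdorff dimension $0$, whereas $\mathfrak{m}_{\eta}$ is carried by $\eta$ (of dimension $1+\kappa/8$) and charges every relatively open subset of it. The support theory for PCAFs (\cite{Fukushima10}; see also \cite{GRV14_dirichlet}) then yields $\supp[I]=\eta$ and $\P^{\mathrm{BM}}_{x}(I_{t}>0\ \forall t>0)=1$ for quasi-every $x\in\eta$, the exceptional set $N\subset\eta$ being polar. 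To upgrade this to every $x\in\eta$: since $\eta$ is a non-degenerate continuum through $x$, every such $x$ is a regular point of $\eta$ for planar Brownian motion (Wiener's criterion), so $\P^{\mathrm{BM}}_{x}(T_{\eta}=0)=1$; as $B$ hits the polar set $N$ only possibly at time $0$, this forces $\P^{\mathrm{BM}}_{x}(T_{\eta\setminus N}=0)=1$; applying the strong Markov property at arbitrarily small hitting times of $\eta\setminus N$ together with the additivity of $I$ gives $\P^{\mathrm{BM}}_{x}(I_{t}>0\ \forall t>0)=1$, and in particular $\P^{\mathrm{BM}}_{x}(I_{T}>0)=1$.

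The main difficulty is not the moment bookkeeping but the inputs it rests on: one needs the one- and two-point estimates for SLE in a form uniform in $\eps$ (and in the starting point), with explicit control of $G$ and $G(\cdot,\cdot)$ near the singular locus $\{0,\infty\}$ and near $\partial\H$, and one must verify that the resulting majorants are integrable against the Brownian occupation measure. This is where the restriction $\kappa\le4$ enters --- equivalently $2-\alpha<1$, which keeps $|z-w|^{-(2-\alpha)}$ integrable even after Brownian smoothing --- and where the natural-parametrisation literature does the real work.
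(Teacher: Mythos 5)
Your $L^1$ (indeed $L^2$) convergence argument is essentially the paper's: Fubini against the Brownian heat kernel, the one- and two-point SLE Green's functions of \cite{LawlerRezaei15_Minkowski}, and dominated convergence. The one place where you assume more than the literature gives is the \emph{global} uniform-in-$(\eps,\eps')$ majorant $\lesssim |z-w|^{\alpha-2}(G(z)+G(w))$ on all of $\H$: the two-point bound of \cite[Lemma~2.9]{LawlerRezaei15_Minkowski} comes with a constant depending on the compact region $\H_R=\{\Im z\ge 1/R,\ |z|\le R\}$, and degenerates near $\partial\H$, $0$ and $\infty$. The paper sidesteps this by introducing the truncated functional $I_{T,R,\eps}$, proving the $L^2$ Cauchy property at fixed $R$ with the two-point bound, and controlling $\E[|I_{T,\eps}-I_{T,R,\eps}|]$ with the \emph{one-point} Green's function only, which is explicit and integrable, before letting $R\to\infty$. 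If you want to avoid proving a global two-point estimate, you need some such truncation. Relatedly, your claim that ``the same bounds applied conditionally on $\eta$'' give quenched $L^2(\P_x^{\rm BM})$ convergence does not follow: conditionally on $\eta$ there is no SLE Green's function, and the annealed bounds only give quenched convergence along a subsequence of $\eps$'s for a.e.\ $\eta$ (which is enough to define $I_T$ given $\eta$, but not the quenched second-moment control your Revuz identification needs).

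For the positivity statement your route is genuinely different from the paper's, and it has a real gap. The support theory for PCAFs identifies $\mathrm{supp}[I]=\{x:\P_x(R=0)=1\}$ with the \emph{quasi-support} of the Revuz measure $\mathfrak{m}_\eta$, i.e.\ the smallest quasi-closed set carrying $\mathfrak{m}_\eta$. This quasi-support may fall short of the topological support $\eta$ by a set of $\mathfrak{m}_\eta$-measure zero which there is no general reason to be \emph{polar}; your assertion that the exceptional set $N\subset\eta$ is polar is exactly the point that needs proof, and your subsequent upgrade (regularity of every $x\in\eta$ plus strong Markov at small hitting times of $\eta\setminus N$) rests on it. Note also that the paper's Section on the jump operator uses this lemma to \emph{conclude} that the support of the PCAF is $\eta$, so deriving the lemma from that support statement would be circular unless you establish the quasi-support independently. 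The paper's proof is instead elementary: by scaling covariance of chordal SLE and Brownian motion started at $x=0$, $I_T\laweq T^{\alpha/2}I_1$, so $\P(I_T>0)$ is independent of $T$; the event $\{\forall T\in\Q\cap(0,1),\ I_T>0\}$ is a tail event of positive probability (positivity of the first moment), hence has probability one; the domain Markov property of SLE transfers this to a countable dense set of starting points on $\eta$; and a continuity argument in the starting point (absolute continuity of the laws of $(B_t)_{t\ge A\eps^2}$ started from nearby points, plus $I_{A\eps^2}\to0$) extends it to every $x\in\eta$. If you wish to keep the PCAF route, you must either prove that the quasi-support of $\mathfrak{m}_\eta$ is all of $\eta$ up to a polar set, or replace that step by an argument of the paper's type.
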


As already mentioned, the article \cite{LawlerRezaei15_Minkowski} constructs the Minkowski content of $\eta$, giving a notion of ``size'' for $\eta$. Lemma~\ref{L:local_time} provides a random notion of size of $\eta$, based on Brownian motion. These two results are closely related and, as we are about to see, Lemma \ref{L:local_time} is a quick consequence of estimates derived in \cite{LawlerRezaei15_Minkowski}. In particular, we recall from \cite{LawlerRezaei15_Minkowski} the existence of ``one-point and two-point Green's functions'' for SLE:
\begin{gather}
\label{E:GreenSLE1}
G^{\rm SLE}(z) = \lim_{\eps \to 0} \eps^{-2+\alpha} \P^{\rm SLE}(\dist(z,\eta)<\eps), \quad z \in \H; \\
\label{E:GreenSLE2}
G^{\rm SLE}(z,w) = \lim_{\eps,\delta \to 0} \eps^{-2+\alpha} \delta^{-2+\alpha} \P^{\rm SLE}(\dist(z,\eta)<\eps, \dist(w,\eta)<\delta), \quad z,w \in \H.
\end{gather}
Moreover, there exists a constant $c = c(\kappa)$ such that
\begin{equation}
\label{E:GreenSLE3}
G^{\rm SLE}(z) = c ~ \Im(z)^{\alpha-2} (\sin \arg(z))^{8/\kappa-1}, \quad z \in \H.
\end{equation}

\begin{proof}[Proof of Lemma \ref{L:local_time}]
Let $T>0$ and $x \in \C$.
We start by introducing an intermediate approximation of $I_{T,\eps}$. For $R \ge 1$, let $\H_R = \{ z \in \C: \Im(z) \ge 1/R, |z| \le R\}$ and
\[ 
I_{T,R,\eps} := \eps^{-2+\alpha} \int_0^T \indic{\{\dist(B_t,\eta)<\eps, B_t \in \H_R\}} \d t, \quad T \ge 0, R \ge 1, \eps >0.
\]
This cutoff in $R$ will give us some room in the two-point computation below.
By the triangle inequality, for any $R \ge 1$,
\begin{align}
\label{E:flight4}
\limsup_{\eps,\delta \to 0} \E^{\rm SLE} \otimes\E_x^{\rm BM}[|I_{T,\eps}-I_{T,\delta}|] & \le \limsup_{\eps, \delta \to 0}
\E^{\rm SLE} \otimes\E_x^{\rm BM}[|I_{T,R,\eps}-I_{T,R,\delta}|] \\
\notag
& + 2 \limsup_{\eps \to 0} \E^{\rm SLE} \otimes\E_x^{\rm BM}[|I_{T,\eps}-I_{T,R,\eps}|].
\end{align}
Let us start by showing that, $R \ge 1$ being fixed, the first term in the RHS vanishes. By Cauchy--Schwarz, it is enough to show that $(I_{T,R,\eps})_{\eps >0}$ is Cauchy in $L^2(\P^{\rm SLE}\otimes\P^{\rm BM})$. To this end, we will establish the following identity:
\begin{equation}
\label{E:flight2}
\lim_{\eps, \delta \to 0} \E^{\rm SLE} \otimes\E_x^{\rm BM}[I_{T,R,\eps} I_{T,R,\delta} ] = 2 \int_0^T \hspace{-3pt} \frac{\d s}{2\pi s} \int_s^T \hspace{-6pt} \frac{\d t}{2\pi(t-s)} \hspace{-2pt}\int_{\H_R} \hspace{-6pt} |\d z|^2 e^{-\frac{|z-x|^2}{2s}} \hspace{-2pt} \int_{\H_R} \hspace{-6pt} |\d w|^2 e^{-\frac{|w-z|^2}{2(t-s)}} G^{\rm SLE}(z,w).
\end{equation}
Using the explicit law of $(B_s,B_t)$ for $0 < s < t < T$, we have
\begin{align}
\label{E:flight1}
\E^{\rm SLE}\otimes\E_x^{\rm BM}[I_{T,R,\eps}I_{T,R,\delta}] & =
2 \int_0^T \frac{\d s}{2\pi s} \int_s^T \frac{\d t}{2\pi(t-s)} \int_{\H_R} |\d z|^2 e^{-\frac{|z-x|^2}{2s}} \int_{\H_R} |\d w|^2 e^{-\frac{|w-z|^2}{2(t-s)}} \times \\
\notag
& \hspace{40pt} \times \eps^{-2+\alpha}\delta^{-2+\alpha} \P^{\rm SLE}(\dist(z,\eta)<\eps, \dist(w,\eta)<\delta).
\end{align}
By \cite[Lemma 2.9]{LawlerRezaei15_Minkowski} (see also Equation (19) therein), there exists $c=c(R)>0$ such that for all $z, w \in \H_R$ and $\eps,\delta \in (0,1)$,
\[
\eps^{-2+\alpha} \delta^{-2+\alpha} \P^{\rm SLE}(\dist(z,\eta)<\eps, \dist(w,\eta)<\delta) \le c |z-w|^{\alpha-2}.
\]
It is then an elementary computation to check that, if one injects the following upper bound in \eqref{E:flight1}, the resulting integral is finite.
Together with \eqref{E:GreenSLE2}, we can thus apply dominated convergence theorem to get \eqref{E:flight2}.

Going back to \eqref{E:flight4}, we have shown that, for all $R \ge 1$,
\begin{align*}
\limsup_{\eps,\delta \to 0} \E^{\rm SLE} \otimes\E_x^{\rm BM}[|I_{T,\eps}-I_{T,\delta}|] & \le 2 \limsup_{\eps \to 0} \E^{\rm SLE} \otimes\E_x^{\rm BM}[|I_{T,\eps}-I_{T,R,\eps}|].
\end{align*}
A similar argument based on dominated convergence theorem (see \cite[Theorem 2.4]{LawlerRezaei15_Minkowski} for a bound on the probability that $\eta$ gets $\eps$-close to a given point) shows that the right hand side term of the above display equals
\[
2 \int_0^T \frac{\d t}{2\pi t} \int_{\H \setminus \H_R} |\d z|^2 e^{-\frac{|z-x|^2}{2t}} G^{\rm SLE}(z).
\]
Using the explicit expression \eqref{E:GreenSLE3} of $G^{\rm SLE}(z)$ and because $\bigcup_{R \ge 1} \H_R = \H$, 
we see that the above integral vanishes as $R \to \infty$. This proves that $(I_{T,\eps})_{\eps>0}$ is Cauchy in $L^1(\P^{\rm SLE} \otimes \P^{\rm BM})$ as desired.

It remains to check that, for $\P^{\rm SLE}$-almost every $\eta$ and for every $x \in \eta$, $\P_x^{\rm BM}(I_T>0)=1$.
Since
\[
\E^{\mathrm{SLE}}\otimes\E_x^{\mathrm{BM}}[I_T] = \int_0^T \frac{\d t}{2\pi t} \int_{\H} |\d z|^2 e^{-\frac{|z-x|^2}{2t}} G^{\rm SLE}(z)
\]
is positive, the probability that $I_T>0$ is positive.
We start by considering the case $x=0$ for which a scaling covariance property holds.
For all $\lambda >0$, the joint law of $(\lambda \eta,(\lambda B_{t/\lambda^2})_{t \ge 0})$ is still given by $\P^{\rm SLE} \otimes \P_0^{\rm BM}$. This scaling property implies that for all $T>0$, $I_T\laweq T^{\alpha/2}I_1$ under the product measure $\P^{\mathrm{SLE}}\otimes\P_0^{\mathrm{BM}}$. In particular, $\P^{\mathrm{SLE}}\otimes\P_0^{\mathrm{BM}}(I_T>0)$ does not depend on $T$, and
\[
\P^{\mathrm{SLE}}\otimes\P_0^{\mathrm{BM}}(\forall T \in \Q \cap (0,1), I_T > 0 ) = \lim_{\substack{T \to 0\\T \in \Q \cap (0,1)}} \P^{\mathrm{SLE}}\otimes\P_0^{\mathrm{BM}}(I_T >0) = \P^{\mathrm{SLE}}\otimes\P_0^{\mathrm{BM}}(I_1>0) >0.
\]
The event $\{\forall T \in \Q \cap (0,1), I_T > 0\}$ being a tail event for $(B_t)_{t \ge 0}$ and the Brownian motion driving the Loewner equation, we infer that its probability equals 1.
Applying the Markov property of SLE, we deduce that for $\P^{\rm SLE}$-almost every $\eta$,
$\P_x^{\rm BM}(I_T>0)=1$ holds for a countable dense subset of starting points $x \in \eta$ (corresponding e.g. to rational times for the capacity parametrisation of SLE). To conclude that the same statement holds simultaneously for all $x \in \eta$, it is enough to check that, conditionally on $\eta$,
\begin{equation}
    \label{E:LA13}
\H \ni x \mapsto \P_x^{\rm BM}(I_T>0)
\quad \text{is continuous.}
\end{equation}

Let $x \in \H$ be some starting point and $\delta>0$. 
Let $\eps>0$ and $y \in \H$ be at distance at most $\eps$ from $x$. For $A>0$, the laws of $(B_t)_{t \ge A\eps^2}$ under $\P_x^{\rm BM}$ and under $\P_y^{\rm BM}$ are mutually absolutely continuous with Radon--Nikodym derivative equal to $1+o(1)$ where $o(1) \to 0$ as $A \to \infty$, uniformly in $\eps$.
Fixing $A>0$ large enough so that this Radon--Nikodym derivative is at least $1-\delta/2$, we have for all $\eps >0$,
\[
\P_y^{\rm BM}(I_T>0) \ge \P_y^{\rm BM}(I_T-I_{A\eps^2}>0) \ge (1-\delta/2) \P_x^{\rm BM}(I_T-I_{A\eps^2}>0).
\]
Now, $\P_x^{\rm BM}$-almost surely $I_{A\eps^2} \to 0$ as $\eps\to 0$. This shows that for all $y$ close enough to $x$ we have
\[
\P_y^{\rm BM}(I_T>0) \ge (1-\delta) \P_x^{\rm BM}(I_T>0).
\]
This is the desired continuity statement \eqref{E:LA13} which concludes the proof of the lemma.
\end{proof}

\begin{comment}
    \subsection{Un hold-up?}
Consider the chordal SLE from 0 to $\infty$, and reflect it across the real line so that the curve is $\C$ and passes through $0$. The local time at 0 is by definition $I_T:=\lim_{\epsilon\to0}\epsilon^{-(1+\frac{\kappa}{8})}\int_0^T\ind_{dist(B_t,\eta)<\epsilon}\d t$, with $(B_t)_{t\geq0}$ a standard Brownian motion started from 0. Under rescaling by $\lambda$, the Minkowski content rescales like $\lambda^{(1+\frac{\kappa}{8})}$ while the Brownian motion rescales like $\lambda^2$. Hence, $I_T\laweq T^{2(1+\frac{\kappa}{8})}I_1$ (for the product measure of SLE and Brownian motion). Moreover, it is a standard fact from SLE theory that for a fixed $z\in\C$, the probability that the SLE gets $\epsilon$-close to $z$ scales like $C\epsilon^{-(1+\frac{\kappa}{8})}$ for some constant $C\in(0,\infty)$. From this, we deduce that $\E[I_1]>0$, hence $\P(I_1>0)>0$ (still under the product measure), and $\P(I_1>0)=1$ since this is clearly a $0/1$-event. In particular, for almost every sample of the curve, we have $I_1>0$ almost surely under the law of the Brownian motion, and $I_T>0$ for all $T$ by the initial scaling relation. 
\end{comment}

\section{Comparison with other conformal welding results}\label{Appendix:comparison}
This appendix grew out of what was initially planned to be a short paragraph on the comparison of the results of Section \ref{ss:zipper} with the conformal welding of quantum discs. Since quantum surfaces are a delicate notion, we include a review, which we aim both concise and accessible. It is based on the recent comparison of Liouville CFT with quantum surfaces via the so-called uniform embedding \cite{AHS_SLE}, and subsequent conformal welding results \cite{AHS20,AHS23_loop,AngCaiSunWu_1}.

    \subsection{Quantum surfaces}
Let $D\subset\hat\C$ be a domain, $\bz=z_1,...,z_N\in D$ distinct marked points, and $\mathrm{Aut}(D,\mathbf{z})$ be the group of conformal automorphisms of $D$ fixing $\bz$ pointwise. One can also add marked points on the boundary, but we will ignore this for the purpose of this review. We have the usual action of $\mathrm{Aut}(D,\bz)$ on the space of distributions $\cD'(D)$ by $X\cdot\Phi=X\circ\Phi+Q\log|\Phi'|$. We denote the equivalence relation by $\sim_\gamma$.

We introduce the following definitions based on the typical LQG/LCFT literature.
\begin{definition}

\begin{itemize}
A \emph{Liouville field} in $D$ with $\alpha_j$-insertions at $z_j$ is a Borel measure $\Q$ on $\cD'(D)$ such that for all $\Phi\in\mathrm{Aut}(D,\bz)$, 
\begin{equation}\label{eq:weyl}
\Phi^*\Q=e^{\frac{c_\rL}{12}\bS_D(\log|\Phi'|)}\prod_{j=1}^N|\Phi'(z_j)|^{2\Delta_j}\Q,
\end{equation}
where $\bS_D(\omega)=\frac{i}{\pi}\int_D\del\omega\wedge\delbar\omega+Q\int_{\del D}\omega k_{|\d z|^2}$ is the Liouville action in $D$ expressed in the Euclidean metric ($k_{|\d z|^2}$ is the geodesic curvature of the boundary), and $\Delta_j=\frac{\alpha_j}{2}(Q-\frac{\alpha_j}{2})$. The condition \eqref{eq:weyl} is the \emph{Weyl anomaly}.
\item\textit{(Tentative)}: A \emph{quantum surface} (embedded in $D$, with marked points $\bz$) is a Borel measure on $\cD'(D)/_{\sim\gamma}\mathrm{Aut}(D,\bz)$.\footnote{Some sources call quantum surface an equivalence class in $\cD'(D)/_{\sim_\gamma}\mathrm{Aut}(D,\bz)$, rather than a measure on this space. We will adopt the other convention in order to avoid the terminology ``random quantum surface". We thank Xin Sun for pointing this out to us.}
\end{itemize}
\end{definition}
The two notions look equivalent, since one could in principle pass any Liouville field to the quotient, and conversely lift any quantum surface to a Liouville field. However, the Liouville carries extra information via the explicit form of the conformal covariance. Moreover, since working with quotients ranks among the most perilous mathematical activities, we will insist on distinguishing the two notions. 

The Liouville field is a perfectly valid notion (leaving aside the question of its construction), but we need to be extra careful when it comes to the meaning attached to a quantum surface. Indeed, in many cases, $\mathrm{Aut}(D)$ is not compact, and the naive process of integrating along the fibres of a Liouville field yields a trivial measure (Haar measure being infinite). The usual strategy is to identify the quotient $\cD'(D)/_{\sim\gamma}\mathrm{Aut}(D)$ with a \emph{global section}.\footnote{i.e. a (smooth) map $\sigma:\cD'(D)/\mathrm{Aut}(D,\bz)\to\cD'(D)$ such that $\pi\circ\sigma=\mathrm{id}$, with $\pi$ the canonical projection. A choice of section identifies the quotient with its image $\sigma(\cD'(D)/_{\sim\gamma}\mathrm{Aut}(D,\bz))\subset\cD'(D)$.} Then, a measure on a section (viewed as a subset of $\cD'(D)$) extends to a Liouville field by imposing the Weyl anomaly \eqref{eq:weyl}. Conversely, given a Liouville field, we can (try to) condition it on this section. There is not always a preferred choice of section, and the process introduces some arbitrariness, especially if $\mathrm{Aut}(D)$ is large (e.g. $D=\hat\C$). To sum up, the only canonical description of a quantum surface is its lift to a Liouville field. That being said, and as we shall see below, there are some quantum surfaces admitting particularly nice embeddings. As a historical note, quantum surfaces come from the approach due to Duplantier--Miller--Sheffield \cite{DuplantierSheffield11,MatingOfTrees}, while Liouville fields were considered by David--Kupiainen--Rhodes--Vargas \cite{DKRV16}. The existence of Gaussian fields satisfying the Weyl anomaly is the consequence of two properties of the Liouville action: it is quadratic and satisfies a cocycle property.

As an example, consider the punctured plane $\C^\times:=\C\setminus\{0\}$ (viewed as the sphere $\hat\C$ with two marked points $(0,\infty)$) with $\mathrm{Aut}(\C^\times)\simeq(\C^\times,\times)$. One way to fix the scaling parameter is to impose $\mathrm{argmax}_{t\in\R}\,\lbrace Y_t:=\int_0^{2\pi}X(e^{-t+i\theta})\d\theta=0\rbrace$. We will only consider rotationally invariant measures for which the above condition makes sense with full mass. The precise form of these measures is not so relevant here, but they are locally absolutely continuous with respect to the Gaussian free field, and $Y_t$ (the ``radial process") is typically a (conditioned) two-sided drifted Brownian motion such that $\lim_{t\to\pm\infty}\,Y_t=-\infty$ with full measure (see \cite[Proposition 2.14]{AHS_SLE} for an example of the type of process considered). The \emph{two-pointed quantum sphere} $\mathrm{QS}_2$ is initially defined on the section just above (hence it induces a quantum surface), but the important thing for us is that its lift to a Liouville field is (up to a multiplicative constant) $\d\mathrm{LF}_\C^{(\gamma,0),(\gamma,\infty)}(c+X)=e^{2(\gamma-Q)c}\d c\,e^{\gamma X(0)}e^{\gamma X(\infty)}\d\P_\C(X)$ \cite[(1.4)]{AHS_SLE}. Here, $\P_\C$ is the GFF in a certain metric, and the exponential $e^{\gamma X(0)}$ is only well defined after Wick renormalisation (and the use of Cameron--Martin's theorem). See \cite{AHS_SLE} for precise definitions. There are variants of $\mathrm{QS}_2$ for generic $\alpha$-insertions, and its partition function is the reflection coefficient of LCFT \cite[Section 1.2]{KRV_DOZZ}.% For $\P_\C$-a.e. $X$, the GMC measure $\d M^\gamma_X(z)$ (with respect to the metric $\frac{|\d z|^2}{(1\vee|z|)^4}$) is well defined, and its total mass is finite. 

Now, we want to leverage the existence of $\mathrm{QS}_2$ to get quantum surfaces embedded in $\hat\C$ (no marked points). Again, there are two possibilities: the first is to find a map directly on the quotient $\cD'(\C^\times)/_{\sim\gamma}\C^\times\to\cD'(\hat\C)/_{\sim\gamma}\mathrm{Aut}(\hat\C)$, and pushforward $\mathrm{QS}_2$. The second (and easier) way is to consider the lift of $\mathrm{QS}_2$ to $\mathrm{LF}_\C^{(\gamma,0),(\gamma,\infty)}$, and map it to a M\"obius-covariant field on the sphere. In both cases, the invariance property must be proved in some way.

An instance of such operation is \emph{forgetting the marked points}. Using the properties of Haar measure on the M\"obius group and a suitable reweighting by the total mass of GMC, it is shown in \cite[Theorem 1.1]{AHS_SLE} that one can integrate out the position of the two marked points of $\mathrm{QS}_2$ in a way that preserves the invariance under LQG coordinate change. The resulting quantum surface is denoted $\mathrm{QS}$, and the corresponding Liouville field is $\d\mathrm{LF}_\C(c+X)=e^{-2Qc}\d c\,\d\P_\C(X)$, where $\P_\C$ is the GFF in a certain metric (see \cite{AHS_SLE} for the explicit expression). Contrary to the punctured plane, there is no privileged choice of section $\cD'(\hat\C)/\mathrm{Aut}(\hat\C)\to\cD'(\C)$, and we are not aware of a nice embedding that would represent the quotient measure. In LQG literature, the terminology \emph{quantum sphere} $\mathrm{QS}$ is used as an umbrella term to describe the would-be quotient measure, without reference to a particular choice of embedding.
\begin{comment}
Let $\mathbf{m}_{\hat\C}$ be Haar measure on the Möbius group of $\hat\C$, assumed to be normalised in some way. For all $F\in\cC^0(\cD'(\hat\C))$, we have by the conformal covariance of GMC 
\begin{equation}\label{E:forget}
\begin{aligned}
&\int F(X\cdot\Phi^{-1}+c)e^{-2\gamma c}M^\gamma_X(\C)^{-2}\d\mathrm{LF}_\C^{(\gamma,0),(\gamma,\infty)}(c+X)\d\mathbf{m}_{\hat{\C}}(\Phi)\\
&\quad=C\int F(X+c)e^{-2\gamma c}M^\gamma_X(\C)^{-2}\d\mathrm{LF}_\C^{(\gamma,\Phi(0)),(\gamma,\Phi(\infty))}(X+c)\d\mathbf{m}_{\hat\C}(\Phi)\\
&\quad=C\int F(X+c)e^{-2Q c}\d c\,\d\P_\C(X)=C\int F(X+c)\d\mathrm{LF}_\C(X+c),
\end{aligned}\end{equation}
for some constant $C>0$ which may vary from line to line. The passage to the last line is justified by the explicit expression of Haar measure as a function of the marked points, which takes care of integrating out the total mass of GMC. The last equality is just the definition of the Liouville field (with no insertions). See \cite[Theorem 1.2]{AHS_SLE} and its proof for details. The process exemplified in \eqref{E:forget} is called \emph{forgetting the marked points}. By construction, the LHS of \eqref{E:forget} is M\"obius-invariant (and we already knew such a fact for $\mathrm{LF}_\C$).
\end{comment} 

There is a similar process defining the \emph{quantum disc} $\mathrm{QD}$ by forgetting the (boundary) marked points of the two-pointed quantum disc $\mathrm{QD}_{0,2}$. Again, while $\mathrm{QD}_{0,2}$ can be described by a particular embedding, there is no nice embedding of $\mathrm{QD}$ and the simplest way to understand this measure is as a Liouville field: as a Liouville field $\mathrm{QD}$ is $\mathrm{LF}_\D=e^{-Qc}\d c\otimes\P_\D$, where $\P_\D$ is just the Neumann GFF in $\D$. We know that this field has a $\frac{\gamma}{2}$-GMC measure on the boundary, and we can disintegrate along this random variable, as in the paragraph preceding Corollary \ref{cor:conditioned}: $\mathrm{LF}_\D=\int_0^\infty\mathrm{LF}_\D(\ell)\ell^{-\frac{2}{\gamma}Q}\frac{\d\ell}{\ell}$, where $\mathrm{LF}_\D^\#(\ell)$ is the measure with expectation $\dot\E_\ell$ defined in \eqref{eq:condition_ell}. Note that the trace of $\mathrm{LF}_\D$ on $\S^1$ is nothing but the measure $e^{-Qc}\d c\otimes\dot\P$ considered in this paper (see in particular the statements of Section \ref{ss:zipper}).

    \subsection{Conformal welding of quantum discs}
With this terminology, we can now state the conformal welding results of \cite[Theorem 1.1]{AHS23_loop} ($C$ is a positive constant): %\aj{SLE sep?}
\begin{equation}\label{eq:welding_ahs}
\mathrm{QS}\otimes\mathrm{SLE}^\mathrm{loop}_\kappa\simeq C\int_0^\infty\mathrm{Weld}(\mathrm{QD}(\ell),\mathrm{QD}(\ell))\ell\d\ell.
\end{equation}
Thanks to the material from the previous section, we have all the tools to unpack this formula. Although \cite{AHS23_loop} writes an equality, we prefer to use $\simeq$ since this should really be understood as an isomorphism of $L^2$-spaces. In the right-hand-side, the measure $\mathrm{QD}(\ell)$ corresponds to the quotient measure of $\ell^{2-\frac{2Q}{\gamma}}\mathrm{LF}_\D(\ell)$, so it coincides with the right-hand side in Corollary \ref{cor:conditioned}. Moreover, the welding operation in \cite{AHS23_loop} is done so as to get a rotationally invariant welding homeomorphism. In fact, the homeomorphism is invariant under the full M\"obius group (both under left and right composition) by the M\"obius-invariance of $\mathrm{LF}_\D$. Hence, the law of this homeomorphism is $e^{2\bK}\tilde{\nu}$. In the left-hand-side of \eqref{eq:welding_ahs}, let us choose the embedding such that the loop disconnects 0 from $\infty$ and has unit conformal radius viewed from 0. Since the welding homeomorphism of the loop comes from the M\"obius-invariant measure from the right-hand-side, the law of the curve is $e^{2\bK}\nu^\#$. Moreover, to do the cutting operation, we need the additional uniform rotation from the left-hand-side in Corollary \eqref{cor:conditioned}. In \cite{AHS23_loop}, this extra randomness takes the form of a sample from the GMC-measure and is also here to restore the rotation invariance. Finally, the law of the M\"obius-invariant field on top of $\eta$ is $\mathrm{LF}_\C$, whose trace on $\eta$ is the measure $e^{-2Qa}\d a\otimes\P_{\eta}$. It follows that the left-hand-side of \eqref{eq:welding_ahs} coincides with the measure $e^{-2Qa}\d a\otimes\P_{\eta}\otimes\frac{\d\alpha}{2\pi}\otimes\nu^\#$ as required. 

\section{A formal computation}\label{appendix:formal}

In this section, we make a formal computation explaining how one could derive Theorem~\ref{T:ghost} from the path integral
\[ 
``\;\d\tilde{\nu}_\kappa(h)=\exp\left(-\frac{c_\rL}{24\pi}\bS_1-2\bK\right)Dh\;",
\]
where $Dh$ is the non-existent Haar measure on Homeo$(\S^1)$ and $\bS_1$ and $\bK$ are defined in \eqref{E:def_K_S1}. 
The fact that this non-rigorous computation yields the correct integration by parts formula gives an \textit{a posteriori} justification to the above path integral.
Let $F, G \in L^2(\tilde\nu_\kappa)$ be test functions and $\mu \in L^\infty(\hat\C)$ be a Beltrami differential compactly supported in $\D^*$. For all $t \in \C$ small, let $\tilde\Phi_t$ be the solution to the Beltrami equation with coefficient $t \mu + \bar t \iota^*\mu$, normalised to fix $0,1,\infty$. Recall that for all welding homeomorphism $h$,
\[ 
F(\tilde\Phi_t \circ h) = F(h) + t\sL_\mu F(h) + \bar t \bar\sL_\mu F(h) + o(t).
\]
We have
\begin{align*}
    \int F(\tilde\Phi_t\circ h) \overline{G(h)} \d\tilde{\nu}_\kappa(h)
    & = \int F(\tilde\Phi_t\circ h) \overline{G(h)} \exp\left(-\frac{c_\rL}{24\pi}\bS_1(h)-2\bK(h)\right)Dh \\
    & = \int F(h) \overline{G(\tilde{\Phi}_t^{-1}\circ h)} \exp\left(-\frac{c_\rL}{24\pi}\bS_1(\tilde{\Phi}_t^{-1}\circ h)-2\bK(\tilde{\Phi}_t^{-1}\circ h)\right)Dh.
\end{align*}
In the last equality, we made a change of variables and used the invariance of the ``Haar measure'' on Homeo$(\S^1)$. To first order, $\tilde\Phi_t^{-1}$ is the normalised solution of the Beltrami equation with coefficient $-t \mu - \bar t \iota^*\mu$. Identifying the $t$-coefficients in the above display, we thus get that
\begin{align*}
    & \int \sL_\mu F(h) \overline{G(h)} \d\tilde{\nu}_\kappa(h) \\
    & = \int F(h) \Big( -\overline{\bar\sL_\mu G(h)} + \overline{G(h)} \cL_\mu \big( \frac{c_\rL}{24\pi} \bS_1 + 2\bK \big)(h) \Big)\exp\left(-\frac{c_\rL}{24\pi}\bS_1(h)-2\bK(h)\right)Dh\\
    & = \int F(h) \Big( -\overline{\bar\sL_\mu G(h)} + \overline{G(h)} \cL_\mu \big( \frac{c_\rL}{24\pi} \bS_1 + 2\bK \big)(h) \Big) \d \tilde{\nu}_\kappa(h).
\end{align*}
As recalled in \eqref{E:TT06}, Takhtajan and Teo \cite{TakhtajanTeo06} proved that
$\sL_\mu\left(\frac{c_\rL}{24\pi}\bS_1+2\bK\right) = \frac{c_\rL}{12}\tilde\vartheta(\mu)+\tilde\varpi(\mu)$. Altogether,
\begin{align*}
    \int \sL_\mu F(h) \overline{G(h)} \d\tilde{\nu}_\kappa(h)
    = \int F(h) \Big( -\overline{\bar\sL_\mu G(h)} + \overline{G(h)} \frac{c_\rL}{12}\tilde\vartheta(\mu)+\tilde\varpi(\mu) \Big) \d\tilde\nu_\kappa(h)
\end{align*}
which coincides with the first identity in \eqref{E:T_ghost}.

\bibliographystyle{alpha}
\bibliography{bpz}
\end{document}